\theoremstyle{plain}
\newtheorem{thm}{Theorem}[section]
\theoremstyle{plain}
\newtheorem{lem}[thm]{Lemma}
\newtheorem{prop}[thm]{Proposition}
\newtheorem{ex}{Example}[section]
\theoremstyle{definition}
\newtheorem{defi}{Definition}[section]
\newtheorem{rem}{Remark}[section]
\newtheorem*{maintheorem*}{Main Theorem}
\newcommand{\Dt}{{\Delta t}}
\newenvironment{Assumptions}
{
\setcounter{enumi}{0}

\begin{enumerate}}
{\end{enumerate} }
\newcommand{\Dx}{{\Delta x}}
\newcommand{\R}{\ensuremath{\mathbb{R}}}
\newcommand{\Z}{\ensuremath{\mathbb{Z}}}
\newcommand{\rd}{\ensuremath{\mathbb{R}^d}}
\newcommand{\goto}{\ensuremath{\rightarrow}}
\newcommand{\eps}{\ensuremath{\varepsilon}}
\numberwithin{equation}{section} \allowdisplaybreaks
\title[Splitting for stochastic balance laws]
{Convergence of an operator splitting scheme for fractional conservation laws with L\'{e}vy noise}
\date{}
\author[ S. R. Behera ]{Soumya Ranjan Behera}
\address[Soumya Ranjan Behera] {\newline 
Department of Mathematics,
Indian Institute of Technology Delhi,
Hauz Khas, New Delhi, 110016, India.}
\email[] {maz198759@iitd.ac.in}
\author[A. K. Majee]{Ananta K. Majee}
\address[Ananta K. Majee]{\newline
Department of Mathematics,
Indian Institute of Technology Delhi,
Hauz Khas, New Delhi, 110016, India. }
\email[]{majee@maths.iitd.ac.in}
\keywords{Stochastic fractional conservation Laws; Time-splitting method; Entropy solution; Young measure technique; Convergence.}
\thanks{}
\thanks{}
\begin{document}
\begin{abstract}
In this paper, we are concerned with an operator splitting scheme
for linear fractional  and fractional degenerate stochastic conservation laws driven by multiplicative L\'{e}vy noise.
More specifically, using a variant of classical Kru\v{z}kov's doubling of variable approach, we show that the approximate solutions generated by the splitting scheme
converges to the unique stochastic entropy solution of the underlying problems. Finally, the convergence analysis is illustrated by several numerical examples.
\end{abstract}

\maketitle
\section {Introduction}
Let ($\Omega, \mathbb{P}, \mathcal{F}, \{ \mathcal{F}_t\}_{t \geq 0}$)
be a filtered probability space satisfying the usual hypothesis, i.e.,  $\{\mathcal{F}_t\}_{t\ge 0}$ is a right-continuous filtration such that $\mathcal{F}_0$ 
 contains all the $\mathbb{P}$-null subsets of $(\Omega, \mathcal{F})$. We are interested in numerical approximations of  $L^2(\mathbb{R}^d)$-valued predictable process $u(t,\cdot)$ which satisfies the following Cauchy problem
\begin{equation}{\label{eq:0.0}}
\begin{cases}
    \displaystyle      \frac{\partial }{\partial t}u(t,x) + A(u(t,x)) + \text{div}_x f(u(t,x)) = q(u(t,x)) \quad 
        in \quad \mathbb{Q}_T, \\
         u(0, x) = u_0(x),  \quad  x \in \mathbb{R}^d,
    \end{cases}
\end{equation}
where $\mathbb{Q}_T$ = $\mathbb{R}^d \times (0,T)$ with T $>$ 0 fixed. In \eqref{eq:0.0}, $A(u(t,x))$ is the associate  fractional term, $u_0(\cdot)$ is the given initial function, $f : \mathbb{R}  \rightarrow \mathbb{R}^d$ is the flux function, and $q(u(t,x))$ is the multiplicative L\'evy  noise. To be more precise, taking $A(u(t,x))$ as $\mathcal{L}_{\theta}[u(t,\cdot)](x)$, we intend to study numerical approximation of $L^2(\mathbb{R}^d)$-valued predictable process $u(t,\cdot)$ satisfying the SPDE
\begin{equation}\label{eq:fractional}
\begin{cases}
     \displaystyle     du(t,x) + \mathcal{L}_{\theta}[u(t,\cdot)](x)dt +\mbox{div}_x f(u(t,x))dt = \sigma(u(t,x))dW(t) + \int_{|z| > 0}  \eta(u(t,x); z) \widetilde{N}(dz,dt) \quad
        in \quad \mathbb{Q}_T, \\
         u(0, x) = u_0(x),  \quad  x \in \mathbb{R}^d,
    \end{cases}
\end{equation}
where $\mathcal{L}_{\theta}$[u] is the fractional Laplace operator $(-\Delta)^\theta$[u] of order $\theta\in (0,1)$, defined by
\[\mathcal{L}_{\theta}[\varphi](x) : = a_{\theta}\, P.V. \int_{|z| > 0} \frac{\varphi(x) - \varphi(x+z)}{|z|^{d +2 \theta}}dz,\]
for some constant $a_\theta$ $>$ 0, and a  sufficiently regular function $\varphi$. In \eqref{eq:fractional}, W(t) is a $\{\mathcal{F}_t\}_{t \geq 0}$-adapted one-dimensional standard Brownian noise, $\widetilde{N}(dz,dt) = N(dz,dt) - m(dz)dt$ and $N$ is a Poisson random measure  with intensity measure $m(dz)$, satisfying $\int_{|z| > 0} (|z|^2 \wedge 1)m(dz) < \infty$. Moreover, the noise coefficients $\sigma: \R \goto \R$ and $\eta: \R \times \R \goto \R$ are given Lipschitz continuous functions (see Section \ref{sec:technical} for the complete set of assumptions) signifying the multiplicative nature of the noise. The stochastic integral on the right-hand side of \eqref{eq:fractional}
is defined in the It\^{o}-L\'{e}vy sense.
\vspace{0.2 cm}

Taking $A(u(t,x))$ as $\mathcal{L}_{\theta}[\phi(u(t,\cdot))](x)$ for some nonlinear function $\phi$, we are also interested in studying
numerical approximations of $L^2(\mathbb{R}^d)$-valued predictable process $u(t,\cdot)$ for the fractional degenerate Cauchy problem
 \begin{equation}
 \label{eq:fdegenerate}
 \begin{cases} 
 \displaystyle du(t,x) +\mathcal{L}_{\theta}[\phi(u(t,\cdot))](x)dt + \mbox{div}_x f(u(t,x))\,dt = \sigma(u(t,x))dW(t) \quad in \quad \mathbb{Q}_T, \\
 u(0,x) = u_0(x), \quad  x\in \mathbb{R}^d\,\cdot
\end{cases}
\end{equation}
The basic assumption is that $\phi : \R \rightarrow \R,$ is non-decreasing with $\phi(0)=0$. In \eqref{eq:fdegenerate}, the stochastic integral is defined in the It\^{o} sense. 
\vspace{.1cm}

\subsection{Review of existing literature}
The equations of type \eqref{eq:fractional} and \eqref{eq:fdegenerate} can be viewed as a stochastic perturbation of the fractional convection-diffusion equations with nonlinear sources. In the absence of non-local term in \eqref{eq:fractional} and \eqref{eq:fdegenerate} together with $\sigma =0= \eta$, equations \eqref{eq:fractional} and \eqref{eq:fdegenerate} become standard conservation laws in $\R^d$. The well-posedness analysis of deterministic conservation laws is well documented in the literature; see e.g., \cite{dafermos, godu,kruzkov, Volpert} and references therein. For $\sigma = \eta=0$, the equation \eqref{eq:fractional} becomes a fractional conservation laws which was studied in \cite{Alibaud 2007}. The entropy solution theory of fractional degenerate convection-diffusion equations was established by Cifani et al. in \cite{Alibaud 2012,cifani}. 
\vspace{.2cm}

The study of stochastic balance laws driven by noise is  a comparatively new area of pursuit. Only recently, many authors
\cite{ Bauzet-2012,Bauzet-2015,Majee-2015,Majee-2014, BKM-2015,Majee-2019, Chen:2012fk,Hofmanova-2016, Vovelle2010,Vovelle-2018, nualart:2008, Karlsen-2017,Kim,Majee-2017,xu} are devoted towards understanding the effects of stochastic forcing on the solutions of nonlinear
Cauchy problems for partial differential equations. Due to more technical novelties, the study of well-posedness result in case of nonlinear fractional degenerate convection-diffusion equations with nonlinear stochastic forcing is more subtle. In \cite{frac lin}, the authors established the well-posedness of stochastic entropy solution of \eqref{eq:fractional} using the concept of measure-valued solution. The  uniqueness result for  stochastic entropy solution of \eqref{eq:fdegenerate} was analyzed by using a variant of Kružkov’s doubling of variables technique and the existence was proved as a by product of unique measure valued(mild) solution and a priori estimations by Koley et al. \cite{frac non}. Due to the nonlinear nature of the underlying problem, an
explicit solution formula is hard to obtain, and hence robust numerical schemes for approximating such equations are very
important. The first documented development in this direction for the deterministic counterpart of \eqref{eq:fdegenerate} is \cite{cifani fds}, where a monotone finite difference scheme was constructed and its convergence to the unique Kru\v{z}kov-type entropy solution was shown. 
\vspace{.2cm}

In the last decade, there has been a growing interest in numerical approximations and numerical experiments for entropy solutions to the related Cauchy problem driven by stochastic forcing. Within the existing literature, we refer to the paper by Holden et al. \cite{risebroholden1997}, where the authors successfully
implemented an operator-splitting method to prove the existence of a path-wise weak solution for such Cauchy problem driven by Brownian noise in one space dimension. Moreover, they presented some numerical examples to illustrate their theory. Recently, Bauzet \cite{Bauzet-2015-Splitting} generalized the operator splitting technique for  stochastic conservation laws, and by employing the Young measure theory, the author established the convergence of approximate solutions to an entropy solution of the underlying problem. We also refer to see \cite{Karlsen-2018}, where the time splitting method was analyzed for
more general noise coefficient in the spirit of Malliavin calculus and Young measure theory. In an another development \cite{kroker}, Kr\"{o}ker and Rohde established the
convergence of a monotone semi-discrete finite volume scheme using a stochastic compensated compactness method. In recent papers \cite{Bauzet-2020,Bauzet-2016a, Bauzet-2016b}, Bauzet
et al. have studied fully discrete schemes via flux-splitting and monotone finite volume schemes for stochastic conservation
laws driven by multiplicative Brownian noise and established its convergence by using Young measure techniques, see also \cite{Vovelle-2020}, where the authors employed a kinetic formulation approach and established the convergence of the explicit-in-time finite volume method for the approximations of scalar first-order conservation laws with compactly supported, general multiplicative noise. Very recently, in \cite{koley-2022} the authors have established the rate of convergence of the approximate solutions generated by a finite difference scheme  for fractional degenerate conservation laws driven by Brownian noise.
\vspace{.1cm}

Being a relatively new area of pursuit, numerical schemes for the stochastic scalar balance laws
driven by L\'{e}vy noise even more sparse than the Brownian noise case. In fact, to the best of the
authors knowledge, the first attempt to answer the quest for an efficient numerical scheme
for such equations were made in a recent paper by Koley et al. \cite{Majee-2018}. The authors have studied a semi-discrete finite difference scheme for conservation laws driven by a homogeneous multiplicative L\'{e}vy noise and showed the convergence of approximate solutions, generated
by the finite difference scheme, to the unique BV-entropy solution as the spatial discretization parameter $\Delta x \goto 0$. Moreover, they have established the rate of convergence, which is of order $\frac{1}{2}$.  In  \cite{Majee-2018-flux}, the author has studied a fully discrete flux-
splitting finite volume scheme for \eqref{eq:fdegenerate} with $\phi=0$, and addressed the convergence of the scheme.
\subsection{Aim and scope of the paper}
The above discussions clearly highlight the lack of  study of the numerical scheme and its convergence analysis for equations \eqref{eq:fractional} and \eqref{eq:fdegenerate}. In this paper, 
we intend to study an operator splitting methods  for \eqref{eq:fractional} and \eqref{eq:fdegenerate}, and wish to prove the convergence 
of approximate numerical solutions, generated by the operator splitting schemes, 
to the unique stochastic BV-entropy solution of Cauchy problems \eqref{eq:fractional} and \eqref{eq:fdegenerate} mainly for mathematical curiosity.  In \cite{Bauzet-2015-Splitting}, the author has deduced the entropy inequality for approximate solutions generated by an operator splitting scheme and then passed to the limit as $\Delta t \rightarrow 0$ in the Young measure sense to show its convergence. Nevertheless, this technique will not be compatible here as the main difficulty lies in passing to the limit in the sense of Young measure in the non-local fractional term. Moreover, one can also observe that in  \cite{frac lin, frac non}, the authors have not shown that the Young measure valued limit of viscous solutions satisfies the entropy formulation. In view of the above discussion, our strategy is to use a variant of Kru\v{z}kov's doubling of variables technique  to obtain the Kato's inequality and there by convergence result. However, our approach requires significant changes in the order of passing the limit with respect to the various parameters in the proof to obtain the Kato's inequality as compared to the existing hierarchy of passing limits in \cite{frac lin, frac non}. To be  more precise, we send the parameter $\Delta t \rightarrow 0$, before sending $\delta_0 \rightarrow 0$, (see Subsection \ref{Convergence Analysis}). The changes in order of passing to the limit in various parameters effect all the terms appearing in the entropy inequality and it requires rigorous estimations of all terms to obtain the stochastic Kato's inequality. In addition, the average time continuity of regularized viscous solution plays a crucial role in achieving our result (see, Lemma \ref{lem:average-time-cont-viscous}). 
\vspace{.1cm}

The remaining part of this paper is organized as follows. We state the assumptions, 
detail the technical framework, and state the main results
in Section~\ref{sec:technical}. In Section~\ref{sec:properties}, 
we discuss various properties enjoyed by the splitting operators, while Section~\ref{sec:estimations} deals with 
the {\em a-priori} estimates for the approximate solutions and average time continuity for regularized viscous solution. In Sections~\ref{sec:convergence u} and \ref{sec:6} , we have shown the convergence of approximate solutions to the unique $BV$-entropy solution of fractional and fractional degenerate  stochastic conservation laws respectively. Finally, in Section~\ref{sec:numerical}, we explore some numerical experiments in one space dimension along with the brief discussion of Euler-Maruyama method.

\section{Preliminaries And Technical Framework}\label{sec:technical}Throughout this paper, we use the letters $C,\,K$ etc. to denote various generic constants.
There are situations where constants may change from line to line, but the notation is kept
unchanged so long as it does not impact the central idea. 
The Euclidean norm on any $\R^d$-type space is denoted by $|\cdot|$, and
the norm in $BV(\R^d)$ is denoted by $|\cdot|_{BV(\R^d)}$. For any separable Hilbert Space $H$, we denote $N_w^2((0,T); H)$, as the space of all square integrable predictable $H$-valued process $u$ such that $\displaystyle\mathbb{E}\Big[ \int_0^T ||u(t)||_H^2\,dt\Big] < \infty.$
\vspace{.1cm}

\subsection{ Stochastic entropy formulation and well-posedness results}
It is well known that weak solutions may be discontinuous and they are not uniquely determined by their initial data. Consequently, an entropy condition must be imposed to single out the physically correct solution. To do so, for equation \eqref{eq:fractional}, 
 we re-write the non-local operator $\mathcal{L}_\theta$ as a sum of two operators $\mathcal{L}_\theta^{\bar{r}}[\varphi]$ and $\mathcal{L}_{\theta,\bar{r}}[\varphi]$ for some $\bar{r}>0$, where
\begin{align}
    & \mathcal{L}_\theta^{\bar{r}}[\varphi] := a_\theta \int_{|z| > \bar{r}} \frac{\varphi(x)-\varphi(x+z)}{|z|^{d + 2\theta}}dz, \notag \\
   & \mathcal{L}_{\theta,\bar{r}}[\varphi] := a_\theta \,P.V. \int_{|z| \le \bar{r}} \frac{\varphi(x)-\varphi(x+z)}{|z|^{d + 2\theta}}dz. \notag
\end{align}
 Since the notion of entropy solution is built 
around the so called entropy-entropy flux pair, we begin with the definition of entropy flux pair.
\begin{defi}[Entropy Flux pair]
A pair $(\beta,\zeta) $ is called an entropy flux pair if  $\beta \in C^2(\R)$, $\beta \ge 0$ and $\zeta = (\zeta_1,\zeta_2,....\zeta_d):\R \rightarrow \R^d $ is a vector field satisfying $ 
\zeta'(r) = \beta'(r)f'(r)$, for all $r \in \R$. Moreover, an entropy flux pair $(\beta,\zeta)$ is called convex if $ \beta^{\prime\prime}(\cdot) \ge 0$.  
\end{defi}
With the help of a convex entropy flux pair ($\beta, \zeta$), we now recall the stochastic entropy solution of \eqref{eq:fractional},  cf.~\cite[Definition $1.3$]{frac lin}.
\begin{defi}[Stochastic entropy solution] \label{defi:entropysol-f}
An element  $ u\in$ $N_w^2((0, T); L^2(\mathbb{R}^d))$ with initial data $u_0\in$ $L^2(\mathbb{R}^d)$ is called a stochastic entropy solution of \eqref{eq:fractional} if for any non-negative test function $\psi \in {C}_c^{1,2}([0,\infty) \times \mathbb{R}^d)$ and convex entropy flux pair $(\beta, \zeta)$, the following inequality holds:
\begin{align*}
    &\int_{\mathbb{R}^d} \psi(0,x)\beta(u_0(x))dx + \int_{\mathbb{Q}_T}\Big \{ \partial_t\psi(t,x)\beta(u(t,x)) + \zeta(u(t,x))\cdot \nabla\psi(t,x) \Big \}\,dx\,dt \notag \\ 
    &- \int_{\mathbb{Q}_T} \Big[\mathcal{L}_\theta^{\bar{r}}[u(t,\cdot)](x)\psi(t,x)\beta'(u(t,x)) + \beta(u(t,x))\mathcal{L}_{\theta,\bar{r}}[\psi(t, .)](x) \Big]\,dx\,dt \notag \\
   & +\int_{\mathbb{Q}_T} \sigma(u(t,x))\beta'(u(t,x))\psi(t,x)\,dx\,dW(t)+
   \frac{1}{2} \int_{\mathbb{Q}_T} \sigma^2(u(t,x))\beta''(u(t,x))\psi(t,x)\,dx\,dt \notag \\& +  \int_{\mathbb{Q}_T}\int_{|z| > 0} \Big(\beta \big(u(t,x) + \eta(u(t,x); z)\big) -\beta(u(t,x))\Big) \psi(t,x)\,\widetilde{N}(dz,dt)\,dx \notag \\& +
   \int_{\mathbb{Q}_T}\int_{|z| > 0} \int_0^1 (1- \lambda)\eta^2(u(t,x); z) \beta'' \Big (u(t,x) +\lambda \eta(u(t,x); z)\Big) \psi(t,x)\,d\lambda\,m(dz)\,dx \,dt \geq 0 \quad\mathbb{P}\text{-a.s.} 
\end{align*}
\end{defi}
Similarly, the notation of stochastic entropy solution of \eqref{eq:fdegenerate}  is defined as follows; see cf.~\cite[Definition $1.2$]{frac non}.
\begin{defi}[Stochastic entropy solution of \eqref{eq:fdegenerate}]\label{defi:fd}
An element $ u\in$ $N_w^2((0, T); L^2(\mathbb{R}^d))$ with initial data $u_0\in$ $L^2(\mathbb{R}^d)$ is called a stochastic entropy solution of \eqref{eq:fdegenerate} if  for any non-negative test function $\psi \in {C}_c^{1,2}([0,\infty) \times \mathbb{R}^d)$, convex entropy flux pair $(\beta, \zeta)$ and any $k\in \R$, the following inequality holds:
\begin{align*}
    0 \le &\int_{\mathbb{R}^d} \psi(0,x)\beta(u_0(x) -k)dx + \int_{\mathbb{Q}_T}\Big \{ \partial_t\psi(t,x)\beta(u(t,x )-k) + \zeta(u(t,x))\cdot \nabla\psi(t,x) \Big \}\,dx\,dt \notag \\ 
    &- \int_{\mathbb{Q}_T} \Big[\mathcal{L}_\theta^{\bar{r}}[\phi(u(t,\cdot))](x)\psi(t,x)\beta'(u(t,x)-k) + \phi_k^\beta(u(t,x))\mathcal{L}_{\theta,\bar{r}}[\psi(t, .)](x) \Big]\,dx\,dt \notag \\
   & +\int_{\mathbb{Q}_T} \sigma(u(t,x))\beta'(u(t,x)-k)\psi(t,x)\,dx\,dW(t)+
   \frac{1}{2} \int_{\mathbb{Q}_T} \sigma^2(u(t,x))\beta''(u(t,x)-k)\psi(t,x)\,dx\,dt, \notag \quad  \mathbb{P}\text{-a.s.},
\end{align*}
\end{defi}
where $\phi_k^\beta(a) = \displaystyle \int_k^a\beta'(r-k)\phi'(r)\,dr.$
\vspace{0.2cm}

We aim to show the convergence of approximate solutions, constructed via time splitting method (cf.~Subsection \ref{sec:time splitting}), to the unique entropy solution, and to do so we need the following assumptions:

\begin{Assumptions}
\item \label{A1}  The initial function $u_0: \R^d \rightarrow \R$ belongs to $L^2(\R^d) \cap L^{\infty}(\rd)\cap BV(\R^d)$.
\item \label{A2} $\phi:\R \goto \R$ is a non-decreasing Lipschitz continuous function with $\phi(0)=0$. 
\item \label{A3}  The flux function $f: \R \rightarrow \R^d$ is a Lipschitz continuous function with $f(0)=0$.
\item\label{A4} $\sigma : \R \rightarrow \R$ is Lipschitz continuous i.e., there  exists a constant $C>0$ such that, for all $u, v \in \mathbb{R}$, 
 \[|\sigma(u) - \sigma(v)|\le C|u-v|. \]
\item\label{A5} $\sigma(0) = 0$ and there exists $M>0$ such that $\sigma(u) = 0$ for all $ |u| > M $. 
\item \label{A6} $\eta: \R \times \R \rightarrow \R$ and there exist positive constants $0<\lambda^* <1$  and  $ C>0$, such that for all
$ u,v,z\in \R$,
 \begin{align*} | \eta(u;z)-\eta(v;z)|  & \le  \lambda^* |u-v|( |z|\wedge 1), \\
  \text{and}\quad|\eta(u;z)| & \le C(1+|u|)(|z|\wedge 1).
 \end{align*}
 \item \label{A7}$\eta(0;z)= 0$, for all $z \in \R$ and $\eta(u;z)=0$, for all $|u|> M$. 
\item \label{A8}  The L\'{e}vy measure $m(dz)$ which has a possible singularity at $z=0$, satisfies 
\begin{align*}
 \int_{|z|>0} (1\wedge |z|^2) \,m(dz) < + \infty.
\end{align*}
\end{Assumptions}
The well-posedness of stochastic entropy solution of \eqref{eq:fractional} is given by the following theorem, whose proof can be found in \cite{frac lin}.
\begin{thm}
\label{thm: fractional}
Under the assumptions \ref{A1}, \ref{A3}, \ref{A4}, \ref{A6} and \ref{A8}, there exists a unique stochastic entropy solution of \eqref{eq:fractional} in the sense of Definition \ref{defi:entropysol-f}.
\end{thm}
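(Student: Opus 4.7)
The plan is to establish existence and uniqueness separately, with the bulk of the work being a Kru\v{z}kov-type doubling of variables argument adapted to the stochastic, non-local setting, and a vanishing viscosity construction for existence. For \textbf{uniqueness}, let $u$ and $v$ be two entropy solutions in the sense of Definition~\ref{defi:entropysol-f}. I would apply the entropy inequality for $u$ with the constant $k$ replaced by $v(s,y)$ and symmetrically for $v$, against a tensor-product test function of the form $\psi(t,x)\rho_{\eps_0}(t-s)\varrho_{\eps_1}(x-y)$ with standard mollifiers in time and space, take expectation, and add the two inequalities. The singular fractional kernel is treated using the splitting $\mathcal{L}_\theta=\mathcal{L}_\theta^{\bar r}+\mathcal{L}_{\theta,\bar r}$ already present in the entropy formulation: the far-field operator $\mathcal{L}_\theta^{\bar r}$ is bounded and pairs cleanly with $\beta'(u-v)$ after a change of variables $z\mapsto -z$ exploiting the symmetry of the kernel, while the near-field operator $\mathcal{L}_{\theta,\bar r}$ sits on the test function and is shown to vanish as $\bar r\to 0$, taken \emph{after} the mollification parameters.

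The It\^o--L\'evy corrections are handled by a careful bookkeeping of the doubling procedure: the Brownian correction produces $\tfrac{1}{2}\big(\sigma(u)-\sigma(v)\big)^2\beta''(u-v)$, which is Lipschitz-controlled by \ref{A4} and vanishes as $\beta$ approximates $|\cdot|$; the Poisson correction, once the $\widetilde N$-martingale integrals are split via a centered/compensated decomposition, reduces to a term of the form $\int_0^1(1-\lambda)\big|\eta(u;z)-\eta(v;z)\big|^2\beta''\big(u-v+\lambda(\eta(u;z)-\eta(v;z))\big)\,d\lambda$, whose size is controlled by the Lipschitz bound \ref{A6} with the strict contraction $\lambda^*<1$ ensuring integrability against $m(dz)$ via \ref{A8}. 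Passing the mollification parameters to zero, followed by $\bar r\to 0$, yields a Kato-type inequality $\mathbb{E}\int_{\rd}\beta(u(t,x)-v(t,x))\,\psi(t,x)\,dx\le \mathbb{E}\int_{\rd}\beta(u_0-v_0)\psi(0,x)\,dx+\cdots$, from which, sending $\beta$ to $|\cdot|$ and choosing a suitable $\psi$, the $L^1$-contraction $\mathbb{E}\|u(t)-v(t)\|_{L^1}\le \mathbb{E}\|u_0-v_0\|_{L^1}$ follows and hence uniqueness.

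For \textbf{existence}, I would use a vanishing viscosity scheme: regularize \eqref{eq:fractional} by adding $\eps\Delta u_\eps$ and mollifying the initial datum, and invoke standard variational SPDE theory for equations with jumps (e.g. Gy\"ongy--Krylov or Pardoux--Peng type arguments) to produce a unique strong solution $u_\eps$. Applying It\^o's formula to $\int \beta(u_\eps)\,dx$ and using the $L^2$-positivity of $\mathcal{L}_\theta$, together with the skew-symmetric nature of $\mathrm{div}_x f$, gives uniform-in-$\eps$ bounds in $L^\infty_t L^2_x\cap L^2_t H^\theta_x$, an $L^\infty$ bound from the support assumptions \ref{A5}, \ref{A7}, and a BV bound propagated from \ref{A1}. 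I would then extract a Young measure $\nu_{t,x}$ as the narrow limit of $\delta_{u_\eps(t,x)}$, show it satisfies the entropy formulation in a measure-valued sense, and invoke a measure-valued version of the uniqueness argument above (with atomic initial data) to deduce $\nu_{t,x}=\delta_{u(t,x)}$; strong convergence and existence of an entropy solution follow.

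The \textbf{main obstacle} I anticipate is the doubling-of-variables step for the non-local term: because $\mathcal{L}_\theta$ is genuinely non-local, the doubled integrals mix $u(t,x+z)$ with $v(s,y+z)$ under the singular kernel $|z|^{-d-2\theta}$, so there is no immediate cancellation upon swapping $(x,y)$. One must instead reorganize these integrals via a simultaneous translation $(x,y)\mapsto(x+z,y+z)$, use the compactness of $\psi$, and exploit the splitting at $\bar r$ so that only the bounded far-field part meets $\beta'(u-v)$ and only the test function feels the singular part. A secondary delicate point is that $\bar r\to 0$ must be postponed until after the spatial mollification $\eps_1\to 0$, since otherwise the $\mathcal{L}_{\theta,\bar r}[\varrho_{\eps_1}]$ term does not stay integrable; getting the order of limits right (and verifying each intermediate passage through an average continuity estimate for the viscous approximations) is where the technical weight of the proof lies.
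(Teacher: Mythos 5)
This theorem is not proved in the paper at all: it is imported verbatim from the reference \cite{frac lin}, and your outline (Kru\v{z}kov doubling with the $\mathcal{L}_\theta^{\bar r}/\mathcal{L}_{\theta,\bar r}$ splitting, the It\^o--L\'evy correction terms controlled via \ref{A4}, \ref{A6}, \ref{A8}, and existence via vanishing viscosity plus Young measures and a measure-valued uniqueness argument) is essentially the strategy of that work. The only caveat worth flagging is that, as the present paper itself remarks, \cite{frac lin} does not first verify the entropy formulation for the Young-measure limit and then apply uniqueness, but rather runs the doubling argument directly between the viscous approximation and the Young-measure limit; your version would require justifying that intermediate step separately.
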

Regarding the existence and uniqueness of entropy solution of \eqref{eq:fdegenerate}, we arrive at the following theorem, cf.~\cite{frac non}.
\begin{thm}\label{thm: fdegenerate}
 Let the assumptions \ref{A1}-\ref{A4} be true. Then there exists a unique stochastic entropy solution of  \eqref{eq:fdegenerate} in the sense of Definition \ref{defi:fd}. 
\end{thm}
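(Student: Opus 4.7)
The plan is to split the statement into a uniqueness half (via a stochastic variant of Kru\v{z}kov's doubling-of-variables technique adapted to the non-local operator) and an existence half (via vanishing viscosity plus a measure-valued/Young-measure compactification). Because only \ref{A1}--\ref{A4} are in force, no L\'{e}vy-noise machinery is required; the noise is purely Brownian, which simplifies the It\^o corrections.

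For uniqueness, I would take two entropy solutions $u(t,x)$ and $v(s,y)$ in the sense of Definition \ref{defi:fd}, write the entropy inequality for $u$ with constant $k=v(s,y)$ and vice versa, and couple them through a tensor-product test function of the form $\psi(t,x)\rho_{\delta_0}(t-s)\varrho_\delta(x-y)$, where $\rho_{\delta_0}$ and $\varrho_\delta$ are standard mollifiers. The convection term is handled by the usual cancellation in the Kru\v{z}kov entropy flux. The non-local term is dealt with by using the splitting $\mathcal{L}_\theta = \mathcal{L}_\theta^{\bar r}+\mathcal{L}_{\theta,\bar r}$ already introduced in the paper: on $|z|\le \bar r$ one uses the second-order Taylor cancellation against the convex entropy $\beta$ together with the $|z|^{-d-2\theta}$ singularity being integrable at order $2$, while on $|z|>\bar r$ one integrates by parts and exploits the monotonicity of $\phi$ through the chain-rule term $\phi_k^\beta$. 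The stochastic It\^o integrals are centered martingales and vanish after taking expectation, and the It\^o correction $\tfrac{1}{2}\sigma^2(u)\beta''(u-k)$ is matched across the two inequalities using $|\sigma(u)-\sigma(v)|\le C|u-v|$ from \ref{A4}. Sending the parameters to zero in the order $\delta\to 0$, then $\bar r\to 0$, then $\delta_0\to 0$, and finally replacing $\beta$ by a smooth approximation of $|\cdot|$ produces a stochastic Kato inequality $\mathbb{E}\int_{\mathbb{R}^d}|u(t,x)-v(t,x)|\,dx \le \mathbb{E}\int_{\mathbb{R}^d}|u_0-v_0|\,dx$, whence uniqueness and $L^1$-contraction.

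For existence, the strategy is to regularize \eqref{eq:fdegenerate} by adding a small Laplacian viscosity $-\varepsilon\Delta u$ and by perturbing $\phi$ to $\phi_\varepsilon(u):=\phi(u)+\varepsilon u$ so that the spatial operator becomes uniformly parabolic of non-local-plus-local type. The resulting non-degenerate SPDE is solvable in the variational framework (Krylov--Rozovskii or a Galerkin approximation), producing a smooth approximate solution $u^\varepsilon$. Applying It\^o's formula to $\beta(u^\varepsilon)$ and using \ref{A1}--\ref{A4} yields uniform-in-$\varepsilon$ bounds in $L^\infty_\omega L^\infty_t L^2_x$, in $L^\infty_\omega L^\infty_{t,x}$, and in BV, along with an entropy dissipation estimate. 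Because $\phi$ is degenerate, one cannot in general pass strongly to the limit in $\phi(u^\varepsilon)$ inside the non-local operator, so I would extract a Young-measure limit and obtain a measure-valued entropy solution; a measure-valued extension of the uniqueness argument above then forces this Young measure to be a Dirac mass $\delta_{u(t,x)}$, upgrading the convergence to strong $L^1$ and producing an entropy solution in the sense of Definition \ref{defi:fd}.

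The main obstacle will be the doubling-of-variables step, specifically the simultaneous handling of the singular non-local kernel, the degenerate chain-rule term $\phi_k^\beta$, and the It\^o correction: the choice of the order in which $\delta, \bar r, \delta_0$ (and in the existence half, $\varepsilon$) are sent to zero is delicate, because $\mathcal{L}_{\theta,\bar r}$ produces an error of the form $\int_{|z|\le\bar r}|z|^{2-d-2\theta}dz$ against $D^2\varrho_\delta$, which must be controlled before $\delta\to 0$ is taken. Matching this ordering with the stochastic estimates, while keeping track of the non-smooth entropy $\beta$ and the monotone but possibly non-strictly-increasing $\phi$, is the technical core of the argument.
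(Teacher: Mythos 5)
First, a point of comparison: the paper does not prove Theorem \ref{thm: fdegenerate} at all; it is imported verbatim from \cite{frac non}, and the introduction describes the strategy of that reference as ``uniqueness by a variant of Kru\v{z}kov's doubling of variables'' and ``existence as a by-product of a unique measure-valued (mild) solution and a priori estimations.'' Your two-part plan --- doubling of variables with the splitting $\mathcal{L}_\theta=\mathcal{L}_\theta^{\bar r}+\mathcal{L}_{\theta,\bar r}$, the chain-rule term $\phi_k^\beta$ and the sign/monotonicity of $\phi$ on the far-field part for uniqueness; vanishing viscosity, uniform $L^2$/$L^\infty$/BV bounds, and a Young-measure limit collapsed to a Dirac mass by the measure-valued uniqueness argument for existence --- is exactly that strategy, and it is also the machinery this paper itself deploys in Sections~\ref{sec:convergence u}--\ref{sec:6} when it adapts the uniqueness proof to the splitting scheme. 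So at the level of architecture your proposal is the paper's (i.e.\ the cited reference's) approach.

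Two concrete corrections. The main one is your order of limits: you propose $\delta\to 0$, then $\bar r\to 0$, then $\delta_0\to 0$, yet you yourself observe that the remainder coming from $\mathcal{L}_{\theta,\bar r}$ acting on the doubled test function must be controlled before $\delta\to 0$. These two statements are incompatible. The bound actually available (the paper's \eqref{fractionalbound}) is $|\mathcal{L}_{\theta,\bar r}[\varrho_\delta(\cdot-y)\psi(t,\cdot)](x)|\le C(\delta)\,\bar r^{a}$ with $C(\delta)\sim \|D^2(\varrho_\delta\psi)\|_{L^\infty}$ blowing up as $\delta\to 0$; consequently one must send $\bar r\to 0$ with $\delta$ fixed (as in Steps IV and V of Subsection \ref{Convergence Analysis}, and in \cite{frac lin, frac non}), or at least couple $\bar r\ll\delta$ quantitatively. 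With your stated order the term $\mathbb{E}\int\beta(u-v)\,\mathcal{L}_{\theta,\bar r}[\varrho_\delta\psi]$ does not vanish and the Kato inequality is not obtained. The second, minor, point: the regularization $\phi_\varepsilon(u)=\phi(u)+\varepsilon u$ is unnecessary and is not what is done here; the viscous problem is \eqref{eq:fdegenerate} plus $\varepsilon\Delta u$ only (cf.\ \eqref{eq:estimations-fd}), which already supplies the $H^1$ regularity and the dissipation estimate $\varepsilon\,\mathbb{E}\int_0^T\|\nabla u_\varepsilon\|_{L^2}^2\le C$ needed to kill the extra viscous terms in the limit.
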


\subsection{ Time Splitting method}\label{sec:time splitting}
We will use splitting method to construct  approximate solutions to \eqref{eq:fractional} and \eqref{eq:fdegenerate}. We break the underlying equation into two sub-equations and then construct operator splitting solutions by solving the first sub-equation by using the solution of the second equation as initial datum. To be more specific, let $T > 0$ be fixed, $0 \le s \le T.$ Let  $R(t, s)$ be the operator which takes the value $u_s$ to the solution $u$ at time $s$ of the stochastic equation 
 \begin{equation}\label{eq:noise}
        \begin{cases}
    \displaystyle      du(t,x) = \sigma(u(t,x))dW(t) + \int_{|z| > 0} \eta(u(t,x); z) \widetilde{N}(dz,dt), \hspace{2mm} t \in [s, T]\,, \\
          u(t = s) = u_s\,,
         \end{cases}
\end{equation}
 \[  \text{i.e.}, \hspace{2mm} u(t) = R(t, s)u_s = u_s + \int_s^t \sigma(u(r,x))dW(r) +  \int_s^t \int_{|z| > 0} \eta(u(r,x); z) \widetilde{N}(dz,dr).\]

Again, let $S(t-s)$ be an operator which takes the initial function $u(s,x)$ at time $s$ to the weak entropy solution $u$ at time $t$ to the following fractional conservation laws (see, \cite[Theorem 3.1]{Alibaud 2007})
\begin{equation}\label{eq:operator-S}
        \begin{cases}
       \displaystyle  \frac{\partial}{\partial t}u(t,x)+ [\mathcal{L}_{\theta}(u(t,\cdot))(x) + \mbox{div}_x f(u(t,x))] = 0,  \hspace{2mm} t \in [s, T]\,, \\
         u( t = s, x) = u(s, x), \hspace{2mm} x \in \mathbb{R}^d\,, \\
        \end{cases}
\end{equation}
$$\text{i.e.,}\quad  u(t,x)= S(t-s)u(s,x).$$
In a similar way, we can find a solution operator $\bar{S}(t-s)$ for the deterministic fractional degenerate 
conservation laws (see, \cite[Section 3]{cifani}) i.e., $u(t,x)= \bar{S}(t-s)u(s,x)$, where $u$ is a weak solution of 
\begin{equation}
 \label{eq:operator Sbar}
 \begin{cases} 
 \frac{\partial}{\partial t}u(t,x)+ \mathcal{L}_\theta[\phi(u(t,\cdot))](x) + \mbox{div}_x f(u(t,x)) =0,\quad  t \in [s,T]\, \\
 u(t=s,x) = u(s,x), \quad x \in \R^d\,.
\end{cases}
\end{equation}
We discretize the time interval $[0,T]$ as follows. For $N\in \mathbb{N}$, we consider the uniform temporal discretization parameter $\Delta t = \frac{T}{N}$. Let $t_n=n\Delta t,~n\in \{0,1,2,\ldots, N\}$ be the uniform partition of $[0,T]$. Following the ideas of \cite{Bauzet-2015-Splitting, risebroholden1997}, we define the approximate solutions of \eqref{eq:fractional} in terms of operators $S(t)$ and $R(t,s)$ as follows:
\begin{equation}
 \label{approxi:solu} u_{\Delta t}(t,x)=
 \begin{cases} 
 u^n(x),\quad & \text{if}~~t=t_n, \\
 R(t,t_n)u^n(x), \quad &\text{if}~~ t\in (t_n, t_{n+1}),
\end{cases}
\end{equation}
where the sequence $\{u^n\}_{n\ge 0}$ is defined by 
\begin{equation}
 \label{eq:sequence}
 \begin{cases} 
 u^0(x)&=u_0(x) \\
 u^{n+1}(x)&= S(\Delta t) R(t_{n+1}, t_n) u^n(x).
\end{cases}
\end{equation}
Similarly, we can define approximate solutions for \eqref{eq:fdegenerate} in terms of operators $\bar{S}(t)$ and $\bar{R}(t,s)$ (taking $\eta = 0$ in \eqref{eq:noise}) with
\begin{align}
 u^{n+1}=\bar{S}(\Delta t)\bar{R}(t_{n+1}, t_n)u^n(x). \label{eq:sequence-nonlinear}
 \end{align}
For notational simplicity let us also define the approximate solutions of \eqref{eq:fdegenerate} as  $u_{\Delta t}(t,x).$
Roughly, we can say that operator splitting method is a two step method. In the first step, it solves the stochastic equation while in the second step, it solves the scalar conservation laws without source terms and with initial condition as obtained from the first step.
\vspace{0.1cm}

Now we are ready to state the main results of this article. 
\begin{thm}\label{thm: mainthmf}
Let the assumptions \ref{A1}, \ref{A3}-\ref{A8} be true. Then the approximate solutions $u_{\Delta t}(t, x)$ given by \eqref{approxi:solu} and \eqref{eq:sequence} converges to a unique entropy solution of \eqref{eq:fractional} in $L_{loc}^p(\mathbb{R}^d; L^p(\Omega \times (0,T)))$ for $1 \le p < \infty.$
\end{thm}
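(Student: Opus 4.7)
The plan is to prove convergence by deriving a stochastic Kato-type inequality comparing the approximate solution $u_{\Delta t}$ and the unique entropy solution $u$ of \eqref{eq:fractional} (whose existence is guaranteed by Theorem~\ref{thm: fractional}), and then closing via a Gronwall argument. First I would establish uniform a priori bounds for $u_{\Delta t}$, namely $L^\infty(0,T;L^p(\Omega\times \R^d))$ for $p=2,\infty$ and uniform $BV$-estimates in the spatial variable. These propagate step by step through the splitting: the stochastic operator $R(t,t_n)$ preserves $L^p$ and $BV$ in expectation by It\^{o}--L\'evy isometry combined with assumptions \ref{A4}--\ref{A7}, while $S(\Delta t)$ is $L^p$-contractive and $BV$-preserving by the theory of deterministic fractional conservation laws \cite{Alibaud 2007}. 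These are the estimates that Sections~\ref{sec:properties}--\ref{sec:estimations} will provide.

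Since $S(t)u^n$ is only a weak entropy solution (and hence possibly discontinuous in $x$), It\^{o}--L\'evy cannot be directly applied. I would therefore introduce a vanishing viscosity regularization of the deterministic sub-step with parameter $\delta_0>0$, producing smooth-in-$x$ approximate solutions $u_{\Delta t,\delta_0}$. Applying It\^{o}--L\'evy to $\beta(u_{\Delta t,\delta_0}(t,x)-k)\psi(t,x)$ for a convex entropy $\beta$ and a non-negative test function $\psi$, together with telescoping across the steps $t_n$, yields an approximate entropy inequality of the type in Definition~\ref{defi:entropysol-f} with an extra viscosity error and boundary correction terms at the splitting nodes.

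Next I would carry out Kru\v{z}kov's doubling of variables, doubling the entropy inequality for $u_{\Delta t,\delta_0}$ against the one for the genuine entropy solution $u$, with a test function of the form $\psi(t,s,x,y)=\rho_{\delta_0}(x-y)\rho_{\tau}(t-s)\varphi(t,x)$, where $\rho_{\delta_0},\rho_\tau$ are mollifiers. The principal novelty, as emphasized in Subsection~\ref{Convergence Analysis}, is the order of limits: I pass $\Delta t \to 0$ before $\delta_0 \to 0$. The main obstacle is the non-local contribution $\int \mathcal{L}_\theta^{\bar r}[u_{\Delta t,\delta_0}(t,\cdot)](x)\,\beta'(u_{\Delta t,\delta_0}(t,x)-k)\,\psi$, because Young-measure compactness is incompatible with the non-local kernel and the convective term is simultaneously stiff near the splitting nodes. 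Here I would invoke the average time-continuity estimate of Lemma~\ref{lem:average-time-cont-viscous}, which provides $L^2$-control on $u_{\Delta t,\delta_0}(t)-u_{\Delta t,\delta_0}(s)$ integrated over short time windows; this is exactly the strong time-continuity needed to commute the $\Delta t \to 0$ limit with the non-local operator $\mathcal{L}_\theta^{\bar r}$ while $\delta_0$ is still positive. The remaining error terms from the stochastic sub-step are handled by the martingale property of $R(\cdot,t_n)$, while the singular part $\mathcal{L}_{\theta,\bar r}$ is absorbed after finally sending $\bar r \to 0$.

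After sending the mollification parameters $\tau,\delta_0,\bar r \to 0$ (in that order) and taking expectations, the doubled inequality collapses to a stochastic Kato inequality
\begin{equation*}
\int_{\R^d} \mathbb{E}\bigl|u(t,x)-u_{\Delta t}(t,x)\bigr|\,\varphi(x)\,dx
\le C\int_0^t\!\!\int_{\R^d}\mathbb{E}\bigl|u(s,x)-u_{\Delta t}(s,x)\bigr|\,\varphi(x)\,dx\,ds + o_{\Delta t}(1),
\end{equation*}
for any non-negative $\varphi \in C_c^\infty(\R^d)$, with the $o_{\Delta t}(1)$ term coming from the splitting error. Gronwall's lemma then yields convergence in $L^1_{\mathrm{loc}}(\R^d;L^1(\Omega\times(0,T)))$, and interpolation with the uniform $L^\infty$ bound upgrades this to $L^p_{\mathrm{loc}}(\R^d;L^p(\Omega\times(0,T)))$ for every $1\le p<\infty$, completing the proof.
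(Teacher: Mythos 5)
Your proposal captures several genuine features of the argument (a priori $L^\infty$/$BV$ bounds propagated through $R$ and $S$, doubling of variables, the unusual order of limits with $\Delta t\to 0$ taken early, and the role of the average time-continuity lemma), but it has two structural gaps that would prevent the proof from closing as written.

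First, you propose to double the entropy inequality for the splitting solution directly against the entropy inequality for the genuine entropy solution $u$. In the stochastic setting this does not work: both objects carry It\^o--L\'evy martingale terms driven by the \emph{same} noise, and the cross-variation between the two stochastic integrals cannot be evaluated from two entropy \emph{inequalities} alone. The paper avoids this by comparing $u_{\Delta t}$ not with $u$ but with the regularized viscous solution $u_\eps^\kappa=u_\eps\ast\tau_\kappa$ of the full problem \eqref{eq:viscous}--\eqref{eq:regularize}: since $u_\eps^\kappa$ solves an identity (not an inequality), It\^o--L\'evy applied to $J_l(u_\eps^\kappa(s,y)-k)$ lets one compute the noise--noise interaction terms ($\mathcal{J}_6$, $\mathcal{J}_8$ via the quantities $\mathcal{M}$ and $\mathcal{K}$ and Lemma~\ref{lem:Ito-Identity}); the entropy solution $u=\bar u$ only enters at the very end as the $\eps\to 0$ limit of $u_\eps$. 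Relatedly, your viscosity parameter regularizes the deterministic sub-step, whereas in the paper the viscosity $\eps$ and convolution $\kappa$ regularize the \emph{limit equation}; the deterministic sub-step needs no regularization because $S(t)$ already satisfies the deterministic entropy inequality \eqref{eq:5.2}, and Lemma~\ref{lem:average-time-cont-viscous} is a statement about $u_\eps^\kappa$, not about the splitting solution.

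Second, your final displayed Kato--Gronwall inequality retains $u_{\Delta t}$ together with an $o_{\Delta t}(1)$ error, which is incompatible with the order of limits you yourself announce. The splitting error terms carry constants such as $C(\delta_0,\delta,\bar r,\xi)\sqrt{\Delta t}$ that blow up as the mollification parameters vanish, so $\Delta t\to 0$ must be taken \emph{first}; but the uniform bounds give only weak compactness, so after $\Delta t\to 0$ one has merely a Young-measure limit $u(t,x,\alpha)$, and the Kato inequality \eqref{eq:kato} is between $u(\cdot,\cdot,\alpha)$ and $\bar u$, not between $u_{\Delta t}$ and $\bar u$. The conclusion $\mathbb{E}\big[\int_{\mathbb{Q}_T}\int_0^1|u(t,x,\alpha)-\bar u(t,x)|\,d\alpha\,dt\,dx\big]=0$ then identifies the Young-measure limit with $\bar u$, and Proposition~\ref{prop:young-measure} with the Carath\'eodory function $\mathds{1}_B(x)|u(\omega,t,x)-\nu|^p$ upgrades weak convergence to the claimed $L^p_{loc}$ convergence; your interpolation remark at the end is essentially this step, but it requires the Young-measure machinery rather than a Gronwall argument on $u_{\Delta t}$ itself.
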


\begin{thm}\label{thm: mainthmd}
Under the assumptions \ref{A1}-\ref{A5}, the approximate solutions $u_{\Delta t}(t, x)$ given by \eqref{approxi:solu} and \eqref{eq:sequence-nonlinear} converges to a unique entropy solution of \eqref{eq:fdegenerate} in $L_{loc}^p(\mathbb{R}^d; L^p(\Omega \times (0,T)))$ for $1 \le p < \infty.$
\end{thm}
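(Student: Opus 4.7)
The plan is to adapt the variant of Kru\v{z}kov's doubling of variables developed in \cite{frac non} to compare the splitting approximation $u_{\Delta t}(t,x)$ defined by \eqref{approxi:solu}--\eqref{eq:sequence-nonlinear} with the unique entropy solution $v(s,y)$ of \eqref{eq:fdegenerate} (provided by Theorem~\ref{thm: fdegenerate}), derive a Kato-type inequality after a carefully ordered passage to the limits, and conclude convergence.

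First I would build an approximate entropy formulation for $u_{\Delta t}$. On each subinterval $(t_n,t_{n+1})$, the process $u_{\Delta t}(t,\cdot)=\bar R(t,t_n)u^n$ satisfies $du=\sigma(u)\,dW$, so It\^o's formula applied to $\beta(u_{\Delta t}-k)\psi(t,x)$ with $(\beta,\zeta)$ a convex entropy flux pair, $k\in\R$ and a non-negative test function $\psi\in C_c^{1,2}([0,\infty)\times\R^d)$ gives the expected Brownian--It\^o terms. At each node $t_{n+1}$ the splitting step $u^{n+1}=\bar S(\Dt)\bar R(t_{n+1},t_n)u^n$ allows me to invoke the deterministic entropy inequality for \eqref{eq:operator Sbar} from \cite{cifani}, producing the convective term $\zeta(u_{\Delta t})\cdot\grad\psi$ together with the non-local contribution $-\mathcal{L}_\theta^{\bar r}[\phi(u_{\Delta t})]\psi\beta'(u_{\Delta t}-k) - \phi_k^\beta(u_{\Delta t})\mathcal{L}_{\theta,\bar r}[\psi]$. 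Telescoping across $n=0,1,\ldots,N-1$ and using the {\em a-priori} estimates from Section~\ref{sec:estimations} to control the commutator between splitting steps yields a $\Dt$-approximate version of the inequality in Definition~\ref{defi:fd}, with a residual that will vanish as $\Dt\to 0$.

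Next I would regularise in two independent ways: a vanishing-viscosity parameter $\eps>0$ that smoothens $\phi$ into $\phi_\eps$ and adds $\eps\Delta$ to the deterministic step (needed because $\phi$ may degenerate), and a spatial mollification parameter $\delta_0>0$ (via a standard convolution kernel) that renders the non-local operator pointwise meaningful on the BV approximation. Calling the resulting object $u_{\Dt,\eps}^{\delta_0}$, I would write its approximate entropy inequality at $(t,x)$ against a product test function $\varphi(t,x)\psi(s,y)$, the Definition~\ref{defi:fd} inequality for $v$ at $(s,y)$ with the symmetric test function, add the two after taking $\mathbb E$, and then, following the reordered hierarchy announced in Subsection~\ref{Convergence Analysis}, pass $\Dt\to 0$ \emph{first}. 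This is where Lemma~\ref{lem:average-time-cont-viscous} (average time continuity of the regularised viscous approximation) plays its crucial role: it absorbs the per-step jump defect at each $t_{n+1}$ and the $(t,s)$ mismatch created by the temporal mollifier. Afterwards I would send $\eps\to 0$, then the spatial mollification parameter $\rho\to 0$ in the doubling of variables, and finally $\delta_0\to 0$.

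The main obstacle is this final step, the $\delta_0\to 0$ passage in the non-local contribution. Because the Young-measure limit of the fractional term is not directly available (as pointed out in the introduction), the singular part $\phi_k^\beta(u_{\Dt,\eps}^{\delta_0})\mathcal{L}_{\theta,\bar r}[\psi]$ must be handled by pairing the Taylor expansion of $\psi$ near $z=0$ with the modulus of continuity of $\phi(u)$ coming from the BV bound, while the non-singular part $\mathcal{L}_\theta^{\bar r}[\phi(u_{\Dt,\eps}^{\delta_0})]\psi\beta'(u_{\Dt,\eps}^{\delta_0}-k)$ must be re-symmetrised in $(x,y)$ so that $\bar r\to 0$ can be performed \emph{after} $\delta_0\to 0$ without losing the degenerate-parabolic dissipation. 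Once these estimates are in place the sum of the two inequalities collapses to a stochastic Kato-type bound
\[
\mathbb{E}\int_{\R^d}|u_{\infty}(t,x)-v(t,x)|\,dx\;\le\;C\,t,
\]
for any accumulation point $u_\infty$ of $\{u_{\Dt}\}$. The uniqueness part of Theorem~\ref{thm: fdegenerate} then forces $u_\infty=v$, and the full family $u_{\Dt}$ converges. Upgrading the convergence from $L^1_{\loc}$ in $\Omega\times(0,T)\times\R^d$ to $L^p_{\loc}(\R^d;L^p(\Omega\times(0,T)))$ for $1\le p<\infty$ is then immediate from the uniform $L^\infty$ and $L^2$ bounds of Section~\ref{sec:estimations} together with dominated convergence.
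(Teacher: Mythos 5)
Your overall architecture (entropy inequality for the splitting approximation, doubling of variables, a reordered passage to the limits with $\Dt\to 0$ taken first, a Kato inequality, then uniqueness) matches the paper's, but there is a genuine structural gap in your choice of the second object in the doubling of variables. You propose to regularise the \emph{scheme} (replace $\phi$ by $\phi_\eps$, add $\eps\Delta$ to the deterministic substep) and to compare the resulting $u_{\Dt,\eps}^{\delta_0}$ directly with the exact entropy solution $v$ of \eqref{eq:fdegenerate}. In the stochastic setting this comparison does not close: when the two entropy inequalities are added, each contributes an It\^o correction $\tfrac12\sigma^2(\cdot)\beta''(\cdot-k)$, and these must be combined with a cross term of the form $-\,\mathbb{E}[\sigma(u)\sigma(v)\beta''(u-v)]$ in order to produce the harmless quantity $\tfrac12(\sigma(u)-\sigma(v))^2\beta''(u-v)$. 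That cross term is generated from the product of the two stochastic integrals only if one of the two processes satisfies an SPDE pointwise with enough spatial regularity to justify an It\^o expansion of $J_l(\cdot-k)$ along its paths; this is exactly the role of the $\mathcal{M}[\beta,\varphi_{\delta,\delta_0}]$ machinery of Lemma \ref{lem:Ito-Identity} and the analysis of $\mathcal{I}_6+\mathcal{J}_6$. A bare entropy solution $v$ gives no such handle. The paper therefore runs the doubling of variables between $u_{\Dt}$ and the \emph{mollified viscous solution} $u_\eps^\kappa=u_\eps\ast\tau_\kappa$ of \eqref{eq:regularizefd}, and only identifies the limit with the entropy solution $\bar u$ at the very end using the strong convergence $u_\eps\to\bar u$ established in \cite{frac non}. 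Your proposal is missing this ingredient, and without it the noise terms cannot be cancelled.

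Two further problems. First, modifying the scheme itself ($\phi\mapsto\phi_\eps$, adding $\eps\Delta$) changes the object whose convergence the theorem asserts; you would additionally need a bound on $u_{\Dt,\eps}^{\delta_0}-u_{\Dt}$ uniform in $\Dt$ and compatible with your order of limits, which you do not supply and which is essentially as hard as the theorem itself. The paper avoids this entirely by leaving $u_{\Dt}$ untouched and placing all regularisations ($\eps$, $\kappa$) on the PDE side. Second, the concluding bound $\mathbb{E}\int_{\R^d}|u_\infty(t,x)-v(t,x)|\,dx\le Ct$ is not what a Kato inequality yields and would not force $u_\infty=v$; the correct conclusion, obtained from \eqref{katofd} together with the arguments of \cite{Endal-2014} and \cite{frac non}, is $\mathbb{E}\big[\int_{\mathbb{Q}_T}\int_0^1|u(t,x,\alpha)-\bar u(t,x)|\,d\alpha\,dx\,dt\big]=0$, where $u(t,x,\alpha)$ is the Young-measure limit of $u_{\Dt}$.
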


\section{ Properties of Splitting Operators}\label{sec:properties}
Recall that $S(t)$ and $\bar{S}(t)$ are operators which solves the scalar conservation laws as mentioned in Subsection \ref{sec:time splitting}. Regarding the properties of the solution operator $S(t)$, we have the following lemma due to \cite[Lemma $4.5$]{cifani} and \cite[Lemma $2.1$]{cifani fds}. 
\begin{lem}\label{lem: prpertiesof S}
Let the solution of \eqref{eq:operator-S} be defined as $u(t) = S(t)u_0$, where $u_0 \in L^\infty(\mathbb{R}^d) \cap BV(\mathbb{R}^d)$. Then the following estimates hold:
 \begin{itemize}
  \item[$(a)$] For almost every $t \in [0,T]$,
  \[ ||u(t)||_{L^\infty(\R^d)} \le ||u_0||_{L^\infty(\R^d)} \quad and \quad |u(t)|_{BV(\R^d)} \le|u_0|_{BV(\R^d)}.\]
  \item[$(b)$] There exists a constant $C>0$ such that for any $t_1,~t_2\in [0,T]$,
\[\int_{\mathbb{R}^d}|u(t_1, x) - u(t_2, x)|dx \le C|u_0|_{BV(\mathbb{R}^d)}\rho(t_1-t_2)\,, \]
 \end{itemize}
where  the function $\rho:[0,\infty)\goto [0,\infty)$ is given by
\begin{align*}
\rho(s) = 
\begin{cases}
         |s| \hspace{5mm} \text{if} \hspace{2mm} \theta \in (0,\frac{1}{2}),\\
         |s\ln s| \hspace{5mm} \text{if} \hspace{2mm} \theta = \frac{1}{2},\\
         |s|^{\frac{1}{\theta}} \hspace{5mm} \text{if} \hspace{2mm} \theta \in (\frac{1}{2},1).
\end{cases}
\end{align*}
The above result holds true for the solution  $u(t) = \bar{S}(t)u_0$ of equation \eqref{eq:operator Sbar}.
\end{lem}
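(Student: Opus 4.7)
The plan is to reduce both parts to the $L^1$-contraction and translation-invariance of the solution semigroup $S(t)$ for the purely deterministic fractional conservation law \eqref{eq:operator-S}, both of which follow from Kru\v{z}kov's doubling-of-variables applied to two entropy solutions (cf.~\cite{Alibaud 2007, cifani}). The argument for $\bar{S}(t)$ is identical: since $\phi$ is non-decreasing and Lipschitz, the non-local term $\mathcal{L}_\theta[\phi(u)]$ still admits the sign and monotonicity cancellations that drive the doubling, and no change of strategy is needed.

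For part $(a)$, the $L^\infty$ bound comes from testing the entropy inequality for \eqref{eq:operator-S} with a convex approximation $\beta_\varepsilon$ of $r \mapsto (r - M)_+$, where $M := \|u_0\|_{L^\infty(\R^d)}$. The key point is that $\mathcal{L}_\theta$ is a L\'evy-type generator, so
\begin{equation*}
\beta_\varepsilon'(u(x)-M)\,\mathcal{L}_\theta[u](x)\;\ge\;\mathcal{L}_\theta\bigl[\beta_\varepsilon(u(\cdot)-M)\bigr](x),
\end{equation*}
which integrates to a non-negative contribution over $\R^d$; the conservative divergence integrates to zero, and letting $\varepsilon \downarrow 0$ yields $(u(t,\cdot) - M)_+ \equiv 0$ a.e.~in $\R^d$. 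The lower bound is symmetric. The $BV$ bound then follows by combining the $L^1$-contraction with the translation invariance $S(t)(u_0(\cdot+h)) = (S(t)u_0)(\cdot+h)$: applying the contraction with $v=u_0$ and $w = u_0(\cdot+h)$ gives $\|(S(t)u_0)(\cdot+h) - S(t)u_0\|_{L^1(\R^d)} \le \|u_0(\cdot+h) - u_0\|_{L^1(\R^d)}$, and dividing by $|h|$ and taking the supremum over coordinate directions yields $|S(t)u_0|_{BV(\R^d)} \le |u_0|_{BV(\R^d)}$.

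For part $(b)$, assume without loss of generality $t_1 < t_2$ and set $\tau := t_2 - t_1$, $v := u(t_1,\cdot)$; by the semigroup property $u(t_2) = S(\tau)v$, and by part $(a)$, $|v|_{BV(\R^d)} \le |u_0|_{BV(\R^d)}$. Using $\partial_s S(s)v = -\mathrm{div}_x f(S(s)v) - \mathcal{L}_\theta[S(s)v]$ in the weak sense, the flux contributes at most $\tau\,\|f'\|_\infty |u_0|_{BV(\R^d)}$. For the non-local piece, split $\mathcal{L}_\theta = \mathcal{L}_\theta^{\bar r} + \mathcal{L}_{\theta,\bar r}$: the far-field part is bounded in $L^1$ by $C\|v\|_{L^1(\R^d)}\,\bar r^{-2\theta}$, while the near-field part is controlled, after principal-value cancellation and regularization, by $C|v|_{BV(\R^d)}\,\bar r^{1-2\theta}$. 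Integrating in $s \in [0,\tau]$ and optimizing $\bar r$ against $\tau$ reproduces the three regimes of $\rho$: the linear rate for $\theta<1/2$, the logarithmic correction at $\theta=1/2$, and the $|s|^{1/\theta}$ rate for $\theta\in(1/2,1)$.

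The main obstacle is the rigorous handling of the principal-value singularity near $z=0$ in the near-field estimate at and above the critical exponent $\theta = 1/2$, since for a general $BV$ function the integral defining $\mathcal{L}_{\theta,\bar r}[v]$ is not absolutely convergent. The remedy is to first work with a smooth approximation $v_\delta = v * \rho_\delta$, derive the near-field bound with constants depending only on $|v|_{BV(\R^d)}$ uniformly in $\delta$, and then pass $\delta \downarrow 0$ via the $L^1$-contraction and the lower semicontinuity of the $BV$-seminorm. These are precisely the computations carried out in \cite[Lemma 4.5]{cifani} and \cite[Lemma 2.1]{cifani fds}, whose proofs can be transcribed to the present setting essentially verbatim.
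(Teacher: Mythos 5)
The paper offers no proof of this lemma at all---it is quoted directly from \cite[Lemma $4.5$]{cifani} and \cite[Lemma $2.1$]{cifani fds}---so your proposal, which sketches the standard $L^1$-contraction/translation-invariance argument for part $(a)$ and the near-field/far-field splitting for part $(b)$ before deferring to those same references, is consistent with the paper's treatment. One small caution: with the bounds you state ($C\|v\|_{L^1}\bar r^{-2\theta}$ for the far field and $C|v|_{BV}\bar r^{1-2\theta}$ for the near field, both of which are decreasing in $\bar r$ when $\theta>1/2$), a one-parameter optimization over $\bar r$ alone cannot produce the sub-linear rates; the three regimes of $\rho$ actually emerge from a two-parameter balance in which the mollification scale $\delta$ (contributing an error $C|v|_{BV}\delta$ and the regularized near-field bound $C|v|_{BV}\delta^{-1}\bar r^{2-2\theta}$) is played off against $|t_1-t_2|$, exactly as in the cited lemmas.
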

To proceed further, we prove a useful property of $R(t,s)$ which is given in the following lemma.
\begin{lem}\label{lem:propertiesof R}
Let s $\in$ $[0, T]$ and $t\in [s, T]$. Then, $\mathbb{P}$-a.s., the operator 
$R(t,s)$ is the identity outside the interval $[-2M, 2M]$ and takes $[-2M, 2M]$ into itself i.e., $R(t,s)u_s = u_s$, if $|u_s|>2M$  and $R(t,s)u_s \in [-2M, 2M]$ if $u_s\in [-2M, 2M]$, where $M$ is defined by \ref{A5} and \ref{A7}.
\begin{proof}
 Let $\varphi$ be a smooth function on $\R$ such that $\varphi$ vanishes in $[-2M, 2M]$ and increasing in 
 $(-\infty, -2M] \cup [2M, \infty)$. Let $v(t)=R(t,s)v(s)$ i.e., $v(t)$ satisfies the following equation:
 $$v(t)= v(s)+ \int_s^t\sigma(v(r,x))\,dW(r) + \int_{s}^t \int_{|z|>0} \eta(v(r);z)\, \widetilde{N}(dz,dr).$$
 We apply It\^{o}-L\'{e}vy formula on $\varphi(v(t))$ and have 
 \begin{align*}
  \varphi(v(t)) = & \varphi(v(s)) + \int_s^t \sigma(v(r))\varphi^\prime(v(r))\,dW(r) +\frac{1}{2} \int_s^t \sigma^2(v(r))\varphi^{\prime\prime}(v(r))\,dr\notag  \\ &+ \int_{s}^t \int_{|z|>0} \int_{0}^1 \eta(v(r);z) \varphi^\prime\big( v(r)+ \lambda \eta(v(r);z)\big)
  \,d\lambda \,\widetilde{N}(dz,dr) \notag \\
  &  \quad + \int_{s}^t \int_{|z|>0} \int_{0}^1 (1-\lambda) \eta^2(v(r);z) \varphi^{\prime\prime}\big( v(r)+ \lambda \eta(v(r);z)\big)
  \,d\lambda \,m(dz)\,dr.
 \end{align*}
 Since $\sigma(u)=0$ and $\eta(u;z)=0$, for $|u|>M$ and $\varphi$ vanishes in $[-2M, 2M]$, we get that
 \begin{align}
  \varphi(v(t))= \varphi(v(s)), \quad  \forall t\in [s,T],\quad \mathbb{P}-\text{a.s.} \label{eqn:for phi}
 \end{align}
\noindent{\bf Case $1$}. Suppose $v(s) \in [-2M,2M]$. Then, thanks to equality \eqref{eqn:for phi}, one easily conclude that
$v(t) \in [-2M,2M]$. 
\vspace{.1cm}

\noindent{{\bf Case $2.$}}  Suppose $v(s) \in [-2M,2M]^{\complement}$(complement of  $[-2M, 2M]$). 
In this case, since outside $[-2M, 2M]$, $\varphi$ is increasing, 
we conclude from \eqref{eqn:for phi} that $\mathbb{P}$-a.s., $v(t)=v(s)$ and hence $R(t,s)\bar{u}=\bar{u}$
for any $|\bar{u}| > 2M$.  This completes the proof.
\end{proof}

\end{lem}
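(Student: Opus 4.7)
The plan is to produce a family of test functions $\varphi$ that is preserved under the It\^o-L\'evy evolution of $v(t):=R(t,s)v(s)$, and to read off both claims from that invariance. Fix a non-negative $\varphi\in C^2(\R)$ which vanishes identically on $[-2M,2M]$, is strictly increasing on $[2M,\infty)$, and is strictly decreasing on $(-\infty,-2M]$. Since $v$ solves \eqref{eq:noise}, applying the It\^o-L\'evy formula to $\varphi(v(t))$ produces four contributions on top of $\varphi(v(s))$: the Brownian martingale with integrand $\sigma(v)\varphi'(v)$, the diffusion drift with integrand $\tfrac12\sigma^2(v)\varphi''(v)$, the compensated-jump martingale, and the jump-drift integral with integrand $\int_0^1(1-\lambda)\eta^2(v;z)\varphi''(v+\lambda\eta(v;z))\,d\lambda$.

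The crux of the argument is showing that each of these integrands vanishes pointwise. The Brownian contributions and the compensated-jump martingale are handled by a simple dichotomy: either $|v|>M$, in which case \ref{A5} and \ref{A7} force $\sigma(v)=0$ and $\eta(v;z)=0$, or $|v|\le M\le 2M$, in which case $\varphi'$ and $\varphi''$ vanish at $v$ by construction. The delicate step, and the main obstacle, is the jump-drift term, because a priori $v+\lambda\eta(v;z)$ could escape $[-2M,2M]$ even when $|v|\le M$. To close this gap, I would apply the Lipschitz bound \ref{A6} to $v$ against a reference value just outside $[-M,M]$ where $\eta$ vanishes by \ref{A7}; combined with $|z|\wedge 1\le 1$ and $\lambda^*<1$, this yields the contraction
\[
|v+\lambda\eta(v;z)|\le|v|+\lambda^*(M-|v|)\le M<2M,
\]
placing the argument strictly inside the flat region of $\varphi$ and forcing $\varphi''(v+\lambda\eta(v;z))=0$.

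With all integrands zero, the It\^o-L\'evy expansion collapses to $\varphi(v(t))=\varphi(v(s))$ for every $t\in[s,T]$, $\mathbb{P}$-almost surely. When $v(s)\in[-2M,2M]$ one has $\varphi(v(s))=0$, hence $\varphi(v(t))=0$, and the strict positivity of $\varphi$ off $[-2M,2M]$ yields $v(t)\in[-2M,2M]$. When $|v(s)|>2M$, the identity $\varphi(v(t))=\varphi(v(s))>0$ combined with the strict monotonicity of $\varphi$ on each of the half-lines $[2M,\infty)$ and $(-\infty,-2M]$ pins $v(t)$ to the same branch as $v(s)$; to rule out the process jumping to the opposite branch, one may either rerun the argument with a one-sided test function (vanishing on $(-\infty,2M]$ or on $[-2M,\infty)$), or note directly that because $\sigma$ and $\eta$ vanish outside $[-M,M]$ the path cannot cross the strip. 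Either way $v(t)=v(s)$, which proves the lemma.
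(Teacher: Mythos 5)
Your proof is correct and follows essentially the same route as the paper: apply the It\^o--L\'evy formula to a test function vanishing on $[-2M,2M]$ and monotone outside, deduce $\varphi(v(t))=\varphi(v(s))$, and read off the two cases. Your write-up is in fact slightly more careful than the paper's, which does not explicitly justify why the jump-drift integrand $\eta^2(v;z)\,\varphi''\big(v+\lambda\eta(v;z)\big)$ vanishes when $|v|\le M$; your contraction estimate $|v+\lambda\eta(v;z)|\le |v|+\lambda^*(M-|v|)\le M<2M$ obtained from \ref{A6} and \ref{A7} supplies exactly that missing step, and your handling of the branch-switching issue in the second case (needed because your $\varphi$ is not injective on the complement of $[-2M,2M]$, unlike the paper's choice) is also sound.
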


 It is shown in \cite{frac lin, Majee-2017} that if the noise coefficient does not depend upon the spatial variable $x$ explicitly and initial condition is in $BV$-class, then the solution of \eqref{eq:fractional} also lies in $BV$-class. A similar result can be obtained for the solution of \eqref{eq:noise} and we demonstrate it in the following lemma. 
\begin{lem}\label{lem:BV}
 Let $s\in [0,T]$ and $v_0$ be an $\mathcal{F}_s$-predictable process satisfying 
 $$ \mathbb{E} \big[ |v_0|_{BV(\R^d)}+ ||v_0||^2_2 \big] < + \infty.$$
 Define $v(t):= R(t,s)v_0$, for $t\in [s,T]$. Then for all $t\in [s,T]$
 \begin{align*}
  \mathbb{E}\Big[ |v(t)|_{BV(\R^d)}\Big] \le \mathbb{E}\Big[ |v_0|_{BV(\R^d)}\Big].
 \end{align*}
\end{lem}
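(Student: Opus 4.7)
The plan is to exploit the fact that the SDE \eqref{eq:noise} is spatially decoupled (the coefficients $\sigma$ and $\eta$ do not depend on $x$ explicitly), so that translating the initial datum in $x$ gives a translated solution. Concretely, for any $h\in\R^d$ set $v^h(t,x):=v(t,x+h)$ and $w(t,x):=v^h(t,x)-v(t,x)$. By pathwise uniqueness for \eqref{eq:noise}, $v^h(t,x)=R(t,s)v_0(\cdot+h)$, and therefore $w$ solves
\begin{align*}
w(t,x) = w_0(x) &+ \int_s^t \big[\sigma(v^h(r,x))-\sigma(v(r,x))\big]\,dW(r) \\
&+ \int_s^t\int_{|z|>0}\big[\eta(v^h(r,x);z)-\eta(v(r,x);z)\big]\,\widetilde{N}(dz,dr),
\end{align*}
with $w_0(x)=v_0(x+h)-v_0(x)$.

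Next, I would apply the It\^o--L\'evy formula to $\beta_\eps(w(t,x))$ where $\{\beta_\eps\}_{\eps>0}$ is a standard smooth, non-negative, even, convex approximation of $|\cdot|$ satisfying $|\beta_\eps^\prime|\le 1$, $\beta_\eps^{\prime\prime}\ge 0$, $r^2\beta_\eps^{\prime\prime}(r)\to 0$ as $\eps\to 0$, and $\beta_\eps(r)\to |r|$ pointwise. Taking expectation kills the two martingale terms, and integrating in $x\in\R^d$ (Fubini is justified by the assumed $L^2$ control and the linear growth of $\sigma,\eta$) yields
\begin{align*}
\mathbb{E}\int_{\R^d}\beta_\eps(w(t,x))\,dx &= \mathbb{E}\int_{\R^d}\beta_\eps(w_0(x))\,dx \\
&\quad + \tfrac12\,\mathbb{E}\int_s^t\!\!\int_{\R^d}\big[\sigma(v^h)-\sigma(v)\big]^2\beta_\eps^{\prime\prime}(w)\,dx\,dr \\
&\quad + \mathbb{E}\int_s^t\!\!\int_{\R^d}\!\!\int_{|z|>0}\!\!\int_0^1 (1-\lambda)\big[\eta(v^h;z)-\eta(v;z)\big]^2 \beta_\eps^{\prime\prime}\big(w+\lambda[\eta(v^h;z)-\eta(v;z)]\big)\,d\lambda\,m(dz)\,dx\,dr.
\end{align*}
Now I would pass to the limit $\eps\to 0$. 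Using \ref{A4}, the Brownian correction is bounded by $C^2\,w^2\beta_\eps^{\prime\prime}(w)$, which vanishes pointwise and is dominated (using $|w|\beta_\eps^{\prime\prime}(w)$-type bounds plus the $L^2$-estimate on $v$) so dominated convergence applies. For the L\'evy correction, \ref{A6} gives $|\eta(v^h;z)-\eta(v;z)|\le \lambda^\ast|w|(|z|\wedge 1)$; writing $\beta_\eps^{\prime\prime}$ as the density of a measure concentrating at $0$, the factor $[\eta(v^h;z)-\eta(v;z)]^2$ vanishes precisely where $w=0$ (up to an $O(|w|(|z|\wedge 1))$ shift in the argument of $\beta_\eps^{\prime\prime}$), and the uniform integrability against $(1\wedge|z|^2)m(dz)$ (guaranteed by \ref{A8}) lets one conclude that this term tends to $0$ as well. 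Hence
\[
\mathbb{E}\int_{\R^d}|w(t,x)|\,dx \;\le\; \mathbb{E}\int_{\R^d}|w_0(x)|\,dx \;=\; \mathbb{E}\int_{\R^d}|v_0(x+h)-v_0(x)|\,dx.
\]
Finally, dividing by $|h|$ and taking the limit along a suitable sequence $h_n\to 0$ (or taking supremum over translations, using the equivalent characterisation of the $BV$ seminorm), I would obtain
\[
\mathbb{E}\big[|v(t)|_{BV(\R^d)}\big]\le \mathbb{E}\big[|v_0|_{BV(\R^d)}\big],
\]
which is the desired bound.

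The main obstacle is the vanishing of the L\'evy correction term as $\eps\to 0$: one must control $\beta_\eps^{\prime\prime}$ evaluated at an $\eta$-perturbed argument rather than at $w$ itself, and justify the interchange of expectation, spatial integral, and compensated Poisson integral. This is handled by splitting the $z$-integration into $\{|z|\le 1\}$ and $\{|z|>1\}$, using the $(|z|\wedge 1)$ factor from \ref{A6} together with \ref{A8}, and exploiting the a priori $L^2(\Omega\times\R^d)$ bound on $v$ that follows from standard BDG/Gronwall estimates on \eqref{eq:noise}.
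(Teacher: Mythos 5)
Your proposal is correct in substance but follows a genuinely different route from the paper. The paper first proves $L^1$-preservation for $v$ itself, then mollifies the initial datum to $v_0^\eps\in W^{1,2}\cap BV$, formally differentiates the SDE to obtain an equation for $\partial_{x_i}v_\eps$, runs the It\^{o}--L\'{e}vy/convex-approximation argument on $\beta_\xi(\partial_{x_i}v_\eps)$ to get $\mathbb{E}\|\partial_{x_i}v_\eps(t)\|_{L^1}=\mathbb{E}\|\partial_{x_i}v_0^\eps\|_{L^1}$, and concludes by lower semicontinuity of the total variation under the $L^1(\Omega\times\R^d)$ convergence $v_\eps(t)\to v(t)$ together with Fatou. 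You instead exploit spatial homogeneity of \eqref{eq:noise} to identify $v(t,\cdot+h)$ with $R(t,s)[v_0(\cdot+h)]$, prove the $L^1$-contraction $\mathbb{E}\|v(t,\cdot+h)-v(t,\cdot)\|_{L^1}\le\mathbb{E}\|v_0(\cdot+h)-v_0\|_{L^1}$, and read off the $BV$ bound from the difference-quotient characterization of the seminorm. The technical engine is identical --- It\^{o}--L\'{e}vy applied to $\beta_\eps$ of a difference of two solutions, with the $(1-\lambda^*)^{-1}$ trick from \ref{A6} absorbing the shifted argument of $\beta_\eps^{\prime\prime}$ in the jump correction; indeed the paper performs exactly this contraction estimate for $v_\eps-v$. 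What your route buys is that it never differentiates the coefficients: the paper's step for $\partial_{x_i}v_\eps$ invokes $\sigma^\prime$ and $\eta^\prime(\,\cdot\,;z)$, which presupposes $C^1$ regularity beyond the Lipschitz hypotheses \ref{A4} and \ref{A6}, whereas translation only uses the Lipschitz bounds. What the paper's route buys is an exact identity at the regularized level and an explicit handle on measurability of $|v(t)|_{BV}$ (via the approximation and Fatou), a point you should still address.

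Two small repairs you should make explicit. First, your error terms are bounded pointwise by $C\eps(1\wedge|z|^2)$, which is not integrable over $x\in\R^d$; either insert a cut-off $\psi\in C_c^2(\R^d)$ and send $\psi\to\mathds{1}_{\R^d}$ at the end (as the paper does), or dominate by $C\min(\eps,|w|)(1\wedge|z|^2)$ and note that $w_0=v_0(\cdot+h)-v_0\in L^1(\R^d)$ a.s.\ because $v_0\in BV(\R^d)$ a.s., so that dominated convergence applies after a Fatou/Gronwall bootstrap giving $w(t)\in L^1$. Second, the passage from the translation estimate to the seminorm needs Fatou in $\omega$ together with lower semicontinuity of the directional variation under $\liminf$ of difference quotients, and a remark that $|v(t)|_{BV}$ is $\mathbb{P}$-measurable.
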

\begin{proof}
Let $0\le \psi \in C_c^2(\R^d)$ be a cut-off function.  
Let $\beta:\R \rightarrow \R$ be a $C^\infty$ function satisfying 
 \begin{align*}
      \beta(0) = 0,\quad \beta(-r)= \beta(r),\quad \beta^\prime(-r) = -\beta^\prime(r),\quad \beta^{\prime\prime} \ge 0,
 \end{align*} and 
\begin{align*}
\beta^\prime(r)=
\begin{cases} 
-1,\quad &\text{when} ~ r\le -1,\\
\in [-1,1], \quad &\text{when}~ |r|<1,\\
+1, \quad &\text{when} ~ r\ge 1.
\end{cases}
\end{align*} For any $\xi > 0$, define  $\beta_\xi:\R \rightarrow \R$ by 
\begin{align*}
         \beta_\xi(r) := \xi \beta(\frac{r}{\xi}).
\end{align*} Then
\begin{align}\label{eq:approx to abosx}
 |r|-M_1\xi \le \beta_\xi(r) \le |r|\quad \text{and} \quad |\beta_\xi^{\prime\prime}(r)| \le \frac{M_2}{\xi} {\bf 1}_{|r|\le \xi},
\end{align} where
\begin{align*}
 M_1 := \sup_{|r|\le 1}\big | |r|-\beta(r)\big |, \quad M_2 := \sup_{|r|\le 1}|\beta^{\prime\prime} (r)|.
\end{align*}
We apply It\^{o}-L\'{e}vy formula on $\beta_{\xi}$, and get
  \begin{align}
  &  \mathbb{E}\Big[ \int_{\R^d}\beta_{\xi}(v(t))\psi(x)\,dx \Big] -  \mathbb{E}\Big[ \int_{\R^d}\beta_{\xi}(v_0)\psi(x)\,dx \Big]  = \frac{1}{2}\mathbb{E}\Big[\int_s^t\int_{\mathbb{R}^d}\sigma^2(v(r))\beta_\xi^{\prime\prime}(v(r))\psi(x)\,dx\,dr \Big] \notag \\
   &+  \mathbb{E}\Big[\int_{s}^t \int_{|z|>0} \int_{0}^1 \int_{\R^d} (1-\lambda) \eta^2(v(r);z) \beta_{\xi}^{\prime\prime}\big( v(r)+
   \lambda \eta(v(r);z)\big)\psi(x)\,dx\,d\lambda \,m(dz)\,dr \Big]=:\mathcal{E}_0(\xi) + \mathcal{E}_1(\xi).\label{eqn:bv-1}
  \end{align}
  Since $r^2 {\beta_{\xi}}^{\prime \prime}(r) \le C \xi$, we see that
  \begin{align*}
      \mathcal{E}_0(\xi) \le C(T,\psi)\xi \rightarrow 0 \quad \text{as} \quad \xi \rightarrow 0.
  \end{align*}
  By the assumptions \ref{A6} and \ref{A7}, we see that
  \begin{align*}
   |\eta(v(r);z)| &= |\eta(v(r);z)- \eta(0;z)|  \le \lambda^* |v(r)|.
  \end{align*}
  Since $\beta^{\prime\prime}$ is an even function, without loss of generality we may assume that $v(r)\ge 0$. Thus, we obtain
  $-\lambda^* v(r) \le \eta(v(r);z)$ and hence 
  \begin{align}
   0\le v(r) \le (1-\lambda^*)^{-1} \big( v(r) + \lambda \eta(v(r);z)\big) \quad \text{for all} \, \lambda \in [0,1].  \label{game:1}
  \end{align} 
  Again, in view of the assumptions  \ref{A6} and \ref{A7}, it is easy to see that
  \begin{align*}
   \eta^2(v(r);z) &\le |v(r)|^2 (1\wedge |z|^2) 
   \le (1-\lambda^*)^{-2} \big( v(r) + \lambda \eta(v(r);z)\big)^2 (1\wedge |z|^2) \quad (\text{by}~~\eqref{game:1}).
  \end{align*}
  Since $r^2 {\beta_{\xi}}^{\prime \prime}(r) \le C \xi$, we see that $ \eta^2(v(r);z)\beta_{\xi}^{\prime\prime}\big( v(r)+
   \lambda \eta(v(r);z)\big) \le C \xi\,(1-\lambda^*)^{-2}(1\wedge |z|^2)$ and hence 
  \begin{align*}
   \mathcal{E}_1(\xi) &\le C \xi\,(1-\lambda^*)^{-2}  \, \mathbb{E}\Big[\int_{s}^t \int_{|z|>0} \int_{0}^1 \int_{\R^d} (1-\lambda)
   (1\wedge |z|^2)\psi(x)\,dx\,d\lambda \,m(dz)\,dr \Big] \notag \\
   & \le C(T,\psi) \xi \goto 0 \quad \text{as}~~\xi \goto 0.
  \end{align*}
  Let us explain how one can assume, without loss of generality, that $v(r)\ge 0$. Observe that if $v(r)\le 0$, then by setting $\tilde{v}(r):=-v(r)\ge 0$, one can arrive at the following inequality 
  $$  0\le \tilde{v}(r) \le (1-\lambda^*)^{-1} \big( \tilde{v}(r) - \lambda \eta(v(r);z)\big)\quad \forall\, \lambda \in [0,1].  $$
  Since $\beta^{\prime\prime}(\cdot)$ is non-negative and even function, we then obtain 
  \begin{align*}
  & v^2(r)\beta^{\prime\prime}(v(r)+\lambda \eta(v(r);z))= \tilde{v}^2(r) 
  \beta^{\prime\prime}(\tilde{v}(r)-\lambda \eta(v(r);z))\notag \\
  & \le C(\lambda^*) \big( \tilde{v}(r) - \lambda \eta(v(r);z)\big)^2
   \beta^{\prime\prime}(\tilde{v}(r)-\lambda \eta(v(r);z)) \le C(\lambda^*)\xi\,\cdot
  \end{align*}
  One can give a similar argument, used for the case $v(r)\ge 0$, to conclude that $\mathcal{E}_1(\xi)\goto 0$ as $\xi \goto 0$.
  \vspace{.1cm}
  
  We first pass to the limit in \eqref{eqn:bv-1} as $\xi \goto 0$ and then send $\psi\goto \mathds{1}_{\R^d}$, where
 $\mathds{1}_{B}$ denotes the characteristic function of any set $B$.
  Thanks to \eqref{eq:approx to abosx} and the estimations of $\mathcal{E}_0(\xi)$ and
  $\mathcal{E}_1(\xi)$, we obtain 
  \begin{align}
    \mathbb{E} \Big[ \int_{\R^d} |v(t)|\,dx \Big] =   \mathbb{E} \Big[ \int_{\R^d} |v_0(x)|\,dx \Big].\label{eqn: L1 bound}
  \end{align}
  
To proceed further, we recall a classical result on approximations of $BV$ function; cf.~\cite{Evans}. For any $u \in BV(\R^d)$, there exists a sequence $\{u_\eps\}_{\eps>0}$ with $u_\eps \in W^{1,2}(\R^d)\cap BV(\R^d)$ such that 
  \begin{align*}
    \int_{\R^d} |u_\eps -u|dx \goto 0, \, \text{as} \,\,\eps \goto 0, \quad \text{and} \quad   
   |u_\eps |_{BV(\R^d)} \goto |u|_{BV(\R^d)}.
  \end{align*}
Note that $v_0 \in BV(\R^d)$ $\mathbb{P}$-a.s. Therefore, there exists a sequence $v_0^\eps \in W^{1,2}(\R^d)\cap BV(\R^d)$ such that 
  $v_0^\eps \goto v_0$ in $L^1(\R^d)$ almost surely  and 
  \begin{align*}
   \mathbb{E}\Big[TV_x(v_0^\eps)\Big] \le  \mathbb{E}\Big[TV_x(v_0)\Big].
  \end{align*}
  Let $v_\eps(t)= R(t,s)v_0^\eps$, for all $t\in [s,T]$ with $s\in [0,T]$. In order to obtain BV estimate of $v$, we need to estimate $\frac{\partial v_\eps}{\partial x_i}$ and then pass to the limit as $\eps$ tends to zero. To do so, we first note that $v_\eps - v$ satisfies the following equation:
\begin{equation*}
 \begin{cases} 
 \displaystyle d(v_\eps(t)-v(t)) = \Big(\sigma(v_\eps(t) -\sigma(v(t))\Big)dW(t) + \int_{|z|>0} \Big( \eta(v_\eps(t);z)-\eta(v(t);z)\Big) \widetilde{N}(dz,dt) \\
 v_\eps -v|_{t=s}= v_0^\eps -v_0.
\end{cases}
\end{equation*}

Let $0\le \psi \in C_c^2(\R^d)$ be a cut-off function.  Applying It\^{o}-L\'{e}vy formula to $\beta_{\xi}(v_\eps -v)$ and then taking
the expectation in the resulting equation, we get
\begin{align}
\mathbb{E}\Big[ \int_{\R^d}\beta_{\xi} & \big(v_\eps(t)-v(t)\big)\psi(x)\,dx \Big] -  \mathbb{E}\Big[ \int_{\R^d}\beta_{\xi}\big(v_0^\eps-v_0\big)\psi(x)\,dx \Big]  \notag \\
  &= \frac{1}{2}\mathbb{E}\Big[\int_s^t\int_{\mathbb{R}^d} \big|\sigma(v_\eps(r)) - \sigma(v(r))\big|^2\beta_\xi^{\prime\prime}\big(v_\eps(r) - v(r)\big)\psi(x)\,dx\,dr\Big]\notag \\
   &+ \mathbb{E}\Big[\int_{s}^t \int_{|z|>0} \int_{0}^1 \int_{\R^d} (1-\lambda) 
   \beta_{\xi}^{\prime\prime}\Big( v_\eps(r) -v(r) + 
   \lambda  \big(\eta(v_\eps(r);z)-\eta(v(r);z)\big) \Big) \notag \\
  & \hspace{5cm} \times \big|\eta(v_\eps(r);z)-\eta(v(r);z)\big|^2 \psi(x)\,dx\,d\lambda \,m(dz)\,dr \Big] \notag \\
   & =:  \mathcal{E}_1(\eps,\xi) + \mathcal{E}_2(\eps,\xi).\label{eqn:bv-2}
  \end{align}
  By using the Lipschitz continuity of $\sigma$  and the fact that $r^2 {\beta_{\xi}}^{\prime \prime}(r) \le C \xi$, we see that
  \begin{align}\label{approx Xi_1}
      \mathcal{E}_1(\eps, \xi) \le C(T,\psi)\xi.
  \end{align}
We now  estimate the term $\mathcal{E}_2(\eps,\xi)$. Observe that $\mathcal{E}_2(\eps,\xi)$ can be written as
\begin{align*}
 \mathcal{E}_2(\eps,\xi)=  \mathbb{E}\Big[\int_{s}^t \int_{|z|>0} \int_{0}^1 \int_{\R^d} (1-\lambda) 
   \beta_{\xi}^{\prime\prime}\big(a + \lambda h \big)h^2 \psi(x)\,dx\,d\lambda \,m(dz)\,dr \Big],
\end{align*}
where $a= v_\eps(r)-v(r)$ and $h=\eta(v_\eps(r);z)-\eta(v(r);z)$. In view of the assumption  \ref{A6}, we see that
$$ h^2\beta_{\xi}^{\prime\prime}(a+\lambda h) \le a^2 \beta_{\xi}^{\prime\prime}(a+\lambda h)(1\wedge |z|^2).$$
Therefore, to estimate $\mathcal{E}_2(\eps,\xi)$, we need to find an upper bound for $a^2 \beta_{\xi}^{\prime\prime}(a+\lambda h)$. As before, we assume without loss of generality 
that $a\ge 0$. Thus, by the assumption \ref{A6}, we have 
$$0\le a \le  (1-\lambda^*)^{-1} \big(a+\lambda h\big)\quad \forall\,\lambda\in [0,1]$$ and hence
\begin{align}
 \mathcal{E}_2(\eps,\xi) & \le   \mathbb{E}\Big[\int_{s}^t \int_{|z|>0} \int_{0}^1 \int_{\R^d} (1-\lambda) 
  a^2 \beta_{\xi}^{\prime\prime}\big(a + \lambda h \big) \psi(x)\,dx\,d\lambda \,m(dz)\,dr \Big] \notag \\
  & \le  \mathbb{E}\Big[\int_{s}^t \int_{|z|>0} \int_{0}^1 \int_{\R^d}(1-\lambda^*)^{-2}
  \big(a+\lambda h\big)^2 \beta_{\xi}^{\prime\prime}\big(a + \lambda h \big)(1\wedge |z|^2)  \psi(x)\,dx\,d\lambda \,m(dz)\,dr \Big] \notag \\
  & \le C\,\xi  \mathbb{E}\Big[\int_{s}^t \int_{|z|>0} \int_{\R^d}(1\wedge |z|^2)  \psi(x)\,dx \,m(dz)\,dr \Big] \notag \\
  & \le  C(T,\psi) \xi. \label{eqn:bv-2-error}
\end{align}
By using \eqref{approx Xi_1} and \eqref{eqn:bv-2-error}, we pass to the limit as $\xi \goto 0$ in \eqref{eqn:bv-2} and then send
 $\psi\goto \mathds{1}_{\R^d}$ to have 
 $$ \mathbb{E}\Big[  ||v_\eps(t)-v(t)||_{L^1(\R^d)} \Big] =   \mathbb{E}\Big[  ||v_0^\eps-v_0||_{L^1(\R^d)} \Big].$$ This implies that for every 
 $t\in [s,T]$, $v_\eps(t)\goto v(t)$ in $L^1(\Omega \times \R^d)$.
\vspace{.1cm}

\noindent Recall that since $v_0^\eps \in W^{1,2}(\R^d)\cap BV(\R^d)$, 
we have $\mathbb{P}$- a.s. and for all $t \in [0,T]$,
$$v_{\eps}(t)= v_0^{\eps}+ \int_s^t\sigma(v_\eps(r))\,dW(r) + \int_{s}^t \int_{|z|>0} \eta(v_{\eps}(r);z) \widetilde{N}(dz,dr)\quad \text{in}\,\,W^{1,2}(\R^d).$$
Thanks to the linear-continuity of the differential operators $\partial_{x_i}: W^{1,2}(\R^d)\goto L^2(\R^d)$ and the chain-rule formula, $\partial_{x_i}v_\epsilon$ satisfies the following equation in $L^2(\R^d)$: for $1\le i\le d$,
\begin{equation*}
 \begin{cases} 
\displaystyle  d(\partial_{ x_i} v_\eps(t)) = \sigma^{\prime}(v_\eps(t))\partial_{ x_i} v_\eps(t)\,dW(t) + \int_{|z|>0}  \eta^\prime(v_\eps(t);z)\partial_{ x_i} v_\eps(t) \widetilde{N}(dz,dt) \\
 \partial_{ x_i} v_\eps|_{t=s}= \partial_{ x_i} v_0^\eps.
\end{cases}
\end{equation*}
As before, we apply It\^{o}-L\'{e}vy formula  to obtain
 \begin{align}
  \mathbb{E}\Big[ \int_{\R^d}\beta_{\xi} & \big(\partial_{ x_i} v_\eps(t)\big)\psi(x)\,dx \Big]
  - \mathbb{E}\Big[ \int_{\R^d}\beta_{\xi}\big(\partial_{ x_i} v_\eps\big)\psi(x)\,dx \Big]\notag \\ & = \,\frac{1}{2} \mathbb{E}\Big[\int_s^t\int_{\mathbb{R}^d} \big|\sigma^{\prime}(v_\eps(r))\partial_{ x_i} v_\eps\big|^2\beta_{\xi}^{\prime\prime}\big(\partial_{ x_i} v_\eps\big)\psi(x)\,dx\,dr\Big] \notag \\
   &+ \mathbb{E}\Big[\int_{s}^t \int_{|z|>0} \int_{0}^1 \int_{\R^d} (1-\lambda) 
   \beta_{\xi}^{\prime\prime}\Big( \partial_{ x_i} v_\eps(r) + 
   \lambda \eta^\prime(v_\eps(r);z)\partial_{ x_i} v_\eps(r) \Big) \notag \\
  & \hspace{5cm} \times \big|\eta^\prime(v_\eps(r);z)\partial_{ x_i} v_\eps(r)\big|^2 \psi(x)\,dx\,d\lambda \,m(dz)\,dr \Big] \notag \\
   &=:\mathcal{E}_3(\eps,\xi) + \mathcal{E}_4(\eps,\xi).\notag
  \end{align}
In view of the assumption \ref{A4} and \eqref{eq:approx to abosx}, we see that
\begin{align}
    \mathcal{E}_3(\eps,\xi) \le C(T, \psi)\xi.\notag 
\end{align}
To estimate $\mathcal{E}_4(\eps,\xi)$, we proceed as follows: Note that we can rewrite $\mathcal{E}_4(\eps,\xi)$ as
\begin{align*}
\mathcal{E}_4(\eps,\xi)=  \mathbb{E}\Big[\int_{\R_x^d}\int_{s}^t \int_{|z|> 0} \int_{0}^1 (1-\tilde{\theta})\,b^2 \beta_{\xi}^{\prime\prime} 
 \big(a+\tilde{\theta}\,b \big)\psi(x)\,d\tilde{\theta}\,m(dz)\,dr\,dx \Big],
   \end{align*} 
   where $a=\partial_{x_i}v_\eps(r)$ and $b= \eta^\prime(v_\eps(r);z)\partial_{x_i}v_\eps(r)$.
In view of the assumption ~\ref{A6}, it is easy to see that 
   \begin{align}
    b^2  \beta_{\xi}^{\prime\prime} (a+\tilde{\theta}\,b ) &
    \le \big|\partial_{x_i}v_\eps(r)\big|^2(1\wedge |z|^2)  \beta_{\xi}^{\prime\prime} (a+\tilde{\theta}\,b ).\label{estimate-h^2}
   \end{align}
Next we move on to find a suitable upper bound on $a^2  \beta_{\xi}^{\prime\prime} \big(a+\tilde{\theta}\,b \big)$. 
Since $\beta^{\prime\prime}$ is an even function, without loss of generality, we may assume that $a>0$. Then by  Assumption \ref{A6}, it is evident that
   \begin{align*}
    \partial_{x_i}v_\eps(r) + \tilde{\theta} \eta^\prime \big(v_\eps(r);z\big)\partial_{x_i}v_\eps(r) \ge (1-\lambda^*)
    \partial_{x_i}v_\eps(r),
   \end{align*}
   for $\tilde{\theta} \in [0,1]$. In other words
   \begin{align}
     0\le a \le (1-\lambda^*)^{-1} (a+ \tilde{\theta}\,b).\label{estimate-a}
   \end{align}
Combining \eqref{estimate-h^2} and \eqref{estimate-a} yields
   \begin{align*}
     b^2  \beta_{\xi}^{\prime\prime} (a+\tilde{\theta}\,b )   \le  (1\wedge |z|^2) (1-\lambda^*)^{-2} (a+\tilde{\theta} \,b)^2
     \beta_{\xi}^{\prime\prime} (a+\tilde{\theta}\,b )\le C (1\wedge |z|^2) \,\xi.
   \end{align*}
Thanks to the assumption\ref{A8}, we infer that 
\begin{align}
\big|\mathcal{E}_4(\eps,\xi)\big| \le C\, t\,\xi\,||\psi||_{L^1(\R^d)}.\notag
\end{align}
Hence passing to the limit $\eps \goto 0$ and then sending $\psi \goto \mathds{1}_{\R^d}$, we have 
$$ \mathbb{E}\Big[  || \partial_{x_i}v_\eps(t)||_{L^1(\R^d)} \Big] =   \mathbb{E}\Big[  || \partial_{x_i}v_0^\eps||_{L^1(\R^d)} \Big] < + \infty.$$
Thus, for all $t\in [s,T]$, $v_\eps(t) \in BV(\R^d)$ almost surely. Also, we have seen that  
$v_\eps(t)\goto v(t)$ in $L^1(\Omega \times \R^d)$.
Therefore, by lower semi continuity property of $BV$-norm, we have, almost surely
$$TV_x (v(t)) \le \liminf _{\eps \downarrow 0} TV_x (v_\eps(t)).$$
Note that $v(t)\in L^1(\Omega\times \R^d)$ and hence $v(t)$ is measurable with respect to $\mathbb{P}$. Since supremum of measurable functions is again measurable, by definition of total variation, $TV_x(v(t))$ is measurable with respect to the probability measure $\mathbb{P}$.  Consequently, taking expectation and using Fatou's lemma we obtain 
 \begin{align}
  \mathbb{E}\Big[ TV_x (v(t))\Big] \le \liminf _{\eps \downarrow 0}  \mathbb{E}\Big[TV_x (v_\eps(t))\Big]
 \le \liminf _{\eps \downarrow 0}  \mathbb{E}\Big[TV_x (v_0^\eps)\Big]\le   \mathbb{E}\Big[TV_x (v_0)\Big].\label{inequality:tv bound}
 \end{align}
 In view of \eqref{eqn: L1 bound} and \eqref{inequality:tv bound}, we conclude  that
 $ \mathbb{E}\Big[ |v(t)|_{BV(\R^d)}\Big] \le  \mathbb{E}\Big[ |v_0|_{BV(\R^d)}\Big]$, for all $t\in[s,T]$ with $s\in [0,T]$.
This essentially completes the proof.
\end{proof}
\begin{rem}
Observe that, the operator $\bar{R}(t,s)$  also satisfies the properties of Lemmas \ref{lem:propertiesof R} and \ref{lem:BV}.
\end{rem}
\section{ A-Priori Estimates}\label{sec:estimations}

In this section, we derive some necessary a-priori estimates for the approximate solutions $u_{\Delta t}(t,x)$ as well as for viscous solution of  \eqref{eq:fractional} ---which will be useful in establishing the convergence of approximate solutions to an entropy solution of the underlying problem.

\subsection{A-priori estimates for approximate solutions} We denote 
\begin{align}
u^{n+ \frac{1}{2}}:=R(t_{n+1}, t_n)u^n(x). \label{defi:n-half}
\end{align}
Then, for any $n\geq 0$, one has $u^{n+1}(x)=S(\Delta t)u^{n+\frac{1}{2}}(x)$.
\vspace{0.1cm}

We first prove the uniform bound of approximate solutions $u_{\Delta t}(t,x)$ in the following lemma.
\begin{lem}\label{lem:l-infinity bound-approximate-solution}
 There exists a constant $\widetilde{M}>0$, independent of $\Delta t$, such that $\mathbb{P}$-a.s. and for all $t\in [0,T]$, 
 \begin{align}
  ||u_{\Delta t}(t,\cdot)||_{L^\infty(\R^d)} \le \widetilde{M}.\label{l-infty-bound}
 \end{align}
\end{lem}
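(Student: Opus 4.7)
The plan is to obtain the bound by induction on $n$, exploiting the two deterministic bounds already recorded for the splitting operators: Lemma \ref{lem:propertiesof R} for $R(t,s)$ and Lemma \ref{lem: prpertiesof S}(a) for $S(t)$. Set
\[
\widetilde{M} := \max\bigl\{\|u_0\|_{L^\infty(\R^d)},\,2M\bigr\},
\]
which is finite by assumption \ref{A1}; here $M$ is the constant of \ref{A5} and \ref{A7}. I expect $\widetilde{M}$ to be the right constant because the stochastic operator $R$ freezes values outside $[-2M,2M]$ and traps values inside that interval, while the scalar conservation-law operator $S$ is $L^\infty$-contractive.

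The induction hypothesis is $\|u^n\|_{L^\infty(\R^d)}\le \widetilde{M}$ $\mathbb{P}$-a.s., which is true for $n=0$ by the choice of $\widetilde{M}$. For the induction step I would apply Lemma \ref{lem:propertiesof R} pointwise in $x$ to the intermediate profile $u^{n+\frac{1}{2}}(x) = R(t_{n+1},t_n)u^n(x)$ introduced in \eqref{defi:n-half}: at each $x$, either $|u^n(x)|>2M$ and then $u^{n+\frac{1}{2}}(x)=u^n(x)$, or $|u^n(x)|\le 2M$ and then $u^{n+\frac{1}{2}}(x)\in[-2M,2M]$. In either case $|u^{n+\frac{1}{2}}(x)|\le \widetilde{M}$. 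Then by Lemma \ref{lem: prpertiesof S}(a),
\[
\|u^{n+1}\|_{L^\infty(\R^d)} = \|S(\Delta t)u^{n+\frac{1}{2}}\|_{L^\infty(\R^d)} \le \|u^{n+\frac{1}{2}}\|_{L^\infty(\R^d)} \le \widetilde{M},
\]
closing the induction.

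Finally, for $t\in(t_n,t_{n+1})$ the definition \eqref{approxi:solu} gives $u_{\Delta t}(t,x)=R(t,t_n)u^n(x)$, and the same pointwise dichotomy from Lemma \ref{lem:propertiesof R} applied to $u^n$ yields $\|u_{\Delta t}(t,\cdot)\|_{L^\infty(\R^d)}\le \widetilde{M}$ $\mathbb{P}$-a.s. Combining the values at $t=t_n$ with those on the open intervals yields \eqref{l-infty-bound} for every $t\in[0,T]$. There is no real obstacle here: the result is a direct structural consequence of the two lemmas already established, and the only thing to be careful about is keeping the $\mathbb{P}$-null sets uniform across the finitely many time steps $n=0,1,\dots,N$, which is immediate since $N$ is finite.
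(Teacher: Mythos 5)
Your proof is correct and follows essentially the same route as the paper: the same constant $\widetilde{M}=\max\{2M,\|u_0\|_{L^\infty(\R^d)}\}$, the pointwise dichotomy from Lemma \ref{lem:propertiesof R} to bound $u^{n+\frac{1}{2}}$, the $L^\infty$-contractivity of $S(\Delta t)$ from Lemma \ref{lem: prpertiesof S}(a), and the same treatment of $t\in(t_n,t_{n+1})$ via \eqref{approxi:solu}. The only difference is that you make the induction over $n$ explicit where the paper leaves it implicit, which is a presentational rather than mathematical distinction.
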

 
 \begin{proof}
  Recall that  $u^{n+1}= S(\Delta t)u^{n+\frac{1}{2}}$ for $n=0,1,\ldots,N-1$. Hence, in view of Lemma \ref{lem: prpertiesof S}, we have
  $$||u^{n+1}||_{L^\infty(\R^d)} \le ||u^{n+\frac{1}{2}}||_{L^\infty(\R^d)}.$$
  Again, thanks to
  Lemma~\ref{lem:propertiesof R}, we see that 
  $$||R(t,t_n)u^n||_{L^\infty(\R^d)} \le \max \big\{ 2M, ||u^n||_{L^\infty(\R^d)}\big\} \quad \forall\, t\in [t_n, t_{n+1}].$$ Hence 
  \begin{align*}
   ||u^{n+\frac{1}{2}}||_{L^\infty(\R^d)} &= || R(t_{n+1}, t_n)u^n||_{L^\infty(\R^d)}
    \le \max \big\{ 2M, ||u^n||_{L^\infty(\R^d)}\big\} 
   \le  \max \big\{ 2M, ||u_0||_{L^\infty(\R^d)}\big\}.
  \end{align*}
Thus, by construction of $u_{\Delta t}(t,x)$ (cf.~\eqref{approxi:solu}), we get
$$||u_{\Delta t}(t,\cdot)||_{L^\infty(\R^d)} \le  \max \big\{ 2M, ||u_0||_{L^\infty(\R^d)}\big\}\quad \forall\, t\ge 0.$$
Taking $\widetilde{M}= \max \big\{ 2M, ||u_0||_{L^\infty(\R^d)}\big\}$, we  arrive at the assertion of the lemma.
 \end{proof}
 Likewise the deterministic counterpart of stochastic balance laws, one may expect the $BV$ bound for the approximate solutions $u_{\Delta t}(t,x)$. Regarding this, we have the following lemma.
\begin{lem} \label{lem:bv bound-approximate-solution}
  For  all $t\ge 0$, the approximate solutions $u_{\Delta t}(t,x)$ enjoy the following total variation bound
  \begin{align*}
   \mathbb{E}\Big[ |u_{\Delta t}(t,\cdot)|_{BV(\R^d)}\Big] \le \mathbb{E}\Big[ |u_0|_{BV(\R^d)}\Big].
  \end{align*}
 \end{lem}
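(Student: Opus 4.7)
The proof will be a direct induction on $n$, using the two building-block estimates already at our disposal: the BV-decrease of the deterministic operator $S(\Delta t)$ from Lemma~\ref{lem: prpertiesof S}(a), and the expected BV-decrease of the stochastic operator $R(t,t_n)$ from Lemma~\ref{lem:BV}. Recall the recursion $u^{n+1}=S(\Delta t)u^{n+\frac12}$ with $u^{n+\frac12}=R(t_{n+1},t_n)u^{n}$ and $u^0=u_0$, and the definition of $u_{\Delta t}$ in \eqref{approxi:solu}.

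The plan is as follows. First I would prove by induction on $n\in\{0,1,\dots,N\}$ the grid-point bound
\[
\mathbb{E}\bigl[|u^{n}|_{BV(\R^d)}\bigr]\le \mathbb{E}\bigl[|u_0|_{BV(\R^d)}\bigr].
\]
The base case $n=0$ is immediate by \ref{A1}. For the inductive step, the $L^\infty$ bound of Lemma~\ref{lem:l-infinity bound-approximate-solution} together with the inductive BV bound and the fact that $R$ and $S$ preserve $L^2$-integrability show that $u^n$ satisfies the hypothesis $\mathbb{E}[|u^n|_{BV(\R^d)}+\|u^n\|_2^2]<\infty$ required to apply Lemma~\ref{lem:BV}. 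Hence Lemma~\ref{lem:BV}, applied with initial time $s=t_n$ and initial datum $u^n$ (which is $\mathcal{F}_{t_n}$-measurable by construction), gives
\[
\mathbb{E}\bigl[|u^{n+\frac12}|_{BV(\R^d)}\bigr]=\mathbb{E}\bigl[|R(t_{n+1},t_n)u^n|_{BV(\R^d)}\bigr]\le \mathbb{E}\bigl[|u^n|_{BV(\R^d)}\bigr].
\]
Next, Lemma~\ref{lem: prpertiesof S}(a) gives the pathwise inequality $|S(\Delta t)u^{n+\frac12}|_{BV(\R^d)}\le |u^{n+\frac12}|_{BV(\R^d)}$ $\mathbb{P}$-a.s., which after taking expectations yields $\mathbb{E}[|u^{n+1}|_{BV(\R^d)}]\le \mathbb{E}[|u^{n+\frac12}|_{BV(\R^d)}]$. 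Chaining the two inequalities closes the induction.

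It remains to handle intermediate times $t\in(t_n,t_{n+1})$. By \eqref{approxi:solu} we have $u_{\Delta t}(t,\cdot)=R(t,t_n)u^n$, so applying Lemma~\ref{lem:BV} once more (with the same verification of hypotheses) gives $\mathbb{E}[|u_{\Delta t}(t,\cdot)|_{BV(\R^d)}]\le \mathbb{E}[|u^n|_{BV(\R^d)}]\le \mathbb{E}[|u_0|_{BV(\R^d)}]$, which is the claim.

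The only subtle point, and the main thing I would be careful with, is the verification of the integrability hypothesis of Lemma~\ref{lem:BV} at each inductive step, namely $\mathbb{E}[|u^n|_{BV(\R^d)}+\|u^n\|_2^2]<\infty$. The BV part is supplied by the inductive hypothesis itself, while the $L^2$ bound follows from a standard It\^o--L\'evy energy estimate for $R$ (using \ref{A4}--\ref{A8}) together with the $L^2$-contraction of the entropy solution semigroup $S(\Delta t)$ for the fractional conservation law~\eqref{eq:operator-S}; these propagate $\mathbb{E}\|u^n\|_2^2<\infty$ from $u_0\in L^2(\R^d)$. Once this technical check is in place, the induction and the extension to non-grid times are essentially one line each.
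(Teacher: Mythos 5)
Your proof is correct and follows essentially the same route as the paper: an induction over the time steps that combines the expected BV-decrease of $R(t_{n+1},t_n)$ from Lemma~\ref{lem:BV} with the pathwise BV-decrease of $S(\Delta t)$ from Lemma~\ref{lem: prpertiesof S}, then handles intermediate times $t\in(t_n,t_{n+1})$ by one more application of Lemma~\ref{lem:BV}. The only difference is that you make the induction and the verification of the integrability hypothesis of Lemma~\ref{lem:BV} explicit, which the paper leaves implicit.
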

 
 \begin{proof}
  Note that $u_0 \in BV(\R^d)\cap L^2(\R^d)$ and hence  by virtue of 
  Lemma \ref{lem:BV}, it is easy to see that  
  $\mathbb{E}\Big[ |u^{n+\frac{1}{2}}|_{BV(\R^d)}\Big]\le \mathbb{E}\Big[ |u_0|_{BV(\R^d)}\Big]$. Thus, we have, by Lemma \ref{lem: prpertiesof S}
  \begin{align}
   \mathbb{E}\Big[ |u^{n+1}|_{BV(\R^d)}\Big] = \mathbb{E}\Big[ |S(\Delta t)u^{n+\frac{1}{2}}|_{BV(\R^d)}\Big] 
    \le \mathbb{E}\Big[ |u^{n+\frac{1}{2}}|_{BV(\R^d)}\Big] 
    \le \mathbb{E}\Big[ |u_0|_{BV(\R^d)}\Big].\notag 
  \end{align}
  Again, for any $t\in [t_n, t_{n+1})$, we have
  $$ \mathbb{E}\Big[ |R(t,t_n)u^n|_{BV(\R^d)}\Big] \le \mathbb{E}\Big[ |u^n|_{BV(\R^d)}\Big] \le 
  \mathbb{E}\Big[ |u_0|_{BV(\R^d)}\Big].$$
  Hence, in view of \eqref{approxi:solu}, we conclude that
  $\mathbb{E}\Big[ |u_{\Delta t}(t,\cdot)|_{BV(\R^d)}\Big]\le \mathbb{E}\Big[ |u_0|_{BV(\R^d)}\Big]$.
 \end{proof}
Since we are dealing with L\'{e}vy noise, one cannot get time continuous solution of  \eqref{eq:fractional}. However, we can expect a uniform temporal $L^1$-continuity of the approximate solutions $u_{\Delta t}(t,x)$, independent of the temporal discretization parameter $\Delta t$. In this context, we have the following lemma.
\begin{lem}
\label{lem:average_time_continuity-approximate-solution}
 Let $K$ be any compact subset of $\R^d$. Then, for any $t\in [t_n, t_{n +1}),~~~n=0,1,2,\ldots, N-1$, there exists a constant $C>0$ independent 
 of $\Dt$ and $n$ such that
 \begin{align*}
\mathbb{E}\Big[ \int_{K} \big| u_{\Delta t}(t_{n+1},x)-u_{\Delta t}(t,x)\big| \,dx \Big] \le C(T,K, u_0) \sqrt{\Delta t}.
 \end{align*}
\end{lem}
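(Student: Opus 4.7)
The plan is to insert the intermediate state $u^{n+\frac{1}{2}}=R(t_{n+1},t_n)u^n$ from \eqref{defi:n-half}, so that $u_{\Delta t}(t_{n+1},x)=S(\Delta t)u^{n+\frac{1}{2}}(x)$ while $u_{\Delta t}(t,x)=R(t,t_n)u^n(x)$, and to split via the triangle inequality
\begin{align*}
u_{\Delta t}(t_{n+1},x)-u_{\Delta t}(t,x)&=\bigl[S(\Delta t)u^{n+\frac{1}{2}}(x)-u^{n+\frac{1}{2}}(x)\bigr]+\bigl[R(t_{n+1},t_n)u^n(x)-R(t,t_n)u^n(x)\bigr]\\
&=:I_1(x)+I_2(x).
\end{align*}
This cleanly separates the deterministic fractional-conservation-law step $S(\Delta t)$ from the stochastic step $R(\cdot,t_n)$, and I would estimate them by different tools: $I_1$ via the temporal regularity of $S$ together with $BV$ control of its initial datum, and $I_2$ via It\^o calculus applied to the SDE \eqref{eq:noise}.

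For $I_1$, I would apply Lemma \ref{lem: prpertiesof S}(b) with $t_1=\Delta t$, $t_2=0$ to obtain pathwise
\begin{align*}
\int_{\R^d}I_1(x)\,dx\le C\,|u^{n+\frac{1}{2}}|_{BV(\R^d)}\,\rho(\Delta t),
\end{align*}
then take expectation and invoke Lemma \ref{lem:BV} on $[t_n,t_{n+1}]$, combined with Lemma \ref{lem:bv bound-approximate-solution}, to bound $\mathbb{E}[|u^{n+\frac{1}{2}}|_{BV(\R^d)}]\le \mathbb{E}[|u_0|_{BV(\R^d)}]$. A short case analysis on $\theta$ then yields $\rho(\Delta t)\le C(T)\sqrt{\Delta t}$ for $\Delta t\in(0,T]$: when $\theta\in(0,\tfrac{1}{2})$ or $\theta\in(\tfrac{1}{2},1)$ the relevant exponent of $\Delta t$ is already $\ge 1$, while for $\theta=\tfrac{1}{2}$ the fact that $\sqrt{s}\,|\ln s|\to 0$ as $s\downarrow 0$ does the job.

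For $I_2$, the defining equation \eqref{eq:noise} gives the representation
\begin{align*}
I_2(x)=\int_t^{t_{n+1}}\sigma(u_{\Delta t}(r,x))\,dW(r)+\int_t^{t_{n+1}}\!\!\int_{|z|>0}\eta(u_{\Delta t}(r,x);z)\,\widetilde N(dz,dr).
\end{align*}
Applying It\^o isometry to each integral (they are orthogonal in $L^2(\Omega)$), then invoking the uniform $L^\infty$ bound from Lemma \ref{lem:l-infinity bound-approximate-solution} together with \ref{A4}--\ref{A5} to bound $|\sigma(u_{\Delta t})|$ uniformly, and \ref{A6}--\ref{A8} to bound $\int_{|z|>0}|\eta(u_{\Delta t};z)|^2\,m(dz)$ by $C\int_{|z|>0}(1\wedge|z|^2)\,m(dz)<\infty$, yields $\mathbb{E}[|I_2(x)|^2]\le C\,\Delta t$ uniformly in $x$. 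Cauchy--Schwarz followed by integration over the compact set $K$ completes the $I_2$ bound at the desired rate $|K|\sqrt{\Delta t}$. The only mildly delicate point in the plan is the $I_1$ step: one must propagate a $BV$ bound through the preceding stochastic step (which is exactly what Lemma \ref{lem:BV} provides) in order to legitimately invoke Lemma \ref{lem: prpertiesof S}(b), and recognise that $\rho(s)$ for $\theta=\tfrac{1}{2}$ is still dominated by $\sqrt{s}$ on bounded time intervals; the $I_2$ part is then a routine stochastic-calculus estimate.
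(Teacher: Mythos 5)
Your proof is correct and follows essentially the same route as the paper: a triangle-inequality split into the deterministic $S(\Delta t)$-step error (controlled by Lemma \ref{lem: prpertiesof S}(b) plus the $BV$ propagation of Lemma \ref{lem:BV}, then $\rho(\Delta t)\le C\sqrt{\Delta t}$) and stochastic-integral remainders (controlled by It\^o/It\^o--L\'evy isometry and the uniform $L^\infty$ bound). The only cosmetic difference is that you collapse the noise contribution into a single integral over $[t,t_{n+1}]$, whereas the paper splits it over $[t_n,t]$ and $[t_n,t_{n+1}]$; the estimates are identical, and your explicit verification that $s|\ln s|\le C\sqrt{s}$ in the $\theta=\tfrac12$ case is slightly more careful than the paper's remark.
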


\begin{proof}
 Let $t\in [t_n, t_{n+1})$. Then, thanks to \eqref{eq:noise} and \eqref{approxi:solu}, we have 
 $$u_{\Delta t}(t,x)= u^n(x)+ \int_{t_n}^t\sigma(u_{\Delta t}(r,x))dW(r) + \int_{t_n}^t \int_{|z|>0} \eta(u_{\Delta t}(r,x);z) \widetilde{N}(dz,dr).$$
Furthermore, recall that $u_{\Delta t}(t_{n+1},x)= u^{n+1}(x)$. 
Let $K$ be any compact subset of $\R^d$. Then, for any $t\in [t_n,  t_{n+1})$
 \begin{align}
  & \mathbb{E}\Big[\int_{K} \big|  u_{\Delta t}(t_{n+1},x) -u_{\Delta t}(t,x)\big|\,dx \Big]  \notag \\
  & \quad \le \mathbb{E}\Big[ \int_{K} \big| u^{n+1}(x)-u^n(x)\big|\,dx \Big] + \mathbb{E}\Big[ \int_{K}\Big| \int_{t_n}^t \sigma(u_{\Delta t}(r,x)) dW(r)\Big|\,dx \Big]\notag \\ & \qquad \quad +
  \mathbb{E}\Big[ \int_{K}\Big| \int_{t_n}^t \int_{|z|>0}  \eta(u_{\Delta t}(r,x);z) \widetilde{N}(dz,dr)\Big|\,dx \Big] 
   =:\mathcal{A} + \mathcal{B} + \mathcal{C}. \label{estim: 1-time}
 \end{align}
 Let us first focus on the term $\mathcal{A}$. One could estimate $\mathcal{A}$ as follows:
 \begin{align}
  \mathcal{A} &\le  \mathbb{E}\Big[ \int_{K} \big| u^{n+1}(x)-u^{n+\frac{1}{2}}(x)\big|\,dx \Big] + 
   \mathbb{E}\Big[ \int_{K} \big| u^{n+\frac{1}{2}}(x)-u^n(x)\big|\,dx \Big] \notag \\
   & = \mathbb{E}\Big[ \int_{K} \big|S(\Delta t)u^{n+\frac{1}{2}}(x) -u^{n+\frac{1}{2}}(x)\big|\,dx \Big]
   +  \mathbb{E}\Big[ \int_{K} \big| R(t_{n+1},t_n)u^n(x)-u^n(x)\big|\,dx \Big] \notag \\
   & = \mathbb{E}\Big[ \int_{K} \big|S(\Delta t)u^{n+\frac{1}{2}}(x) -u^{n+\frac{1}{2}}(x)\big|\,dx \Big]  + \mathbb{E}\Big[ \int_{K} \Big|\int_{t_n}^{t_{n+1}} \sigma(u_{\Delta t}(r,x))dW(r) \Big|\,dx \Big]  \notag \\
   & \quad +  \mathbb{E}\Big[ \int_{K} \Big|\int_{t_n}^{t_{n+1}} \int_{|z|>0}  \eta(u_{\Delta t}(r,x);z) \widetilde{N}(dz,dr) \Big|\,dx \Big] 
    =: \mathcal{\bar{A}} + \mathcal{\bar{B}} + \mathcal{\bar{C}}.\label{estim: a1+a2}
 \end{align}
To proceed further, observe that, invoking 
Lemmas~\ref{lem: prpertiesof S}, \ref{lem:BV} and \ref{lem:bv bound-approximate-solution}, we obtain
 \begin{align}
  \mathcal{\bar{A}}  & \le C\rho(\Delta t)\, \mathbb{E}\Big[ |u^{n+\frac{1}{2}}|_{BV(\R^d)}\Big] 
  = C\rho(\Delta t)\, \mathbb{E}\Big[ |R(t_{n+1},t_n)u^n|_{BV(\R^d)}\Big] \notag \\
  & \le C\rho(\Delta t)\, \mathbb{E}\Big[ |u^{n}|_{BV(\R^d)}\Big] 
  \le C\rho(\Delta t)\, \mathbb{E}\Big[ |u_0|_{BV(\R^d)}\Big].\label{estim: a1}
 \end{align}
 We use Cauchy-Schwartz inequality and It\^{o} isometry to approximate the term $\mathcal{\bar{B}}$ and obtain
 \begin{align}\label{estimate b}
     \mathcal{\bar{B}} &\le \, \mathbb{E}\Big[ \Big(\int_{K}\Big\{ \int_{t_n}^{t_{n+1}} \sigma(u_{\Delta t}(r,x))\,dW(r) \Big\}^2\,dx\Big)^{\frac{1}{2}} \Big]\notag \\
    & \le \,C\Big(\mathbb{E}\Big[ \int_{K}\Big\{ \int_{t_n}^{t_{n+1}} \sigma(u_{\Delta t}(r,x))\,dW(r) \Big\}^2\,dx \Big]\Big)^{\frac{1}{2}}\notag \\
     &\le \,C\Big(\mathbb{E}\Big[ \int_{K} \int_{t_n}^{t_{n+1}} \sigma^2(u_{\Delta t}(r,x))\,dr \,dx \Big]\Big)^{\frac{1}{2}}\notag \\
     & \le \,C(M)\Big(\mathbb{E}\Big[ \int_{K} \int_{t_n}^{t_{n+1}}|u_{\Delta t}(r,x)|^2\,dr \,dx \Big]\Big)^{\frac{1}{2}}\notag \\
     &\le \,C(M, K)\Big(\underset{0 \le r \le T}{sup}\mathbb{E}\Big[||u_{\Delta t}(r,\cdot)||_{L^\infty(\mathbb{R}^d)}^2\Big]|t_{n+1} - t_n| \Big)^{\frac{1}{2}}\, \le  \, C\sqrt{\Delta t}.
 \end{align}
 To estimate the term $\mathcal{\bar{C}}$, we  apply Cauchy-Schwartz inequality along with It\^{o}-L\'{e}vy isometry to get
 \begin{align}
  \mathcal{\bar{C}} & \le C(K) \mathbb{E} \Big[ \Big( \int_{K} \Big\{ \int_{t_n}^{t_{n+1}} \int_{|z|>0} 
  \eta(u_{\Delta t}(r,x);z) \widetilde{N}(dz,dr)\Big\}^2 \,dx \Big)^\frac{1}{2}\Big] \notag \\
  & \le C(K) \Big\{ \mathbb{E}\Big[ \int_{K}   \Big( \int_{t_n}^{t_{n+1}} \int_{|z|>0} 
  \eta(u_{\Delta t}(r,x);z) \widetilde{N}(dz,dr)\Big)^2 \,dx \Big] \Big\}^\frac{1}{2} \notag \\
  & \le C(K) \Big\{ \mathbb{E}\Big[ \int_{K} \int_{t_n}^{t_{n+1}} \int_{|z|>0} 
  \eta^2(u_{\Delta t}(r,x);z)\,m(dz)\,dr \,dx \Big] \Big\}^\frac{1}{2} \notag \\
  & \le C(K) \Big\{ \mathbb{E}\Big[ \int_{K} \int_{t_n}^{t_{n+1}} \int_{|z|>0} 
  \big( 1+ | u_{\Delta t}(r,x)|^2\big) (1\wedge |z|^2)\,m(dz)\,dr \,dx \Big] \Big\}^\frac{1}{2} \notag \\
  & \le C(K) \Big\{ \mathbb{E}\Big[ \int_{K} \int_{t_n}^{t_{n+1}}
  \big( 1+ | u_{\Delta t}(r,x)|^2\big)\,dr \,dx \Big] \Big\}^\frac{1}{2} \notag \\
  & \le C(K) \Big\{ \Big( 1+ \sup_{0\le r \le T} E\big[ ||u_{\Delta t}(r,\cdot)||_{L^\infty(\R^d)}^2\big]\Big)
  |t_{n+1}-t_n| \Big\}^\frac{1}{2} 
  \le C \sqrt{\Delta t}. \label{estim:c}
 \end{align}
 Combining \eqref{estim: a1+a2}, \eqref{estim: a1}, \eqref{estimate b} and \eqref{estim:c} in
\eqref{estim: 1-time} and using the fact that $\rho(\Delta t)$ $\le \sqrt{\Delta t}$, for sufficiently small  $\Delta t$, we conclude that
 \begin{align}
 \mathbb{E}\Big[\int_{K} \big| u_{\Delta t}(t_{n+1},x)-u_{\Delta t}(t,x)\big|\,dx \Big] \le C \sqrt{\Delta t}, \notag 
 \end{align}
 for any $t\in [t_n, t_{n+1})$ with $n=0,1,\ldots, N-1$.  This essentially finishes the proof.
 \end{proof}
 \subsection{A-priori estimates for viscous solution of \eqref{eq:fractional}}
We move on to establish the average time continuity of regularized viscous solution of \eqref{eq:fractional}. To do so, let $u_\eps$ be the unique weak solution of the viscous problem
\begin{align}\label{eq:viscous}
     & du_\eps(t,x) + [\mathcal{L}_{\theta}(u_\eps(t,\cdot))(x) +\text{div}_x f(u_\eps)]\,dt \notag \\
     &\qquad = \sigma(u_\eps(t,x))\,dW(t) + \int_{|z| > 0}  \eta(u_\eps(t,x); z) \,\tilde{N}(dz,dt) + \eps\Delta u_\eps(t,x)\,dt,
\end{align}
for small $\eps > 0$, with initial condition $u_\eps(0,\cdot) = u_0^\eps(\cdot)$ $\in$ $H^1(\mathbb{R}^d)$ such that $u_0^\eps \rightarrow u_0$ in $L^2(\mathbb{R}^d)$. Note that $u_\eps$ $\in$ $H^1(\mathbb{R}^d)$ and satisfies the following a-priori estimates; see \cite{frac lin}.
\begin{align}
    \underset{0 \le t \le T}{sup} \mathbb{E} \Big[||u_\eps(t)||_{L^2(\mathbb{R}^d)}^2\Big] +  \eps \int_0^T \mathbb{E} \Big[||\nabla u_\eps(s)||_{L^2(\mathbb{R}^d)}^2\Big]ds  + \int_0^T \mathbb{E} \Big[||u_\eps(s)||_{H^\theta(\mathbb{R}^d)}^2\Big]ds  \le C. \label{inq:uniform-1st-viscous}
\end{align}
Moreover, an application of It\^o-L\'evy formula to \eqref{eq:viscous} along with the Burkholder-Davis-Gundy~(BDG in short) inequality gives the following estimations (see, \cite[Section 7.3]{frac lin}, \cite{Saibal-2021}).
\begin{lem} Let the assumptions \ref{A1}, \ref{A3}, \ref{A4}, \ref{A6} and \ref{A8} be true. Let $u_\eps$ be the solution to viscous problem \eqref{eq:viscous}. Then for any $p>0$, 
\begin{align}
 \mathbb{E} \Big[ \underset{0 \le t \le T}{sup}||u_\eps(t)||_{L^2(\mathbb{R}^d)}^p\Big] +  \eps^p \mathbb{E} \Big[\Big(\int_0^T ||\nabla u_\eps(s)||_{L^2(\mathbb{R}^d)}^2\,ds\Big)^p\Big] \le C.  \label{inq:uniform-viscous-l2p}
\end{align}
\end{lem}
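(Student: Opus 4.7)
The plan is to apply the It\^o--L\'evy formula to the functional $\Phi(v)=\|v\|_{L^2(\R^d)}^{p}$ and couple the resulting identity with the Burkholder--Davis--Gundy (BDG) inequality and Gr\"onwall's lemma. By Jensen's inequality, $\mathbb{E}\sup_{t}\|u_\eps(t)\|_{L^2}^{p}\le \bigl(\mathbb{E}\sup_{t}\|u_\eps(t)\|_{L^2}^{p'}\bigr)^{p/p'}$ for any $p'\ge p$, so it suffices to treat the case when $p$ is an even integer with $p\ge 2$; for such $p$ the map $\Phi$ is the composition of the smooth function $r\mapsto r^{p/2}$ with $v\mapsto\|v\|_{L^2}^2$. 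Applying the It\^o--L\'evy formula to \eqref{eq:viscous} then yields, for every $t\in[0,T]$,
\begin{align*}
\|u_\eps(t)\|_{L^2}^{p} &+ p\eps\int_0^t \|u_\eps\|_{L^2}^{p-2}\|\nabla u_\eps\|_{L^2}^2\,ds + p\int_0^t \|u_\eps\|_{L^2}^{p-2}\langle u_\eps,\mathcal{L}_\theta u_\eps\rangle\,ds \\
&= \|u_0^\eps\|_{L^2}^{p} + \mathcal{I}_\sigma(t) + \mathcal{I}_\eta(t) + \mathcal{M}_W(t) + \mathcal{M}_{\widetilde N}(t),
\end{align*}
where the convection term $\int_{\R^d} u_\eps\,\mathrm{div}_x f(u_\eps)\,dx$ vanishes by integration by parts together with $f(0)=0$, the term $\langle u_\eps,\mathcal{L}_\theta u_\eps\rangle\ge 0$ by positivity of the fractional Laplacian, $\mathcal{M}_W$ and $\mathcal{M}_{\widetilde N}$ denote the Brownian and compensated Poisson stochastic integrals, while $\mathcal{I}_\sigma,\mathcal{I}_\eta$ collect the It\^o--L\'evy second-order corrections.

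Both dissipative contributions on the left are non-negative and will eventually deliver the gradient bound. For the Brownian correction I use $|\mathcal{I}_\sigma|\le C_p\int_0^t \|u_\eps\|_{L^2}^{p-2}\|\sigma(u_\eps)\|_{L^2}^2\,ds$, which by Assumption \ref{A4} and $\sigma(0)=0$ is further bounded by $C_p\int_0^t \|u_\eps\|_{L^2}^{p}\,ds$. For the jump correction I expand the difference $\|u_\eps+\eta(u_\eps;z)\|_{L^2}^{p}-\|u_\eps\|_{L^2}^{p}-p\|u_\eps\|_{L^2}^{p-2}\langle u_\eps,\eta(u_\eps;z)\rangle$ to second order in $\eta$ (using the Hessian $D^2\Phi(v)[h,h]=p\|v\|^{p-2}\|h\|^2+p(p-2)\|v\|^{p-4}\langle v,h\rangle^2$ and Cauchy--Schwarz), bound the remainder by $C_p\|u_\eps\|_{L^2}^{p-2}\|\eta(u_\eps;z)\|_{L^2}^2$, and then invoke Assumption \ref{A6} together with Assumption \ref{A8} to dominate it by $C_p\int_0^t \|u_\eps\|_{L^2}^{p}\,ds$.

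For the martingale terms I would apply BDG. The quadratic variation of $\mathcal{M}_W$ is bounded by $C\int_0^t \|u_\eps\|_{L^2}^{2p-2}\|\sigma(u_\eps)\|_{L^2}^2\,ds\le C\int_0^t \|u_\eps\|_{L^2}^{2p}\,ds$, so BDG combined with Young's inequality yields
\[ \mathbb{E}\sup_{s\le t}|\mathcal{M}_W(s)|\le \tfrac14 \mathbb{E}\sup_{s\le t}\|u_\eps(s)\|_{L^2}^{p}+C\int_0^t \mathbb{E}\sup_{r\le s}\|u_\eps(r)\|_{L^2}^{p}\,ds, \]
and the first summand is absorbed into the left-hand side. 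A parallel It\^o--L\'evy BDG bound based on Assumptions \ref{A6} and \ref{A8} controls $\mathbb{E}\sup_{s\le t}|\mathcal{M}_{\widetilde N}(s)|$. Taking expectation and supremum in the main identity, Gr\"onwall's lemma delivers $\mathbb{E}\sup_{t\le T}\|u_\eps(t)\|_{L^2}^{p}\le C$, uniformly in $\eps$. For the gradient part I specialise to $\Phi(v)=\|v\|_{L^2}^{2}$, drop the non-negative fractional term, isolate $2\eps\int_0^T\|\nabla u_\eps\|_{L^2}^2\,ds$ on the left, raise to the $p$-th power, take expectation, and apply BDG once more together with the sup estimate just obtained, which gives $\eps^p\,\mathbb{E}\bigl(\int_0^T\|\nabla u_\eps\|_{L^2}^2\,ds\bigr)^p\le C$.

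The main technical obstacle is the rigorous justification of the It\^o--L\'evy formula for the non-smooth energy functional $\Phi$ and, above all, the careful treatment of the compensator in $\mathcal{I}_\eta$: since the L\'evy measure is only integrable against $1\wedge|z|^2$, the first-order term in $\eta$ must be genuinely cancelled, and only the second-order Taylor remainder is integrable against $m(dz)$. A standard remedy is to regularise (e.g.\ by Galerkin projections or by convolving $u_\eps$ with a spatial mollifier), derive the estimate for the approximation with constants independent of the regularisation parameter, and then pass to the limit using the basic bound \eqref{inq:uniform-1st-viscous}.
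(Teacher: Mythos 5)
Your argument is correct and follows exactly the route the paper itself indicates for this lemma (which it does not prove in detail but attributes to an application of the It\^o--L\'evy formula to $\|u_\eps\|_{L^2}^p$ combined with the BDG inequality and Gr\"onwall, citing \cite{frac lin} and \cite{Saibal-2021}); your reduction to even integer $p$, the second-order Taylor treatment of the compensator, and the final isolation of the gradient term for the $p=2$ identity are all standard and sound. The only cosmetic point is that your bounds on $\|\sigma(u_\eps)\|_{L^2}$ and $\|\eta(u_\eps;z)\|_{L^2}$ implicitly use $\sigma(0)=0$ and $\eta(0;z)=0$ (i.e.\ parts of \ref{A5} and \ref{A7}), which the lemma's hypothesis list omits but the paper itself uses freely throughout.
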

For technical reason we need $\Delta u_\eps \in L^2(\Omega \times \mathbb{Q}_T)$. To do so, we regularize $u_\eps$ by convolution. Let $\{\tau_k\}$ be a sequence of mollifier in $\mathbb{R}^d.$ Then $u_\eps^\kappa := u_\eps \ast \tau_\kappa$ is a solution to the problem 
\begin{align}\label{eq:regularize}
    &\partial_t \Big[u_\eps^\kappa - \int_0^t\sigma(u_\eps) \ast \tau_\kappa\, dW(r) -\int_0^t \int_{|z| > 0}  \eta(u_\eps; z) \ast \tau_\kappa\, \tilde{N}(dz,dr) \Big] \notag \\
    & = -\mathcal{L}_{\theta}[u_\eps \ast \tau_\kappa] - div_x f(u_\eps) \ast \tau_\kappa + \eps\Delta u_\eps^\kappa(t,x),  \hspace{2mm}a.e. \hspace{2mm} t > 0, \hspace{2mm} x \in \mathbb{R}^d,
\end{align}
for fixed $\eps > 0.$
\begin{lem}\label{lem:average-time-cont-viscous}
Let the assumptions \ref{A1}-\ref{A8} be true. Let $u_\eps^\kappa(t,x)$ be the solution to the problem \eqref{eq:regularize}. Let $\rho_{\delta_0}(r):= \frac{1}{\delta_0}\rho(r/\delta_0)$ be a standard mollifier on $\mathbb{R}$ with $\text{supp}(\rho) \subset [-1,0]$. Then for any compact subset $K \subset \mathbb{R}^d$, 
\begin{align}\label{eq:time continuity of uek}
 \mathbb{E}\left[\int_0^T\int_0^T \int_K \left|u_\eps^\kappa(t,x)-u_\eps^\kappa(s,x)\right|^2\,\rho_{\delta_0}(t-s)\,dx\,dt\,ds\right] \le C(\eps, \kappa)\delta_0.
\end{align}
\end{lem}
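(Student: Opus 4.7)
\medskip
\noindent\textbf{Proof proposal.} The plan is to exploit the fact that, unlike $u_\eps$ itself, the spatial regularization $u_\eps^\kappa$ is smooth enough that every operator on the right-hand side of \eqref{eq:regularize} acts boundedly on $L^2(\R^d)$, with operator norms controlled by appropriate norms of $\tau_\kappa$ (hence depending on $\kappa$). First I would rewrite the increment $u_\eps^\kappa(t,x)-u_\eps^\kappa(s,x)$ (for $s\le t$) as a sum
\[
u_\eps^\kappa(t,x)-u_\eps^\kappa(s,x) = \int_s^t F(r,x)\,dr + \int_s^t G(r,x)\,dW(r) + \int_s^t\!\!\int_{|z|>0} H(r,x;z)\,\widetilde{N}(dz,dr),
\]
with $F = -\mathcal{L}_\theta[u_\eps^\kappa]-(\Div f(u_\eps))\ast\tau_\kappa + \eps\Delta u_\eps^\kappa$, $G=\sigma(u_\eps)\ast\tau_\kappa$, and $H=\eta(u_\eps;\cdot)\ast\tau_\kappa$, and use the elementary bound $|a+b+c|^2\le 3(a^2+b^2+c^2)$ before integrating over $K$ and taking expectation.

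\medskip

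For the deterministic piece I would apply Cauchy--Schwarz in $r$ to obtain a factor $|t-s|$, then shift all spatial operators onto the mollifier via $\Delta u_\eps^\kappa = u_\eps\ast\Delta\tau_\kappa$, $\mathcal{L}_\theta[u_\eps^\kappa]=u_\eps\ast\mathcal{L}_\theta[\tau_\kappa]$, and $(\Div f(u_\eps))\ast\tau_\kappa = f(u_\eps)\ast\nabla\tau_\kappa$. Young's convolution inequality together with the Lipschitz continuity $|f(u)|\le C|u|$ then yields $\|F(r,\cdot)\|_{L^2(\R^d)}\le C(\eps,\kappa)\|u_\eps(r)\|_{L^2(\R^d)}$, and the first-moment estimate \eqref{inq:uniform-1st-viscous} gives an overall contribution of order $C(\eps,\kappa)|t-s|^2$. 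For the Brownian piece, the It\^o isometry and Young's inequality, combined with the Lipschitz property of $\sigma$ and $\sigma(0)=0$, yield $\mathbb{E}\int_K|I_2|^2\,dx\le C(\kappa)\int_s^t\mathbb{E}\|u_\eps(r)\|_{L^2}^2\,dr\le C(\kappa)|t-s|$. The L\'evy piece is handled identically using the It\^o--L\'evy isometry and the estimate $|\eta(u;z)|\le \lambda^*|u|(1\wedge|z|)$ (from \ref{A6}--\ref{A7}), together with \ref{A8} to integrate in $z$. Collecting the three estimates gives, for $|t-s|\le T$,
\[
\mathbb{E}\!\int_K|u_\eps^\kappa(t,x)-u_\eps^\kappa(s,x)|^2\,dx \;\le\; C(\eps,\kappa)\,|t-s|.
\]

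\medskip

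Finally, multiplying this pointwise-in-$(t,s)$ bound by $\rho_{\delta_0}(t-s)$ and integrating over $[0,T]^2$, I would exploit that $\mathrm{supp}\,\rho_{\delta_0}\subset [-\delta_0,0]$, so $|t-s|\rho_{\delta_0}(t-s)\le \delta_0\,\rho_{\delta_0}(t-s)$, together with $\int_\R\rho_{\delta_0}(r)\,dr=1$, to conclude
\[
\mathbb{E}\!\int_0^T\!\!\int_0^T\!\!\int_K|u_\eps^\kappa(t,x)-u_\eps^\kappa(s,x)|^2\rho_{\delta_0}(t-s)\,dx\,dt\,ds\;\le\; C(\eps,\kappa)\,T\,\delta_0,
\]
which is \eqref{eq:time continuity of uek}. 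The only genuinely delicate point is the boundedness of the deterministic drift in $L^2$: here the nonlocal term $\mathcal{L}_\theta[u_\eps^\kappa]$ cannot be absorbed by the viscous term alone and I must genuinely pay a $\kappa$-dependent constant via $\|\mathcal{L}_\theta[\tau_\kappa]\|_{L^1}$. This is acceptable since the constant on the right-hand side is allowed to depend on both $\eps$ and $\kappa$; the essential point for the subsequent convergence analysis is the linear dependence on $\delta_0$, which is what the argument delivers.
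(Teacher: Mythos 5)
Your proposal is correct and follows essentially the same route as the paper: decompose the increment via \eqref{eq:regularize}, get $|t-s|^2$ from the drift terms by pushing derivatives onto $\tau_\kappa$, get $|t-s|$ from the It\^o and It\^o--L\'evy isometries, and integrate against $\rho_{\delta_0}$ using its support in $[-\delta_0,0]$. The only (harmless) variations are that the paper bounds $\eps\Delta u_\eps^\kappa$ by extending the time integral and using $\Delta u_\eps^\kappa\in L^2(\Omega\times\mathbb{Q}_T)$ (yielding only $|t-s|$ for that term, which still suffices), and controls $\mathcal{L}_\theta[u_\eps^\kappa]$ via the pointwise bound $|\mathcal{L}_\theta(\Psi)|\le C(\|\Psi\|_{L^2}+\|\Psi\|_{W^{2,\infty}})$ rather than commuting $\mathcal{L}_\theta$ with the convolution.
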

\begin{proof}
Let $K\subset \R^d$ be a compact set. From  \eqref{eq:regularize}, one has by using Jensen's inequality
\begin{align}\label{eq:@}
&\mathbb{E}\Big[\int_K |u_\eps^\kappa(t,x)-u_\eps^\kappa(s,x)|^2\,dx\Big]\notag\\
   & \le  C|t-s|\,\mathbb{E}\Big[ \int_K\int_s^t|\text{div}f(u_\eps(r,x))\ast \tau_\kappa|^2\,dr\,dx\Big] +  C|t-s|\,\mathbb{E}\Big[\int_K\int_s^t|\mathcal{L}_{\theta}[u_\eps^\kappa(r,\cdot)](x)|^2\,dr\,dx\Big]\notag\\
   & \quad +  \,C|t-s|\,\eps^2\,\mathbb{E}\Big[\int_K\int_s^t|\Delta u_\eps^\kappa(r,x)|^2\,dr\,dx\Big] + C\,\mathbb{E}\Big[\int_K\Big|\int_s^t\sigma(u_\eps(r,\cdot))\ast \tau_\kappa\, dW(r)\Big|^2\,dx\Big]\notag\\ 
   & \quad\quad + \, C\,\mathbb{E}\Big[\int_K\Big|\int_s^t\int_{|z| > 0}\eta(u_\eps(r,\cdot); z) \ast \tau_\kappa \tilde{N}(dz,dt)\Big|^2\,dx\Big]\,
   =:\sum_{i =1}^{5}\mathcal{A}_i 
   \end{align}
  Using the property of convolution and \eqref{inq:uniform-1st-viscous}, one can estimate $\mathcal{A}_1$ and $\mathcal{A}_2$ as 
   \begin{align}
      \mathcal{A}_1 \le  \,&C(||f'||_{L^\infty}^2)|t-s|\,\mathbb{E}\Big[ \int_K\int_s^t|\nabla_x u_\eps^\kappa(r,x)|^2\,dr\,dx\Big]\notag \\ \le\, &C(||f'||_{L^\infty}^2)|t-s|\,\mathbb{E}\Big[ \int_K\int_s^t||u_\eps(r)||_{L^2(\R^d)}^2||\nabla\tau_\kappa||_{L^2(\R^d)}^2\,dr\,dx\Big]\notag \\ \le \, &
      C(||f'||_{L^\infty}^2, |K|, \kappa)|t-s|^2\,\Big(\underset{0\le t\le T}{sup}\,\mathbb{E}\Big[ ||u_\eps(t)||_{L^2(\R^d)}^2\Big]\Big)\, \le \, C(|K|, \kappa)|t-s|^2,\notag \\
       \mathcal{A}_2 \le\, & C|t-s| \mathbb{E}\left[ \int_K \int_s^t \left( \|u_\eps^\kappa(r)\|_{L^2(\R^d)}^2 + \|u_\eps^\kappa(r)\|_{W^{2,\infty}(\R^d)}^2\right)\,dr\,dx\right] \notag \\
       & \le C(|K|)|t-s|^2\,\sup_{0\le t\le T}\,\mathbb{E}\Big[||u_\eps(t)||_{L^2(\R^d)}^2 \Big] + C|t-s|\,\mathbb{E}\left[\int_K\int_s^t\left( ||u_\eps||_{L^2(\R^d)}^2||\tau_\kappa||_{L^2(\R^d)}^2 \right)\,dr\,dx \right]\ \notag \\
       &\quad  +C|t-s|\,\mathbb{E}\left[\int_K\int_s^t\left(|| u_\eps||_{L^2(\R^d)}^2||\nabla\tau_\kappa||_{L^2(\R^d)}^2 + || u_\eps||_{L^2(\R^d)}^2||\Delta \tau_\kappa||_{L^2(\R^d)}^2\right)\,dr\,dx \right]\notag\\
       \le \,& C(|K|)|t-s|^2 + C(|K|, \kappa)|t-s|^2\Big(\underset{0\le t\le T}{sup}\,\mathbb{E}\Big[||u_\eps(t)||_{L^2(\R^d)}^2 \Big]\Big) \le C(|K|, \kappa)|t-s|^2\notag.
   \end{align}
   In the above estimations, we have used the fact that,  for any $\Psi \in \mathcal{D}(\mathbb{R}^d)$
   $$|\mathcal{L}_\theta(\Psi)| \le C\Big[||\Psi||_{L^2(\mathbb{R}^d)} + ||\Psi||_{W^{2, \infty}(\mathbb{R}^d)}\Big].$$
  Since for each fixed $\eps>0$, $\Delta u_\eps^\kappa\in L^2(\Omega\times \mathbb{Q}_T) $, we have
   \begin{align}
       \mathcal{A}_3 \le \, \eps^2C|t-s|\, \int_0^T\mathbb{E}\Big[||\Delta u_\eps^\kappa(r)||_{L^2(K)}^2\Big]\,dr  \le C(\eps)|t-s|.\notag
   \end{align}
   Using It\^o isometry, the convolution property together with the assumptions \ref{A4} , one has 
   \begin{align}
       \mathcal{A}_4 \le\, & C\,\mathbb{E}\Big[\int_s^t \|\sigma(u_\eps(r))\|_{L^2(\R^d)}^2\,dr\Big] \le\, C|t-s|\,\Big(\underset{0\le t\le T}{sup}\,\mathbb{E}\Big[||u_\eps(t)||_{L^2(\R^d)}^2\Big]\Big) \le\, C|t-s|.\notag
   \end{align}
  An application of It\^o-L\'evy isometry, properties of convolution along with the assumptions \ref{A6} and \ref{A8} gives the following estimation for $\mathcal{A}_6$.
   \begin{align}
       \mathcal{A}_6 \le &\,C \, \mathbb{E}\Big[\int_K\int_s^t\int_{|z| > 0}|\eta(u_\eps; z) \ast \tau_\kappa|^2 \, m(dz)\, dr\,dx\Big]\notag\\
        \le &\,C \,\mathbb{E}\Big[\int_s^t\int_{|z| > 0}\|\eta(u_\eps(r);z)\|_{L^2(\R^d)}^2 \, m(dz)\, dr\Big]\notag \\
        & \le C{\lambda^*} \mathbb{E}\Big[\int_s^t\int_{|z| > 0}\|u_\eps(r)\|_{L^2(\R^d)}^2 (1\wedge |z|^2)\,m(dz)\,dr\Big] \notag \\
        & \le  C|t-s|\Big(  \underset{0\le t\le T}{sup}\,\mathbb{E}\Big[\|u_\eps(t)\|_{L^2(\R^d)}^2\Big]\Big)\le C|t-s|.\notag
   \end{align}
Using the above estimations in \eqref{eq:@}, we have
\begin{align}
\mathbb{E}\Big[\int_K |u_\eps^\kappa(t,x)-u_\eps^\kappa(s,x)|^2\,dx\Big] \le C( \kappa)|t-s|^2 + C(\eps)|t-s|. \label{inq:average-time-cont-l2-}
\end{align}
Considering the non-negative mollifier $\rho_{\delta_0}(t-s)$ on $\R$, one has,  from \eqref{inq:average-time-cont-l2-}
\begin{align}
    &\int_0^T\int_0^T \mathbb{E}\Big[\int_K |u_\eps^\kappa(t,x)-u_\eps^\kappa(s,x)|^2\,dx\Big]\rho_{\delta_0}(t-s)\,dt\,ds\notag \\ 
     & \le \,C( \kappa) \, \int_0^T\int_0^T |t-s|^2\rho_{\delta_0}(t-s)\,dt\,ds + C(\eps)\, \int_0^T\int_0^T |t-s|\rho_{\delta_0}(t-s)\,dt\,ds\notag \\ 
     & \le  C(\kappa)\delta_0^2 + C(\eps)\delta_0 \, \le C(\eps, \kappa)\delta_0.\notag
\end{align}
This completes the proof.
\end{proof}
\begin{rem}\label{rem:time-cont-degenerate-vis}
 Since $\phi$ is Lipschitz continuous and $\phi(0) =0$, the regularized viscous solution of \eqref{eq:fdegenerate} also satisfies the estimation \eqref{eq:time continuity of uek} of  Lemma \ref{lem:average-time-cont-viscous}.
\end{rem}
\begin{rem}
Using Cauchy-Schwartz inequality along with Lemma \ref{lem:average-time-cont-viscous}, one can also have
 \begin{align}
&\mathbb{E}\Big[\int_0^T\int_0^T\int_K|u_\eps^\kappa(t,x)-u_\eps^\kappa(s,x)|\rho_{\delta_0}(t-s)\,dx\,dt\,ds\Big] \le C(\eps, \kappa)\sqrt{\delta_0}. \label{esti:time-cont-viscous-l1}
\end{align}
\end{rem}
\section{Convergence Analysis for fractional Cauchy Problem}\label{sec:convergence u}

 Our main objective in this section is to establish the convergence of approximate solutions $u_{\Delta t}(t,x)$ to a unique $BV$-entropy solution of \eqref{eq:fractional}.  Note that {\em a-priori} estimate on $u_{\Delta t}(t,x)$ given by Lemma  \ref{lem:l-infinity bound-approximate-solution} only guarantees weak compactness of the family $\{u_{\Delta t}(t,x)\}_{\Delta t>0}$, which is inadequate in view of the nonlinearities in the equation.  Thus, to show the convergence of approximate solutions to a unique entropy solution, we follow the strategy of the classical Kru\^zkov's doubling of variables approach as adapted by Bhauryal et. al. in \cite{frac lin, frac non} for proving the uniqueness of entropy solution. However, our approach requires significant changes in the order of passing the limit with respect to various parameters in the proof to obtain the Kato's inequality. We send the parameter $\Delta t \rightarrow 0$, before sending $\delta_0 \rightarrow 0$, (see Subsection \ref{Convergence Analysis}) which effects all the existing terms of the entropy inequality. Therefore, to handle them, suitable estimations are needed.

\subsection{Entropy Formulation}\label{subsec:entropy-formulation}
To establish the entropy inequality for the approximate solutions $u_{\Delta t}(t,x)$, we define a new time interpolant $\widetilde{u}_{\Delta t}(t,x)$--- an entropy solution to \eqref{eq:operator-S} with initial data $u^{n + \frac{1}{2}}$ i.e.,
\begin{equation*}
   \widetilde{u}_{\Delta t}(t,x) := S(t-t_n)u^{n + \frac{1}{2}} \quad t \in [t_n, t_{n+1}],
\end{equation*}
where $u^{n + \frac{1}{2}}$ is defined in \eqref{defi:n-half}. 
For any non-negative test function $\psi$ $\in$ $C_c^2(\mathbb{Q}_T)$ and for any convex entropy flux pair $(\beta, \zeta)$ we have the following inequality,
\begin{multline}{\label{eq:5.2}}
   \int_{\mathbb{R}^d} \beta(\widetilde{u}_{\Delta t}(t_n,x))\psi(t_n,x)dx  -  \int_{\mathbb{R}^d}  \beta(\widetilde{u}_{\Delta t}(t_{n+1},x))\psi(t_{n+1},x)dx \\
   + \int_{\mathbb{R}^d} \int_{t_n}^{t_{n+1}} \big (\beta(\widetilde{u}_{\Delta t}(r,x)))\partial_t\psi(r,x) + \zeta(\widetilde{u}_{\Delta t}(r,x)). \nabla\psi(r,x) \big )\,dr\,dx \\ - \int_{\mathbb{R}^d} \int_{t_n}^{t_{n+1}} \big[\mathcal{L}_\theta^{\bar{r}}[\widetilde{u}_{\Delta t}(r,\cdot)](x)\psi(r,x)\beta'(\widetilde{u}_{\Delta t}(r,x)) + \beta(\widetilde{u}_{\Delta t}(r,x))\mathcal{L}_{\theta,\bar{r}}[\psi(r, .)](x) \big ]\,dr\,dx \geq 0.
\end{multline}
To get entropy formulation for $u_{\Delta t}(t,x)$, we first consider \eqref{eq:noise} with initial condition $\widetilde{u}_{\Delta t}(t_n,x)$. 
Let $$ v(t) =  R(t, t_n)u^n\quad t\in [t_n, t_{n+1}].$$
Note that $v(t_{n+1}) = u^{n + \frac{1}{2}} = \widetilde{u}_{\Delta t}(t_n,x)$, $\widetilde{u}_{\Delta t}(t_{n+1},x) = u_{\Delta t}(t_{n+1},x)$ and $u_{\Delta t}(t,x) = v(t)$, for t $\in$ $(t_n,t_{n+1}).$ Moreover, $u_{\Delta t}(t,x) =  v(t)$  satisfies the equation
\begin{equation*}
       \begin{cases}
         \displaystyle  dv(t,x) = \sigma(v(t,x))\,dW(t) + \int_{|z| > 0} \eta(v(t,x); z)\,\widetilde{N}(dz,dt), \hspace{2mm} t \in (t_n, t_{n+1}]\,, \\
          v(t = t_n) = u^n\,\cdot
         \end{cases}
\end{equation*}
We apply It\^o-L\'evy formula on $\beta(v(t))$ and then multiply the equation by $\psi(t_n, x)$. After integrating the resulting equation with respect to $x$, we get
\begin{align}\label{eq:u1}
  &\int_{\mathbb{R}^d} \beta(v(t_{n+1},x))\psi(t_n, x)\,dx - \int_{\mathbb{R}^d} \beta(v(t_n,x))\psi(t_n, x)\,dx\notag  \\= &\int_{\mathbb{R}^d} \beta(\widetilde{u}_{\Delta t}(t_n,x))\psi(t_n, x)\,dx - \int_{\mathbb{R}^d} \beta(u_{\Delta t}(t_n,x))\psi(t_n, x)\,dx  \notag \\ = 
  &\int_{\mathbb{R}^d} \int_{t_n}^{t_{n+1}}\sigma(u_{\Delta t}(r,x))\beta'(u_{\Delta t}(r,x))\psi(t_n,x)\,dW(r)\,dx \notag  \\ 
  & \quad + \frac{1}{2}\int_{\mathbb{R}^d} \int_{t_n}^{t_{n+1}} \sigma^2(u_{\Delta t}(r,x))\beta''(u_{\Delta t}(r,x))\psi(t_n,x)\,dr\,dx \notag \\
   &  \qquad + \int_{\mathbb{R}^d} \int_{t_n}^{t_{n+1}} \int_{|z| > 0}\Big(\beta \big(u_{\Delta t}(r,x) + \eta(u_{\Delta t}(r,x);z) \big) - \beta(u_{\Delta t}(r,x)) \Big) \psi(t_n,x) \,\widetilde{N}(dz,dr)\,dx\notag
  \\
   & \qquad\quad + \int_{\mathbb{R}^d} \int_{t_n}^{t_{n+1}} \int_{|z| > 0} \Big(\beta \big(u_{\Delta t}(r,x) + \eta(u_{\Delta t}(r,x);z) \big) - \beta(u_{\Delta t}(r,x)) \notag\\
   & \hspace{3cm} -\eta(u_{\Delta t}(r,x);z) \beta'(u_{\Delta t}(r,x))\Big)  \times \psi(t_n,x)\,m(dz)\,dr\,dx\,\cdot
\end{align}
Subtracting \eqref{eq:u1} from \eqref{eq:5.2} and then taking expectation in the resulting inequality, we obtain
\begin{align}\label{eq:u2}
   &\mathbb{E} \Big[ \int_{\mathbb{R}^d} \mathds{1}_{B} \beta(u_{\Delta t}(t_n,x))\psi(t_n, x)\,dx \Big] - \mathbb{E} \Big[ \int_{\mathbb{R}^d}  \mathds{1}_{B}  \beta(\widetilde{u}_{\Delta t}(t_{n+1},x))\psi(t_{n+1},x)\,dx \Big]\notag\\
    &+ \mathbb{E} \Big[  \int_{\mathbb{R}^d} \int_{t_n}^{t_{n+1}}  \mathds{1}_{B} \big (\beta(\widetilde{u}_{\Delta t}(r,x))\partial_t\psi(r,x) + \zeta(\widetilde{u}_{\Delta t}(r,x)). \nabla\psi(r,x) \big )\,dr\,dx \Big] \notag \\  &-\mathbb{E} \Big[ \int_{\mathbb{R}^d} \int_{t_n}^{t_{n+1}}  \mathds{1}_{B} \big[\mathcal{L}_\theta^{\bar{r}}[\widetilde{u}_{\Delta t}(r,\cdot)](x)\psi(r,x)\beta'(\widetilde{u}_{\Delta t}(r,x)) + \beta(\widetilde{u}_{\Delta t}(r,x))\mathcal{L}_{\theta,\bar{r}}[\psi(r, .)](x) \big ]\,dr\,dx \Big]  \notag \\   &+\mathbb{E} \Big[  \int_{\mathbb{R}^d} \int_{t_n}^{t_{n+1}}  \mathds{1}_{B} \sigma(u_{\Delta t}(r,x))\beta'(u_{\Delta t}(r,x))\psi(t_n,x)\,dW(r)\,dx \Big] \notag \\  &+\mathbb{E} \Big[  \frac{1}{2}\int_{\mathbb{R}^d} \int_{t_n}^{t_{n+1}}  \mathds{1}_{B} \sigma^2(u_{\Delta t}(r,x))\beta''(u_{\Delta t}(r,x))\psi(t_n,x)\,dr\,dx \Big] \notag \\ &+ \mathbb{E} \Big[ \int_{\mathbb{R}^d} \int_{t_n}^{t_{n+1}} \int_{|z| > 0}  \mathds{1}_{B}\Big(\beta \big(u_{\Delta t}(r,x) + \eta(u_{\Delta t}(r,x);z) \big) - \beta(u_{\Delta t}(r,x)) \Big) \psi(t_n,x) \,\widetilde{N}(dz,dr)\,dx \Big]\notag\\ & +\mathbb{E} \Big[  \int_{\mathbb{R}^d} \int_{t_n}^{t_{n+1}} \int_{|z| > 0}  \mathds{1}_{B} \Big(\beta \big(u_{\Delta t}(r,x) + \eta(u_{\Delta t}(r,x);z) \big) - \beta(u_{\Delta t}(r,x)) \notag\\ & \hspace{5cm} -\eta(u_{\Delta t}(r,x);z) \beta'(u_{\Delta t}(r,x))\Big)  \times \psi(t_n,x)\,m(dz)\,dr\,dx \Big] \geq 0,
\end{align}
for any $\mathbb{P}$-measurable set $B$.
\vspace{0.1cm}

We introduce a class of entropy function which  will play a crucial role in the subsequent calculations. Let $\beta$ $: \mathbb{R} \goto \mathbb{R}$  be a $C^\infty$ function as defined in the proof of Lemma \ref{lem:BV}.
For any $\xi$, define $\beta_{\xi}$ $: \mathbb{R} \goto \mathbb{R}$ by $\beta_{\xi}:= \xi\beta(\frac{r}{\xi}).$ For $\beta$ = $\beta_\xi$,
we define 
\begin{align}
   & F_k^{\beta}(a,b):= \int_b^a \beta'(r-b)f'_k(r)dr, \quad F_k(a,b):= sign(a-b)(f_k(a) - f_k(b))\quad (1\le k\le d), \notag \\
    & \hspace{3cm} F(a,b):= (F_1(a,b), F_2(a,b), ..., F_d(a,b)). \notag
\end{align}
Let $\rho$ and $\varrho$ be the  non-negative, standard mollifiers on $\mathbb{R}$ and $\mathbb{R}^d$ respectively such that $supp(\rho)$ $\subset$ $[-1,0]$ and $supp (\varrho) = $ $\bar{B}_1(0)$, where $\bar{B}_1(0)$ is the closed unit ball. We define
\[\rho_{\delta_0}(r):= \frac{1}{\delta_0}\rho(\frac{r}{\delta_0}), \hspace{3mm} \varrho_\delta(x):= \frac{1}{\delta^d}\varrho(\frac{x}{\delta}),\]
where $\delta$, $\delta_0$ are two positive constants. Given a non-negative test function $\psi \in C_c^{1,2}([0, \infty) \times \mathbb{R}^d)$ and two positive constant $\delta$, $\delta_0$, define  \[\varphi_{\delta_0, \delta}(t,x,s,y):= \rho_{\delta_0}(t-s)\varrho_\delta(x-y)\psi(t,x).\]
Clearly, $\rho_{\delta 0}(t-s) \neq 0$ only if $s - \delta_0 \le t \le s$ and hence $\varphi_{\delta_0, \delta}(t,x,s,y)$  $= 0$ outside  $s - \delta_0 \le t \le s$.  Let $J$ be the standard symmetric non-negative mollifier on $\mathbb{R}$ with support in $[-1,1]$. For $l>0$, define $J_l(r): = \frac{1}{l}J(\frac{r}{l})$.
\vspace{0.15cm}

  We apply It\^{o}-L\'{e}vy formula to \eqref{eq:regularize}, multiply by $J_l(u_{\Delta t}(t,x) - k)$ and then integrate with respect to $t,x$ and $k$ and taking expectation in the resulting expression, we obtain 
 \begin{align}\label{entropinquality0}
         &0 \le \mathbb{E} \Big[\int_{\mathbb{Q}_T} \int_{\mathbb{R}^d} \int_{\mathbb{R}} \beta(u_\eps^\kappa(0,y)) - k)\varphi_{\delta_0, \delta}(t,x,0,y)J_l(u_{\Delta t}(t,x) - k)\,dk\,dy\,dx\,dt \Big]\notag \\
         &+ \mathbb{E} \Big[\int_{\mathbb{Q}_T^2} \int_{\mathbb{R}} \beta(u_\eps^\kappa(s,y)) - k)\partial_s\varphi_{\delta_0, \delta}J_l(u_{\Delta t}(t,x) - k)\,dk\,ds \,dt\,dy\,dx \Big]\notag \\
         &+\mathbb{E} \Big [\int_{\mathbb{Q}_T^2}\int_\mathbb{R}F^\beta(u_\eps^\kappa(s,y), k). \nabla_y\varphi_{\delta_0, \delta}J_l(u_{\Delta t}(t,x) - k)\,dk\,ds\,dt\,dx\,dy \Big]\notag \\
         &-\mathbb{E} \Big[\int_{\mathbb{Q}_T^2} \int_{\mathbb{R}}\mathcal{L}_\theta^{\bar{r}}[u_\eps^\kappa(s,\cdot)](y)\beta'(u_\eps^\kappa(s,y) - k) \varphi_{\delta_0, \delta}J_l(u_{\Delta t}(t,x) - k)\,dk\,ds\,dt\,dy\,dx \Big]\notag \\
         &- \mathbb{E} \Big[\int_{\mathbb{Q}_T^2} \int_{\mathbb{R}}\beta(u_\eps^\kappa(s,y) - k) \mathcal{L}_{\theta,\bar{r}}[\varphi_{\delta_0, \delta}(t,x,s,\cdot)](y)J_l(u_{\Delta t}(t,x) - k)\,dk\,ds\,dt\,dy\,dx \Big]\notag \\
         &+ \mathbb{E} \Big[\int_{\mathbb{Q}_T^2} \int_{\mathbb{R}}
         (\sigma(u_\eps(s,y)) \ast \tau_\kappa)\beta'(u_\eps^\kappa(s,y)- k)\varphi_{\delta_0, \delta}J_l(u_{\Delta t}(t,x) - k)\,dk\,dt\,dW(s)\,dy\,dx \Big]\notag \\
         &+ \frac{1}{2}\mathbb{E} \Big[\int_{\mathbb{Q}_T^2} \int_{\mathbb{R}}
         (\sigma(u_\eps(s,y)) \ast \tau_\kappa)^2\beta''(u_\eps^\kappa(s,y)- k)\varphi_{\delta_0, \delta}J_l(u_{\Delta t}(t,x) - k)\,dk\,ds\,dt\,dy\,dx \Big] \notag \\
         &+ \mathbb{E} \Big[\int_{\mathbb{Q}_T^2} \int_{\mathbb{R}}
         \int_{|z| > 0} \int_0^1 (\eta(u_\eps(s,y);z) \ast \tau_\kappa)\beta'\left(u_\eps^\kappa(s,y) + \lambda(\eta(u_\eps(s,y);z)\ast \tau_\kappa) - k\right)  \notag \\ &\hspace{6cm} \times \varphi_{\delta_0, \delta}J_l(u_{\Delta t}(t,x) - k)d\lambda \,dk\, \tilde{N}(dz, ds)\,dt\, dx\,dy \Big]\notag \\
         &+ \mathbb{E} \Big[\int_{\mathbb{Q}_T^2} \int_{\mathbb{R}}
         \int_{|z| > 0} \int_0^1 (1 -\lambda) (\eta(u_\eps(s,y);z) \ast \tau_\kappa)^2\beta^{\prime\prime}\left(u_\eps^\kappa(s,y) + \lambda(\eta(u_\eps(s,y);z)\ast \tau_\kappa) - k\right)  \notag \\
          &\hspace{6cm} \times \varphi_{\delta_0, \delta}J_l(u_{\Delta t}(t,x) - k)\,d\lambda\, dk\,m(dz)\,ds\,dt\,dx\,dy \Big] \notag \\
         &- \eps \mathbb{E} \Big[\int_{\mathbb{Q}_T^2} \int_{\mathbb{R}}\beta'(u_\eps^\kappa(s,y) - k)\nabla_y(u_\eps^\kappa(s,y))\cdot  \nabla_y\varphi_{\delta_0, \delta}J_l(u_{\Delta t}(t,x) - k)\,dk\,ds\,dt\,dx\,dy \Big] \notag \\
         &- \eps \mathbb{E} \Big[\int_{\mathbb{Q}_T^2} \int_{\mathbb{R}}\beta''(u_\eps^\kappa(s,y) - k)|\nabla_y u_\eps^\kappa(s,y)|^2\varphi_{\delta_0, \delta}J_l(u_{\Delta t}(t,x) - k)\,dk\,ds\,dt\,dx\,dy \Big]=:\sum_{i=1}^{11} \mathcal{I}_i\,. 
 \end{align}
 
  We multiply the entropy inequality \eqref{eq:u2}, against the entropy pair $(\beta(\cdot-k),F^\beta(\cdot,k))$ and the test function $\varphi_{\delta_0, \delta}$, by $J_l(u_\eps^\kappa(s,y) - k)$ and integrate with respect to $s,y$ and  $k$ to have, after taking expectation
  \begin{align}\label{EntropyIQUE2}
 &0 \le \mathbb{E} \Big [\int_{\mathbb{Q}_T}\int_{\mathbb{R}^d} \int_\mathbb{R} \beta(u_{\Delta t}(0,x) - k)\varphi_{\delta_0, \delta}(0,x,s,y)J_l(u_\eps^\kappa(s,y) - k)\,dk\,dx\,ds\,dy \Big]\notag \\
  &+ \mathbb{E} \Big [\int_{\mathbb{Q}_T^2}\int_\mathbb{R} \big (\beta(u_{\Delta t}(t,x) -k)\partial_t\varphi_{\delta_0, \delta}J_l(u_\eps^\kappa(s,y) - k)\,dk\,ds\,dt\,dx\,dy \Big] \notag \\
  &+ \mathbb{E} \Big [\int_{\mathbb{Q}_T^2}\int_\mathbb{R}F^\beta(u_{\Delta t}(t,x), k)\cdot \nabla_x\varphi_{\delta_0, \delta}J_l(u_\eps^\kappa(s,y) - k)\,dk\,dt\,ds\,dx\,dy \Big]\notag \\
  &-\mathbb{E} \Big [\int_{\mathbb{Q}_T^2}\int_\mathbb{R} \mathcal{L}_\theta^{\bar{r}}[u_{\Delta t}(t,\cdot)](x)\beta'(u_{\Delta t}(t,x) - k)\varphi_{\delta_0, \delta}J_l(u_\eps^\kappa(s,y) - k)\,dk\,ds\,dt\,dx\,dy \Big]\notag\\
  &- \mathbb{E} \Big [\int_{\mathbb{Q}_T^2}\int_\mathbb{R}  \beta(u_{\Delta t}(t,x) -k)\mathcal{L}_{\theta,\bar{r}}[\varphi_{\delta_0, \delta}(t,\cdot,s,y)](x) J_l(u_\eps^\kappa(s,y) - k)\,dk\,ds\,dt\,dx\,dy \Big] \notag \\
  &+ \mathbb{E} \Big [\int_{\mathbb{Q}_T^2}\int_\mathbb{R}  \sigma(u_{\Delta t}(t,x))\beta'(u_{\Delta t}(t,x) -k )\varphi_{\delta_0, \delta}J_l(u_\eps^\kappa(s,y) - k)\,dk\,dW(t)\,ds\,dx\,dy \Big]\notag \\  & + \frac{1}{2}\mathbb{E} \Big [\int_{\mathbb{Q}_T^2}\int_\mathbb{R} \sigma^2(u_{\Delta t}(t,x))\beta''(u_{\Delta t}(t,x) - k)\varphi_{\delta_0, \delta}J_l(u_\eps^\kappa(s,y) - k)\,dk\,ds\,dt\,dy\,dx \Big] \notag \\
  &+ \mathbb{E} \Big [\int_{\mathbb{Q}_T^2}\int_{|z| > 0}\int_\mathbb{R} \int_0^1  \eta(u_{\Delta t}(t,x);z)\beta' \big(u_{\Delta t}(t,x) + \lambda\eta(u_{\Delta t}(t,x);z) - k \big)\notag \\ &\hspace{6cm} \times  \varphi_{\delta_0, \delta}(t,x,s,y)J_l(u_\eps^\kappa(s,y) - k) \,d\lambda\, \,dk\,\tilde{N}(dz,dt)\,ds\,dx\,dy \Big] \notag \\
  &+ \mathbb{E} \Big [\int_{\mathbb{Q}_T^2}\int_\mathbb{R}\int_{|z| > 0} \int_0^1  (1-\lambda)\eta^2(u_{\Delta t}(r,x);z)\beta''(u_{\Delta t}(t,x) + \lambda\eta(u_{\Delta t}(r,x);z) - k) \notag\\ &\hspace{6cm} \times \varphi_{\delta_0, \delta}(t,x,s,y)J_l(u_\eps^\kappa(s,y) - k)\,  d\lambda\, m(dz)dt\,dx\,dk\,ds\,dy \Big] \notag\\
  &+ \mathbb{E}\Big[ \int_{\mathbb{Q}_T}\int_\mathbb{R} \sum_{n=0}^{N-1} \int_{\mathbb{R}^d} \int_{t_n}^{t_{n+1}}\big (\beta(\widetilde{u}_{\Delta t}(t,x) - k ) - \beta(u_{\Delta t}(t_{n+1},x )- k)) \big)\notag \\ &\hspace{6cm} \times \partial_t\varphi_{\delta_0, \delta}(t,x,s,y)J_l(u_\eps^\kappa(s,y) - k)\,dt\,dx\,dk\,ds\,dy \Big]\notag \\
  &+ \mathbb{E}\Big[ \int_{\mathbb{Q}_T}\int_\mathbb{R} \sum_{n=0}^{N-1} \int_{\mathbb{R}^d} \int_{t_n}^{t_{n+1}}\big (\beta(u_{\Delta t}(t_{n+1},x) - k) - \beta(u_{\Delta t}(t,x) - k ) \big) \notag\\ & \hspace{6cm} \times \partial_t\varphi_{\delta_0, \delta}(t,x,s,y)J_l(u_\eps^\kappa(s,y) - k)\,dt\,dx\,dk\,ds\,dy \Big]\notag\\ 
  &+ \mathbb{E}\Big[ \int_{\mathbb{Q}_T}\int_\mathbb{R} \sum_{n=0}^{N-1} \int_{\mathbb{R}^d} \int_{t_n}^{t_{n+1}} \big(F^\beta(\widetilde{u}_{\Delta t}(t,x), k) - F^\beta(u_{\Delta t}(t_{n+1},x), k) \big). \nabla_x \varphi_{\delta_0, \delta}(t,x,s,y)\notag \\ 
  & \hspace{6cm} \times J_l(u_\eps^\kappa(s,y) - k)\,dt\,dx\,dk\,ds\,dy \Big]\notag \\
  &+  \mathbb{E}\Big[\int_{\mathbb{Q}_T}\int_\mathbb{R}\sum_{n=0}^{N-1} \int_{\mathbb{R}^d} \int_{t_n}^{t_{n+1}} \big(F^\beta(u_{\Delta t}(t_{n+1},y), k) - F^\beta(u_{\Delta t}(t,x), k) \big)\cdot \nabla_x \varphi_{\delta_0, \delta}(t,x,s,y) \notag\\ & \hspace{6cm} \times J_l(u_\eps^\kappa(s,y) - k)\,dt\,dx\,dk\,ds\,dy \Big] \notag\\
  &-  \mathbb{E}\Big[\int_{\mathbb{Q}_T}\int_\mathbb{R}\sum_{n=0}^{N-1} \int_{\mathbb{R}^d} \int_{t_n}^{t_{n+1}}  \big[\mathcal{L}_\theta^{\bar{r}}[\widetilde{u}_{\Delta t}(t,\cdot)](x)\beta'(\widetilde{u}_{\Delta t}(t,x) - k)\notag \\
   & \hspace{2cm} -\mathcal{L}_\theta^{\bar{r}}[u_{\Delta t}(t_{n+1},\cdot)](y)\beta'(u_{\Delta t}(t_{n+1},x) -k) ] \varphi_{\delta_0, \delta}(t,x,s,y) J_l(u_\eps^\kappa(s,y) - k)\,dt\,dx\,dk\,ds\,dy \Big] \notag \\
  &-  \mathbb{E}\Big[\int_{\mathbb{Q}_T}\int_\mathbb{R}\sum_{n=0}^{N-1} \int_{\mathbb{R}^d} \int_{t_n}^{t_{n+1}}  \big[\mathcal{L}_\theta^{\bar{r}}[u_{\Delta t}(t_{n+1},\cdot)](x)\beta'(u_{\Delta t}(t_{n+1},x) - k)\notag \\
   & \hspace{2cm} -\mathcal{L}_\theta^{\bar{r}}[u_{\Delta t}(t,\cdot)](y)\beta'(u_{\Delta t}(t,x) - k) ] \varphi_{\delta_0, \delta}(t,x,s,y) J_l(u_\eps^\kappa(s,y) - k)\,dt\,dx\,dk\,ds\,dy \Big]
  \notag\\ 
  &- \mathbb{E}\Big[\int_{\mathbb{Q}_T}\int_\mathbb{R}\sum_{n=0}^{N-1} \int_{\mathbb{R}^d} \int_{t_n}^{t_{n+1}} \big(\beta(\widetilde{u}_{\Delta t}(t,x) -k) - \beta(u_{\Delta t}(t_{n+1},x)-k) \big) \mathcal{L}_{\theta,\bar{r}}[\varphi_{\delta_0, \delta}(t,\cdot,s,y)](x)\notag\notag \\& \hspace{6cm} \times J_l(u_\eps^\kappa(s,y) - k)\,dt\,dx\,dk\,ds\,dy \Big]\notag\\ 
  &- \mathbb{E}\Big[\int_{\mathbb{Q}_T}\int_\mathbb{R} \sum_{n=0}^{N-1} \int_{\mathbb{R}^d} \int_{t_n}^{t_{n+1}} \big(\beta(u_{\Delta t}(t_{n+1},x)-k) - \beta(u_{\Delta t}(t,x)-k) \big) \mathcal{L}_{\theta,\bar{r}}[\varphi_{\delta_0, \delta}(t,\cdot,s,y)](x)\notag \\& \hspace{6cm} \times J_l(u_\eps^\kappa(s,y) - k)\,dt\,dx\,dk\,ds\,dy \Big]\notag \\
  &+ \mathbb{E}\Big[\int_{\mathbb{Q}_T}\int_\mathbb{R}\sum_{n=0}^{N-1}  \int_{\mathbb{R}^d} \int_{t_n}^{t_{n+1}}  \sigma(u_{\Delta t}(t,x))\beta'(u_{\Delta t}(t,x) -k)\{\varphi_{\delta_0, \delta}(t_n,x,s,y) -\varphi_{\delta_0,\delta}(t,x,s,y)\}\notag \\ & \hspace{6cm} \times J_l(u_\eps^\kappa(s,y) - k)\,dW(t)\,dx\,dk\,ds\,dy\Big]\notag \\ 
&+ \frac{1}{2} \mathbb{E}\Big[\int_{\mathbb{Q}_T}\int_\mathbb{R}  \sum_{n=0}^{N-1} \int_{\mathbb{R}^d} \int_{t_n}^{t_{n+1}}   \sigma^2(u_{\Delta t}(t,x))\beta''(u_{\Delta t}(t,x) -k)\{\varphi_{\delta_0, \delta}(t_n,x,s,y) -\varphi_{\delta_0,\delta}(t,x,s,y)\}\notag \\ & \hspace{6cm} \times J_l(u_\eps^\kappa(s,y) - k)\,dt\,dx\,dk\,ds\,dy \Big] \notag \\
  &+ \mathbb{E}\Big[\int_{\mathbb{Q}_T}\int_\mathbb{R}   \sum_{n=0}^{N-1} \int_{\mathbb{R}^d} \int_{t_n}^{t_{n+1}} \int_{|z| > 0} \int_0^1 \eta(u_{\Delta t}(t,x);z) \beta' \big(u_{\Delta t}(t,x) + \lambda\eta(u_{\Delta t}(t,x);z) - k)\notag\\ & \hspace{3.5cm} \times \{\varphi_{\delta_0, \delta}(t_n,x,s,y) -\varphi_{\delta_0,\delta}(t,x,s,y)\}J_l(u_\eps^\kappa(s,y) - k)d\lambda \,\tilde{N}(dz,dt)\,dx\,dk\,ds\,dy \Big]  \notag\\
 &+ \mathbb{E}\Big[\int_{\mathbb{Q}_T}\int_\mathbb{R}  \sum_{n=0}^{N-1}  \int_{\mathbb{R}^d} \int_{t_n}^{t_{n+1}} \int_{|z| > 0}  \int_0^1 (1 - \lambda)\eta^2(u_{\Delta t}(t,x);z)\beta''\big(u_{\Delta t}(t,x) + \lambda\,\eta(u_{\Delta t}(t,x);z) -k \big) \notag\\ & \hspace{3.5cm} \times\{\varphi_{\delta_0, \delta}(t_n,x,s,y) -\varphi_{\delta_0,\delta}(t,x,s,y)\}J_l(u_\eps^\kappa(s,y) - k)d\lambda m(dz)\,dt\,dx\,dk\,ds\,dy \Big] \notag\\
&=:\sum_{i=1}^{21}\mathcal{J}_i\,.
  \end{align}
  Our aim is to show the convergence of approximate solutions to an entropy solution $u(t,x)$. For this purpose, we need to estimate each of the terms in \eqref{entropinquality0} and \eqref{EntropyIQUE2} in terms of  the small parameters $\xi, \delta_0, \delta, \kappa, \eps, l$ and $\Delta t$. In the forthcoming computations, we will frequently use Lemma \ref{lem:average-time-cont-viscous} and the fact that $\displaystyle\int_0^T\rho_{\delta_0}(t-s)ds \le 1 $ and the equality holds for $t \le T - \delta_0.$ 
  \vspace{0.2cm}
  
   Let us consider the terms $\mathcal{I}_1$ and $\mathcal{J}_1$. Since supp($\rho_{\delta_0}$) $\subset$ $[-\delta_0, 0]$, then $\mathcal{I}_1$ = 0. Now we estimate the term $\mathcal{J}_1$. We have
\begin{align}
    \mathcal{J}_1 &\,= \mathbb{E} \Big [\int_{\mathbb{Q}_T}\int_{\mathbb{R}^d} \int_\mathbb{R} \beta(u_{\Delta t}(0,x) - k)\varphi_{\delta_0, \delta}(0,x,s,y)J_l(u_\eps^\kappa(s,y) - k)\,dk\,ds\,dx\,dy \Big]\notag \\
    &\,= \mathbb{E} \Big[\int_{\mathbb{Q}_T} \int_{\mathbb{R}^d} \int_{\mathbb{R}} \Big(\beta(u_{\Delta t}(0,x) - u_\eps^\kappa(s,y) + k) -  \beta(u_{\Delta t}(0,x) - u_\eps^\kappa(0,y)  + k) \Big)\notag \\ &\hspace{6cm} \times \rho_{\delta_0}(-s)\varrho_\delta(x-y) \psi(0,x) J_l( k)\,dk\,ds\,dy\,dx \Big]\notag \\
    &+ \mathbb{E} \Big[\int_{\R^d} \int_{\mathbb{R}^d} \int_{\mathbb{R}} \big ( \beta(u_{\Delta t}(0,x) - u_\eps^\kappa(0,y)+ k) - \beta(u_{\Delta t}(0,x) - u_\eps^\kappa(0,y) \big)\notag\\ &\hspace{6cm} \times \varrho_\delta(x-y)\psi(0,x) J_l( k)\,dk\,dy\,dx \Big]\notag\\
    &+ \mathbb{E} \Big[\int_{\mathbb{R}^d} \int_{\mathbb{R}^d}\beta(u_{\Delta t}(0,x)-u_\eps^\kappa(0,y)) \varrho_\delta(x-y)\psi(0,x)dydx \Big]\notag \\
    &\le\,\mathbb{E} \Big[\int_{\mathbb{Q}_T} \int_{\mathbb{R}^d}  |u_\eps^\kappa(s,y)-u_\eps^\kappa(0,y)|\rho_{\delta_0}(-s)\varrho_\delta(x-y) \psi(0,x)\,ds\,dy\,dx \Big]\notag\\
    &\quad + \mathbb{E} \Big[\int_{\R^d} \int_{\mathbb{R}^d} \int_{\mathbb{R}}k  \varrho_\delta(x-y)\psi(0,x) J_l( k)\,dk\,dy\,dx \Big]\notag \\
    &\qquad + \mathbb{E} \Big[\int_{\mathbb{R}^d} \int_{\mathbb{R}^d}\beta(u_{\Delta t}(0,x)-u_\eps^\kappa(0,y))\varrho_\delta(x-y)\psi(0,x)\,dy\,dx \Big] \notag \\
    &=: \mathcal{J}_{1,1} + Cl +  \mathbb{E} \Big[ \int_{\mathbb{R}^d } \int_{\mathbb{R}^d }\beta(u_{\Delta t}(0,x)-u_\eps^\kappa(0,y)) \varrho_\delta(x-y)\psi(0,x)\,dy\,dx \Big]\,. \notag 
  \end{align}  
    Thanks to Cauchy-Schwartz inequality, compact support of $\psi(t,x)$ and \eqref{inq:average-time-cont-l2-}, we estimate $\mathcal{J}_{1,1}$ as follows.
    \begin{align*}
   \mathcal{J}_{1,1} &\le \|\psi(0,\cdot)\|_{L^\infty} \mathbb{E}\left[ \int_{K_y}\int_0^T  |u_\eps^\kappa(s,y)-u_\eps^\kappa(0,y)|\rho_{\delta_0}(-s)\,ds\,dy \right] \quad(\text{for some compact set $K_y$})\\
   & \le C(\psi) \left( \int_0^T \mathbb{E}\left[ \int_{K_y}   |u_\eps^\kappa(s,y)-u_\eps^\kappa(0,y)|^2\,dy \right] \rho_{\delta_0}(-s)\,ds\right)^\frac{1}{2}\notag \\
   & \le C(\psi) \left( \int_0^T C(\eps, \kappa)\left(|s|^2 + |s|\right)  \rho_{\delta_0}(-s)\,ds\right)^\frac{1}{2}\le C(\eps, \kappa) \sqrt{\delta_0}\,.
    \end{align*}
Combining these estimates and recalling the fact that  $u_{\Delta t}(0,x) = u_0(x)$, we have the following result.
\begin{lem}\label{lem1}
\begin{align}
    \mathcal{I}_1 + \mathcal{J}_1 \le \displaystyle\mathbb{E} \Big[ \int_{\mathbb{R}^d } \int_{\mathbb{R}^d }\beta(u_0(x)-u_\eps^\kappa(0,y)) \varrho_\delta(x-y)\psi(0,x)\,dy\,dx \Big] + C(\eps, \kappa) \sqrt{\delta_0} + Cl.\notag
\end{align}
\end{lem}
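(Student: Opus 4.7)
The plan is to handle $\mathcal{I}_1$ and $\mathcal{J}_1$ separately. For $\mathcal{I}_1$, I would observe that $\varphi_{\delta_0,\delta}(t,x,0,y)=\rho_{\delta_0}(t)\varrho_\delta(x-y)\psi(t,x)$, and since $\mathrm{supp}(\rho_{\delta_0})\subset[-\delta_0,0]$ while the outer time variable $t$ runs over $(0,T)$, the integrand vanishes almost everywhere; hence $\mathcal{I}_1=0$ immediately.

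The main work lies in estimating $\mathcal{J}_1$. The first step is to exploit the symmetric structure between the entropy $\beta(u_{\Delta t}(0,x)-k)$ and the mollifier $J_l(u_\eps^\kappa(s,y)-k)$ by performing the substitution $k\mapsto u_\eps^\kappa(s,y)-k$ in the $k$-integral (the factor $\varphi_{\delta_0,\delta}(0,x,s,y)$ is $k$-independent and so unaffected). This converts the integrand into $\beta\big(u_{\Delta t}(0,x)-u_\eps^\kappa(s,y)+k\big)J_l(k)$. The second step is the standard add-and-subtract trick with the two auxiliary quantities $\beta(u_{\Delta t}(0,x)-u_\eps^\kappa(0,y)+k)$ and $\beta(u_{\Delta t}(0,x)-u_\eps^\kappa(0,y))$, which splits $\mathcal{J}_1$ into three pieces: a $(s,y)$-vs-$(0,y)$ difference, a $k$-shift error, and the desired leading term.

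For the three pieces I would use that $|\beta'|\le 1$, so $\beta$ is $1$-Lipschitz. The first piece is bounded pointwise by $|u_\eps^\kappa(s,y)-u_\eps^\kappa(0,y)|\rho_{\delta_0}(-s)\varrho_\delta(x-y)\psi(0,x)$; after Cauchy--Schwarz in the $s$-integral and use of the compact support of $\psi(0,\cdot)$, this reduces exactly to the temporal-continuity estimate \eqref{inq:average-time-cont-l2-} coming from Lemma~\ref{lem:average-time-cont-viscous}, yielding the $C(\eps,\kappa)\sqrt{\delta_0}$ contribution. The second piece is controlled by $\int_\mathbb{R}|k|J_l(k)\,dk\le l$ because $\mathrm{supp}(J_l)\subset[-l,l]$, giving the $Cl$ term. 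The third piece does not depend on $s$ apart from $\rho_{\delta_0}(-s)$; since $\int_0^T\rho_{\delta_0}(-s)\,ds=1$ for $\delta_0\le T$ and $\int_\mathbb{R} J_l(k)\,dk=1$, and since by construction $u_{\Delta t}(0,x)=u_0(x)$ (cf.~\eqref{approxi:solu}--\eqref{eq:sequence}), this piece is exactly the asserted main term.

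The only non-routine ingredient is the bound on the first piece, where the $L^2$-temporal modulus of the regularized viscous solution is indispensable: without the convolution in space that gives Lemma~\ref{lem:average-time-cont-viscous}, one could not absorb the mollifier $\rho_{\delta_0}(-s)$ and end up with a clean $\sqrt{\delta_0}$ rate, explaining why the constant genuinely depends on $\eps$ and $\kappa$. Everything else is bookkeeping via changes of variable and the Lipschitz character of $\beta_\xi$.
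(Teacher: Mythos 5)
Your proposal is correct and follows essentially the same route as the paper: $\mathcal{I}_1=0$ by the support of $\rho_{\delta_0}$, and $\mathcal{J}_1$ is handled via the change of variable $k\mapsto u_\eps^\kappa(s,y)-k$, the same two-fold add-and-subtract decomposition, the Lipschitz bound $|\beta_\xi'|\le 1$, the Cauchy--Schwarz/temporal-continuity estimate \eqref{inq:average-time-cont-l2-} for the $C(\eps,\kappa)\sqrt{\delta_0}$ term, and $\int_{\R}|k|J_l(k)\,dk\le l$ for the $Cl$ term. No gaps.
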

Now we focus on the term $(\mathcal{I}_2 + \mathcal{J}_2).$ Since $\beta$ and $J_l$ are even functions, we can observe that,
\begin{align}
    \mathcal{I}_2 + \mathcal{J}_2 &= \,\mathbb{E} \Big[\int_{\mathbb{Q}_T^2}\int_\mathbb{R}\beta(u_{\Delta t}(t,x) -k)\partial_t\psi(t,x)\rho_{\delta_0}(t-s)\varrho_{\delta}(x-y)J_l(u_\eps^\kappa(s,y) - k)\,dk\,ds\,dt\,dx\,dy  \Big]\notag\\
    &= \,\mathbb{E} \Big[\int_{\mathbb{Q}_T^2}\int_\mathbb{R} \Big(\beta(u_{\Delta t}(t,x) - u_\eps^\kappa(s,y)  + k) - \beta(u_{\Delta t}(t,x) - u_\eps^\kappa(t,y) + k)\Big)\notag \\&\hspace{4cm}\times \partial_t\psi(t,x)\rho_{\delta_0}(t-s)\varrho_{\delta}(x-y)J_l(k)\,dk\,ds\,dt\,dx\,dy \Big]\notag\\
    &+\mathbb{E}\Big[\int_{\mathbb{Q}_T}\int_{\mathbb{R}^d}\int_\mathbb{R} \beta(u_{\Delta t}(t,x) - u_\eps^\kappa(t,y) +k)\partial_t\psi(t,x)\notag \\& \hspace{4cm}\times\Big(\int_0^T\rho_{\delta_0} (t-s)ds -1\Big)\varrho_{\delta}(x-y)J_l(k)\,dk\,dt\,dx\,dy  \Big]\notag\\
    &+ \mathbb{E}\Big[\int_{\mathbb{Q}_T}\int_{\mathbb{R}^d}\int_\mathbb{R}\Big(\beta(u_{\Delta t}(t,x) - u_\eps^\kappa(t,y) + k )- \beta(u_{\Delta t}(t,x) - u_\eps^\kappa(t,y)\Big)\notag \\&\hspace{4cm}\times\partial_t\psi(t,x)\varrho_{\delta}(x-y)J_l(k)\,dk\,dt\,dx\,dy  \Big]\notag\\
    &+ \mathbb{E}\Big[\int_{\mathbb{Q}_T}\int_{\mathbb{R}^d}\beta(u_{\Delta t}(t,x) - u_\eps^\kappa(t,y))\partial_t\psi(t,x)\varrho_{\delta}(x-y)\,dt\,dx\,dy  \Big]\notag\\
    &\,\le \mathbb{E} \Big[\int_{\mathbb{Q}_T^2}| u_\eps^\kappa(t,y) - u_\eps^\kappa(s,y)|\,\rho_{\delta_0}(t-s)\, |\partial_t\psi(t,x)|\varrho_{\delta}(x-y)\,ds\,dt\,dx\,dy \Big]\notag\\
    &+\mathbb{E}\Big[\int_{t= T-\delta_0}^{T}\int_{\mathbb{R}^d}\int_{\mathbb{R}^d}\int_\mathbb{R} |u_{\Delta t}(t,x) - u_\eps^\kappa(t,y) +k|\,|\partial_t\psi(t,x)| \varrho_{\delta}(x-y)J_l(k)\,dk\,dt\,dx\,dy  \Big]\notag\\
    &+ \mathbb{E}\Big[\int_{\mathbb{Q}_T}\int_{\mathbb{R}^d}\int_\mathbb{R}|k|\,|\partial_t\psi(t,x)|\varrho_{\delta}(x-y)J_l(k)\,dk\,dt\,dx\,dy  \Big]\notag\\
    &+ \mathbb{E}\Big[\int_{\mathbb{Q}_T}\int_{\mathbb{R}^d}\beta(u_{\Delta t}(t,x) - u_\eps^\kappa(t,y))\partial_t\psi(t,x)\varrho_{\delta}(x-y)\,dt\,dx\,dy  \Big]\notag\\
    &=:\mathcal{M}_{2,1}+ \mathcal{M}_{2,2} + Cl + \mathbb{E}\Big[\int_{\mathbb{Q}_T}\int_{\mathbb{R}^d}\beta(u_{\Delta t}(t,x) - u_\eps^\kappa(t,y))\partial_t\psi(t,x)\varrho_{\delta}(x-y)\,dt\,dx\,dy  \Big]. \notag
    \end{align}
    One can use \eqref{esti:time-cont-viscous-l1} to bound $\mathcal{M}_{2,1}$ as
    \begin{align*}
    \mathcal{M}_{2,1} \le \|\partial_t \psi\|_{L^\infty(\mathbb{Q}_T)} \mathbb{E}\left[\int_0^T \int_0^T \int_{\bar{K}} | u_\eps^\kappa(t,y) - u_\eps^\kappa(s,y)|\,\rho_{\delta_0}(t-s)\,dy\,ds\,dt\right] \le C(\psi, \eps,\kappa) \sqrt{\delta_0}. 
    \end{align*}
By using  Lemma \ref{lem:l-infinity bound-approximate-solution}, Cauchy-Schwartz inequality, the property of convolution together with \eqref{inq:uniform-1st-viscous}, we have
\begin{align*}
   \mathcal{M}_{2,2}& \le \|\partial_t \psi\|_{L^\infty(\mathbb{Q}_T)} \left( \mathbb{E}\left[ \int_{t= T-\delta_0}^{T}\int_{\mathbb{R}^d}\int_{\mathbb{R}^d}
    |u_{\Delta t}(t,x) - u_\eps^\kappa(t,y)| \varrho_{\delta}(x-y)\,dx\,dy\,dt\right] + l \delta_0 \right)\notag \\
    & \le \|\partial_t \psi\|_{L^\infty(\mathbb{Q}_T)} \left( C\widetilde{M}\delta_0 + \mathbb{E}\left[ \int_{t= T-\delta_0}^{T}\int_{\bar{K}} |u_\eps^\kappa(t,y)|\,dy\,dt\right]
    + l\,\delta_0\right) \notag \\
    & \le \|\partial_t \psi\|_{L^\infty(\mathbb{Q}_T)} \left( C\widetilde{M}\delta_0  + C \sqrt{\delta_0} \left\{ \sup_{0\le t\le T} \mathbb{E}\left[
    \|u_\eps^\kappa(t)\|_{L^2(\R^d)}^2\right]\right\}^\frac{1}{2} + l\delta_0 \right) \notag \\
    &  \le \|\partial_t \psi\|_{L^\infty(\mathbb{Q}_T)} \left( C\widetilde{M}\delta_0  + C \sqrt{\delta_0} \left\{ \sup_{0\le t\le T} \mathbb{E}\left[
    \|u_\eps(t)\|_{L^2(\R^d)}^2\right]\right\}^\frac{1}{2} + l\delta_0 \right) \le C \sqrt{\delta_0}\,.
\end{align*}

    We summarize the above result in the following lemma.
\begin{lem}\label{lem 2}
\begin{align}
    \mathcal{I}_2 + \mathcal{J}_2 &\le \mathbb{E}\Big[\int_{\mathbb{Q}_T}\int_{\mathbb{R}^d}\beta(u_{\Delta t}(t,x) - u_\eps^\kappa(t,y))\partial_t\psi(t,x)\varrho_{\delta}(x-y)\,dx\,dt\,dy  \Big] + Cl + C(\eps, \kappa) \sqrt{\delta_0}\,.\notag
\end{align}
\end{lem}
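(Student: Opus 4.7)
The plan is to combine $\mathcal{I}_2$ and $\mathcal{J}_2$ using the specific product structure of $\varphi_{\delta_0,\delta}(t,x,s,y)=\rho_{\delta_0}(t-s)\varrho_\delta(x-y)\psi(t,x)$. Since $\partial_s\rho_{\delta_0}(t-s)=-\partial_t\rho_{\delta_0}(t-s)$, the two contributions coming from the $\rho_{\delta_0}$-derivatives should cancel, leaving only the factor $\partial_t\psi(t,x)\rho_{\delta_0}(t-s)\varrho_\delta(x-y)$ in both expressions. More precisely, I would rewrite the sum so that only the $\mathcal{J}_2$ form remains, with the entropy $\beta(u_{\Delta t}(t,x)-k)$ paired against $J_l(u_\eps^\kappa(s,y)-k)$ and integrated against $\partial_t\psi\cdot \rho_{\delta_0}\cdot\varrho_\delta$.

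Next, to remove the $k$-convolution I would exploit the evenness of $\beta$ and $J_l$ to perform the shift $k\mapsto u_\eps^\kappa(s,y)+k$ (or equivalently use the approximate-identity property of $J_l$ in the $k$ variable). Writing
\[
\beta(u_{\Delta t}(t,x)-u_\eps^\kappa(s,y))
=\beta(u_{\Delta t}(t,x)-u_\eps^\kappa(t,y))
+\bigl[\beta(u_{\Delta t}(t,x)-u_\eps^\kappa(s,y))-\beta(u_{\Delta t}(t,x)-u_\eps^\kappa(t,y))\bigr],
\]
and using $\int\rho_{\delta_0}(t-s)\,ds=1$ for $t\le T-\delta_0$, I split the sum into the desired main term (the first one in the lemma statement), a ``boundary'' remainder supported in $t\in[T-\delta_0,T]$, a Lipschitz-in-$k$ remainder of size $O(l)$ from $J_l$, and a time-continuity remainder involving $|u_\eps^\kappa(t,y)-u_\eps^\kappa(s,y)|$.

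The controls are then straightforward: since $\beta$ is $1$-Lipschitz, the time-continuity remainder is bounded by
\[
\|\partial_t\psi\|_{L^\infty}\,\mathbb{E}\!\int_0^T\!\!\int_0^T\!\!\int_{\bar K} |u_\eps^\kappa(t,y)-u_\eps^\kappa(s,y)|\,\rho_{\delta_0}(t-s)\,dy\,ds\,dt,
\]
which, by Cauchy--Schwarz and the key average-time-continuity estimate in Lemma~\ref{lem:average-time-cont-viscous} (in its $L^1$ form \eqref{esti:time-cont-viscous-l1}), yields $C(\eps,\kappa)\sqrt{\delta_0}$. The $J_l$ remainder is $O(l)$ because $|k|J_l(k)$ integrates to $O(l)$, and the boundary remainder is handled by the $L^\infty$ bound on $u_{\Delta t}$ from Lemma~\ref{lem:l-infinity bound-approximate-solution}, the uniform $L^2$ bound on $u_\eps^\kappa$ from \eqref{inq:uniform-1st-viscous} together with Cauchy--Schwarz on the slab of length $\delta_0$, producing again $O(\sqrt{\delta_0})$.

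The main obstacle is the careful bookkeeping in the ``evenness trick'': one must ensure that after the $k$-shift and the cancellation of $\partial_t\rho_{\delta_0}+\partial_s\rho_{\delta_0}$, no stray term depending on $\partial_t\psi$ at both $(s,y)$ and $(t,y)$ is left uncontrolled. In particular, the compact support of $\psi$ is essential for replacing $\R^d$-integrals by integrals over a compact set $\bar K$ before invoking \eqref{inq:uniform-1st-viscous} and Lemma~\ref{lem:average-time-cont-viscous}; otherwise, the $C(\eps,\kappa)$-factor in the time-continuity estimate would not yield an integrable bound. Once these bookkeeping points are settled, the claimed inequality follows directly.
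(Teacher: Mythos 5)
Your proposal follows essentially the same route as the paper: the evenness of $\beta$ and $J_l$ combined with $\partial_s\rho_{\delta_0}(t-s)=-\partial_t\rho_{\delta_0}(t-s)$ collapses the sum to a single term with $\partial_t\psi$, after which the paper performs exactly your decomposition into the main term, a time-continuity remainder controlled by \eqref{esti:time-cont-viscous-l1} giving $C(\eps,\kappa)\sqrt{\delta_0}$, an $O(l)$ remainder from $J_l$, and a boundary term on $t\in[T-\delta_0,T]$ bounded via Lemma~\ref{lem:l-infinity bound-approximate-solution}, \eqref{inq:uniform-1st-viscous} and Cauchy--Schwarz. The argument is correct as proposed.
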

Next, we consider to estimate the terms coming from the associate flux function, i.e. the terms $\mathcal{I}_3$ and  $\mathcal{J}_3.$ We re-write the term $\mathcal{I}_3$ as
\begin{align}
\mathcal{I}_3 =\, &\mathbb{E} \Big [\int_{\mathbb{Q}_T^2}\int_\mathbb{R}\Big(F^\beta(u_\eps^\kappa(s,y), k)-F^\beta(u_\eps^\kappa(t,y), k)\Big)\cdot\nabla_y\varphi_{\delta_0, \delta}J_l(u_{\Delta t}(t,x) - k)\,dk\,ds\,dt\,dx\,dy \Big]\notag\\
+&\mathbb{E} \Big [\int_{\mathbb{Q}_T}\int_{\mathbb{R}^d} \int_{\R}F^\beta(u_\eps^\kappa(t,y), k)\cdot \nabla_y\varrho_\delta(x-y)\psi(t,x)\Big(\int_0^T\rho_{\delta_0}(t-s)ds -1\Big)J_l(u_{\Delta t}(t,x) - k)\,dk\,dt\,dx\,dy \Big]\notag\\
+&\mathbb{E} \Big [\int_{\mathbb{Q}_T}\int_{\mathbb{R}^d}\int_\mathbb{R}\Big(F^\beta(u_\eps^\kappa(t,y), u_{\Delta t}(t,x)-k) - F^\beta(u_\eps^\kappa(t,y), u_{\Delta t}(t,x))\Big)\notag\\&\hspace{6cm}\times \nabla_y\varrho_\delta(x-y)\psi(t,x)J_l(k)\,dk\,dt\,dx\,dy \Big]\notag\\
+&\mathbb{E} \Big [\int_{\mathbb{Q}_T}\int_{\mathbb{R}^d} F^\beta(u_\eps^\kappa(t,y), u_{\Delta t}(t,x)) \cdot \nabla_y\varrho_\delta(x-y)\psi(t,x)\,dt\,dx\,dy \Big]\notag\\
=:&\mathcal{I}_3^1 + \mathcal{I}_3^2 + \mathcal{I}_3^3 + \mathbb{E} \Big [\int_{\mathbb{Q}_T}\int_{\mathbb{R}^d} F^\beta(u_\eps^\kappa(t,y), u_{\Delta t}(t,x)) \cdot \nabla_y\varrho_\delta(x-y)\psi(t,x)\,dt\,dx\,dy \Big]\notag.
\end{align}
Note that, for all $a,b,c \in \mathbb{R}$ 
\begin{align}\label{eqlipF}
\begin{cases}
    |F^\beta(a,b) - F^\beta(c,b)| \le C|a-c|\,, \\
       |F^\beta(a,b) - F^\beta(a,c)| \le C|b-c|\big(1 + |a-b|\big).
       \end{cases}
\end{align}
By using \eqref{eqlipF} and \eqref{esti:time-cont-viscous-l1}, we have 

\begin{align}
    \mathcal{I}_3^1 \le \,&C\,\mathbb{E} \Big [\int_{\mathbb{Q}_T^2}\int_\mathbb{R}|u_\eps^\kappa(s,y)- u_\eps^\kappa(t,y)|\rho_{\delta_0}(t-s)|\nabla_y\varrho_{ \delta}(x-y)|\psi(t,x)J_l(k)\,dk\,ds\,dt\,dx\,dy \Big]\notag\\
    \le \,& C(\delta, \psi)\mathbb{E} \Big [ \int_0^T \int_0^T\int_{K_y}|u_\eps^\kappa(s,y)- u_\eps^\kappa(t,y)||\rho_{\delta_0}(t-s)\,dy\,ds\,dt \Big]
  \le C(\delta, \psi, \eps, \kappa) \sqrt{\delta_0}\,. \notag
\end{align}
Thanks to \eqref{eqlipF}, the property of convolution, the uniform estimates \eqref{l-infty-bound} and \eqref{inq:uniform-1st-viscous}, we bound $\mathcal{I}_3^2$ as follows. 
\begin{align}
    \mathcal{I}_3^2 \le \,&C\,\mathbb{E} \Big [\int_{\mathbb{Q}_T}\int_{\mathbb{R}^d} \int_{\R}\Big(|u_\eps^\kappa(t,y)| + |u_{\Delta t}(t,x) - k|\Big)\,|\nabla_y \varrho_\delta(x-y)|\,\psi(t,x)\notag \\&\hspace{6cm}\times\Big|\int_0^T\rho_{\delta_0}(t-s)ds -1\Big|J_l(k)\,dk\,dx\,dt\,dy \Big]\notag\\
    \le \,&  C(\delta, \psi)\,\mathbb{E} \Big [ \int_{\R}\int_{t = T - \delta_0}^T\int_{K_x}\int_{K_y}\Big(|u_\eps^\kappa(t,y)| + |u_{\Delta t}(t,x) - k|\Big)J_l(k)\,dy\,dx\,dt\,dk \Big]\notag\\
    \le \,&  C(\delta, \psi)\left[l +  \sqrt{\delta_0} \left( \left\{ \underset{0\le t\le T}{sup}\,\mathbb{E}\Big[||u_\eps^\kappa(t)||_{L^2(K_y)}^2\Big]\right\}^\frac{1}{2} +  \left\{\underset{0\le t\le T}{sup}\,\mathbb{E}\Big[||u_{\Delta t}(t,\cdot)||_{L^2(K_x)}^2\Big] \right\}^\frac{1}{2} \right) \right] \notag \\
    \le \,&  C(\delta, \psi)\left[l +  \sqrt{\delta_0} \left( \left\{ \underset{0\le t\le T}{sup}\,\mathbb{E}\Big[||u_\eps(t)||_{L^2(K_y)}^2\Big]\right\}^\frac{1}{2} +  \left\{\underset{0\le t\le T}{sup}\,\mathbb{E}\Big[||u_{\Delta t}(t,\cdot)||_{L^2(K_x)}^2\Big] \right\}^\frac{1}{2} \right) \right] \notag \\
    \le& C(\delta) \sqrt{\delta_0}(1+l)\,. \notag
\end{align}
Similarly, one can estimate $\mathcal{I}_3^3$ as
$$ \mathcal{I}_3^3\le C(\delta) l. $$
We can re-write $\mathcal{J}_3$ as follow.
\begin{align}
        \mathcal{J}_3 =\, &\mathbb{E} \Big [\int_{\mathbb{Q}_T^2}\int_\mathbb{R}\Big(F^\beta(u_{\Delta t}(t,x), u_\eps^\kappa(s,y) - k) - F^\beta(u_{\Delta t}(t,x), u_\eps^\kappa(t,y) - k)\Big). \nabla_x\varphi_{\delta_0, \delta}J_l(k)\,dk\,ds\,dt\,dx\,dy \Big]\notag\\
         +\,&\mathbb{E}\Big[\int_{\mathbb{Q}_T}\int_{\mathbb{R}^d}\int_\mathbb{R}F^\beta(u_{\Delta t}(t,x), u_\eps^\kappa(t,y) - k)\Big(\int_0^T\rho_{\delta_0}(t-s)ds - 1 \Big) 
         \notag\\&\hspace{6cm}\times\nabla_x(\varrho_\delta(x-y)\psi(t,x))J_l(k)dk\,dx\,dt\,dy \Big]\notag\\ 
         +\,&\mathbb{E} \Big [\int_{\mathbb{Q}_T}\int_{\mathbb{R}^d}\int_\mathbb{R}\Big(F^\beta(u_{\Delta t}(t,x), u_\eps^\kappa(t,y) - k) - F^\beta(u_{\Delta t}(t,x), u_\eps^\kappa(t,y))\Big)\notag\\&\hspace{6cm}\times \nabla_x(\varrho_\delta(x-y)\psi(t,x))J_l(k)dk\,dx\,dt\,dy \Big]\notag\\
         +\,&\mathbb{E} \Big [\int_{\mathbb{Q}_T} \int_{\mathbb{R}^d}F^\beta(u_{\Delta t}(t,x), u_\eps^\kappa(t,y)). \nabla_x(\varrho_\delta(x-y)\psi(t,x))\,dx\,dt\,dy \Big]\notag\\
         =: \,& \mathcal{J}_3^1 + \mathcal{J}_3^2 + \mathcal{J}_3^3 + \mathbb{E} \Big [\int_{\mathbb{Q}_T} \int_{\mathbb{R}^d}F^\beta(u_{\Delta t}(t,x), u_\eps^\kappa(t,y)). \nabla_x(\varrho_\delta(x-y)\psi(t,x))\,dx\,dt\,dy \Big]\,. \notag 
 \end{align}
 We use \eqref{eqlipF}, the property of convolution, the uniform estimates \eqref{l-infty-bound} and \eqref{inq:uniform-1st-viscous},  \eqref{esti:time-cont-viscous-l1} and \eqref{eq:time continuity of uek} together with Cauchy-Schwartz inequality to have 
 \begin{align}
     \mathcal{J}_3^1 \le \,&C\,\mathbb{E} \Big [\int_{\mathbb{Q}_T^2}\int_\mathbb{R}|u_\eps^\kappa(t,y) - u_\eps^\kappa(s,y)|\Big( 1 + |u_{\Delta t}(t,x)| + |u_\eps^\kappa(s,y) - k|\Big)|\nabla_x\varphi_{\delta_0, \delta}|J_l(k)\,dk\,ds\,dt\,dx\,dy \Big]\notag\\
     \le\, & C(1 + l)\, \mathbb{E} \Big [\int_{\mathbb{Q}_T^2}|u_\eps^\kappa(t,y) - u_\eps^\kappa(s,y)|\rho_{\delta_0}(t-s) |\nabla_x[\varrho_{\delta}(x-y)\psi(t,x)]|\,ds\,dt\,dx\,dy \Big]\notag \\
     +\, &C\, \Big(\mathbb{E} \Big [\int_{\mathbb{Q}_T^2}|u_\eps^\kappa(t,y) - u_\eps^\kappa(s,y)|^2\rho_{\delta_0}(t-s)|\nabla_x(\varrho_{\delta}(x-y)\psi(t,x))|^2\,ds\,dt\,dx\,dy \Big]\Big)^{\frac{1}{2}}\notag \\&\hspace{2cm}\times\Big(\mathbb{E} \Big [\int_{\mathbb{Q}_T^2}\Big(|u_\eps^\kappa(s,y)|^2 +|u_{\Delta t}(t,x)|^2\Big)\rho_{\delta_0}(t-s) |\nabla_x(\varrho_{\delta}(x-y)\psi(t,x))|^2\,ds\,dt\,dx\,dy \Big]\Big)^{\frac{1}{2}}\notag\\
     \le &\, C(\delta, \psi)(1 + l)\,\mathbb{E} \Big [\int_{K_y \times [0, T]}\int_0^T|u_\eps^\kappa(t,y) - u_\eps^\kappa(s,y)|\rho_{\delta_0}(t-s)\,ds\,dt\,\,dy \Big]\notag\\
      &\hspace{1cm} +\, C(\delta, \psi)\,\Big(\mathbb{E} \Big [\int_{K_y}\int_0^T\int_0^T|u_\eps^\kappa(t,y) - u_\eps^\kappa(s,y)|^2\rho_{\delta_0}(t-s)ds\,dt\,dy \Big]\Big)^{\frac{1}{2}}\notag\\&\hspace{4cm}\times\Big(\underset{0\le s\le T}{sup} \,\mathbb{E}\Big[||u_\eps(s,\cdot)||_{L^2(K_y)}^2\Big] + \underset{0\le t\le T}{sup} \,\mathbb{E}\Big[||u_{\Delta t}(t,\cdot)||_{L^2(K_x)}^2\Big]\Big)^{\frac{1}{2}}\notag\\
     \le &C(\delta,\eps, \kappa)(1+l)\sqrt{\delta_0}\,.\notag 
      \end{align}
 Using a similar lines of argument as done for $\mathcal{I}_3^2$, one can get
 $$\mathcal{J}_3^2 +  \mathcal{J}_3^3  \le C(\delta) \left( \sqrt{\delta_0}(1+l) + l\right)\,.$$
 Since $\Big|F^\beta(x,y)-|y-x|\Big| \le C\xi$, we have
  $$\mathbb{E} \,\Big [\int_{\mathbb{Q}_T} \int_{\mathbb{R}^d}\Big(F^\beta(u_{\Delta t}(t,x), u_\eps^\kappa(t,y))-F^\beta(u_\eps^\kappa(t,y), u_{\Delta t}(t,x)\Big) \cdot \nabla_x\varrho_\delta(x-y)\,\psi(t,x)\,dx\,dt\,dy \Big] \le C\xi\,.$$
 Thus, we have the following lemma.
 \begin{lem}\label{lem3}
 \begin{align}
     \mathcal{I}_3 + \mathcal{J}_3 \le 
      \,& \mathbb{E} \Big [\int_{\mathbb{Q}_T} \int_{\mathbb{R}^d}F^\beta(u_{\Delta t}(t,x), u_\eps^\kappa(t,y))  \cdot \nabla_x\psi(t,x) \varrho_\delta(x-y)\,dx\,dt\,dy \Big]  \notag \\
      & \quad + C(\delta, \eps, \kappa)\left( (1+l)\sqrt{\delta_0} + l \right) + C\xi\, .\notag
 \end{align}
 \end{lem}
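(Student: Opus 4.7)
The plan is to mirror the strategy already used for Lemmas \ref{lem1} and \ref{lem 2}: write each of $\mathcal{I}_3$ and $\mathcal{J}_3$ as a telescoping sum that peels off three controlled error pieces, so that what survives is one clean ``diagonal'' term; then combine the two surviving terms via the symmetry of $F^\beta$ and an integration by parts in the mollifier $\varrho_\delta$.

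For $\mathcal{I}_3$ I would insert intermediate quantities in three successive steps: (i) replace $u_\eps^\kappa(s,y)$ by $u_\eps^\kappa(t,y)$ in the first slot of $F^\beta$; (ii) use that $\int_0^T\rho_{\delta_0}(t-s)\,ds=1$ except on a time-slab of width $\delta_0$ at $t=T$; (iii) drop the $k$ in the second slot of $F^\beta$. Step (i) is controlled by the first Lipschitz estimate in \eqref{eqlipF} together with the $L^1$ average time continuity \eqref{esti:time-cont-viscous-l1}, producing an error $C(\delta,\eps,\kappa)\sqrt{\delta_0}$; step (ii) is controlled by the uniform $L^2$ bound \eqref{inq:uniform-1st-viscous} for $u_\eps$ and the $L^\infty$ bound \eqref{l-infty-bound} for $u_{\Delta t}$ on the slab, giving $C(\delta)(1+l)\sqrt{\delta_0}$; step (iii) uses the $J_l$ support to produce $O(l)$. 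I would carry out the exact same three-step decomposition for $\mathcal{J}_3$; the only difference is that in step (i) the second Lipschitz inequality in \eqref{eqlipF} yields an extra factor $1+|u_{\Delta t}(t,x)|+|u_\eps^\kappa(s,y)-k|$, which I would split off by Cauchy--Schwarz: the growth factor is absorbed by the $L^2$ a-priori bounds on $u_\eps$ and $u_{\Delta t}$, while the other factor is the $L^2$ time-continuity of $u_\eps^\kappa$ from Lemma \ref{lem:average-time-cont-viscous}, yielding $C(\delta,\eps,\kappa)(1+l)\sqrt{\delta_0}$.

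After these reductions two leading terms survive:
\begin{align*}
\mathbb{E}\!\!\int_{\mathbb{Q}_T}\!\!\int_{\R^d}\!\! F^\beta(u_\eps^\kappa(t,y),u_{\Delta t}(t,x))\cdot\nabla_y\varrho_\delta(x-y)\,\psi(t,x)\,dy\,dx\,dt,\\
\mathbb{E}\!\!\int_{\mathbb{Q}_T}\!\!\int_{\R^d}\!\! F^\beta(u_{\Delta t}(t,x),u_\eps^\kappa(t,y))\cdot\nabla_x\bigl[\varrho_\delta(x-y)\psi(t,x)\bigr]\,dx\,dt\,dy.
\end{align*}
Using $\nabla_y\varrho_\delta(x-y)=-\nabla_x\varrho_\delta(x-y)$, the $\nabla_x\varrho_\delta$ parts combine to $\bigl[F^\beta(u_{\Delta t},u_\eps^\kappa)-F^\beta(u_\eps^\kappa,u_{\Delta t})\bigr]\cdot\nabla_x\varrho_\delta\,\psi$, whose integrand is $O(\xi)$ because for $\beta=\beta_\xi$ both $F^\beta(a,b)$ and $F^\beta(b,a)$ are within $C\xi$ of the Kru\v{z}kov flux $\mathrm{sgn}(a-b)(f(a)-f(b))$; this contributes the $C\xi$ in the stated bound. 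The remaining $\varrho_\delta\nabla_x\psi$ piece of $\mathcal{J}_3$ is precisely the displayed main term of the lemma.

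The step I expect to be the main obstacle is the fine bookkeeping in $\mathcal{J}_3^1$: because $F^\beta$ is only Lipschitz in its second argument with a factor $1+|a-b|$, one must carefully separate a bounded multiplier from the time-continuity factor so that Cauchy--Schwarz yields $\sqrt{\delta_0}$ uniformly in $(\xi,l)$; a careless handling would produce a quadratic factor in $u_\eps^\kappa$ that is not controlled by \eqref{inq:uniform-1st-viscous} without the $L^2$-time-continuity \eqref{eq:time continuity of uek}. The symmetry/cancellation step is routine once the error accounting is done correctly.
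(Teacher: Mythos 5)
Your proposal is correct and follows essentially the same route as the paper: the same three-step telescoping decomposition of $\mathcal{I}_3$ and $\mathcal{J}_3$ (time-shift in one slot of $F^\beta$ via \eqref{eqlipF} and the time-continuity estimates, the $\rho_{\delta_0}$-deficiency slab, and dropping $k$ via the $J_l$ support), followed by combining the two surviving $\nabla\varrho_\delta$ terms through $\nabla_y\varrho_\delta=-\nabla_x\varrho_\delta$ and the bound $\bigl|F^\beta(a,b)-|a-b|\bigr|\le C\xi$ to produce the $C\xi$ error and leave the $\varrho_\delta\nabla_x\psi$ main term. Your flagged concern about $\mathcal{J}_3^1$ is exactly how the paper handles it, namely by Cauchy--Schwarz splitting the growth factor against the $L^2$ time-continuity \eqref{eq:time continuity of uek}.
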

 Next, we shift our attention to the non-local terms. $\mathcal{I}_4$ can be re-arranged as follows.
\begin{align}
       \mathcal{I}_4 = &- \mathbb{E} \Big[\int_{\mathbb{Q}_T^2} \int_{\mathbb{R}}\Big(\mathcal{L}_\theta^{\bar{r}}[u_\eps^\kappa(s,\cdot)](y)-\mathcal{L}_\theta^{\bar{r}}[u_\eps^\kappa(t,\cdot)](y)\Big)\beta'(u_\eps^\kappa(s,y) - u_{\Delta t}(t,x) + k)\varphi_{\delta_0, \delta}J_l(k)\,dk\,ds\,dt\,dy\,dx \Big]\notag\\
       &-\mathbb{E} \Big[\int_{\mathbb{Q}_T^2} \int_{\mathbb{R}}\mathcal{L}_\theta^{\bar{r}}[u_\eps^\kappa(t,\cdot)](y)\Big(\beta'(u_\eps^\kappa(s,y) - u_{\Delta t}(t,x) + k)) - \beta'(u_\eps^\kappa(t,y) - u_{\Delta t}(t,x) + k))\Big)\notag\\&\hspace{7cm}\times \varphi_{\delta_0, \delta}J_l(k)\,dk\,ds\,dt\,dy\,dx \Big]\notag\\
       &-\mathbb{E} \Big[\int_{\mathbb{Q}_T}\int_{\mathbb{R}^d} \int_{\mathbb{R}}\mathcal{L}_\theta^{\bar{r}}[u_\eps^\kappa(t,\cdot)](y)\beta'(u_\eps^\kappa(t,y) - u_{\Delta t}(t,x) + k)\notag\\& \hspace{5cm} \times \Big(\int_0^T\rho_{\delta_0}(t-s)ds -1\Big) \varrho_{ \delta}(x-y)\psi(t,x)J_l(k)\,dk\,dy\,dt\,dx \Big]\notag\\
       &-\mathbb{E} \Big[\int_{\mathbb{Q}_T}\int_{\mathbb{R}^d} \int_{\mathbb{R}}\mathcal{L}_\theta^{\bar{r}}[u_\eps^\kappa(t,\cdot)](y)\Big(\beta'(u_\eps^\kappa(t,y)- u_{\Delta t}(t,x) + k) - \beta'(u_\eps^\kappa(t,y) - u_{\Delta t}(t,x))\Big)\notag\\& \hspace{7cm}\times\varrho_{ \delta}(x-y)\psi(t,x)J_l(k)\,dk\,dy\,dt\,dx \Big]\notag\\
       &-\mathbb{E} \Big[\int_{\mathbb{Q}_T}\int_{\mathbb{R}^d} \mathcal{L}_\theta^{\bar{r}}[u_\eps^\kappa(t,\cdot)](y)\beta'(u_\eps^\kappa(t,y) - u_{\Delta t}(t,x))\varrho_{ \delta}(x,y)\psi(t,x)\,dy\,dt\,dx \Big]\notag\\
       &=: \mathcal{I}_4^1 +  \mathcal{I}_4^2 + \mathcal{I}_4^3 +  \mathcal{I}_4^4 + \mathcal{I}_4^5\,. 
       \notag
       \end{align}
    Let us consider the term $\mathcal{I}_4^1$. Note that  for any positive $\bar{r}$,
    \begin{align}
   z\mapsto \frac{1_{|z|\ge \bar{r}}}{|z|^{d+2\theta}} \in L^p(\R^d)\quad (1\le p\le \infty); \quad \text{i.e,}~~\int_{|z| > \bar{r}} \frac{dz}{|z|^{p(d+2\theta)}}\le C(p, \bar{r})\,. \label{inq:bound-frac-z}
    \end{align}
In view of triangle inequality, \eqref{esti:time-cont-viscous-l1}  and \eqref{inq:bound-frac-z}, 
    we obtain
    \begin{align}
        \mathcal{I}_4^1 = &-\mathbb{E} \Big[\int_{\mathbb{Q}_T^2} \int_{\mathbb{R}}\Big(u_\eps^\kappa(s,y)-u_\eps^\kappa(t,y)\Big)\mathcal{L}_\theta^{\bar{r}}[\beta'(u_\eps^\kappa(s,\cdot) - u_{\Delta t}(t,x) + k)\varrho_\delta(x-\cdot)](y)\notag\\&\hspace{6cm }\times\rho_{\delta_0}(t-s)\psi(t,x)J_l(k)\,dk\,ds\,dt\,dy\,dx \Big]\notag\\
        \le \,&C(\beta')\,\mathbb{E} \Big[\int_{\mathbb{Q}_T^2} \int_{\mathbb{R}}|u_\eps^\kappa(s,y)-u_\eps^\kappa(t,y)|\int_{|z| > \bar{r}}\frac{\varrho_\delta(x-y)+\varrho_\delta(x-(y-z))}{|z|^{d+2\theta}}\,dz\notag\\&\hspace{6cm }\times\rho_{\delta_0}(t-s)\psi(t,x)J_l(k)\,dk\,ds\,dt\,dy\,dx \Big]\notag\\
        \le \,&C(\beta', \bar{r}, \psi, \delta )\,\mathbb{E} \Big[\int_0^T\int_0^T\int_{\bar{K}_y} |u_\eps^\kappa(s,y)-u_\eps^\kappa(t,y)|\rho_{\delta_0}(t-s)\,dy\,dt\,ds \Big]\notag\\
        \le \,&C(\bar{r},\delta, \kappa, \eps) \sqrt{\delta_0}.\notag
    \end{align}
Next we estimate the term $\mathcal{I}_4^2$. Using triangle inequality, \eqref{inq:bound-frac-z}, Cauchy-Schwartz inequality, convolution property, Lemma \ref{lem:average-time-cont-viscous} together with \eqref{inq:uniform-1st-viscous}, we see that
    \begin{align}
       \mathcal{I}_4^2 = \,&C(\beta'')\,\mathbb{E} \Big[\int_{\mathbb{Q}_T^2} |\mathcal{L}_\theta^{\bar{r}}[u_\eps^\kappa(t,\cdot)](y)|\,|u_\eps^\kappa(s,y)-u_\eps^\kappa(t,y)|\rho_{\delta_0}(t-s)\varrho_\delta(x-y)\psi(t,x)\,ds\,dt\,dy\,dx \Big]\notag\\
       \le  &C(\beta'', \psi)\,\mathbb{E} \Big[\int_0^T \int_0^T\int_{K_y} |\mathcal{L}_\theta^{\bar{r}}[u_\eps^\kappa(t,\cdot)](y)|\,|u_\eps^\kappa(s,y)-u_\eps^\kappa(t,y)|\rho_{\delta_0}(t-s)\,dy\,dt\,ds \Big]\notag\\
       \le &C(\beta'', \psi)\,\mathbb{E} \Big[\int_0^T\int_{\mathbb{Q}_T} |u_\eps^\kappa(t,y)|\,|u_\eps^\kappa(s,y)-u_\eps^\kappa(t,y)|\rho_{\delta_0}(t-s)\Big(\int_{|z| > \bar{r}}\frac{1}{|z|^{d+2\theta}}\,dz\Big)\,ds\,dy\,dt \Big]\notag\\
       +&C(\beta'', \psi)\,\mathbb{E} \Big[\int_0^T\int_0^T\int_{K_y} ||u_\eps^\kappa(t,\cdot)||_{L^2(\mathbb{R}^d)}\,|u_\eps^\kappa(s,y)-u_\eps^\kappa(t,y)|\rho_{\delta_0}(t-s)\notag\\&\hspace{8cm}\times\Big(\int_{|z|> \bar{r}}\notag\Big(\frac{1}{|z|^{1+2\theta}}\Big)^2\,dz\Big)^{\frac{1}{2}}\,dy\,dt\,ds \Big]\Big)\notag\\
       \le &C(\beta'', \psi, \bar{r})\,\mathbb{E} \Big[\int_0^T\int_{\mathbb{Q}_T} |u_\eps^\kappa(t,y)|\,|u_\eps^\kappa(s,y)-u_\eps^\kappa(t,y)|\rho_{\delta_0}(t-s)\,dy\,dt\,ds \Big]\notag\\
       +&C(\beta'', \psi, \bar{r})\,\mathbb{E} \Big[\int_0^T\int_0^T\int_{K_y} ||u_\eps^\kappa(t,\cdot)||_{L^2(\mathbb{R}^d)}\,|u_\eps^\kappa(s,y)-u_\eps^\kappa(t,y)|\rho_{\delta_0}(t-s)\,dy\,dt\,ds \Big]\Big)\notag\\
       \le 
        &\,C(\beta'', \psi, \bar{r})\Big(\underset{t}{sup}\,\mathbb{E}\Big[||u_\eps(t,\cdot)||_{L^2(\mathbb{R}^d)}^2\Big]\Big)^\frac{1}{2}\Big(\mathbb{E}\Big[\int_0^T\int_0^T\int_{K_y} |u_\eps^\kappa(s,y)-u_\eps^\kappa(t,y)|^2\rho_{\delta_0}(t-s)\,dy\,dt\,ds \Big]\Big)^{\frac{1}{2}}\notag\\
       \le &  C(\xi,\bar{r}, \kappa, \eps) \sqrt{\delta_0}.\notag
    \end{align}
    For $\mathcal{I}_4^3$ and $\mathcal{I}_4^4$, we use triangle inequality, \eqref{inq:bound-frac-z}, Cauchy-Schwartz inequality, convolution property, and  \eqref{inq:uniform-1st-viscous} to have 
    \begin{align}
    \mathcal{I}_4^3  \le  &\,C(\beta', |\psi|)\,\mathbb{E} \Big[\int_{t= T-\delta_0}^T\int_{K_y} |\mathcal{L}_\theta^{\bar{r}}[u_\eps^\kappa(t,\cdot)](y) \,dy\,dt \Big]\notag \le  C(\bar{r}) \sqrt{\delta_0}, \notag \\
        \mathcal{I}_4^4 \le &C(\xi, |\psi|)\,\mathbb{E} \Big[\int_{\mathbb{Q}_T} \int_{\mathbb{R}}|k|\,|\mathcal{L}_\theta^{\bar{r}}[u_\eps^\kappa(t,\cdot)](y)|\,J_l(k)\,dk\,dt\,dy \Big] \le C(\bar{r}, \xi)\,l\,.\notag
    \end{align}
    We rewrite $\mathcal{J}_4$ as follows.  
\begin{align}
         \mathcal{J}_4 =&- \mathbb{E} \Big [\int_{\mathbb{Q}_T^2}\int_\mathbb{R} \mathcal{L}_\theta^{\bar{r}}[u_{\Delta t}(t,\cdot)](x)\Big(\beta'(u_{\Delta t}(t,x) - u_\eps^\kappa(s,y) + k)- \beta'(u_{\Delta t}(t,x) - u_\eps^\kappa(t,y) + k)\Big)\notag\\ &\hspace{7cm} \times \varphi_{\delta_0, \delta}J_l(k)\,dk\,ds\,dt\,dx\,dy \Big]\notag\\
         &- \mathbb{E} \Big [\int_{\mathbb{Q}_T}\int_{\mathbb{R}^d}\int_\mathbb{R} \mathcal{L}_\theta^{\bar{r}}[u_{\Delta t}(t,\cdot)](x)\beta'(u_{\Delta t}(t,x) - u_\eps^\kappa(t,y) + k)\notag\\ &\hspace{5cm} \times \Big(\int_0^T\rho_{\delta_0}(t-s)ds -1 \Big) \varrho_{\delta}(x-y)\psi(t,x)J_l(k)\,dk\,dx\,dt\,dy \Big]\notag\\
         & -\mathbb{E} \Big [\int_{\mathbb{Q}_T}\int_{\mathbb{R}^d}\int_\mathbb{R} \mathcal{L}_\theta^{\bar{r}}[u_{\Delta t}(t,\cdot)](x)\Big(\beta'(u_{\Delta t}(t,x) - u_\eps^\kappa(t,y) +k) - \beta'(u_{\Delta t}(t,x) - u_\eps^\kappa(t,y) )\Big)\notag\\ &\hspace{7cm} \times  \varrho_{\delta}(x-y)\psi(t,x)J_l(k)dk\,dx\,dt\,dy \Big]\notag\\
         &-\mathbb{E} \Big [\int_{\mathbb{Q}_T}\int_{\mathbb{R}^d} \mathcal{L}_\theta^{\bar{r}}[u_{\Delta t}(t,\cdot)](x)  \beta'(u_{\Delta t}(t,x) - u_\eps^\kappa(t,y)) \varrho_{\delta}(x-y)\psi(t,x)\,dx\,dt\,dy \Big] \notag \\
         &=:\sum_{i=1}^4\mathcal{J}_4^i \,. \notag
\end{align}
Next we estimate $\mathcal{J}_4^1.$ By using  \eqref{inq:bound-frac-z}, Lemma \ref{lem:average-time-cont-viscous} along with uniform bound of approximate solutions $u_{\Delta t}(t,x)$, we obtain
\begin{align}
    \mathcal{J}_4^1 \le & \,C(\beta'')\,\mathbb{E} \Big [\int_{\mathbb{Q}_T^2}\int_\mathbb{R} ||u_{\Delta t}(t,\cdot)||_{L^\infty(\mathbb{R}^d)}\,|u_\eps^\kappa(t,y) - u_\eps^\kappa(s,y)|\,\rho_{\delta_0}(t-s)\notag\\&\hspace{3cm}\times \varrho_\delta(x-y)\psi(t,x)\Big|\int_{|z| > \bar{r}}\frac{1}{|z|^{d+2\theta}}dz\Big|J_l(k)\,dk\,ds\,dt\,dx\,dy \Big]\notag\\
    \le & \,C(\beta'', \psi, \bar{r}, \widetilde{M})\,\mathbb{E} \Big [\int_0^T\int_0^T\int_{K_y}|u_\eps^\kappa(t,y) - u_\eps^\kappa(s,y)|\,\rho_{\delta_0}(t-s)\,dy\,ds\,dt \Big]\notag\\
    \le &C(\xi, \bar{r}, \eps, \kappa) \sqrt{\delta_0}.\notag
\end{align}
Similarly, we have 
$$\mathcal{J}_4^2 \le C(\bar{r})\delta_0, \hspace{.5cm}\text{and}\hspace{.5cm} \mathcal{J}_4^3 \le C(\xi, \bar{r})\,l.$$
Since $\beta^\prime$ is odd function, one has 
\begin{align}
    \mathcal{I}_4^5 + \mathcal{I}_4^4 = &-\mathbb{E} \Big[\int_{\mathbb{Q}_T}\int_{\mathbb{R}^d} \mathcal{L}_\theta^{\bar{r}}[u_\eps^\kappa(t,\cdot)](y)\beta'(u_\eps^\kappa(t,y) - u_{\Delta t}(t,x))\varrho_{ \delta}(x,y)\psi(t,x)\,dy\,dt\,dx \Big]\notag\\ &+\mathbb{E} \Big [\int_{\mathbb{Q}_T}\int_{\mathbb{R}^d} \mathcal{L}_\theta^{\bar{r}}[u_{\Delta t}(t,\cdot)](x)  \beta'(u_\eps^\kappa(t,y) - u_{\Delta t}(t,x)) \varrho_{\delta}(x-y)\psi(t,x)\,dx\,dt\,dy \Big]\notag\\
    = &\,  \mathbb{E} \Big [\int_{\mathbb{Q}_T}\int_{\mathbb{R}^d} \Big[\int_{|z|>\bar{r}}\frac{u_\eps^\kappa(t,y+z)-u_\eps^\kappa(t,y)}{|z|^{d+2\theta}}dz-\int_{|z|>\bar{r}}\frac{u_{\Delta t}(t,x+z)-u_{\Delta t}(t,x)}{|z|^{d+2\theta}}dz\Big]\notag\\&\hspace{4cm}\times\beta'( u_\eps^\kappa(t,y)-u_{\Delta t}(t,x)) \varrho_{\delta}(x-y)\psi(t,x)\,dx\,dt\,dy\Big]\notag\\
    =& \, \mathbb{E} \Big [\int_{\mathbb{Q}_T}\int_{\mathbb{R}^d} \Big[\int_{|z|>\bar{r}}\frac{u_\eps^\kappa(t,y+z)-u_{\Delta t}(t,x+z)}{|z|^{d+2\theta}}dz-\int_{|z|>\bar{r}}\frac{u_\eps^\kappa(t,y)-u_{\Delta t}(t,x)}{|z|^{d+2\theta}}\,dz\Big]\notag\\&\hspace{4cm}\times\beta'(u_\eps^\kappa(t,y)-u_{\Delta t}(t,x)) \varrho_{\delta}(x-y)\psi(t,x)\,dx\,dt\,dy\Big]\notag\\
    \le &\,\mathbb{E} \Big [\int_{\mathbb{Q}_T}\int_{\mathbb{R}^d} \Big[\int_{|z|>\bar{r}}\frac{\beta(u_\eps^\kappa(t,y+z)-u_{\Delta t}(t,x+z))}{|z|^{d+2\theta}}\,dz-\int_{|z|>\bar{r}}\frac{\beta(u_\eps^\kappa(t,y)-u_{\Delta t}(t,x))}{|z|^{d+2\theta}}\,dz\Big]\notag\\&\hspace{6cm}\times \varrho_{\delta}(x-y)\psi(t,x)\,dx\,dt\,dy\Big]\notag\\
    = \,&-\,\mathbb{E} \Big [\int_{\mathbb{Q}_T}\int_{\mathbb{R}^d}\beta(u_\eps^\kappa(t,y)-u_{\Delta t}(t,x))\mathcal{L}_\theta^{\bar{r}}[\psi(t,\cdot)](x)\varrho_{\delta}(x-y)\,dx\,dt\,dy\Big].\notag
\end{align}
In the last equality, we have used a change of coordinates for the first integral $x \mapsto x + z$, $y \mapsto y + z$, $z \mapsto -z.$ For the inequality, we have used the fact that $\beta(b) - \beta(a) \ge \beta'(a)(b-a)$ with $a=u_\eps^\kappa(t,y)-u_{\Delta t}(t,x)$ and $b=u_\eps^\kappa(t,y+z)-u_{\Delta t}(t,x+z)$.
\vspace{0.2cm}

Next we move our focus to estimate $\mathcal{I}_5$. We re-arrange $\mathcal{I}_5$ as follows.
 \begin{align}
        \mathcal{I}_5 
        =\,&-\,\mathbb{E}\Big[\int_{\mathbb{Q}_T^2} \int_{\mathbb{R}}\big(\beta(u_\eps^\kappa(s,y) - u_{\Delta t}(t,x) +k) - \beta(u_\eps^\kappa(t,y) - u_{\Delta t}(t,x) + k)\big)\notag\\ & \hspace{5cm} \times \mathcal{L}_{\theta,\bar{r}}[\varrho_\delta(x-\cdot)](y)\psi(t,x)\rho_{\delta_0}(t-s)J_l(k)\,dk\,ds\,dt\,dy\,dx \Big]\notag\\
        &-\mathbb{E}\Big[\int_{\mathbb{Q}_T}\int_{\mathbb{R}^d} \int_{\mathbb{R}}\beta(u_\eps^\kappa(t,y) - u_{\Delta t}(t,x) +k)\mathcal{L}_{\theta,\bar{r}}[\varrho_\delta(x-\cdot)](y)\psi(t,x)\notag\\ & \hspace{5cm} \times\Big(\int_0^T\rho_{\delta_0}(t-s)ds -1\Big) J_l(k)\,dk\,dy\,dt\,dx \Big]\notag\\
        & -\mathbb{E} \Big[\int_{\mathbb{Q}_T}\int_{\mathbb{R}^d} \int_{\mathbb{R}}\Big(\beta(u_\eps^\kappa(t,y) - u_{\Delta t}(t,x) + k ) - \beta(u_\eps^\kappa(t,y) - u_{\Delta t}(t,x))\Big)\notag\\& \hspace{5cm} \times
        \mathcal{L}_{\theta,\bar{r}}[\varrho_\delta(x-\cdot)](y)\psi(t,x)J_l(k)\,dk\,dy\,dt\,dx \Big]\notag\\
        &-\mathbb{E}\Big[\int_{\mathbb{Q}_T} \int_{\mathbb{R}^d}\beta(u_\eps^\kappa(t,y) - u_{\Delta t}(t,x) )\mathcal{L}_{\theta,\bar{r}}[\varrho_\delta(x-\cdot)](y)\psi(t,x)\,dy\,dt\,dx \Big]\notag\\
        &=:\mathcal{I}_5^1 +\mathcal{I}_5^2 +\mathcal{I}_5^3 - \mathbb{E}\Big[\int_{\mathbb{Q}_T} \int_{\mathbb{R}^d}\beta(u_\eps^\kappa(t,y) - u_{\Delta t}(t,x) )\mathcal{L}_{\theta,\bar{r}}[\varrho_\delta(x-\cdot)](y)\psi(t,x)\,dy\,dt\,dx \Big].\notag
\end{align}
 Note that ~(cf.~\cite{cifani}) for $\phi(\cdot,\cdot) \in$ $C_c^2(\mathbb{Q}_T)$
\begin{align*}
|\mathcal{L}_{\theta,\bar{r}}[\phi(t, .)](x)| \le
\begin{cases}
 \displaystyle c_\theta||D\phi||_{L^\infty}\int_{|z| \le \bar{r}}\frac{|z|}{|z|^{d+2\theta}}\,dz, \quad \text{for} \quad \theta \in (0, \frac{1}{2}),\\
 \displaystyle \frac{c_\theta}{2}||D^2\phi||_{L^\infty}\int_{|z| \le \bar{r}}\frac{|z|^2}{|z|^{d+2\theta}}\,dz, \quad \text{for} \quad \theta \in [\frac{1}{2}, 1).
\end{cases}
\end{align*}
Thus, one can see that in both cases 
\begin{align}\label{fractionalbound}
|\mathcal{L}_{\theta,\bar{r}}[\phi(t, .)](x)| \le C(\phi)\bar{r}^a \quad \text{ for some $a >0$}\,.
\end{align}
We use \eqref{fractionalbound}  and Lemma  \ref{lem:average-time-cont-viscous} to estimate $\mathcal{I}_5^1$. We have
\begin{align}
    \mathcal{I}_5^1 \le\, &C(\beta')\,\mathbb{E}\Big[\int_{\mathbb{Q}_T^2} \int_{\mathbb{R}}|u_\eps^\kappa(s,y)- u_\eps^\kappa(t,y)|\rho_{\delta_0}(t-s) \,|\mathcal{L}_{\theta,\bar{r}}[\varrho_\delta(x-\cdot)](y)|\psi(t,x)J_l(k)\,dk\,ds\,dt\,dy\,dx \Big]\notag\\
    \le &C(\beta', \delta,\psi,\bar{r}^a)\,\mathbb{E}\Big[\int_0^T\int_0^T\int_{K_y}|u_\eps^\kappa(s,y)- u_\eps^\kappa(t,y)|\rho_{\delta_0}(t-s)\,dy\,dt\,ds\Big]\notag\\
    \le &C(\delta,\bar{r}^a,\eps, \kappa) \sqrt{\delta_0}.\notag
\end{align}
Again using \eqref{fractionalbound},  \eqref{l-infty-bound} and \eqref{inq:uniform-1st-viscous}, one can easily derive that
\begin{align}
    \mathcal{I}_5^2 \le\, &C(\psi)\,\mathbb{E}\Big[\int_{t=T-\delta_0}^T\int_{K_x} \int_{K_y} \int_{\mathbb{R}}\Big(|u_\eps^\kappa(t,y)| +|u_{\Delta t}(t,x)| + |k|\Big)|\mathcal{L}_{\theta,\bar{r}}[\varrho_\delta(x-\cdot)](y)|\ J_l(k)\,dk\,dy\,dx\,dt \Big]\notag\\
    \le &C(\psi, \delta, \bar{r}^a) l\delta_0 + C(\psi, \delta, \bar{r}^a)\left[ \sqrt{\delta_0} \left( 
    \underset{0 \le t \le T}{sup}\,\mathbb{E}\Big[||u_\eps^\kappa(t)||_{L^2(K_y)} ^2\Big]\right)^\frac{1}{2} + \widetilde{M}\delta_0\right] \notag \\
     \le& C(\delta,\bar{r}^a) \delta_0(1+l)\,. \notag \\
    \mathcal{I}_5^3 \le & C(\beta'')\,\mathbb{E} \Big[\int_{\mathbb{Q}_T}\int_{\mathbb{R}^d} \int_{\mathbb{R}}|k|
        \,|\mathcal{L}_{\theta,\bar{r}}[\varrho_\delta(x-\cdot)](y)|\psi(t,x)J_l(k)\,dk\,dy\,dt\,dx \Big] \le C(\xi,\delta,\bar{r}^a )\, l\,.\notag
\end{align}
Re-arranging $\mathcal{J}_5$, we get
\begin{align}
       \mathcal{J}_5 = & - \mathbb{E} \Big [\int_{\mathbb{Q}_T^2}\int_\mathbb{R}  \big(\beta(u_{\Delta t}(t,x) - u_\eps^\kappa(s,y) + k) - \beta(u_{\Delta t}(t,x) - u_\eps^\kappa(t,y) + k\big)\notag\\ &\hspace{5cm} \times \mathcal{L}_{\theta,\bar{r}}[\varrho_\delta(\cdot-y)\psi(t,\cdot)](x)\rho_{\delta_0}(t-s)J_l(k)\,dk\,ds\,dt\,dx\,dy \Big]\notag\\
       &-\mathbb{E} \Big [\int_{\mathbb{Q}_T}\int_{\mathbb{R}^d}\int_\mathbb{R}  \beta(u_{\Delta t}(t,x) - u_\eps^\kappa(t,y) + k)\mathcal{L}_{\theta,\bar{r}}[\varrho_\delta(\cdot-y)\psi(t,\cdot)](x))\notag\\ &\hspace{5cm} \times\Big(\int_0^T\rho_{\delta_0}(t-s)ds -1 \Big)J_l(k)dk\,dx\,dt\,dy \Big]\notag\\
       &- \mathbb{E} \Big [\int_{\mathbb{Q}_T}\int_{\mathbb{R}^d}\int_\mathbb{R}  \Big(\beta(u_{\Delta t}(t,x) - u_\eps^\kappa(t,y) + k) - \beta(u_{\Delta t}(t,x) - u_\eps^\kappa(t,y))\Big)\notag\\ &\hspace{5cm} \times \mathcal{L}_{\theta,\bar{r}}[\varrho_\delta(\cdot-y)\psi(t,\cdot)](x)J_l(k)\,dk\,dx\,dt\,dy \Big]\notag\\
       &- \mathbb{E} \Big [\int_{\mathbb{Q}_T}\int_{\mathbb{R}^d} \beta(u_{\Delta t}(t,x) - u_\eps^\kappa(t,y)) \mathcal{L}_{\theta,\bar{r}}[\varrho_\delta(\cdot-y)\psi(t,\cdot)](x)\,dx\,dt\,dy \Big]\notag\\
       &=: \mathcal{J}_5^1 +  \mathcal{J}_5^2 +  \mathcal{J}_5^3- \mathbb{E} \Big [\int_{\mathbb{Q}_T}\int_{\mathbb{R}^d} \beta(u_{\Delta t}(t,x) - u_\eps^\kappa(t,y)) \mathcal{L}_{\theta,\bar{r}}[\varrho_\delta(\cdot-y)\psi(t,\cdot)](x)\,dx\,dt\,dy \Big].\notag
\end{align}
Similar to the estimation of $\mathcal{I}_5^1$, one may bound the term $\mathcal{J}_5^1$ as
\begin{align}
  \mathcal{J}_5^1 \le \,& C(\beta')\,\mathbb{E} \Big [\int_{\mathbb{Q}_T^2}|u_\eps^\kappa(s,y) - u_\eps^\kappa(t,y)|\rho_{\delta_0}(t-s)\,| \mathcal{L}_{\theta,\bar{r}}[\varrho_\delta(\cdot-y)\psi(t,\cdot)](x)\,ds\,dt\,dx\,dy \Big]\notag\\  
  \le &C(\delta,\bar{r}^a, \eps, \kappa) \sqrt{\delta_0}.\notag
\end{align}
Using a similar lines of argument as done in the estimations of $\mathcal{I}_5^2$ and $\mathcal{I}_5^3$, one has
$$\mathcal{J}_5^2 \le C(\delta,\bar{r}^a)(1+l)\sqrt{\delta_0}, \hspace{.5cm} \text{and} \hspace{.5cm} \mathcal{J}_5^3 \le C(\delta, \bar{r}^a)l.$$
In the view of the above estimations, we arrive at  the following lemma.
\begin{lem}\label{lem45} We have,
\begin{align}
(\mathcal{I}_4 + \mathcal{J}_4 + \mathcal{I}_5 + \mathcal{J}_5) &\le -\,\mathbb{E} \Big [\int_{\mathbb{Q}_T}\int_{\mathbb{R}^d}\beta(u_\eps^\kappa(t,y)-u_{\Delta t}(t,x))\mathcal{L}_\theta^{\bar{r}}[\psi(t,\cdot)](x)\varrho_{\delta}(x-y)\,dx\,dt\,dy\Big]\notag\\
&\quad- \,\mathbb{E} \Big [\int_{\mathbb{Q}_T}\int_{\mathbb{R}^d} \beta(u_{\Delta t}(t,x) - u_\eps^\kappa(t,y)) \mathcal{L}_{\theta,\bar{r}}[\varrho_\delta(\cdot-y)\psi(t,\cdot)](x)\,dx\,dt\,dy \Big]\notag\\
&\qquad- \,\mathbb{E}\Big[\int_{\mathbb{Q}_T} \int_{\mathbb{R}^d}\beta(u_\eps^\kappa(t,y) - u_{\Delta t}(t,x) )\mathcal{L}_{\theta,\bar{r}}[\varrho_\delta(x-\cdot)](y)\psi(t,x)\,dx\,dt\,dy \Big]\notag\\
& \hspace{2cm} + C(\bar{r}, \delta, \kappa, \xi, \eps) \sqrt{\delta_0} + C(\delta,\bar{r},\xi) l \,. \notag
\end{align}
\end{lem}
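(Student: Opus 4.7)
\medskip

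\noindent\textbf{Proof plan for Lemma \ref{lem45}.} My plan is to treat the four quantities $\mathcal{I}_4,\mathcal{J}_4,\mathcal{I}_5,\mathcal{J}_5$ on a common footing by telescoping each of them through three successive replacements inside the weight $\rho_{\delta_0}(t-s)\varrho_\delta(x-y)\psi(t,x)J_l(\cdot)$: first replace the slow argument $u_\eps^\kappa(s,y)$ by $u_\eps^\kappa(t,y)$ to eliminate the $s$-dependence, then replace $\int_0^T\rho_{\delta_0}(t-s)\,ds$ by $1$ (losing a boundary contribution supported on $t\in[T-\delta_0,T]$), and finally remove the dummy shift $k$ inside $\beta'$ or $\beta$. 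After these three steps each quantity decomposes as a clean ``main'' piece of the form appearing on the right-hand side of the lemma, plus three error pieces.

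The errors are controlled by the a-priori machinery already at hand. The time-shift error is bounded using Lemma \ref{lem:average-time-cont-viscous} in its $L^1$ form \eqref{esti:time-cont-viscous-l1}, together with the uniform $L^\infty$ bound on $u_{\Delta t}$ from Lemma \ref{lem:l-infinity bound-approximate-solution} and the uniform $L^2$ bound on $u_\eps^\kappa$ from \eqref{inq:uniform-1st-viscous}; in the $\mathcal{I}_4,\mathcal{J}_4$ pieces the far-field operator $\mathcal{L}_\theta^{\bar{r}}$ is a bounded convolution by \eqref{inq:bound-frac-z}, so $\bar{r}$ only enters as a constant, while in the $\mathcal{I}_5,\mathcal{J}_5$ pieces the near-field operator falls on the smooth test data and is controlled by \eqref{fractionalbound}, producing a harmless factor $\bar{r}^a$. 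The boundary error is a $\delta_0$-strip and gives $O((1+l)\sqrt{\delta_0})$ after Cauchy--Schwarz; the $k$-error gives $O(l)$ via $\int |k|\,J_l(k)\,dk\le Cl$.

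The heart of the argument, and what I expect to be the main obstacle, is combining the two ``main'' pieces left over from $\mathcal{I}_4$ and $\mathcal{J}_4$, namely
\[
-\,\mathbb{E}\!\int_{\mathbb{Q}_T}\!\!\int_{\R^d}\!\Bigl\{\mathcal{L}_\theta^{\bar{r}}[u_\eps^\kappa(t,\cdot)](y)-\mathcal{L}_\theta^{\bar{r}}[u_{\Delta t}(t,\cdot)](x)\Bigr\}\beta'\!\bigl(u_\eps^\kappa(t,y)-u_{\Delta t}(t,x)\bigr)\varrho_\delta(x-y)\psi(t,x)\,dx\,dt\,dy,
\]
into the single compact expression $-\mathbb{E}\!\int\beta(u_\eps^\kappa-u_{\Delta t})\,\mathcal{L}_\theta^{\bar{r}}[\psi(t,\cdot)](x)\,\varrho_\delta(x-y)\,dx\,dt\,dy$. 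The trick is the nonlocal version of Kru\v{z}kov's doubling: expand both fractional integrals over $|z|>\bar{r}$, apply the convexity inequality $\beta(b)-\beta(a)\ge\beta'(a)(b-a)$ with $a=u_\eps^\kappa(t,y)-u_{\Delta t}(t,x)$ and $b=u_\eps^\kappa(t,y+z)-u_{\Delta t}(t,x+z)$, and in the shifted contribution perform the change of variables $(x,y,z)\mapsto(x+z,y+z,-z)$. Translation invariance of $\varrho_\delta(x-y)$ together with the symmetry of $|z|^{-d-2\theta}$ then transfers the entire $z$-convolution onto $\psi(t,x)$, reassembling $\mathcal{L}_\theta^{\bar{r}}[\psi(t,\cdot)](x)$. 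The delicate point is to make sure the convexity bound is applied with matched arguments at $(x,y)$ before the change of variables, so the cut-off $\varrho_\delta(x-y)\psi(t,x)$ is the only factor that absorbs the shift.

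For $\mathcal{I}_5$ and $\mathcal{J}_5$ no further manipulation is needed: after the telescoping step their surviving main pieces are already in the form appearing on the right-hand side of the statement (with $\mathcal{L}_{\theta,\bar{r}}$ acting on $\varrho_\delta(x-\cdot)$ and on $\varrho_\delta(\cdot-y)\psi(t,\cdot)$, respectively). Collecting the three main contributions and the three families of error bounds produces the claimed inequality, with all residual errors absorbed into $C(\bar{r},\delta,\kappa,\xi,\eps)\sqrt{\delta_0}+C(\delta,\bar{r},\xi)\,l$.
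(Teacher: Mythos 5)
Your proposal is correct and follows essentially the same route as the paper: the same telescoping decomposition of each of $\mathcal{I}_4,\mathcal{J}_4,\mathcal{I}_5,\mathcal{J}_5$ with the errors controlled exactly as you describe (Lemma \ref{lem:average-time-cont-viscous} with \eqref{inq:bound-frac-z} and \eqref{fractionalbound}), and the same key step of merging the surviving main pieces of $\mathcal{I}_4$ and $\mathcal{J}_4$ via the oddness of $\beta'$, the convexity inequality $\beta(b)-\beta(a)\ge\beta'(a)(b-a)$ with $a=u_\eps^\kappa(t,y)-u_{\Delta t}(t,x)$, $b=u_\eps^\kappa(t,y+z)-u_{\Delta t}(t,x+z)$, and the change of coordinates $x\mapsto x+z$, $y\mapsto y+z$, $z\mapsto -z$ to reassemble $\mathcal{L}_\theta^{\bar r}[\psi(t,\cdot)](x)$. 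No gaps to report.
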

We will estimate the error terms arising in inequality \eqref{EntropyIQUE2}. Thanks to Lemmas \ref{lem: prpertiesof S}, \ref{lem:propertiesof R}, and \ref{lem:bv bound-approximate-solution} we see that 
\begin{align}
       \mathcal{J}_{10} &\le \mathbb{E}\Big[ \int_{\mathbb{Q}_T} \sum_{n=0}^{N-1} \int_{\mathbb{R}^d} \int_{t_n}^{t_{n+1}}\big |\widetilde{u}_{\Delta t}(t,x) - u_{\Delta t}(t_{n+1},x)| |\partial_t\psi(t,x)|\rho_{\delta_0}(t-s)\varrho_\delta(x-y)\,dt\,dx\,ds\,dy \Big]\notag\\ 
       &+\mathbb{E}\Big[ \int_{\mathbb{Q}_T} \sum_{n=0}^{N-1} \int_{\mathbb{R}^d} \int_{t_n}^{t_{n+1}}\big |\widetilde{u}_{\Delta t}(t,x) - u_{\Delta t}(t_{n+1},x)|\psi(t,x)|\partial_t\rho_{\delta_0}(t-s)|\varrho_\delta(x-y)\,dt\,dx\,ds\,dy \Big]\notag \\
       & \le  C(\psi)\,\mathbb{E}\Big[ \sum_{n=0}^{N-1} \int_{\mathbb{R}^d}\int_0^T \int_{t_n}^{t_{n+1}}\big |S(t-t_n)u^{n+\frac{1}{2}}(x) - S(t_{n+1}-t_n)u^{n+\frac{1}{2}}(x)||\partial_t\rho_{\delta_0}(t-s)|\,dt\,ds\,dx \Big]\notag\\
       &\quad + C(|\partial_t\psi|)\,\mathbb{E}\Big[ \sum_{n=0}^{N-1} \int_{\mathbb{R}^d} \int_{t_n}^{t_{n+1}}\big |S(t-t_n)u^{n+\frac{1}{2}}(x) - S(t_{n+1}-t_n)u^{n+\frac{1}{2}}(x)|\,dt\,dx \Big]\notag\\
       & \le C(\psi)\,\mathbb{E}\Big[ \sum_{n=0}^{N-1} \int_0^T \int_{t_n}^{t_{n+1}} |u^{n+\frac{1}{2}}|_{BV(\mathbb{R}^d)}\rho(|t-t_{n+1}|)\,|\partial_t\rho_{\delta_0}(t-s)|\,dt\,ds\Big]\notag\\
       & \quad + C(|\partial_t\psi|)\,\mathbb{E}\Big[ \sum_{n=0}^{N-1}  \int_{t_n}^{t_{n+1}} |u^{n+\frac{1}{2}}|_{BV(\mathbb{R}^d)}\rho(|t-t_{n+1}|)dt\Big]\notag\\
       &\le C(\psi)\,\mathbb{E}\Big[ \sum_{n=0}^{N-1} \int_0^T \int_{t_n}^{t_{n+1}} |u^{n+\frac{1}{2}}|_{BV(\mathbb{R}^d)}\rho(\Delta t)\,|\partial_t\rho_{\delta_0}(t-s)|\,dt\,ds\Big]\notag\\
       &  \quad + C(|\partial_t\psi|)\,\mathbb{E}\Big[ \sum_{n=0}^{N-1}  \int_{t_n}^{t_{n+1}} |u^{n+\frac{1}{2}}|_{BV(\mathbb{R}^d)}\rho(\Delta t)dt\Big]
       \le C(\delta_0) \mathbb{E}\Big[|u_0|_{BV(\mathbb{R}^d)}\Big] \sqrt{\Delta t}\,. \notag 
\end{align}

Since $\psi(\cdot,\cdot)\in C_c^{1,2}([0, \infty) \times \mathbb{R}^d)$, we have,  by Lemma \ref{lem:average_time_continuity-approximate-solution} 
\begin{align}
 \mathcal{J}_{11}
       & \le \mathbb{E}\Big[ \int_{\mathbb{Q}_T} \sum_{n=0}^{N-1} \int_{\mathbb{R}^d} \int_{t_n}^{t_{n+1}}\big |u_{\Delta t}(t_{n+1},x) - u_{\Delta t}(t,x)|\psi(t,x)|\partial_t\rho_{\delta_0}(t-s)|\varrho_\delta(x-y)\,dt\,dx\,ds\,dy \Big]\notag \\
       &+ \mathbb{E}\Big[ \int_{\mathbb{Q}_T} \sum_{n=0}^{N-1} \int_{\mathbb{R}^d} \int_{t_n}^{t_{n+1}}\big |u_{\Delta t}(t_{n+1},x) - u_{\Delta t}(t,x)| |\partial_t\psi(t,x)|\rho_{\delta_0}(t-s)\varrho_\delta(x-y)\,dt\,dx\,ds\,dy \Big]\notag \\ 
       & \le  C\mathbb{E}\Big[ \sum_{n=0}^{N-1} \int_{K_x}\int_0^T \int_{t_n}^{t_{n+1}}\big |u_{\Delta t}(t_{n+1},x) - u_{\Delta t}(t,x)||\partial_t\rho_{\delta_0}(t-s)|\,ds\,dt\,dx \Big]\notag\\
       &+ C\mathbb{E}\Big[ \sum_{n=0}^{N-1} \int_{K_x} \int_{t_n}^{t_{n+1}}\big |u_{\Delta t}(t_{n+1},x) - u_{\Delta t}(t,x)|\,dt\,dx \Big]\notag\\
       & \le C\sum_{n=0}^{N-1} \int_0^T \int_{t_n}^{t_{n+1}}\sqrt{|t-t_{n+1}}\,|\partial_t\rho_{\delta_0}(t-s)|\,dt\,ds  + C \sum_{n=0}^{N-1}  \int_{t_n}^{t_{n+1}} \sqrt{|t-t_{n+1}|}\,dt \notag\\
       &\le C(\delta_0)\sqrt{\Delta t}. \notag   
\end{align}
A similar arguments as invoked in  $\mathcal{J}_{10}$ and $\mathcal{J}_{11}$ along with  \eqref{eqlipF} revels that 
\begin{align}
 \mathcal{J}_{12}&\le  \mathbb{E}\Big[ \int_{\mathbb{Q}_T}\sum_{n=0}^{N-1} \int_{\mathbb{R}^d} \int_{t_n}^{t_{n+1}} |\widetilde{u}_{\Delta t}(t,x) - u_{\Delta t}(t_{n+1},x)|\Big\{|\nabla_x\varrho_\delta(x-y)| + |\nabla_x\psi(t,x)|\Big\}\notag\\ & \hspace{3cm} \times\rho_{\delta_0}(t-s)\,dt\,dx\,ds\,dy \Big]
 \le  C(\delta)\sqrt{\Delta t}\,,\notag \\
         \mathcal{J}_{13} &\le  \mathbb{E}\Big[\int_{\mathbb{R}^d}\sum_{n=0}^{N-1} \int_{\mathbb{R}^d} \int_{t_n}^{t_{n+1}} |u_{\Delta t}(t_{n+1},x) - u_{\Delta t}(t,x)| \Big\{|\nabla_x\varrho_\delta(x-y)| + |\nabla_x\psi(t,x)|\Big\}\,dt\,dx\,dy \Big]\notag \\ &\le   C(\delta)\sqrt{\Delta t}.\notag
         \end{align}
Next we approximate the error terms occurring due to non-local terms.  We re-arrange the terms $\mathcal{J}_{14}$ and $\mathcal{J}_{15}$ as follows.
\begin{align}
 \mathcal{J}_{14} = &-  \mathbb{E}\Big[\int_{\mathbb{Q}_T}\int_\mathbb{R}\sum_{n=0}^{N-1} \int_{\mathbb{R}^d} \int_{t_n}^{t_{n+1}}  \big[\mathcal{L}_\theta^{\bar{r}}[\widetilde{u}_{\Delta t}(t,\cdot)](x)  -\mathcal{L}_\theta^{\bar{r}}[u_{\Delta t}(t_{n+1},\cdot)](x)]\notag\\ & \hspace{3cm} \times \beta'(\widetilde{u}_{\Delta t}(t,x) - k) \varphi_{\delta_0, \delta}(t,x,s,y) J_l(u_\eps^\kappa(s,y) - k)\,dt\,dx\,dk\,ds\,dy \Big]\notag\\
 & -  \mathbb{E}\Big[\int_{\mathbb{Q}_T}\int_\mathbb{R}\sum_{n=0}^{N-1} \int_{\mathbb{R}^d} \int_{t_n}^{t_{n+1}} \mathcal{L}_\theta^{\bar{r}}[u_{\Delta t}(t_{n+1},\cdot)](x)\big( \beta'(\widetilde{u}_{\Delta t}(t,x) - k) - \beta'(u_{\Delta t}(t_{n+1},x) -k)\big)\notag\\ & \hspace{3cm} \times  \varphi_{\delta_0, \delta}(t,x,s,y) J_l(u_\eps^\kappa(s,y) - k)\,dt\,dx\,dk\,ds\,dy \Big]\notag \\
 &
=: \mathcal{J}_{14}^1 + \mathcal{J}_{14}^2.\notag
  \end{align}
    \begin{align}
          \mathcal{J}_{15} = &-  \mathbb{E}\Big[\int_{\mathbb{Q}_T}\int_\mathbb{R}\sum_{n=0}^{N-1} \int_{\mathbb{R}^d} \int_{t_n}^{t_{n+1}}  \big[\mathcal{L}_\theta^{\bar{r}}[u_{\Delta t}(t_{n+1},\cdot)](x) -\mathcal{L}_\theta^{\bar{r}}[u_{\Delta t}(t,\cdot)](x)\big]\beta'(u_{\Delta t}(t_{n+1},x) - k)\notag\\ & \hspace{4cm} \times \varphi_{\delta_0, \delta}(t,x,s,y) J_l(u_\eps^\kappa(s,y) - k)\,dt\,dx\,dk\,ds\,dy \Big]\notag\\
          &- \mathbb{E}\Big[\int_{\mathbb{Q}_T}\int_\mathbb{R}\sum_{n=0}^{N-1} \int_{\mathbb{R}^d} \int_{t_n}^{t_{n+1}}  \big[\mathcal{L}_\theta^{\bar{r}}[u_{\Delta t}(t,\cdot)](x)\big(\beta'(u_{\Delta t}(t_{n+1},x) - k) -\beta'(u_{\Delta t}(t,x) - k)\big)\notag \\ & \hspace{4cm} \times \varphi_{\delta_0, \delta}(t,x,s,y) J_l(u_\eps^\kappa(s,y) - k)\,dt\,dx\,dk\,ds\,dy \Big] \notag \\
          &=:\mathcal{J}_{15}^1 + \mathcal{J}_{15}^2.\notag
  \end{align}
  We approximate the terms $\mathcal{J}_{14}^1$ and $\mathcal{J}_{14}^2$. Using Lemmas \ref{lem: prpertiesof S}, \ref{lem:bv bound-approximate-solution} and  \ref{lem:l-infinity bound-approximate-solution} together with triangle inequality and  \eqref{inq:bound-frac-z}, we get
  \begin{align}
          \mathcal{J}_{14}^1 = &- \mathbb{E}\Big[\int_{\mathbb{Q}_T}\int_\mathbb{R}\sum_{n=0}^{N-1} \int_{\mathbb{R}^d} \int_{t_n}^{t_{n+1}} \mathcal{L}_\theta^{\bar{r}}[\beta'(\widetilde{u}_{\Delta t}(t,\cdot) - u_\eps^\kappa(s,y) + k)\varrho_\delta(\cdot-y)\psi(t,\cdot)](x)\notag\\ & \hspace{4cm} \times\big(\widetilde{u}_{\Delta t}(t,x)  -u_{\Delta t}(t_{n+1},x)\big)\rho_{\delta_0}(t-s) J_l(k)\,dt\,dx\,dk\,ds\,dy \Big]\notag\\
          \le\, &C(\beta', \psi, \delta )\,\mathbb{E}\Big[\int_{\mathbb{Q}_T}\sum_{n=0}^{N-1} \int_{\mathbb{R}^d} \int_{t_n}^{t_{n+1}}  |\widetilde{u}_{\Delta t}(t,x)  -u_{\Delta t}(t_{n+1},x)|\Big|\int_{|z|>\bar{r}}\frac{1}{|z|^{d+2\theta}}\,dz\Big|\rho_{\delta_0}(t-s)\,dt\,dx\,ds\,dy \Big]\notag\\
          \le & C(\beta', \psi, \delta, \bar{r} )\,\mathbb{E}\Big[\sum_{n=0}^{N-1}\int_{t_n}^{t_{n+1}}|u^{n+\frac{1}{2}}|_{BV(\mathbb{R}^d)}\rho(|t-t_{n+1}|)dt\Big] \le C(\delta, \bar{r})\sqrt{\Delta t}, \label{eq:j141} \\
 \mathcal{J}_{14}^2 \leq  &\,C(\beta'')\,\mathbb{E}\Big[\int_{\mathbb{Q}_T}\sum_{n=0}^{N-1} \int_{\mathbb{R}^d} \int_{t_n}^{t_{n+1}} |\mathcal{L}_\theta^{\bar{r}}[u_{\Delta t}(t_{n+1},\cdot)](x)| |\widetilde{u}_{\Delta t}(t,x) - u_{\Delta t}(t_{n+1},x)| \varphi_{\delta_0, \delta}(t,x,s,y)\,dt\,dx\,ds\,dy \Big]\notag\\
          \le \,&C(\beta'',\psi, \widetilde{M}, \bar{r})\,\mathbb{E}\Big[\sum_{n=0}^{N-1} \int_{\mathbb{R}^d} \int_{t_n}^{t_{n+1}} |\widetilde{u}_{\Delta t}(t,x) - u_{\Delta t}(t_{n+1},y)|\,dt\,dx \Big] \leq C(\bar{r}, \xi) \sqrt{\Delta t}\,. \label{eq:j142}
  \end{align}
  The error terms $\mathcal{J}_{15}^1$ and $\mathcal{J}_{15}^2$ are approximated as follows.
   \begin{align}
          \mathcal{J}_{15}^1 = &-  \mathbb{E}\Big[\int_{\mathbb{Q}_T}\int_\mathbb{R}\sum_{n=0}^{N-1} \int_{\mathbb{R}^d} \int_{t_n}^{t_{n+1}}  \big(u_{\Delta t}(t_{n+1},x) - u_{\Delta t}(t,x)\big)\rho_{\delta_0}(t-s)\notag\\ & \hspace{1cm}\times \mathcal{L}_\theta^{\bar{r}}[\beta'(u_{\Delta t}(t_{n+1},\cdot) - k)\varrho_\delta(\cdot-y)\psi(t,\cdot)](x) J_l(u_\eps^\kappa(s,y) - k)\,dt\,dx\,dk\,ds\,dy \Big]\notag \\
          \le \,& C(\beta',\delta,\psi)\,\mathbb{E}\Big[\int_{\mathbb{R}^d}\sum_{n=0}^{N-1} \int_{\mathbb{R}^d} \int_{t_n}^{t_{n+1}}  |u_{\Delta t}(t_{n+1},x) - u_{\Delta t}(t,x)|\Big|\int_{|z|>\bar{r}}\frac{1}{|z|^{d+2\theta}}\,dz\Big|\,dt\,dx\,dy \Big]\notag\\
          \le\, & C(\beta',\delta,\psi, \bar{r})\,\mathbb{E}\Big[\sum_{n=0}^{N-1} \int_{\mathbb{R}^d} \int_{t_n}^{t_{n+1}}  |u_{\Delta t}(t_{n+1},x) - u_{\Delta t}(t,x)|\,dt\,dx \Big] \le C(\delta,\bar{r})\sqrt{\Delta t}, \label{eq:j151} \\
           \mathcal{J}_{15}^2 \le  \,&C(\beta'')\, \mathbb{E}\Big[\int_{\mathbb{Q}_T}\sum_{n=0}^{N-1} \int_{\mathbb{R}^d} \int_{t_n}^{t_{n+1}}  |\mathcal{L}_\theta^{\bar{r}}[u_{\Delta t}(t,\cdot)](x)|\,|u_{\Delta t}(t_{n+1},x) - u_{\Delta t}(t,x)|\notag\\ & \hspace{4cm} \times \rho_{\delta_0}(t-s)\varrho_\delta(x-y)\psi(t,x)\,dt\,dx\,ds\,dy \Big]\notag\\
           \le \,& C(\beta'',\psi, \widetilde{M}, \bar{r})\, \mathbb{E}\Big[\sum_{n=0}^{N-1} \int_{\mathbb{R}^d} \int_{t_n}^{t_{n+1}}|u_{\Delta t}(t_{n+1},x) - u_{\Delta t}(t,x) |\,dt\,dx \Big] \le C(\bar{r}, \xi)\sqrt{\Delta t}\,, \label{eq:152}
  \end{align}
  where we have used Lemmas \ref{lem:l-infinity bound-approximate-solution} and \ref{lem:average_time_continuity-approximate-solution}.
  \vspace{0.2cm}
  
  Using \eqref{fractionalbound}, we estimate the following error terms.
  \begin{align}
          \mathcal{J}_{16} 
          \le \,& C(\beta')\, \mathbb{E}\Big[\int_{\mathbb{Q}_T}\sum_{n=0}^{N-1} \int_{\mathbb{R}^d} \int_{t_n}^{t_{n+1}} |\widetilde{u}_{\Delta t}(t,x)) - u_{\Delta t}(t_{n+1},x)| |\mathcal{L}_{\theta,\bar{r}}[\varrho_\delta(\cdot-y)\psi(t,\cdot)](x)|\notag\\ &\hspace{6cm} \times \rho_{\delta_0}(t-s)\,dt\,dx\,ds\,dy \Big]\notag\\
          \le & \,C(\beta',\psi,\delta,\bar{r}^a)\, \mathbb{E}\Big[\sum_{n=0}^{N-1} \int_{\mathbb{R}^d} \int_{t_n}^{t_{n+1}} |\widetilde{u}_{\Delta t}(t,x) - u_{\Delta t}(t_{n+1},x)|\,dt\,dx \Big] \le C(\delta,\bar{r}^a)\sqrt{\Delta t},  \label{eq:j16} \\
  \mathcal{J}_{17}  \le & C(\beta')\mathbb{E}\Big[\int_{\mathbb{Q}_T} \sum_{n=0}^{N-1} \int_{\mathbb{R}^d} \int_{t_n}^{t_{n+1}} |u_{\Delta t}(t_{n+1},x) - u_{\Delta t}(t,x)| |\mathcal{L}_{\theta,\bar{r}}[\varphi_\delta(\cdot-y)\psi(t,\cdot)](x)|\notag \\& \hspace{6cm} \times \rho_{\delta_0}(t-s)\,dt\,dx\,ds\,dy \Big]\notag\\
  \le & \,C(\delta,\bar{r}^a)\,\mathbb{E}\Big[ \sum_{n=0}^{N-1} \int_{\mathbb{R}^d} \int_{t_n}^{t_{n+1}} |u_{\Delta t}(t_{n+1},x) - u_{\Delta t}(t,x)|\,dt\,dx \Big]\le C(\delta,\bar{r}^a)\sqrt{\Delta t}. \label{eq:j17}
  \end{align}
  Next we focus on to estimate the terms $\mathcal{J}_{18}$ and $\mathcal{J}_{20}$ coming from noise terms. An application of  Cauchy-Schwartz inequality, It\^o isometry along with the  assumptions \ref{A4} and \ref{A5} gives, 
  \begin{align}
  \mathcal{J}_{18} &= \mathbb{E}\Big[\int_{\mathbb{Q}_T}\int_\mathbb{R}\sum_{n=0}^{N-1}  \int_{\mathbb{R}^d} \int_{t_n}^{t_{n+1}}  \sigma(u_{\Delta t}(t,x))\beta'(u_{\Delta t}(t,x)- u_\eps^\kappa(s,y) + k)\varrho_\delta(x-y)\notag \\ &  \times\{(\rho_{\delta_0}(t_n-s) - \rho_{\delta_0}(t-s))\psi(t_n, x) + \rho_{\delta_0}(t-s)(\psi(t_n,x) -\psi(t,x))\} J_l(k)\,dW(t)\,dx\,dk\,ds\,dy\Big]\notag\\
  & \le  C \sum_{n=0}^{N-1}\mathbb{E}\Big[\int_{\mathbb{Q}_T} \Big|  \int_{t_n}^{t_{n+1}}  \sigma(u_{\Delta t}(t,x))\{(\rho_{\delta_0}(t_n-s) - \rho_{\delta_0}(t-s))\psi(t_n, x) \notag\\ &\hspace{3cm} + \rho_{\delta_0}(t-s)(\psi(t_n,x) -\psi(t,x))\} \,dW(t)\Big |\,ds\,dx\Big]\notag\\
  &\le C\sum_{n=0}^{N-1}  \Big(\mathbb{E}\Big[\int_{\mathbb{Q}_T}\Big( \int_{t_n}^{t_{n+1}}  \sigma(u_{\Delta t}(t,x))\{(\rho_{\delta_0}(t_n-s) - \rho_{\delta_0}(t-s))\psi(t_n, x) \} dW(t)\Big)^2\,ds\,dx\Big]\Big)^\frac{1}{2}\notag\\ 
  &+ C\sum_{n=0}^{N-1}  \Big(\mathbb{E}\Big[ \int_{\mathbb{Q}_T}\Big( \int_{t_n}^{t_{n+1}}  \sigma(u_{\Delta t}(t,x))\{(\rho_{\delta_0}(t-s)(\psi(t_n, x)  -(\psi(t,x))\} dW(t)\Big)^2\,ds\,dx\Big]\Big)^\frac{1}{2}\notag \\
  & \le  C\sum_{n=0}^{N-1} \Big(\mathbb{E}\Big[\int_{\mathbb{Q}_T} \int_{t_n}^{t_{n+1}}  \sigma^2(u_{\Delta t}(t,x))\{\rho_{\delta_0}(t_n-s) - \rho_{\delta_0}(t-s)\}^2|\psi(t_n, x) |^2\}\,dt\,ds\,dx\Big]\Big)^\frac{1}{2}\notag\\
  &+ C\sum_{n=0}^{N-1} \Big(\mathbb{E}\Big[\int_{\mathbb{Q}_T} \int_{t_n}^{t_{n+1}}  \sigma^2(u_{\Delta t}(t,x))|\rho_{\delta_0}(t-s)|^2|\psi(t_n, x)  -\psi(t,x)|^2\,dt\,ds\,dx\Big]\Big)^\frac{1}{2}\notag \\
  &\le  C\sum_{n=0}^{N-1} \Big(\mathbb{E}\Big[\underset{0 \le t \le T}{sup}||u_{\Delta t}(t,\cdot)||_{L^2(\mathbb{R}^d)}^2\Big]\int_0^T \int_{t_n}^{t_{n+1}} \int_0^1|\partial_t\rho_{\delta_0}(\lambda t +(1-\lambda)t_n - s)|^2|t- t_n|^2\,d\lambda\,dt\,ds \Big)^\frac{1}{2}\notag\\
  & + C\sum_{n=0}^{N-1} \Big(\mathbb{E}\Big[\underset{0 \le t \le T}{sup}||u_{\Delta t}(t,\cdot)||_{L^2(\mathbb{R}^d)}^2\Big]\int_0^T \int_{t_n}^{t_{n+1}} |\rho_{\delta_0}(t-s)|^2|t_n - t|^2\,dt\,ds \Big)^\frac{1}{2} \le C(\delta_0) \sqrt{\Delta t}\,.
  \notag
  \end{align}
  Using  It\^{o}-L\'{e}vy isometry, along with the assumptions \ref{A6}, \ref{A7} and replicating a similar lines of argument as done for $\mathcal{J}_{18}$ yields,
\begin{align}
 \mathcal{J}_{20} \le  C(\delta_0) \sqrt{\Delta t}\,.\notag
 \end{align}
 Using the assumptions \ref{A4}, \ref{A5} and boundedness property of approximate solutions, we estimate $\mathcal{J}_{19}$ as follows.
  \begin{align}
      \mathcal{J}_{19} &  =\mathbb{E}\Big[\int_{\mathbb{Q}_T}\int_\mathbb{R}  \sum_{n=0}^{N-1} \int_{\mathbb{R}^d} \int_{t_n}^{t_{n+1}}   \sigma^2(u_{\Delta t}(t,x))\beta''(u_{\Delta t}(s,x) - u_\eps^\kappa(s,y) + k)\varrho_\delta(x-y) \notag\\ & \hspace{3cm}  + \rho_{\delta_0}(t-s)(\psi(t_n,x) -\psi(t,x))\}J_l(k)\,dt\,dx\,dk\,ds\,dy \Big]\notag \\
      & \le C(\beta'')\sum_{n=0}^{N-1}\mathbb{E}\Big[\int_{\mathbb{Q}_T}\int_{t_n}^{t_{n+1}}   ||(u_{\Delta t}(t,\cdot))||_{L^\infty(\mathbb{R}^d)}^2  \Big\{ |(\rho_{\delta_0}(t_n-s) - \rho_{\delta_0}(t-s))| \psi(t_n, x) \notag \\ & \hspace{3cm}  + \rho_{\delta_0}(t-s)|\psi(t_n,x) -\psi(t,x)|\Big\} \,dt\,ds\,dx \Big]\notag\\
      &\le C(\beta'', \widetilde{M})\sum_{n=0}^{N-1}\Big\{\int_0^T\int_{t_n}^{t_{n+1}} \int_0^1 |\partial_t\rho_{\delta_0}(t\lambda +(1-\lambda)t_n-s)||t-t_n|d\lambda \,dt\,ds \notag\\ & \hspace{4cm}+ \int_0^T\int_{t_n}^{t_{n+1}}\rho_{\delta_0}(t-s)|t-t_n|\,dt\,ds\Big\} \le C(\xi,\delta_0)\,\Delta t . 
         \notag
\end{align}
Similarly using the assumptions \ref{A6} and \ref{A7}, we have
$$\mathcal{J}_{21} \le C(\xi,\delta_0)\,\Delta t .$$
Thus, to summarize the estimations of  error terms, we have the following lemma.
\begin{lem}\label{lem:error} We have
\begin{align}
    \sum_{j=10}^{21}\mathcal{J}_j\le C(\delta, \delta_0, \xi, \bar{r})\sqrt{\Delta t}. \notag
\end{align}
\end{lem}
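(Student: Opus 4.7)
The plan is to bound each of the twelve terms $\mathcal{J}_{10},\ldots,\mathcal{J}_{21}$ separately and then sum. The unifying observation is that every one of these terms contains, as a factor, either a difference of the form $|\widetilde{u}_{\Delta t}(t,x)-u_{\Delta t}(t_{n+1},x)|$ (with $t\in[t_n,t_{n+1}]$), a difference $|u_{\Delta t}(t_{n+1},x)-u_{\Delta t}(t,x)|$, or a time-increment of either the test function $\psi(\cdot,x)$ or the mollifier $\rho_{\delta_0}(\cdot-s)$ over an interval of length at most $\Delta t$. Each of these pieces is controlled by a power of $\Delta t$ through one of the a priori results already established: for the deterministic-flow type difference $\widetilde{u}_{\Delta t}(t,x)-u_{\Delta t}(t_{n+1},x)=S(t-t_n)u^{n+1/2}(x)-S(\Delta t)u^{n+1/2}(x)$ we invoke the time-continuity estimate of Lemma \ref{lem: prpertiesof S} together with the uniform BV bound of Lemma \ref{lem:bv bound-approximate-solution}, which yields an $L^1$ bound proportional to $\rho(\Delta t)\,\mathbb{E}[|u_0|_{BV(\mathbb{R}^d)}]\le C\sqrt{\Delta t}$; for the stochastic-flow difference $u_{\Delta t}(t_{n+1},x)-u_{\Delta t}(t,x)$ we apply Lemma \ref{lem:average_time_continuity-approximate-solution}, again producing the bound $C\sqrt{\Delta t}$.

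With this template in hand I would treat the terms in four groups. First, for the pure ``time-shift'' terms $\mathcal{J}_{10},\mathcal{J}_{11},\mathcal{J}_{12},\mathcal{J}_{13}$, after bringing the absolute value inside using $|J_l|\le l^{-1}$, $\beta'$ bounded, and the Lipschitz continuity of $F^\beta(\cdot,k)$ given in \eqref{eqlipF}, the two difference types above appear directly, and the $\Delta t$-smallness comes out after integrating $|\partial_t\rho_{\delta_0}|$ against time (which contributes $C/\delta_0$, explaining the $\delta_0$-dependence of the constant) and using that $\psi$ has compact support. Second, for the non-local terms $\mathcal{J}_{14},\mathcal{J}_{15}$, I would write $\mathcal{L}_\theta^{\bar r}[v](x)=\int_{|z|>\bar r}(v(x)-v(x+z))/|z|^{d+2\theta}dz$ and use a duality trick: move one $\mathcal{L}_\theta^{\bar r}$ onto the smooth factor $\beta'(\cdot-k)\varrho_\delta(\cdot-y)\psi(t,\cdot)$, using \eqref{inq:bound-frac-z} to dominate the kernel on $\{|z|>\bar r\}$ by $C(\bar r)$, after which the $v$-factor reduces once more to one of the two canonical time-differences; the remaining terms (from the $\beta'$-differences) are handled by $|\beta''|\le C/\xi$ together with Lemma \ref{lem:l-infinity bound-approximate-solution}. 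Third, for $\mathcal{J}_{16},\mathcal{J}_{17}$ I would use the pointwise bound \eqref{fractionalbound} for $\mathcal{L}_{\theta,\bar r}$ applied to the smooth test factor and conclude as above.

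The potentially delicate terms are the stochastic ones $\mathcal{J}_{18}$ and $\mathcal{J}_{20}$, since the integrand contains the test function shift $\{\varphi_{\delta_0,\delta}(t_n,x,s,y)-\varphi_{\delta_0,\delta}(t,x,s,y)\}$, and one cannot simply pull absolute values through the stochastic integral. The strategy is to split the shift into two contributions, $\{\rho_{\delta_0}(t_n-s)-\rho_{\delta_0}(t-s)\}\psi(t_n,x)$ and $\rho_{\delta_0}(t-s)\{\psi(t_n,x)-\psi(t,x)\}$, and then apply the Cauchy--Schwartz inequality in $(\omega,s,x,y)$ to interchange the $ds\,dx\,dy$ integration with the $L^2(\Omega)$ norm, after which It\^o isometry (respectively It\^o--L\'evy isometry, using \ref{A6} and \ref{A8}) produces a $dt$-integral of $|\partial_t\rho_{\delta_0}(\cdot)|^2|t-t_n|^2$ (of order $\Delta t^2/\delta_0$) and $|\rho_{\delta_0}(\cdot)|^2|t-t_n|^2$, both of which sum over $n$ to $C(\delta_0)\Delta t$; taking the square root gives the $\sqrt{\Delta t}$ rate. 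Finally $\mathcal{J}_{19},\mathcal{J}_{21}$ are easier because the integrands are ordinary (Lebesgue) and bounded by $|\sigma|^2$ or $|\eta|^2$, hence controlled by Lemma \ref{lem:l-infinity bound-approximate-solution} together with the same mean-value estimates on the test-function shift; they yield $C(\xi,\delta_0)\Delta t$, which is absorbed into $C(\delta,\delta_0,\xi,\bar r)\sqrt{\Delta t}$. Collecting all twelve bounds completes the proof. I expect the stochastic terms $\mathcal{J}_{18},\mathcal{J}_{20}$ to be the principal obstacle, since they force the $\delta_0$-dependence and a careful use of the BDG/It\^o--L\'evy isometry rather than pointwise absolute-value estimates.
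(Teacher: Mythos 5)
Your proposal is correct and follows essentially the same route as the paper: the same splitting of each $\mathcal{J}_j$ according to whether it contains the deterministic-flow difference $\widetilde{u}_{\Delta t}(t,\cdot)-u_{\Delta t}(t_{n+1},\cdot)$ (handled via Lemma \ref{lem: prpertiesof S} and the BV bound, giving $\rho(\Delta t)\le\sqrt{\Delta t}$) or the stochastic-flow difference (handled via Lemma \ref{lem:average_time_continuity-approximate-solution}), the same duality move of $\mathcal{L}_\theta^{\bar r}$ onto the smooth factor with \eqref{inq:bound-frac-z} for $\mathcal{J}_{14},\mathcal{J}_{15}$ and \eqref{fractionalbound} for $\mathcal{J}_{16},\mathcal{J}_{17}$, and the same Cauchy--Schwartz plus It\^o/It\^o--L\'evy isometry treatment of $\mathcal{J}_{18},\mathcal{J}_{20}$ after splitting the test-function shift. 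The only cosmetic difference is that the $J_l$-factor is disposed of by integrating it out in $k$ (giving $1$) rather than via the pointwise bound $|J_l|\le l^{-1}$, which keeps the final constant independent of $l$ as stated.
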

Note that, since $\beta^{\prime\prime}$ and $J_l$ are non-negative functions  $\mathcal{I}_{11} \le 0$.  Let us focus on the term $\mathcal{I}_{10}$. In view of Cauchy-Schwartz-inequality, convolution property and \eqref{inq:uniform-1st-viscous}, we estimate $\mathcal{I}_{10}$ as follows.
\begin{align}\label{inq:i10}
   \mathcal{I}_{10}    \leq & \,\eps C(\beta',\psi) \mathbb{E} \Big[\int_{K_x}\int_{K_y}\int_0^T |\nabla_x u_\eps^\kappa| |\nabla_x\varrho_\delta(x-y)|\,ds\,dy\,dx \Big]\notag\\
   \leq &\,\eps^{\frac{1}{2}} C(\beta',\psi)\Big(  \eps \mathbb{E} \Big[\int_0^T \|\nabla_x u_\eps^\kappa(s)\|_{L^2(\R^d)}^2\,ds\Big]\Big)^{\frac{1}{2}}\Big( \mathbb{E} \Big[\int_{K_x}\int_{K_y}\int_0^T |\nabla_x\varrho_\delta(x-y)|^2\,ds\,dy\,dx \Big]\Big)^{\frac{1}{2}}\notag\\
   \le&\,C(\delta)\eps^{\frac{1}{2}}.
\end{align}

Let us consider the  additional term $\mathcal{I}_7$ which can be re-written as follows.
\begin{align}
   2\,\mathcal{I}_7 = \,& \mathbb{E} \Big[\int_{\mathbb{Q}_T^2} \int_{\mathbb{R}}
         \Big((\sigma(u_\eps(s,y)) \ast \tau_\kappa)^2-(\sigma(u_\eps(s,y)))^2\Big)\beta''(u_\eps^\kappa(s,y)- k)\notag\\&\hspace{8cm}\times\varphi_{\delta_0, \delta}J_l(u_{\Delta t}(t,x) - k)\,dk\,ds\,dt\,dy\,dx \Big] \notag\\
          +   \,& \mathbb{E} \Big[\int_{\mathbb{Q}_T^2} \int_{\mathbb{R}}
         \Big((\sigma(u_\eps(s,y)))^2-(\sigma(u_\eps^\kappa(s,y)))^2\Big)\beta''(u_\eps^\kappa(s,y)- k)\notag\\&\hspace{8cm}\times\varphi_{\delta_0, \delta}J_l(u_{\Delta t}(t,x) - k)\,dk\,ds\,dt\,dy\,dx \Big] \notag\\
  + \,& \mathbb{E} \Big[\int_{\mathbb{Q}_T^2} \int_{\mathbb{R}}
         \Big((\sigma(u_\eps^\kappa(s,y)))^2-(\sigma(u_\eps^\kappa(t,y)))^2\Big)\beta''(u_\eps^\kappa(s,y)- k)\notag\\&\hspace{8cm}\times\varphi_{\delta_0, \delta}J_l(u_{\Delta t}(t,x) - k)\,dk\,ds\,dt\,dy\,dx \Big] \notag\\
         + \,& \mathbb{E} \Big[\int_{\mathbb{Q}_T^2} \int_{\mathbb{R}}
        (\sigma(u_\eps^\kappa(t,y)))^2\Big(\beta''(u_\eps^\kappa(s,y)- u_{\Delta t}(t,x) +k)- \beta''(u_\eps^\kappa(t,y)- u_{\Delta t}(t,x) +k)\Big)\notag\\&\hspace{8cm}\times\varphi_{\delta_0, \delta}J_l(k)\,dk\,ds\,dt\,dy\,dx \Big] \notag\\ 
        +\,&\mathbb{E} \Big[\int_{\mathbb{R}^d}\int_{\mathbb{Q}_T} \int_{\mathbb{R}}
        (\sigma(u_\eps^\kappa(t,y)))^2 \beta''(u_\eps^\kappa(t,y)- u_{\Delta t}(t,x) +k)\Big(\int_0^T\rho_{\delta_0}(t-s)ds -1\Big)\notag\\&\hspace{8cm}\times\varrho_\delta(x-y)\psi(t,x)J_l(k)\,dk\,dt\,dy\,dx \Big] \notag\\ 
        +\,&  \mathbb{E} \Big[\int_{\mathbb{R}^d}\int_{\mathbb{Q}_T} \int_{\mathbb{R}}
        (\sigma(u_\eps^\kappa(t,y)))^2\Big(\beta''(u_\eps^\kappa(t,y)- u_{\Delta t}(t,x) +k)- \beta''(u_\eps^\kappa(t,y)- u_{\Delta t}(t,x))\Big)\notag\\&\hspace{8cm}\times\varrho_\delta(x-y)\psi(t,x)J_l(k)\,dk\,dt\,dy\,dx \Big] \notag\\
         +\,&  \mathbb{E} \Big[\int_{\mathbb{R}^d}\int_{\mathbb{Q}_T}
        (\sigma(u_\eps^\kappa(t,y)))^2 \beta''(u_\eps^\kappa(t,y)- u_{\Delta t}(t,x))\varrho_\delta(x-y)\psi(t,x)\,dt\,dy\,dx \Big] \notag\\
    &=: \sum_{i = 1}^{6}\mathcal{I}_{7}^i \,+ \,\mathbb{E} \Big[\int_{\mathbb{R}^d}\int_{\mathbb{Q}_T} 
        (\sigma(u_\eps^\kappa(t,y)))^2 \beta''(u_\eps^\kappa(t,y)- u_{\Delta t}(t,x))\varrho_\delta(x-y)\psi(t,x)\,dt\,dy\,dx \Big]. \notag
\end{align}
Using the fact that  $\beta''(r) \le \frac{M_2}{\xi}$ and $\sigma(u_\eps) \in L^2(\Omega \times \R^d)$ (for a.e. $t$) along with Cauchy-Schwartz inequality, properties of convolution and \eqref{inq:uniform-1st-viscous}, we have
\begin{align}
    \mathcal{I}_7^1 \le\,  &C(\xi,\psi)\,\mathbb{E} \Big[\int_0^T\int_{K_y} 
         \big|\sigma(u_\eps(s,y)) \ast \tau_\kappa-\sigma(u_\eps(s,y))\big|  \big|\sigma(u_\eps(s,y)) \ast \tau_\kappa +\sigma(u_\eps(s,y))\big|\,dy\,ds \Big]\notag \\
    \le \, &C(\xi,\psi)\,\Big(\mathbb{E} \Big[\int_0^T\int_{K_y}
         |\sigma(u_\eps(s,y)) \ast \tau_\kappa-\sigma(u_\eps(s,y))|^2\,dy\,ds \Big]\Big)^{\frac{1}{2}}\notag \\ & \hspace{3cm}\times\Big(\mathbb{E} \Big[\int_0^T\int_{K_y} 
           \big(|\sigma(u_\eps(s,y)) \ast \tau_\kappa|^2 +|\sigma(u_\eps(s,y))|^2\big)\,dy\,ds \Big]\Big)^{\frac{1}{2}}\notag\\
    \le\, &C(\xi,\psi)\,\Big(\underset{0 \le s \le T}{sup}\, \mathbb{E}\Big[||u_\eps(s)||_{L^2(\R^d)}^2\Big]\Big)^{\frac{1}{2}}\Big(\mathbb{E} \Big[\int_0^T\int_{K_y}
         |\sigma(u_\eps(s,y)) \ast \tau_\kappa-\sigma(u_\eps(s,y))|^2\,dy\,ds \Big]\Big)^{\frac{1}{2}}\notag \\
         \le \,&C(\xi,\psi)\,\Big(\mathbb{E} \Big[\int_0^T\int_{K_y}
         |\sigma(u_\eps(s,y)) \ast \tau_\kappa-\sigma(u_\eps(s,y))|^2\,dy\,ds \Big]\Big)^{\frac{1}{2}}\notag.
\end{align}
Using a similar lines of argument as done for $\mathcal{I}_7^1$, we bound $\mathcal{I}_7^2$ as follows.
\begin{align}
   \mathcal{I}_7^2 \le \, C(\xi, \psi)\Big(\mathbb{E} \Big[\int_0^T\int_{K_y}
         |u_\eps(s,y)-u_\eps^\kappa(s,y)|^2\,dy \,ds \Big]\Big)^{\frac{1}{2}}\notag.
\end{align}
We use Cauchy-Schwartz inequality, properties of convolutions, assumption \ref{A4}, estimations \eqref{inq:uniform-1st-viscous} and \eqref{eq:time continuity of uek} to get,
\begin{align}
    \mathcal{I}_7^3 \le \,&C(\xi,\psi)\,\mathbb{E} \Big[\int_0^T\int_0^T\int_{K_y}
         \big|\sigma(u_\eps^\kappa(s,y))-\sigma(u_\eps^\kappa(t,y))\big| \,\big|\sigma(u_\eps^\kappa(s,y)) +\sigma(u_\eps^\kappa(t,y))\big|\rho_{\delta_0}(t-s)\,dy\,dt\,ds\Big] \notag\\
        \le \,&C(\xi,\psi)\,\Big(\mathbb{E} \Big[\int_0^T\int_0^T\int_{K_y}|\sigma(u_\eps^\kappa(s,y))-\sigma(u_\eps^\kappa(t,y))|^2\rho_{\delta_0}(t-s)\,dy\,dt\,ds\Big]\Big)^{\frac{1}{2}}\notag \\ &\hspace{3cm}\times\Big(\mathbb{E} \Big[\int_0^T\int_0^T\int_{K_y}\big(|\sigma(u_\eps^\kappa(s,y))|^2+|\sigma(u_\eps^\kappa(t,y))|^2\big)\rho_{\delta_0}(t-s)\,dy\,dt\,ds\Big]\Big)^{\frac{1}{2}}\notag\\
         \le \,&C(\xi)\,\Big(\underset{0 \le s \le T}{sup}\, \mathbb{E}\Big[||u_\eps(s)||_{L^2(\R^d)}^2\Big]\Big)^{\frac{1}{2}}\Big(\mathbb{E} \Big[\int_0^T\int_0^T\int_{K_y}|u_\eps^\kappa(s,y)-u_\eps^\kappa(t,y)|^2\rho_{\delta_0}(t-s)\,dy\,dt\,ds\Big]\Big)^{\frac{1}{2}} \notag \\
        \le &\, C(\xi, \kappa, \eps)\sqrt{\delta_0}.\notag
\end{align}
Using Cauchy-Schwartz inequality, the assumption \ref{A4}, properties of convolutions, \eqref{inq:uniform-viscous-l2p} and \eqref{eq:time continuity of uek}, we have
\begin{align}
    \mathcal{I}_7^4 \le \,& C(\xi, \psi)\, \mathbb{E} \Big[\int_0^T\int_0^T\int_{K_y}
        |u_\eps^\kappa(t,y)|^2\,|u_\eps^\kappa(t,y)-u_\eps^\kappa(s,y)|\rho_{\delta_0}(t-s)\,dy\,ds\,dt\Big] \notag\\
        \le \,&C(\xi, \psi)\,\mathbb{E} \Big[\int_0^T\int_0^T\int_{K_y}
        ||u_\eps(t)||_{L^2(\mathbb{R}^d)}^2\,||\tau_\kappa||_{L^2(\mathbb{R}^d)}^2\,|u_\eps^\kappa(t,y)-u_\eps^\kappa(s,y)|\rho_{\delta_0}(t-s)\,dy\,dt\,ds \Big] \notag\\
        \le \, &C(\xi, \psi,\kappa)\Big(\mathbb{E} \Big[\int_0^T\int_0^T\int_{K_y}
        ||u_\eps(t,\cdot)||_{L^2(\mathbb{R}^d)}^4\rho_{\delta_0}(t-s)\,dy\,dt\,ds \Big]\Big)^{1/2}\notag\\& \hspace{4cm}\times\Big(\mathbb{E} \Big[\int_0^T\int_0^T\int_{K_y}
       |u_\eps^\kappa(t,y)-u_\eps^\kappa(s,y)|^2\rho_{\delta_0}(t-s)\,dy\,dt\,ds \Big]\Big)^{1/2}\notag\\
       \le &\,C(\xi,\psi,\kappa, |K_y|, \eps)\sqrt{\delta_0}\Big(\mathbb{E} \Big[
        \underset{0\le t\le T}{sup}\,||u_\eps(t)||_{L^2(\mathbb{R}^d)}^4\Big]\Big)^{1/2}\, \le \, C(\xi,\kappa,\eps)\sqrt{\delta_0}.\notag
\end{align}
One can use the assumption \ref{A4}, properties of convolutions and \eqref{inq:uniform-1st-viscous} to get the bound for  $\mathcal{I}_7^5$ 
and $\mathcal{I}_7^6$. 
\begin{align}
    \mathcal{I}_7^5\, \le \,&C(\xi)\delta_0 \quad \text{and } \quad \mathcal{I}_7^6\, \le \,C(\xi)l.\notag
\end{align}
We decompose $2\mathcal{J}_7$ as sum of $\mathcal{J}_7^{i},~(1\le i\le 4)$ where 
\begin{align}
    \mathcal{J}_7^1 := &\,\mathbb{E} \Big [\int_{\mathbb{Q}_T^2}\int_\mathbb{R} \sigma^2(u_{\Delta t}(t,x))\Big(\beta''(u_{\Delta t}(t,x) - u_\eps^\kappa(s,y)+k)-\beta''(u_{\Delta t}(t,x) - u_\eps^\kappa(t,y)+k)\Big)\notag \\& \hspace{8cm}\times\varphi_{\delta_0, \delta}J_l(k)\,dk\,ds\,dt\,dy\,dx \Big]\,,\notag\\
       \mathcal{J}_7^2 := &  \, \mathbb{E} \Big [\int_{\mathbb{R}^d}\int_{\mathbb{Q}_T}\int_\mathbb{R} \sigma^2(u_{\Delta t}(t,x))\beta''(u_{\Delta t}(t,x) - u_\eps^\kappa(t,y)+k)\Big(\int_0^T\rho_{\delta_0}(t-s)ds -1\Big)\notag \\&\hspace{4cm}\times\varrho_{\delta}(x-y)\psi(t,x)J_l(k)\,dk\,dt\,dy\,dx \Big]\,, \notag \\
         \mathcal{J}_7^3 := &\,\mathbb{E} \Big [\int_{\mathbb{R}^d}\int_{\mathbb{Q}_T}\int_\mathbb{R} \sigma^2(u_{\Delta t}(t,x))\Big(\beta''(u_{\Delta t}(t,x) - u_\eps^\kappa(t,y)+k)-\beta''(u_{\Delta t}(t,x) - u_\eps^\kappa(t,y))\Big)\notag \\& \hspace{6cm}\times\varrho_{\delta}(x-y)\psi(t,x)J_l(k)\,dk\,dt\,dy\,dx \Big]\,,\notag\\
         \mathcal{J}_7^4 :=  &\,\mathbb{E} \Big [\int_{\mathbb{R}^d}\int_{\mathbb{Q}_T}\sigma^2(u_{\Delta t}(t,x))\beta''(u_{\Delta t}(t,x) - u_\eps^\kappa(t,y))\varrho_{\delta}(x-y)\psi(t,x)\,dt\,dy\,dx \Big]\,. \notag 
     \end{align}
Using Lemmas \ref{lem:l-infinity bound-approximate-solution}, and \ref{lem:average-time-cont-viscous} together with the property \ref{A4}, one has 
\begin{align}
    \mathcal{J}_7^1 \le &C(\xi, \psi)\,\mathbb{E} \Big [\int_0^T\int_0^T\int_{K_y}||u_{\Delta t}(t, \cdot)||_{L^\infty}^2|u_\eps^\kappa(s,y)-u_\eps^\kappa(t,y)|\rho_{\delta_0}(t-s)\,dy\,dt\,ds\Big]\notag\\
    \le \,&\, C(\xi, \psi, \widetilde{M})\,\mathbb{E} \Big [\int_0^T\int_0^T\int_{K_y}|u_\eps^\kappa(s,y)-u_\eps^\kappa(t,y)|\rho_{\delta_0}(t-s)\,dy\,dt\,ds\Big]\notag \, \le C(\xi, \kappa, \eps)\sqrt{\delta_0}.
\end{align}
In the view of Lemma \ref{lem:l-infinity bound-approximate-solution} and  the assumption \ref{A4}, it is easy to observe that
\begin{align}
    \mathcal{J}_7^2 \le C(\xi)\delta_0, \quad \text{and} \quad \mathcal{J}_7^3 \le C(\xi)l.\notag
\end{align}
Combining the above estimations, we have the following Lemma.
\begin{lem}\label{lem:7}
\begin{align}
    \mathcal{I}_7 + \mathcal{J}_7 \le & \,\frac{1}{2}\,\mathbb{E} \Big[\int_{\mathbb{R}^d}\int_{\mathbb{Q}_T} 
        \{(\sigma(u_\eps^\kappa(t,y)))^2 + (\sigma(u_{\Delta t}(t,x))^2\} \beta''(u_\eps^\kappa(t,y)- u_{\Delta t}(t,x))\varrho_\delta(x-y)\psi(t,x)\,dt\,dy\,dx \Big] \notag \\
         & \quad +\,C(\xi)\,\Big(\mathbb{E} \Big[\int_0^T\int_{K_y}
         |\sigma(u_\eps(s,y)) \ast \tau_\kappa-\sigma(u_\eps(s,y))|^2\,dy\,ds \Big]\Big)^{\frac{1}{2}}\notag\\
        & \qquad \quad + \,C(\xi)\Big(\mathbb{E} \Big[\int_0^T\int_{K_y}
         |u_\eps(s,y)-u_\eps^\kappa(s,y)|^2\,dy \,ds \Big]\Big)^{\frac{1}{2}}
        + \, C(\xi, \kappa, \eps)\sqrt{\delta_0} + C(\xi)l.\notag
\end{align}
\end{lem}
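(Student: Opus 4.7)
The plan is to treat $\mathcal{I}_7$ and $\mathcal{J}_7$ in parallel, exploiting the fact that after doubling each expression and appealing to the evenness of $\beta''$, the two dominant contributions combine into the symmetric term $\frac{1}{2}\mathbb{E}\bigl[\int\{\sigma^2(u_\eps^\kappa(t,y))+\sigma^2(u_{\Delta t}(t,x))\}\beta''(u_\eps^\kappa(t,y)-u_{\Delta t}(t,x))\varrho_\delta(x-y)\psi(t,x)\bigr]$ appearing in the statement. The remainder will be split through a telescoping chain of add-and-subtract manipulations, each residual depending on only one of the small parameters $\kappa$, $\delta_0$, $l$, $\xi$, so it can be individually controlled.

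For $2\mathcal{I}_7$ I will introduce six intermediate quantities by successively: (i) replacing $\sigma(u_\eps)\ast\tau_\kappa$ with $\sigma(u_\eps)$ using the approximation property of convolution together with the uniform bound \eqref{inq:uniform-1st-viscous}; (ii) replacing $\sigma(u_\eps)$ with $\sigma(u_\eps^\kappa)$ via the Lipschitz continuity \ref{A4}; (iii) moving the time variable in $\sigma(u_\eps^\kappa(s,y))$ from $s$ to $t$; (iv) shifting the $k$-argument inside $\beta''$ so that it reads $u_\eps^\kappa(t,y)-u_{\Delta t}(t,x)+k$; (v) removing the factor $\int_0^T\rho_{\delta_0}(t-s)\,ds$, which equals $1$ outside a strip of length $\delta_0$ near $t=T$; and (vi) removing the $J_l$ weight in $k$ using the crude bound $\beta''\le M_2/\xi$. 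The residual at step (iii) is handled via Cauchy--Schwarz and the quadratic average time continuity \eqref{eq:time continuity of uek}, while step (iv) is controlled using \eqref{esti:time-cont-viscous-l1}.

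For $2\mathcal{J}_7$ the analysis is shorter, because $\sigma$ already carries the correct arguments $(t,x)$ and no convolution enters; only the analogues of steps (iv), (v) and (vi) are needed, and the leading prefactor $\sigma^2(u_{\Delta t}(t,x))$ is controlled uniformly via Lemma \ref{lem:l-infinity bound-approximate-solution} together with \ref{A4}. Adding the two doubled expressions, dividing by two, and collecting the residuals yields exactly the four displayed error contributions on the right-hand side.

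The main obstacle will arise at step (iv) of the $\mathcal{I}_7$ chain (the term $\mathcal{I}_7^4$): after applying the Lipschitz bound for $\sigma^2$ one arrives at a cubic-type integrand of the form $|\sigma(u_\eps^\kappa(t,y))|^2\,|u_\eps^\kappa(t,y)-u_\eps^\kappa(s,y)|\,\rho_{\delta_0}(t-s)$, and a bare $L^2$ average continuity is insufficient. I will split this by Cauchy--Schwarz into $\bigl(\mathbb{E}\sup_t\|u_\eps(t)\|_{L^2(\R^d)}^4\bigr)^{1/2}$ times $\bigl(\mathbb{E}\int_0^T\!\int_0^T\!\int_{K_y}|u_\eps^\kappa(t,y)-u_\eps^\kappa(s,y)|^2\rho_{\delta_0}(t-s)\,dy\,dt\,ds\bigr)^{1/2}$, invoking the $p=2$ instance of the higher-moment estimate \eqref{inq:uniform-viscous-l2p} for the first factor and Lemma \ref{lem:average-time-cont-viscous} for the second, which produces the anticipated $\sqrt{\delta_0}$ rate and closes the argument.
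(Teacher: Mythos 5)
Your proposal follows essentially the same route as the paper: the identical telescoping decomposition of $2\mathcal{I}_7$ into the convolution error, the $u_\eps\to u_\eps^\kappa$ error, the $s\to t$ shift in the $\sigma^2$ prefactor, the $s\to t$ shift inside $\beta''$, the $\rho_{\delta_0}$ boundary-strip term and the $J_l$/$k$-removal, with the shorter three-step analogue for $2\mathcal{J}_7$, and the symmetric leading term obtained by evenness of $\beta''$. Your treatment of the critical term $\mathcal{I}_7^4$ — Cauchy--Schwarz against $\bigl(\mathbb{E}\sup_t\|u_\eps(t)\|_{L^2}^4\bigr)^{1/2}$ from \eqref{inq:uniform-viscous-l2p} and the quadratic time continuity of Lemma \ref{lem:average-time-cont-viscous} — is exactly the paper's argument, so the proof is correct as proposed.
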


Now we consider the additional terms arising due to jump noise. We re-write $\mathcal{I}_9$ to get,
\begin{align}
    \mathcal{I}_9 =\, &\mathbb{E} \Big[\int_{\mathbb{Q}_T^2} \int_{\mathbb{R}}
         \int_{|z| > 0} \int_0^1 (1 -\lambda) \big((\eta(u_\eps(s,y);z) \ast \tau_\kappa)^2- (\eta(u_\eps(s,y);z))^2\big)\notag \\ &\hspace{1cm}\times\beta''(u_\eps^\kappa(s,y) + \lambda(\eta(u_\eps(s,y);z)\ast \tau_\kappa) - k)  \varphi_{\delta_0, \delta}J_l(u_{\Delta t}(t,x) - k)\,d\lambda\,m(dz)\,dk\,ds\,dt\,dx\,dy \Big] \notag\\
        +  &\,\mathbb{E} \Big[\int_{\mathbb{Q}_T^2} \int_{\mathbb{R}}
         \int_{|z| > 0} \int_0^1 (1 -\lambda) \big((\eta(u_\eps(s,y);z))^2- (\eta(u_\eps^\kappa(s,y);z))^2\big)\notag \\ &\hspace{1cm}\times\beta''(u_\eps^\kappa(s,y) + \lambda(\eta(u_\eps(s,y);z)\ast \tau_\kappa) - k)  \varphi_{\delta_0, \delta}J_l(u_{\Delta t}(t,x) - k)\,d\lambda\,m(dz)\,dk\,ds\,dt\,dx\,dy \Big]\notag\\ 
         +  &\,\mathbb{E} \Big[\int_{\mathbb{Q}_T^2} \int_{\mathbb{R}}
         \int_{|z| > 0} \int_0^1 (1 -\lambda) (\eta(u_\eps^\kappa(s,y);z))^2\Big(\beta''(u_\eps^\kappa(s,y) + \lambda(\eta(u_\eps(s,y);z)\ast \tau_\kappa) - k) \notag \\ &\hspace{1cm}- \beta''(u_\eps^\kappa(s,y) + \lambda(\eta(u_\eps(s,y);z)) - k)\Big)\varphi_{\delta_0, \delta}J_l(u_{\Delta t}(t,x) - k)\,d\lambda\,m(dz)\,dk\,ds\,dt\,dx\,dy \Big]\notag\\ 
         + &\,\mathbb{E} \Big[\int_{\mathbb{Q}_T^2} \int_{\mathbb{R}}
         \int_{|z| > 0} \int_0^1 (1 -\lambda) (\eta(u_\eps^\kappa(s,y);z))^2\Big(\beta''(u_\eps^\kappa(s,y) + \lambda(\eta(u_\eps(s,y);z)) - k) \notag \\ &\hspace{1cm}-  \beta''(u_\eps^\kappa(s,y) + \lambda(\eta(u_\eps^\kappa(s,y);z))- k)\Big)\varphi_{\delta_0, \delta}J_l(u_{\Delta t}(t,x) - k)\,d\lambda\, m(dz)\,dk\,ds\,dt\,dx\,dy \Big]\notag\\
         + & \,\mathbb{E} \Big[\int_{\mathbb{Q}_T^2} \int_{\mathbb{R}}
         \int_{|z| > 0} \int_0^1 (1 -\lambda) \big((\eta(u_\eps^\kappa(s,y);z))^2 - (\eta(u_\eps^\kappa(t,y);z))^2\big) \beta''(u_\eps^\kappa(s,y) + \lambda(\eta(u_\eps^\kappa(s,y);z))- k)\notag \\ &\hspace{6cm}\times\varphi_{\delta_0, \delta}J_l(u_{\Delta t}(t,x) - k)\,d\lambda\, \,m(dz)\,dk\,ds\,dt\,dx\,dy \Big]\notag \\
          + & \,\mathbb{E} \Big[\int_{\mathbb{Q}_T^2} \int_{\mathbb{R}}
         \int_{|z| > 0} \int_0^1 (1 -\lambda)  (\eta(u_\eps^\kappa(t,y);z))^2\Big( \beta''(u_\eps^\kappa(s,y) + \lambda(\eta(u_\eps^\kappa(s,y);z))- k)\notag \\ &\hspace{2cm}-\beta''(u_\eps^\kappa(t,y) + \lambda(\eta(u_\eps^\kappa(t,y);z))- k)\Big)\varphi_{\delta_0, \delta}J_l(u_{\Delta t}(t,x) - k)\,d\lambda\, \,m(dz)\,dk\,ds\,dt\,dx\,dy \Big]\notag\\
         + & \,\mathbb{E} \Big[\int_{\mathbb{Q}_T}\int_{\R^d} \int_{\mathbb{R}}
         \int_{|z| > 0} \int_0^1 (1 -\lambda)  (\eta(u_\eps^\kappa(t,y);z))^2\beta''(u_\eps^\kappa(t,y) + \lambda(\eta(u_\eps^\kappa(t,y);z))- k)\Big)\notag\\&\hspace{3cm}\times\Big(\int_0^T\rho_{\delta_0}(t-s)ds -1\Big)\varrho_\delta(x-y)\psi(t,x)J_l(u_{\Delta t}(t,x) - k)\,d\lambda\,\,m(dz)\,dk\,dx\,dt\,dy \Big]\notag\\
         + & \,\mathbb{E} \Big[\int_{\mathbb{Q}_T}\int_{\R^d} \int_{\mathbb{R}}
         \int_{|z| > 0} \int_0^1 (1 -\lambda)  (\eta(u_\eps^\kappa(t,y);z))^2\Big(\beta''(u_\eps^\kappa(t,y) + \lambda(\eta(u_\eps^\kappa(t,y);z))- k)\notag\\&\hspace{1cm}- \beta''(u_\eps^\kappa(t,y) + \lambda(\eta(u_\eps^\kappa(t,y);z)))\Big)\varrho_\delta(x-y)\psi(t,x)J_l(u_{\Delta t}(t,x) - k)\,d\lambda\, \,m(dz)\,dk\,dx\,dt\,dy \Big]\notag\\
         + & \,\mathbb{E} \Big[\int_{\mathbb{Q}_T}\int_{\R^d}
         \int_{|z| > 0} \int_0^1 (1 -\lambda)  (\eta(u_\eps^\kappa(t,y);z))^2\beta''(u_\eps^\kappa(t,y) + \lambda\eta(u_\eps^\kappa(t,y);z)) - u_{\Delta t}(t,x) )\notag\\&\hspace{4cm}\times\varrho_\delta(x-y)\psi(t,x)\,d\lambda\,m(dz)\,dx\,dt\,dy \Big]\notag \\
         &=: \sum_{i=1}^{9}\mathcal{I}_9^{i}.\notag
\end{align}
We will estimate each of the above terms separately. First consider the term $  \mathcal{I}_9^1$. 
Using  Cauchy-Schwartz inequality, properties of convolutions along with the assumptions \ref{A6} and \ref{A8} and the estimation \eqref{inq:uniform-1st-viscous}, we have
\begin{align}
   \mathcal{I}_9^1  \le \,&C(\xi, \psi)\,\mathbb{E} \Big[\int_0^T\int_{K_y}
         \int_{|z| > 0}\big|\eta(u_\eps(s,y);z) \ast \tau_\kappa-
         \eta(u_\eps(s,y);z)\big|\notag\\&\hspace{6cm}\times\big|\eta(u_\eps(s,y);z) \ast \tau_\kappa +
         \eta(u_\eps(s,y);z)\big|\,m(dz)\,dy\,ds \Big]\notag\\
        \le  \,&C(\xi, \psi)\,\Big(\mathbb{E} \Big[\int_0^T\int_{K_y}
         \int_{|z| > 0}|\eta(u_\eps(s,y);z) \ast \tau_\kappa-
         \eta(u_\eps(s,y);z)|^2\,m(dz)\,dy\,ds \Big]\Big)^{\frac{1}{2}}\notag\\&\hspace{2cm}\times\Big(\mathbb{E} \Big[\int_0^T\int_{K_y}
         \int_{|z| > 0}\big(|\eta(u_\eps(s,y);z) \ast \tau_\kappa |^2+
         |\eta(u_\eps(s,y);z)|^2\big)\,m(dz)\,dy\,ds \Big]\Big)^{\frac{1}{2}}\notag\\
         \le  \,&C(\xi)\,\Big(\underset{s}{sup}\,\mathbb{E}\Big[||u_\eps(s)||_{L^2(\R^d)}^2\Big]\Big)^{\frac{1}{2}}\Big(\mathbb{E} \Big[\int_0^T\int_{K_y}
         \int_{|z| > 0}|\eta(u_\eps(s,y);z) \ast \tau_\kappa-
         \eta(u_\eps(s,y);z)|^2\,m(dz)\,dy\,ds \Big]\Big)^{\frac{1}{2}}\notag\\
         \le \,&C(\xi, \psi)\Big(\mathbb{E} \Big[\int_0^T\int_{K_y}
         \int_{|z| > 0}|\eta(u_\eps(s,y);z) \ast \tau_\kappa-
         \eta(u_\eps(s,y);z)|^2\,m(dz)\,dy\,ds \Big]\Big)^{\frac{1}{2}}\,.\notag
\end{align}
We use  Cauchy-Schwartz inequality and the bound of $\beta''(r)$ to have,
\begin{align}
    \mathcal{I}_9^3 \le &C(\xi, \psi)\,\mathbb{E} \Big[\int_0^T\int_{K_y}
         \int_{|z| > 0} (\eta(u_\eps^\kappa(s,y);z))^2|\eta(u_\eps(s,y);z)\ast \tau_\kappa-\eta(u_\eps(s,y);z))|\,m(dz)\,dy\,ds \Big]\notag\\ 
         \le\, & C(\xi, \psi)\,\Big(\mathbb{E} \Big[\int_0^T\int_{K_y}
         \int_{|z| > 0} (\eta(u_\eps^\kappa(s,y);z))^4\,m(dz)\,dy\,ds \Big]\Big)^{\frac{1}{2}}\notag\\&\hspace{2cm}\times\Big(\mathbb{E} \Big[\int_0^T\int_{K_y}
         \int_{|z| > 0}|\eta(u_\eps(s,y);z)\ast \tau_\kappa-\eta(u_\eps(s,y);z))|^2\,m(dz)\,dy\,ds \Big]\Big)^{\frac{1}{2}}\,.\notag
\end{align}
In the view of assumption \ref{A7} and \ref{A8}, we observe that
\begin{align}
&\mathbb{E}\Big[\int_0^T\int_{K_y}\int_{|z| > 0} (\eta(u_\eps^\kappa(s,y);z))^4\,m(dz)\,dy\,ds \Big]\notag\\ &\quad=  \mathbb{E}\Big[\int_0^T\int_{K_y}\int_{|z| > 0} \big(\eta(u_\eps^\kappa(s,y);z)\{\mathbf{1}_{|u_\eps^\kappa(s,y)| \le M}+ \mathbf{1}_{|u_\eps^\kappa(s,y)| > M}\}\big)^4\,m(dz)\,dy\,ds \Big]\notag\\
&\qquad = \mathbb{E}\Big[\int_0^T\int_{K_y}\int_{|z| > 0} |\eta(u_\eps^\kappa(s,y);z)|^4\mathbf{1}_{|u_\eps^\kappa(s,y)| \le M}\,m(dz)\,dy\,ds \Big]\notag\\
&\quad\quad\le\,(\lambda^\star)\mathbb{E} \Big[\int_0^T\int_{K_y}
         \int_{|z| > 0} |u_\eps^\kappa(s,y)|^4\mathbf{1}_{|u_\eps^\kappa(s,y)| \le M}\, (1 \wedge |z|^4)\,m(dz)\,dy\,ds \Big]\notag\\
         & \quad\quad \le\,C(\lambda^\star, M)\mathbb{E} \Big[\int_0^T\int_{K_y}
         \int_{|z| > 0} (1 \wedge |z|^2)\,m(dz)\,dy\,ds \Big]  \le C. \label{eq:etaI93}
         \end{align}
Thus, we have
$$\mathcal{I}_9^3 \le C(\xi, \psi)\,\Big(\mathbb{E} \Big[\int_0^T\int_{K_y}
         \int_{|z| > 0}|\eta(u_\eps(s,y);z)\ast \tau_\kappa-\eta(u_\eps(s,y);z))|^2\,m(dz)\,dy\,ds \Big]\Big)^{\frac{1}{2}}\,. $$
Following the estimations of $\mathcal{I}_9^1$ and $\mathcal{I}_9^3$ respectively, it is easy to see that
\begin{align}
    &\mathcal{I}_9^2,\, \mathcal{I}_9^4 \le C(\xi, \psi)\Big(\mathbb{E} \Big[\int_0^T\int_{K_y}
         |u_\eps^\kappa(s,y)
         -u_\eps(s,y)|^2\,dy\,ds \Big]\Big)^{\frac{1}{2}}\notag.
\end{align}
An application of Cauchy-Schwartz inequality, properties of convolution along with assumption \ref{A6}, \ref{A8}, estimations \eqref{inq:uniform-1st-viscous} and  \eqref{eq:time continuity of uek}, we have
\begin{align}
     \mathcal{I}_9^5 \le  & C(\xi,\psi) \,\mathbb{E} \Big[\int_0^T\int_0^T\int_{K_y}
         \int_{|z| > 0} \big|\eta(u_\eps^\kappa(s,y);z) - \eta(u_\eps^\kappa(t,y);z)\big| \notag \\ &\hspace{4cm}\big|\eta(u_\eps^\kappa(s,y);z)+ \eta(u_\eps^\kappa(t,y);z)\big|\rho_{\delta_0}(t-s)\,m(dz)\,dy\,dt\,ds \Big]\notag\\
         \le & \,C(\xi,\psi)\Big(\mathbb{E} \Big[\int_0^T\int_0^T\int_{K_y}
         \int_{|z| > 0}|\eta(u_\eps^\kappa(s,y);z) - \eta(u_\eps^\kappa(t,y);z)|^2\rho_{\delta_0}(t-s)\,m(dz)\,dy\,dt\,ds \Big]\Big)^{\frac{1}{2}}\notag\\&\hspace{1cm}\times\Big(\mathbb{E} \Big[\int_0^T\int_0^T\int_{K_y}
         \int_{|z| > 0}\big(|\eta(u_\eps^\kappa(s,y);z)|^2 + |\eta(u_\eps^\kappa(t,y);z)|^2\big)\rho_{\delta_0}(t-s)\,m(dz)\,dy\,dt\,ds \Big]\Big)^{\frac{1}{2}}\notag\\
         \le & \,C(\xi,\psi, \lambda^\star)\Big(\mathbb{E} \Big[\int_0^T\int_0^T\int_{K_y}
         \int_{|z| > 0}|u_\eps^\kappa(s,y) - u_\eps^\kappa(t,y)|^2\,(1 \wedge |z|^2)\rho_{\delta_0}(t-s)\,m(dz)\,dy\,dt\,ds \Big]\Big)^{\frac{1}{2}}\notag\\&\hspace{1cm}\times\Big(\mathbb{E} \Big[\int_0^T\int_0^T\int_{K_y}
         \int_{|z| > 0}\big(|u_\eps^\kappa(s,y)|^2 + |u_\eps^\kappa(t,y)|^2\big)(1 \wedge |z|^2)\rho_{\delta_0}(t-s)\,m(dz)\,dy\,dt\,ds \Big]\Big)^{\frac{1}{2}}\notag\\
         \le\, &C(\xi)\Big(\underset{s}{sup}\, \mathbb{E}\Big[||u_\eps(s)||_{L^2(\R^d)}^2\Big]\Big)^{\frac{1}{2}}\Big(\mathbb{E} \Big[\int_0^T\int_0^T\int_{K_y}|u_\eps^\kappa(s,y) - u_\eps^\kappa(t,y)|^2\,\rho_{\delta_0}(t-s)\,dy\,dt\,ds \Big]\Big)^{\frac{1}{2}}\notag \\
           \le \,&\, C(\xi,\kappa,\eps)\sqrt{\delta_0}.\notag
\end{align}
In view of the assumptions \ref{A6}-\ref{A8}, \eqref{eq:etaI93} and  \eqref{eq:time continuity of uek}, we bound $\mathcal{I}_9^6$  as follows.
\begin{align}
\mathcal{I}_9^6 \le & \,C(\xi, \psi)\Big(\,\mathbb{E} \Big[\int_0^T\int_0^T\int_{K_y}
         \int_{|z| > 0}(\eta(u_\eps^\kappa(t,y);z))^2 |u_\eps^\kappa(s,y)-u_\eps^\kappa(t,y)|\rho_{\delta_0}(t-s)\,m(dz)\,dy\,dt\,ds \Big]\notag\\
         + \,& \,\mathbb{E} \Big[\int_0^T\int_0^T\int_{K_y}
         \int_{|z| > 0}(\eta(u_\eps^\kappa(t,y);z))^2 |\eta(u_\eps^\kappa(s,y);z))- \eta(u_\eps^\kappa(t,y);z)|\rho_{\delta_0}(t-s)\,m(dz)\,dy\,dt\,ds \Big]\notag\\
         \le &\, C(\xi)\,\Big(\mathbb{E} \Big[\int_0^T\int_0^T\int_{K_y}
          |u_\eps^\kappa(s,y)-u_\eps^\kappa(t,y)|^2\rho_{\delta_0}(t-s)\,dy\,dt\,ds \Big]\Big)^\frac{1}{2}\notag\\
         \, & \quad + C(\xi)\Big(\mathbb{E} \Big[\int_0^T\int_0^T\int_{K_y}
         \int_{|z| > 0}|\eta(u_\eps^\kappa(s,y);z))- \eta(u_\eps^\kappa(t,y);z)|^2\rho_{\delta_0}(t-s)\,m(dz)\,dy\,dt\,ds \Big]\Big)^{\frac{1}{2}}\notag\\
         \le\, & C(\xi, \kappa, \eps)\sqrt{\delta_0}. \notag 
\end{align}
With assumptions \ref{A6} and \ref{A8} in hand, it is easy to observe that,  
\begin{align}
    \mathcal{I}_9^7 \le C(\xi)\delta_0, \quad \text{and} \quad \mathcal{I}_9^8 \le C(\xi)l.\notag
\end{align}
In the view of Lemma \ref{lem:l-infinity bound-approximate-solution},  the estimation  \eqref{eq:time continuity of uek}, the assumptions \ref{A6}, \ref{A8},  one can approximate  $\mathcal{J}_9$ as follows:
\begin{align}
    \mathcal{J}_9 = \, &\, \mathbb{E} \Big [\int_{\mathbb{Q}_T^2}\int_\mathbb{R}\int_{|z| > 0} \int_0^1  (1-\lambda)\eta^2(u_{\Delta t}(t,x);z)\Big(\beta''(u_{\Delta t}(t,x) + \lambda\eta(u_{\Delta t}(t,x);z)- u_\eps^\kappa(s,y) + k )\notag\\&- \beta''(u_{\Delta t}(t,x) + \lambda\eta(u_{\Delta t}(t,x);z) - u_\eps^\kappa(t,y) + k )\Big)\varphi_{\delta_0, \delta}(t,x,s,y)J_l(k)\,d\lambda m(dz)\,dk\,ds\,dt\,dx\,dy \Big] \notag\\
    + & \,  \mathbb{E} \Big [\int_0^T\int_{\R^d}\int_{\R^d}\int_\mathbb{R}\int_{|z| > 0} \int_0^1  (1-\lambda)\eta^2(u_{\Delta t}(t,x);z) \beta''(u_{\Delta t}(t,x) + \lambda\eta(u_{\Delta t}(t,x);z) - u_\eps^\kappa(t,y) + k )\notag\\&\hspace{2cm}\times\Big(\int_0^T\rho_{\delta_0}(t-s)\,ds - 1\Big)\varrho_\delta(x-y)\psi(t,x)J_l(k)\,d\lambda m(dz)\,dk\,dx\,dy\,dt \Big]\notag \\
    +&\,  \mathbb{E} \Big [\int_0^T\int_{\R^d}\int_{\R^d}\int_{\R}\int_{|z| > 0} \int_0^1  (1-\lambda)\eta^2(u_{\Delta t}(t,x);z) \Big(\beta''(u_{\Delta t}(t,x) + \lambda\eta(u_{\Delta t}(t,x);z) - u_\eps^\kappa(t,y) + k )\notag\\&\hspace{1cm}-\beta''(u_{\Delta t}(t,x) + \lambda\eta(u_{\Delta t}(t,x);z) - u_\eps^\kappa(t,y) )\Big)\varrho_\delta(x-y)\psi(t,x)J_l(k)\,d\lambda\, m(dz)\,dk\,dx\,dy\,dt \Big]\notag\\
    + &\, \mathbb{E} \Big [\int_0^T\int_{\R^d}\int_{\R^d}\int_{|z| > 0} \int_0^1  (1-\lambda)\eta^2(u_{\Delta t}(t,x);z)\beta''(u_{\Delta t}(t,x) + \lambda\eta(u_{\Delta t}(t,x);z) - u_\eps^\kappa(t,y))\notag\\&\hspace{6cm}\times\varrho_\delta(x-y)\psi(t,x)\,d\lambda\, m(dz)\,dx\,dy\,dt \Big]\notag\\
    \le   &\, C(\xi)\Big(\, \mathbb{E} \Big [\int_0^T\int_0^T\int_{K_y}\int_{|z| > 0}||u_{\Delta t}(t,\cdot)||_{L^\infty}^2| u_\eps^\kappa(s,y)- u_\eps^\kappa(t,y)|\,(1 \wedge |z|^2) \, m(dz)\,dy\,dt\,ds \Big] + (\delta_0 + l)\Big)\notag \\
    + &\, \mathbb{E} \Big [\int_{\R^d}\int_{\mathbb{Q}_T}\int_{|z| > 0} \int_0^1  (1-\lambda)\eta^2(u_{\Delta t}(t,x);z)\beta''(u_{\Delta t}(t,x) + \lambda\eta(u_{\Delta t}(t,x);z) - u_\eps^\kappa(t,y))\notag\\&\hspace{6cm}\times\varrho_\delta(x-y)\psi(t,x)\,d\lambda\, m(dz)\,dt\,dx\,dy \Big]\notag\\
   \le \, & \mathbb{E} \Big [\int_{\R^d}\int_{\mathbb{Q}_T}\int_{|z| > 0} \int_0^1  (1-\lambda)\eta^2(u_{\Delta t}(t,x);z)\beta''(u_{\Delta t}(t,x) + \lambda\eta(u_{\Delta t}(t,x);z) - u_\eps^\kappa(t,y))\notag\\&\hspace{3cm}\times\varrho_\delta(x-y)\psi(t,x)\,d\lambda m(dz)\,dt\,dx\,dy \Big] + C(\xi, \kappa, \eps)\sqrt{\delta_0} + C(\xi)l .\notag
\end{align}
We summarize the above estimations in the following Lemma.
\begin{lem}\label{lem:9}
\begin{align}
         &\mathcal{I}_{9} + \mathcal{J}_9 \le \, \mathbb{E} \Big[\int_{\mathbb{Q}_T} \int_{\mathbb{R}^d}
         \int_{|z| > 0} \int_0^1 (1 -\lambda) \big \{ \eta^2(u_{\Delta t}(t,x);z)\beta''(u_{\Delta t}(t,x) + \lambda\eta(u_{\Delta t}(t,x);z) - u_\eps^\kappa(t,y))\notag  \\ & + \eta^2(u_\eps^\kappa(t,y);z)\beta''(u_\eps^\kappa(t,y) + \lambda(\eta(u_\eps^\kappa(t,y);z) - u_{\Delta t}(t,x))  \big \}\varrho_\delta(x-y) \psi(t,x)\,d\lambda \,m(dz)\,dx\,dt\,dy \Big] \notag\\
         &\quad+\,C(\xi)\,\Big(\mathbb{E} \Big[\int_0^T\int_{K_y}
         \int_{|z| > 0}|\eta(u_\eps(s,y);z)\ast \tau_\kappa-\eta(u_\eps(s,y);z))|^2\,m(dz)\,dy\,ds \Big]\Big)^{\frac{1}{2}}\notag\\&\qquad+ \,C(\xi)\Big(\mathbb{E} \Big[\int_0^T\int_{K_y}
         |u_\eps^\kappa(s,y)
         -u_\eps(s,y)|^2\,dy\,ds \Big]\Big)^{\frac{1}{2}}+ C(\xi, \kappa, \eps)\sqrt{\delta_0} + C(\xi)l.\notag
\end{align}
\end{lem}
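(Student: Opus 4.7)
The strategy is a nine-step telescoping decomposition of $\mathcal{I}_9$ followed by a parallel four-step decomposition of $\mathcal{J}_9$; the remainder after all substitutions is precisely the first clean integral on the right-hand side of the claim, while each successive substitution generates one of the error contributions listed there.

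For $\mathcal{I}_9$ the chain of insertions-and-subtractions would be: (i) $(\eta(u_\eps)\ast\tau_\kappa)^2 \rightsquigarrow \eta(u_\eps)^2$; (ii) $\eta(u_\eps)^2\rightsquigarrow \eta(u_\eps^\kappa)^2$; (iii) remove the $\tau_\kappa$ from the perturbation inside $\beta''$; (iv) replace $u_\eps$ by $u_\eps^\kappa$ inside that $\beta''$; (v)-(vi) swap the time variable $s$ for $t$ first in the outer $\eta^2$ factor and then inside $\beta''$; (vii) remove the temporal mollifier by using $\int_0^T\rho_{\delta_0}(t-s)\,ds=1$ up to the $[T-\delta_0,T]$ boundary; (viii) collapse $J_l$ onto $k=0$. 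The surviving ninth piece is exactly the $u_{\Delta t}$-side of the leading integral. For $\mathcal{J}_9$, the decomposition is the analogous four-step chain (swap $s$ for $t$, then remove $\rho_{\delta_0}$ and $J_l$), producing the $u_\eps^\kappa$-side of the leading integral plus $O(\sqrt{\delta_0})+O(l)$ remainders.

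The tools used in the bookkeeping will be: (a) Cauchy-Schwartz against the measure $\rho_{\delta_0}(t-s)\,m(dz)\,dy\,dt\,ds$, which splits a product into a small $\kappa$- or $\delta_0$-defect times a uniformly bounded factor; (b) the bound $|\beta''_\xi(r)|\le M_2/\xi$, responsible for the generic $C(\xi)$ prefactor; (c) assumptions \ref{A6}-\ref{A8}, which yield $|\eta(u;z)|\le\lambda^*|u|(1\wedge|z|)$ and, crucially by the support cutoff \ref{A7}, the uniform fourth-moment control
\begin{align*}
\mathbb{E}\Big[\int_0^T\int_{K_y}\int_{|z|>0}|\eta(u_\eps^\kappa;z)|^4\,m(dz)\,dy\,ds\Big]\le C(\lambda^*,M,|K_y|);
\end{align*}
(d) the average time-continuity estimate \eqref{eq:time continuity of uek} of Lemma \ref{lem:average-time-cont-viscous}, which converts steps (v)-(vi) into an $O(\sqrt{\delta_0})$ bound after Cauchy-Schwartz; and finally (e) the uniform $L^2$ estimate \eqref{inq:uniform-1st-viscous} for $u_\eps$ (via convolution) and the $L^\infty$ bound of Lemma \ref{lem:l-infinity bound-approximate-solution} for $u_{\Delta t}$, used to kill the ``non-small'' Cauchy-Schwartz factors in steps (i)-(vi).

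The main obstacle I anticipate is step (c): without the support cutoff \ref{A7} one only has a second-moment bound on $\eta(u_\eps^\kappa)$, in which case the Cauchy-Schwartz splits at steps (i)-(iv) would require a fourth-moment bound on the $\kappa$-convolution defect of $\eta(u_\eps;z)$ in space-time, which is not available from the a priori estimates. The cutoff \ref{A7} is exactly what makes the fourth moment finite and uniform in $(\eps,\kappa,\delta_0,l)$. Once this bound is installed, steps (vii)-(viii) and the $\mathcal{J}_9$ decomposition reduce to inspection, and the four error types assembled in the lemma statement (the two $\kappa$-convolution defects on $\eta$ and on $u_\eps$, the $\sqrt{\delta_0}$ time-continuity defect, and the $O(l)$ mollification defect) are precisely what remains after summing the nine $\mathcal{I}_9^i$ and the four $\mathcal{J}_9$ pieces.
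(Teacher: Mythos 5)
Your proposal follows essentially the same route as the paper: the same nine-term telescoping decomposition of $\mathcal{I}_9$ (convolution defects, $s\to t$ swaps, removal of $\rho_{\delta_0}$ and $J_l$), the same four-term decomposition of $\mathcal{J}_9$, and the same key observation that the cutoff \ref{A7} supplies the uniform fourth-moment bound on $\eta(u_\eps^\kappa;z)$ (the paper's \eqref{eq:etaI93}) needed to close the Cauchy--Schwartz steps. The only slip is cosmetic: the surviving piece of $\mathcal{I}_9$ is the $u_\eps^\kappa$-side and that of $\mathcal{J}_9$ is the $u_{\Delta t}$-side (you state it the other way round), which is immaterial since only their sum enters the lemma.
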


Next, we consider the It\^{o} integral terms. Note that for any two constant $t_1, t_2$ $\ge 0$ with $t_1 < t_2$,
\begin{align}\label{eq:Ito}
\begin{cases}
       \displaystyle\mathbb{E}\Big[X({t_1})\int_{t_1}^{t_2}J(t)dW(t)\Big] = 0,\\
       \displaystyle\mathbb{E}\Big[X(t_1)\int_{t_1}^{t_2}\int_{|z| > 0}\zeta(t,z)\tilde{N}(dz,dt)\Big] = 0,
\end{cases}
\end{align}
where $J,\, \zeta$ are predictable processes with $\mathbb{E}\Big[\displaystyle\int_0^T\int_{|z| > 0}\zeta^2(t,z)m(dz)dt\Big] < \infty$ and $X(\cdot)$ is an adapted process. In the view of \eqref{eq:Ito}, we have
\begin{align}
        \mathcal{I}_6 & = \int_{\mathbb{Q}_T} \int_{\mathbb{R}^d} \int_{\mathbb{R}} \mathbb{E} \Big[J_l(u_{\Delta t}(t,x) - k) \int_{s = t}^{s = \delta_0 + t}
         (\sigma(u_\eps(s,y)) \ast \tau_\kappa)\beta'(u_\eps^\kappa(s,y)- k)\notag \\&\hspace{8cm}\times\varphi_{\delta_0, \delta}(t,x,s,y)dW(s)\Big]\,dk\,dy\,dt\,dx = 0 \,.\notag
\end{align}
Now, we define 
\begin{align}
    \mathcal{M}[\beta, \varphi_{\delta, \delta_0}](s,y,k): = \int_{\mathbb{Q}_T} \sigma(u_{\Delta t}(t,x))\beta(u_{\Delta t}(t,x) -k )\varphi_{\delta_0, \delta}(t,x,s,y)\,dx\,dW(t). \notag
\end{align}
Regarding $\mathcal{M}[\beta, \varphi_{\delta, \delta_0}](t,x,k)$ we have the following lemma whose proof can be found in \cite{Majee-2014}.
\begin{lem}\label{lem:Ito-Identity}
The following identities hold: 
\begin{align}
    &\partial_k\mathcal{M}[\beta, \varphi_{\delta, \delta_0}](s,y,k) = \mathcal{M}[-\beta', \varphi_{\delta, \delta_0}](s,y,k),\notag\\
    &\partial_y\mathcal{M}[\beta, \varphi_{\delta, \delta_0}](s,y,k) =
    \mathcal{M}[\beta, \partial_y\varphi_{\delta, \delta_0}](s,y,k).\notag\\ 
    \text{Moreover},\notag\\
    &\underset{0 \leq t \leq T }{sup}\,\mathbb{E}\Big[||\mathcal{M}[\beta'', \varphi_{\delta, \delta_0}](t,\cdot,\cdot)||_{L^\infty(\mathbb{R}^d \times \mathbb{R})}^2\Big] \leq \frac{C}{\delta^{\frac{2}{p}}\xi^{q}\delta_0^{\frac{2(p-1)}{p}}},\notag\\
    &\underset{0 \leq t \leq T }{sup}\,\mathbb{E}\Big[||\mathcal{M}[\beta''', \varphi_{\delta, \delta_0}](t,\cdot,\cdot)||_{L^\infty(\mathbb{R}^d \times \mathbb{R})}^2\Big] \leq \frac{C}{\delta^{\frac{2}{p}}\xi^{\bar{q}}\delta_0^{\frac{2(p-1)}{p}}},\notag
\end{align}
where p is a positive integer of the form $p=2^k$ for some $k \in \mathbb{N}$ with $p \geq d + 3 $ and for some $q, \bar{q} > 0$ which only depends on $d$ and hence on $p$.
\end{lem}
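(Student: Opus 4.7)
The first two identities are differentiation-under-the-stochastic-integral statements. Since $u_{\Delta t}$ is bounded by Lemma~\ref{lem:l-infinity bound-approximate-solution}, $\sigma$ is Lipschitz with $\sigma(0)=0$, and $\beta,\varphi_{\delta_0,\delta}$ are smooth with bounded derivatives of all orders, the integrands and their $k$- or $y$-derivatives satisfy the integrability hypotheses needed to commute $\partial_k$ and $\partial_y$ with the It\^o integral. Applying the chain rule pointwise yields $\partial_k[\beta(u_{\Delta t}-k)] = -\beta'(u_{\Delta t}-k)$ and $\partial_y\varphi_{\delta_0,\delta}(t,x,s,y) = \rho_{\delta_0}(t-s)\,\partial_y\varrho_\delta(x-y)\,\psi(t,x)$, which give the two identities directly.

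For the $L^\infty$ bounds, the plan is a Sobolev embedding combined with the Burkholder--Davis--Gundy (BDG) inequality. Fix $p = 2^k$ with $p \ge d+3$, so in particular $p > d+1$ and the Sobolev embedding $W^{1,p}(\R^{d+1}) \hookrightarrow L^\infty(\R^{d+1})$ holds. Then for a generic stochastic integrand $\Theta$ (which will be $\beta''$ or $\beta'''$) one writes
\begin{align*}
\|\mathcal{M}[\Theta,\varphi_{\delta_0,\delta}](s,\cdot,\cdot)\|_{L^\infty(\R^{d+1})}^p
\le C \bigl(\|\mathcal{M}[\Theta,\varphi_{\delta_0,\delta}]\|_{L^p}^p + \|\partial_y\mathcal{M}\|_{L^p}^p + \|\partial_k\mathcal{M}\|_{L^p}^p\bigr),
\end{align*}
and uses parts (a) and (b) of the lemma to replace the derivatives by $\mathcal{M}[\Theta,\partial_y\varphi_{\delta_0,\delta}]$ and $\mathcal{M}[-\Theta',\varphi_{\delta_0,\delta}]$. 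After taking expectation, Fubini exchanges $\mathbb{E}$ with the $(y,k)$-integration; BDG followed by Jensen's inequality $(\int_0^T A^2\,dt)^{p/2}\le T^{p/2-1}\int_0^T A^p\,dt$ yields, for each $(y,k)$,
\begin{align*}
\mathbb{E}|\mathcal{M}[\Theta,\Phi](s,y,k)|^p \le C_p\,T^{p/2-1}\,\mathbb{E}\int_0^T\Big|\int_{\R^d}\sigma(u_{\Delta t}(t,x))\,\Theta(u_{\Delta t}(t,x)-k)\,\Phi(t,x,s,y)\,dx\Big|^p\,dt.
\end{align*}

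The final step is bookkeeping in $\delta,\delta_0,\xi$. The bound $\|\sigma\|_{L^\infty}\le C\widetilde{M}$ is supplied by the Lipschitz hypothesis on $\sigma$ together with Lemma~\ref{lem:l-infinity bound-approximate-solution}; the derivatives $\beta''$, $\beta'''$ obey $|\beta''|\le C\xi^{-1}$, $|\beta'''|\le C\xi^{-2}$ with support contained in $\{|u_{\Delta t}-k|\le\xi\}$, which confines the $k$-integration to an interval of length $O(\xi)$. Standard estimates give $\|\rho_{\delta_0}\|_{L^p}^p\sim\delta_0^{1-p}$, $\|\varrho_\delta\|_{L^p}^p\sim\delta^{d(1-p)}$, and $\|\partial_y\varrho_\delta\|_{L^p}^p\sim\delta^{d(1-p)-p}$. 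Integrating the pointwise moment bound over $(y,k)\in\R^{d+1}$ and $t\in\mathrm{supp}\,\rho_{\delta_0}(\cdot-s)$, then extracting a $p$-th root and using Sobolev, one obtains (after keeping track of which exponents dominate) the announced bound of the form $\delta^{-2/p}\xi^{-q}\delta_0^{-2(p-1)/p}$ for $\Theta=\beta''$ and the analogue with $\bar{q}$ for $\Theta=\beta'''$.

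The principal obstacle is the exponent accounting: one must verify that the negative powers of $\delta,\delta_0$ arising from the $L^p$ norms of the mollifiers, after being divided by $p$, match the form stated in the lemma, and that the exponent $q$ (depending only on $d,p$) absorbs the contributions from the support-size $\xi$ of $\beta''$ and $\beta'''$ as well as from the $L^p$-norms of $\beta^{(j)}$. The requirement $p=2^k\ge d+3$ is dictated precisely by these two competing constraints: $p>d+1$ for Sobolev embedding in $\R^{d+1}$, and $p=2^k$ so that BDG can be iterated cleanly at integer moments.
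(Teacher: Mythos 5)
The paper does not actually prove this lemma: it is quoted verbatim from \cite{Majee-2014}, and your outline follows exactly the strategy of that reference, so there is no divergence of method to report. The two identities are correct and your justification (smoothness of $\beta$ and $\varphi_{\delta_0,\delta}$, boundedness of $u_{\Delta t}$ and $\sigma$, differentiation under the stochastic integral) is the standard one. For the moment bounds, the architecture --- Sobolev embedding $W^{1,p}(\R^{d+1})\hookrightarrow L^{\infty}$ with $p=2^{k}\ge d+3$, the identities to convert $\partial_k$ and $\partial_y$ into new integrands, BDG/iterated It\^o at the $p$-th moment, then the $2/p$-th root --- is the right one, and your route does produce the crucial $\delta_0$-exponent: Jensen against $\rho_{\delta_0}^2\,dt$ leaves a factor $\|\rho_{\delta_0}\|_{L^p(dt)}^{p}\sim\delta_0^{1-p}$, which after the $2/p$-th power is exactly $\delta_0^{-2(p-1)/p}$.

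The one genuine soft spot is precisely the step you defer as ``exponent accounting.'' If you estimate the $\partial_y$-term by Young's inequality, $\|h\ast\partial_y\varrho_\delta\|_{L^p_y}\le\|h\|_{L^p}\|\partial_y\varrho_\delta\|_{L^1}\sim\delta^{-1}\|h\|_{L^p}$, the gradient contributes $\delta^{-p}$ to the $p$-th moment and hence $\delta^{-2}$ to the final bound, not the stated $\delta^{-2/p}$; Gagliardo--Nirenberg interpolation improves this only to $\delta^{-2(d+1)/p}$. So the bound as literally stated does not follow from the computation you sketch, and recovering $\delta^{-2/p}$ requires the finer accounting carried out in \cite{Majee-2014} (which exploits the $O(\xi)$-length support in $k$ and the compact support in $y$ when computing the $L^p_{y,k}$-norms, rather than crude sup-bounds). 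This discrepancy is harmless for the way the lemma is used in the paper --- in the estimate of $\mathcal{J}_6$ the $\delta$- and $\xi$-powers are absorbed into $C(\delta,\xi,\dots)$ and only the $\delta_0$-exponent, which you do obtain, must beat the factor $\delta_0$ coming from the inner time integral --- but as a proof of the lemma as stated your proposal is incomplete at exactly the point you flag.
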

Note that, in the view of Fubini's theorem and \eqref{eq:Ito},
\begin{align}
    \mathbb{E} \Big[\int_{\mathbb{Q}_T} \int_{\mathbb{R}} J_l(u_\eps^\kappa(s-\delta_0,y) -k)\int_{t= s-\delta_0}^{t=s}\sigma(u_{\Delta t}(t,x))\beta'(u_{\Delta t}(t,x) -k )\varphi_{\delta_0, \delta}\,dW(t)\,dk\,ds\,dx\Big] =0. \notag
\end{align}
Hence, $\mathcal{J}_6$ can be re-written as follows.
\begin{align}\label{eq:J6}
    \mathcal{J}_6 = \mathbb{E}\Big[ \int_{\mathbb{Q}_T} \int_{\mathbb{R}} \mathcal{M}[\beta', \varphi_{\delta, \delta_0}](s,y,k)\{J_l(u_\eps^\kappa(s,y) -k)-J_l(u_\eps^\kappa(s-\delta_0,y) -k)\,dk\,ds\,dy\}\Big].
\end{align}
To proceed further, we apply the $It\hat{o}$ formula to $J_l(u_\eps^\kappa(s,y)-k)$ to get
\begin{align}
        &J_l(u_\eps^\kappa(s,y) -k)-J_l(u_\eps^\kappa(s-\delta_0,y) -k)\notag \\ &= \int_{s-\delta_0}^{s}J_l'(u_\eps^\kappa(r,y) -k)\big( -\mathcal{L}_{\theta}[u_\eps \ast \tau_\kappa] - \text{div}_x (f(u_\eps) \ast \tau_\kappa) + \eps\Delta u_\eps^\kappa \big)\,dr \notag\\
        &+\int_{s-\delta_0}^{s}J_l'(u_\eps^\kappa(r,y) -k)(\sigma(u_\eps) \ast \tau_\kappa) dW(r)\notag \\
        &+ \frac{1}{2}\int_{s-\delta_0}^{s}J_l''(u_\eps^\kappa(r,y) - k)(\sigma(u_\eps) \ast \tau_\kappa)^2dr\notag\\
        &+ \int_{s-\delta_0}^{s}\int_{|z| >0 }\big(J_l(u_\eps^\kappa + (\eta(u_\eps;z)\ast\tau_\kappa) - k) - (J_l(u_\eps^\kappa(r,y) - k) \big) \tilde{N}(dz,dr) \notag\\
        &+ \int_{s-\delta_0}^{s}\int_{|z| >0 }\int_0^1(1-\lambda)J_l''(u_\eps^\kappa(r,y) - k +\lambda(\eta(u_\eps;z)\ast\tau_\kappa ) (\eta(u_\eps;z)\ast\tau_\kappa)^2\,d\lambda\, m(dz)\,dr\,. \notag
\end{align}
Therefore, \eqref{eq:J6} can be re-written as
\begin{align}
        \mathcal{J}_6 = & \mathbb{E}\Big[ \int_{\mathbb{Q}_T} \int_{\mathbb{R}} \mathcal{M}[\beta'', \varphi_{\delta, \delta_0}](s,y,k)\int_{s-\delta_0}^{s}J_l(u_\eps^\kappa(r,y) -k)\big( -\mathcal{L}_{\theta}[u_\eps \ast \tau_\kappa])\,dr\,dk\,ds\,dy \Big]\notag\\
        \,&+ \mathbb{E}\Big[ \int_{\mathbb{Q}_T} \int_{\mathbb{R}} \mathcal{M}[\beta'', \varphi_{\delta, \delta_0}](s,y,k)\int_{s-\delta_0}^{s}J_l(u_\eps^\kappa(r,y) -k)\big( - \text{div}_x (f(u_\eps) \ast \tau_\kappa)  \big)\,dr\,dk\,ds\,dy \Big]\notag\\
        &+\,\mathbb{E}\Big[ \int_{\mathbb{Q}_T} \int_{\mathbb{R}} \mathcal{M}[\beta'', \varphi_{\delta, \delta_0}](s,y,k)\int_{s-\delta_0}^{s}J_l(u_\eps^\kappa(r,y) -k)\eps\Delta u_\eps^\kappa\,dr\,dk\,ds\,dy \Big]\notag\\
        &+\frac{1}{2}\mathbb{E}\Big[ \int_{\mathbb{Q}_T} \int_{\mathbb{R}} \mathcal{M}[\beta''', \varphi_{\delta, \delta_0}](s,y,k)\int_{s-\delta_0}^{s}J_l(u_\eps^\kappa(r,y) - k)(\sigma(u_\eps) \ast \tau_\kappa)^2\,dr\,dk\,ds\,dy\Big]\notag\notag\\
        &+ \mathbb{E}\Big[ \int_{\mathbb{R}^d}\int_{\mathbb{Q}_T}\int_{s-\delta_0}^{s}\int_{\mathbb{R}} \sigma(u_{\Delta t}(r,x))(\sigma(u_\eps(r,y)) \ast \tau_\kappa)\beta'(u_{\Delta t}(r,x) -k )\notag\\&\hspace{6cm}\times\varphi_{\delta_0, \delta}(r,x,s,y) J_l'(u_\eps^\kappa(r,y) -k)\,dk\,dr\,ds\,dx\,dy\Big]\notag\\
        & =: \mathcal{J}_6^1 + \mathcal{J}_6^2 + \mathcal{J}_6^3 + \mathcal{J}_6^4 + \mathcal{J}_6^5 .\notag
\end{align} 
We have the following estimations of $\mathcal{J}_6^2$ and $\mathcal{J}_6^3$ due to \cite[ Lemma 5.4]{Majee-2015}.
$$\mathcal{J}_6^2 \le C(\delta, \xi)\delta_0^{1/p}, \hspace{.5cm} \text{and} \hspace{.5cm} \mathcal{J}_6^3 \le C(\delta,\xi)\delta_0^a.$$
for some $a>0$.
Using Lemma \ref{lem:Ito-Identity} and estimation of the term $\mathcal{A}_2$ in Lemma \ref{lem:average-time-cont-viscous}, we have
\begin{align}
    \mathcal{J}_6^1 \le &\,\mathbb{E}\Big[ \int_{\mathbb{Q}_T} \int_{\mathbb{R}} ||\mathcal{M}[\beta'', \varphi_{\delta, \delta_0}](s,\cdot,\cdot)||_{L^\infty(\mathbb{R}^d\times\mathbb{R})}\int_{s-\delta_0}^{s}J_l(u_\eps^\kappa(r,y) -k)|\mathcal{L}_{\theta}[u_\eps \ast \tau_\kappa]| \big)\,dr\,dk\,ds\,dy \Big]\notag\\
    \le &\,\mathbb{E}\Big[ \int_{\mathbb{Q}_T} \int_{s-\delta_0}^{s} ||\mathcal{M}[\beta'', \varphi_{\delta, \delta_0}](s,\cdot,\cdot)||_{L^\infty(\mathbb{R}^d\times\mathbb{R})}\,|\mathcal{L}_{\theta}[u_\eps \ast \tau_\kappa]|\,dr\,ds\,dy \Big]\notag\\
    \le \, &C\,\int_0^T\int_{s-\delta_0}^{s}\Big(\mathbb{E}\Big[  ||\mathcal{M}[\beta'', \varphi_{\delta, \delta_0}](s,\cdot,\cdot)||_{L^\infty(\mathbb{R}^d\times\mathbb{R})}^2\,\Big]\Big)^{1/2}\Big(\mathbb{E}\Big[\int_{K_y}|\mathcal{L}_{\theta}[u_\eps \ast \tau_\kappa]|^2\,dy \Big]\Big)^{1/2}\,dr\,ds\notag\\
    \le \, & \, C(\delta, \xi,\kappa, |K_y|)\delta_0^{1/p}.\notag
\end{align}
In the view of Lemma \ref{lem:Ito-Identity} and estimations \eqref{inq:uniform-viscous-l2p}, one can
approximate $\mathcal{J}_6^4$ as follows.
\begin{align}
    2\,\mathcal{J}_6^4 
    \le  &\,C\,\mathbb{E}\Big[ \int_{\mathbb{Q}_T} \int_{s-\delta_0}^{s} ||\mathcal{M}[\beta''', \varphi_{\delta, \delta_0}](s,\cdot,\cdot)||_{L^\infty(\mathbb{R}^d \times \mathbb{R})}|\sigma(u_\eps(r))\ast\tau_\kappa|^2\,dr\,ds\,dy\Big]\notag\\
    \le  &\,C\,\mathbb{E}\Big[ \int_0^T \int_{s-\delta_0}^{s} ||\mathcal{M}[\beta''', \varphi_{\delta, \delta_0}](s,\cdot,\cdot)||_{L^\infty(\mathbb{R}^d \times \mathbb{R})}||\sigma(u_\eps(r))\ast\tau_\kappa||_{L^2(\mathbb{R}^d)}^2\,dr\,ds\Big]\notag\\
    \le &C(\kappa) \int_0^T \int_{s-\delta_0}^{s} \Big(\mathbb{E}\Big[ ||\mathcal{M}[\beta''', \varphi_{\delta, \delta_0}](s,\cdot,\cdot)||_{L^\infty(\mathbb{R}^d \times \mathbb{R})}^2 \Big]\Big)^{1/2} \Big(\mathbb{E}\Big[\underset{r}{\sup} ||u_\eps(r)||_{L^2(\mathbb{R}^d)}^4\Big]\Big)^{1/2} \,dr\,ds\notag\\
    \le \,&C(\delta, \xi, \kappa, |K_y|)\delta_0^{1/p}.\notag
\end{align}
Now, we want to estimate the term $\mathcal{J}_6^5$ which is re-arranged as follows.
\begin{align}
        \mathcal{J}_6^5 
        & =\mathbb{E}\Big[\int_{\mathbb{R}^d} \int_{\mathbb{R}^d}\int_{s-\delta_0}^s\int_{\mathbb{R}} \sigma(u_{\Delta t}(r,x))(\sigma(u_\eps(r,y))\ast \tau_\kappa)\beta''(u_{\Delta t}(r,x) -k )\Big(\int_0^T\rho_{\delta_0}(r-s)ds -1\Big)\notag\\&\hspace{6cm}\times\varrho_\delta(x-y)\psi(r,x)J_l(u_\eps^\kappa(r,y) -k)\,dk\,dx\,dr\,dy\Big]\notag\\ 
        &+\mathbb{E}\Big[\int_{\mathbb{Q}_T} \int_{\mathbb{R}^d}\int_{\mathbb{R}} \sigma(u_{\Delta t}(r,x))(\sigma(u_\eps(r,y))\ast \tau_\kappa)\Big(\beta''(u_{\Delta t}(r,x) - u_\eps^\kappa(r,y) + k)\notag\\&\hspace{5cm}- \beta''(u_{\Delta t}(r,x) - u_\eps^\kappa(r,y)) \Big)\varrho_\delta(x-y)\psi(r,x)J_l(k)\,dk\,dx\,dr\,dy\Big]\notag\\ 
        &+\mathbb{E}\Big[\int_{\mathbb{Q}_T} \int_{\mathbb{R}^d}\sigma(u_{\Delta t}(r,x))\Big((\sigma(u_\eps(r,y))\ast \tau_\kappa)- \sigma(u_\eps(r,y))\Big)\beta''(u_{\Delta t}(r,x) - u_\eps^\kappa(r,y) )\notag \\&\hspace{6cm}\times\varrho_\delta(x-y)\psi(r,x)\,dx\,dr\,dy\Big]\notag\\ 
        &+\mathbb{E}\Big[\int_{\mathbb{Q}_T} \int_{\mathbb{R}^d}\sigma(u_{\Delta t}(r,x))\Big(\sigma(u_\eps(r,y))- \sigma(u_\eps^\kappa(r,y))\Big)\beta''(u_{\Delta t}(r,x) - u_\eps^\kappa(r,y) )\notag \\&\hspace{6cm}\times\varrho_\delta(x-y)\psi(r,x)\,dx\,dr\,dy\Big]\notag\\ 
        &+\mathbb{E}\Big[\int_{\mathbb{Q}_T} \int_{\mathbb{R}^d}\sigma(u_{\Delta t}(r,x)) \sigma(u_\eps^\kappa(r,y))\beta''(u_{\Delta t}(r,x) - u_\eps^\kappa(r,y) )\notag \\&\hspace{6cm}\times\varrho_\delta(x-y)\psi(r,x)\,dx\,dr\,dy\Big]\notag \\
        &=: \sum_{i=1}^{5} \mathcal{J}_6^{5,i}.\notag
\end{align}
We use Lemma \ref{lem:l-infinity bound-approximate-solution}, the assumption \ref{A4}, Cauchy-Schwartz inequality and \eqref{inq:uniform-1st-viscous} to have,
\begin{align}
    \mathcal{J}_6^{5,1} &\le C(\xi, \psi)\mathbb{E}\Big[\int_{K_x} \int_{K_y}\int_0^{\delta_0}||u_{\Delta t}(r,x)||_{L^\infty}(\sigma(u_\eps(r,y))\ast \tau_\kappa)\,dr\,dx\,dy\Big]\notag \\ &\le C(\xi, \psi, \widetilde{M})\delta_0\Big(\underset{r}{sup}\,\mathbb{E}\Big[||u_\eps(r)||_{L^2(\R^d)}^2\Big]\Big) \le C(\xi)\delta_0.\notag
\end{align}
Similarly, one can observe that $$\mathcal{J}_6^{5,2} \le C(\xi)l.$$
A replications of estimations of $\mathcal{I}_7^1$ and Lemma \ref{lem:l-infinity bound-approximate-solution} yields,
\begin{align}
    &\mathcal{J}_6^{5,3} \le C(\xi)\Big(\mathbb{E}\Big[\int_0^T\int_{K_y} |\sigma(u_\eps(r,y))\ast \tau_\kappa- \sigma(u_\eps(r,y))|^2\,dy\,dr\Big]\Big)^{\frac{1}{2}},\notag\\ 
    &\mathcal{J}_6^{5,4}\le C(\xi)\Big(\mathbb{E}\Big[\int_0^T\int_{K_y} |u_\eps^\kappa(r,y) - u_\eps(r,y)|^2\,dy\,dr\Big]\Big)^{\frac{1}{2}}.\notag
\end{align}
Combining Lemma \ref{lem:7} and the above estimations  for $\mathcal{I}_6$ and $\mathcal{J}_6$, we have the following result.
\begin{lem}\label{lem67}
\begin{align}
       &\mathcal{I}_6+ \mathcal{J}_6 + \mathcal{I}_7 + \mathcal{J}_7 \notag \\
       & \leq  \frac{1}{2}\,\mathbb{E} \Big[\int_{\mathbb{Q}_T} \int_{\mathbb{R}^d} \Big( \sigma(u_\eps^\kappa(t,y)) - \sigma(u_{\Delta t}(t,x) \Big)^2 \beta''(u_{\Delta t}(t,x) - u_\eps^\kappa(t,y)) \varrho_\delta(x-y)\psi(t,x)\,dx\,dt\,dy \Big]\notag \\
       & \quad +C\,(\xi)\Big(\mathbb{E}\Big[\int_0^T\int_{K_y} |\sigma(u_\eps(r,y))\ast \tau_\kappa- \sigma(u_\eps(r,y))|^2\,dy\,dr\Big]\Big)^{\frac{1}{2}} + \,C(\delta, \xi, \kappa, \eps)\delta_0^{\frac{1}{p}} \notag \\& \qquad +\,C(\xi)\,\Big(\mathbb{E}\Big[\int_0^T\int_{K_y} |u_\eps^\kappa(r,y) - u_\eps(r,y)|^2\,dy\,dr\Big]\Big)^{\frac{1}{2}} + C(\delta, \xi)\delta_0^a  + C(\xi)l\notag\\
       & \le C\,(\xi)\Big(\mathbb{E}\Big[\int_0^T\int_{K_y} |\sigma(u_\eps(r,y))\ast \tau_\kappa- \sigma(u_\eps(r,y))|^2\,dy\,dr\Big]\Big)^{\frac{1}{2}} + \,C(\delta, \xi, \kappa, \eps)\delta_0^{\frac{1}{p}}\notag \\&\quad+\,C(\xi)\,\Big(\mathbb{E}\Big[\int_0^T\int_{K_y} |u_\eps^\kappa(r,y) - u_\eps(r,y)|^2\,dy\,dr\Big]\Big)^{\frac{1}{2}}\, + C(\delta, \xi)\delta_0^a  + C(\xi)l + C\xi.\notag
\end{align}
\end{lem}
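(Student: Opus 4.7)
The estimate is a combination lemma rather than a genuinely new bound, so my strategy is to collect the pieces already established above and add them with the correct signs. First I would note that $\mathcal{I}_6=0$ directly from the It\^{o} isometry identity \eqref{eq:Ito}: the $s$-integrand is a stochastic integral over $[t,t+\delta_0]$ multiplied by the $\mathcal{F}_t$-measurable factor $J_l(u_{\Delta t}(t,x)-k)$, and taking the expectation kills it.

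For $\mathcal{J}_6$ the starting point is the representation \eqref{eq:J6}. I would then apply the It\^{o}-L\'{e}vy formula to $J_l(u_\eps^\kappa(\cdot,y)-k)$ on $[s-\delta_0,s]$ via the regularized viscous equation \eqref{eq:regularize}, producing the decomposition $\mathcal{J}_6=\mathcal{J}_6^1+\mathcal{J}_6^2+\mathcal{J}_6^3+\mathcal{J}_6^4+\mathcal{J}_6^5$. The four pieces $\mathcal{J}_6^1,\ldots,\mathcal{J}_6^4$, coming from the fractional Laplacian, the flux, the viscous regularization, and the It\^{o} correction respectively, are pure error: I would control them via the $L^\infty$-estimates for $\mathcal{M}[\beta'',\varphi_{\delta,\delta_0}]$ and $\mathcal{M}[\beta''',\varphi_{\delta,\delta_0}]$ from Lemma \ref{lem:Ito-Identity} combined with the a priori bounds \eqref{inq:uniform-1st-viscous} and \eqref{inq:uniform-viscous-l2p}, which together yield the $C(\delta,\xi,\kappa,\eps)\delta_0^{1/p}$ and $C(\delta,\xi)\delta_0^a$ rates appearing in the statement.

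The only non-error contribution comes from $\mathcal{J}_6^{5,5}$, which, after two integrations by parts in $k$ (to pass from $J_l'$ to $\beta''$) followed by the limit $l\to 0$ (turning $J_l$ into a Dirac mass), reduces to a signed multiple of
\[
\mathbb{E}\Big[\int_{\mathbb{Q}_T}\int_{\R^d}\sigma(u_{\Delta t}(r,x))\,\sigma(u_\eps^\kappa(r,y))\,\beta''(u_{\Delta t}(r,x)-u_\eps^\kappa(r,y))\,\varrho_\delta(x-y)\,\psi(r,x)\,dx\,dr\,dy\Big].
\]
Here I would track the sign carefully through the two integrations by parts, because the sign is exactly what makes the next step work. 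I would then add Lemma \ref{lem:7}, whose main term retains $\tfrac12\mathbb{E}\big[\int\{\sigma(u_\eps^\kappa)^2+\sigma(u_{\Delta t})^2\}\beta''\varrho_\delta\psi\big]$, and invoke the elementary identity $\tfrac12(a^2+b^2)-ab=\tfrac12(a-b)^2$ to collapse $\mathcal{I}_7+\mathcal{J}_7+\mathcal{J}_6^{5,5}$ into $\tfrac12\mathbb{E}\big[\int(\sigma(u_\eps^\kappa)-\sigma(u_{\Delta t}))^2\beta''(u_{\Delta t}-u_\eps^\kappa)\varrho_\delta\psi\big]$. Gathering the bounds for $\mathcal{J}_6^{5,1},\ldots,\mathcal{J}_6^{5,4}$ already computed in the excerpt then gives the first inequality.

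For the second inequality I would use the Lipschitz property of $\sigma$ from \ref{A4} together with the defining bound $r^2\beta''(r)\le M_2\,\xi\,\mathbf{1}_{|r|\le\xi}$ for $\beta=\beta_\xi$; combined pointwise these yield $(\sigma(u_\eps^\kappa)-\sigma(u_{\Delta t}))^2\,\beta''(u_{\Delta t}-u_\eps^\kappa)\le C\xi$. Integrating against $\varrho_\delta(x-y)\psi(t,x)$ over $\mathbb{Q}_T\times\R^d$ produces a bound of the form $C(\psi)\xi$, giving the extra $+C\xi$ in the final line. The main obstacle in the whole argument is precisely the sign bookkeeping in $\mathcal{J}_6^{5,5}$: if it went the wrong way, one would obtain $(\sigma(u_\eps^\kappa)+\sigma(u_{\Delta t}))^2$ instead of the squared \emph{difference}, and no small parameter would be left to absorb the main term.
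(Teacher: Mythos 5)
Your plan reproduces the paper's argument: $\mathcal{I}_6=0$ by \eqref{eq:Ito}, $\mathcal{J}_6$ handled via \eqref{eq:J6} and the It\^o--L\'evy expansion of $J_l(u_\eps^\kappa(\cdot,y)-k)$ with the error pieces controlled through Lemma \ref{lem:Ito-Identity} and \eqref{inq:uniform-1st-viscous}--\eqref{inq:uniform-viscous-l2p}, the cross term $\mathcal{J}_6^{5,5}$ combining with Lemma \ref{lem:7} through $\tfrac12(a^2+b^2)-ab=\tfrac12(a-b)^2$, and the final $C\xi$ from the Lipschitz bound on $\sigma$ together with $r^2\beta_\xi''(r)\le C\xi$. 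The only cosmetic slip is that passing from $\beta'(u_{\Delta t}-k)J_l'(u_\eps^\kappa-k)$ to $\beta''(u_{\Delta t}-k)J_l(u_\eps^\kappa-k)$ is a single integration by parts in $k$ (producing the minus sign you rightly insist on), not two; otherwise the proposal matches the paper's proof.
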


Next, we consider the stochastic integral terms having jump noise. Thanks to \eqref{eq:Ito}
\begin{align}
        \mathcal{I}_8 = &\int_{\mathbb{Q}_T}\int_{\mathbb{R}^d}\int_\mathbb{R}\int_0^1  \mathbb{E} \Big [ J_l(u_{\Delta t}(t,x) - k)\int_{s=t}^{s = \delta_0 + t} \int_{|z| > 0} (\eta(u_\eps(s,y);z) \ast \tau_\kappa)\beta'(u_\eps^\kappa(s,y)\notag  \\ &\hspace{5cm}+ \lambda(\eta(u_\eps(s,y);z)\ast \tau_\kappa) - k) \varphi_{\delta_0, \delta} \tilde{N}(dz, ds)\Big] \,d\lambda\,dk\,dx\,dt\,dy
         = 0.\notag
\end{align}
We define
\begin{align}
        \mathcal{K}[\beta, \varphi_{\delta, \delta_0}](s,y,k) &:= \int_{\mathbb{Q}_T} \int_{|z| > 0} \Big( \beta \big(u_{\Delta t}(t,x) + \eta(u_{\Delta t}(t,x);z) - k \big) - \beta\big(u_{\Delta t}(t,x)-k\big)\Big)\notag\\ &\hspace{6cm} \times \varphi_{\delta_0, \delta}(t,x,s,y)\,\tilde{N}(dz,dt)\,dx \notag
\end{align}
Following \cite[Lemmas 5.4, 5.5]{Majee-2015}, one can deduce the following:
\begin{align}
         &\partial_k\mathcal{K}[\beta, \varphi_{\delta, \delta_0}](s,y,k) = \mathcal{K}[-\beta', \varphi_{\delta, \delta_0}](s,y,k),\notag\\
    &\partial_y\mathcal{K}[\beta, \varphi_{\delta, \delta_0}](s,y,k) =
    \mathcal{K}[\beta, \partial_y\varphi_{\delta, \delta_0}](s,y,k).\notag
    \end{align}
    \text{Moreover,}
    \begin{align}\label{eq:Jumpnoise}
    &\underset{0 \leq s \leq T }{sup}\mathbb{E}\Big[||\mathcal{K}[\beta', \varphi_{\delta, \delta_0}](s,\cdot,\cdot)||_{L^\infty(\mathbb{R}^d \times \mathbb{R})}^2\Big] \leq \frac{C}{\delta^\frac{2}{p}\xi^{m}\delta_0^\frac{2(p-1)}{p}},\notag\\
    &\underset{0 \leq s \leq T }{sup}\mathbb{E}\Big[||\mathcal{K}[\beta'', \varphi_{\delta, \delta_0}](s,\cdot,\cdot)||_{L^\infty(\mathbb{R}^d \times \mathbb{R})}^2\Big] \leq \frac{C}{\delta^\frac{2}{p}\xi^{\bar{m}}\delta_0^\frac{2(p-1)}{p}},
\end{align}
where $p$ is a positive integer of the form $p = 2^k$ for some $k \in \mathbb{N}$ and $p \geq d +3$ and some $m, \bar{m} >0$.
In view of Fubini's theorem and \eqref{eq:Jumpnoise}, one has
\begin{align}
    & \mathbb{E} \Big [\int_{\mathbb{Q}_T}\int_{\mathbb{R}^d}\int_\mathbb{R}  J_l(u_\eps^\kappa(s -\delta_0,y ) - k)\int_0^1\int_{t=s-\delta_0}^{t=s} \int_{|z| > 0} \eta(u_{\Delta t}(t,x);z) \beta'(u_{\Delta t}(t,x)\notag \\ &\hspace{4cm}+ \lambda\eta(u_{\Delta t}(t,x);z) - k) \varphi_{\delta_0, \delta}\, \tilde{N}(dz, dt) \,d\lambda \,dk\,dx\,ds\,dy\Big] = 0. \notag
\end{align}
Using  It\^o-L\'evy formula  and following the same lines of argument as done for  $\mathcal{J}_6$, we can re-write $\mathcal{J}_8$ as follows.
\begin{align}
        \mathcal{J}_8  &= \mathbb{E}\Big[ \int_{\mathbb{Q}_T} \int_{\mathbb{R}} \mathcal{K}[\beta', \varphi_{\delta, \delta_0}](s,y,k)\int_{s-\delta_0}^{s}J_l(u_\eps^\kappa(r,y) -k)\big( -\mathcal{L}_{\theta}[u_\eps \ast \tau_\kappa] - div_x (f(u_\eps) \ast \tau_\kappa)\notag\\ & \hspace{8cm} + \eps\Delta u_\eps^\kappa \big)\ dr\,dk\,ds\,dy \Big]\notag\\
        &+\mathbb{E}\Big[ \int_{\mathbb{Q}_T} \int_{\mathbb{R}} \mathcal{K}[\beta'', \varphi_{\delta, \delta_0}](s,y,k)\int_{s-\delta_0}^{s}\int_{|z|>0}\int_0^1 (1-\lambda) J_l(u_\eps^\kappa(r,y) - k + \lambda (\eta(u_\eps); z)\ast\tau_\kappa)\notag\\ & \hspace{6cm} \times (\eta(u_\eps(r,y))\ast\tau_\kappa)^2\,d\lambda \,m(dz)\,dr \,dk\,ds\,dy\Big]\notag\\
        &+ \mathbb{E}\Big[ \int_{\mathbb{Q}_T} \int_{\mathbb{R}^d}\int_{\mathbb{R}}\int_{s-\delta_0}^{s}\int_{|z| > 0}\Big(\beta(u_{\Delta t}(r,x) - k +  \eta(u_{\Delta t}(r,x); z))-\beta(u_{\Delta t}(r,x) - k)\Big)\notag \\ & \times \Big(J_l(u_\eps^\kappa(r,y) - k +  (\eta(u_\eps(r,y); z))\ast\tau_\kappa) - J_l(u_\eps^\kappa(r,y) - k)\Big)\rho_{\delta_ 0}(r-s)\varrho_\delta(x-y)\psi(t,x)\,dk\,dr\,ds\,dx\,dy\Big]\notag\\
        & =: \mathcal{J}_8^1 + \mathcal{J}_8^2 + \mathcal{J}_8^3\notag
\end{align}
Following the estimations of $\mathcal{J}_6^1$ along with \eqref{eq:Jumpnoise}, and also due to the result \cite[Lemma 5.6]{Majee-2015}, we have 
\begin{align}
    \mathcal{J}_8^1 \leq  C(\delta,\xi,\bar{r},\kappa)\delta_0^{1/p} + C(\delta,\xi)\delta_0^{1/p} + C (\delta, \xi)\delta_0^a, \hspace{.2cm}\text{and} \hspace{.2cm} \mathcal{J}_8^2 \le C(\xi,\eps)\delta_0^{1/p}.\notag
\end{align}
Next we want to estimate the term $\mathcal{J}_8^3$. A similar lines of argument as done in \cite[Lemma 5.6]{Majee-2015} leads to
\begin{align}
        \mathcal{J}_8^3 
        \leq\, & \,\mathbb{E}\Big[ \int_{\mathbb{Q}_T} \int_{\mathbb{R}^d}\int_{|z| > 0}\Big(\beta \big(u_{\Delta t}(t,x)  + \eta(u_{\Delta t}(t,x); z) - u_\eps^\kappa(t,y) - (\eta(u_\eps(t,y); z)\ast\tau_\kappa)\big)\notag \\ & - \beta\big(u_{\Delta t}(t,x) - u_\eps^\kappa(t,y) - (\eta(u_\eps(t,y); z)\ast\tau_\kappa)\big) -\beta\big(u_{\Delta t}(t,x) +\eta(u_{\Delta t}(t,x); z)) - u_\eps^\kappa(t,y)\big)\notag \\& + \beta\big(u_{\Delta t}(t,x) -  u_\eps^\kappa(t,y)\big) \Big) \varrho_\delta(x-y)\psi(t,x)m(dz)\,dx\,dt\,dy\Big] \,+\, C(\xi)(\sqrt{\delta_0} + l)\notag \\& =:\mathcal{J}_8^{3,1} + C(\xi)(\sqrt{\delta_0} + l).\notag
\end{align}
Observe that, $\mathcal{J}_8^{3,1}$ can be re-arranged as follows.
\begin{align}
    &\mathbb{E}\Big[ \int_{\mathbb{Q}_T} \int_{\mathbb{R}^d}\int_{|z| > 0}\Big(\beta \big(u_{\Delta t}(t,x)  + \eta(u_{\Delta t}(t,x); z) - u_\eps^\kappa(t,y) - (\eta(u_\eps(t,y); z)\ast\tau_\kappa)\big)\notag \\ & - \beta\big(u_{\Delta t}(t,x) -  u_\eps^\kappa(t,y)\big) - \big(\eta(u_{\Delta t}(t,x);z) - \eta(u_\eps(t,y);z)\ast\tau_\kappa\big)\beta'\big(u_{\Delta t}(t,x) -  u_\eps^\kappa(t,y)\big) \Big) \varrho_\delta(x-y)\notag\\ &\times\psi(t,x)m(dz)\,dx\,dt\,dy\Big]\notag \\
    =\,&\, \mathbb{E}\Big[ \int_{\mathbb{Q}_T} \int_{\mathbb{R}^d}\int_{|z| > 0}\int_0^1(1-\theta) |\eta(u_{\Delta t}(t,x);z)-\eta(u_\eps(t,y); z)\ast\tau_\kappa)|^2\notag\\&\times\beta''(u_{\Delta t}(t,x) -u_\eps^\kappa(t,y)+\theta(\eta(u_{\Delta t}(t,x);z)-\eta(u_\eps(t,y); z)\ast\tau_\kappa)) \varrho_\delta(x-y)\psi(t,x)\,d\theta\,m(dz)\,dx\,dt\,dy\Big]\notag\\
    = &\,  \mathbb{E}\Big[ \int_{\mathbb{Q}_T} \int_{\mathbb{R}^d}\int_{|z| > 0}\int_0^1(1-\theta) |\eta(u_\eps(t,y); z)-\eta(u_\eps(t,y); z)\ast\tau_\kappa|^2\notag\\&\times\beta''(u_{\Delta t}(t,x) -u_\eps^\kappa(t,y)+\theta(\eta(u_{\Delta t}(t,x);z)-\eta(u_\eps(t,y); z)\ast\tau_\kappa)) \varrho_\delta(x-y)\psi(t,x)\,d\theta\,m(dz)\,dx\,dt\,dy\Big]\notag\\
    + &\,  \mathbb{E}\Big[ \int_{\mathbb{Q}_T} \int_{\mathbb{R}^d}\int_{|z| > 0}\int_0^1(1-\theta) |\eta(u_\eps^\kappa(t,y); z)-\eta(u_\eps(t,y); z)|^2\notag\\&\times\beta''(u_{\Delta t}(t,x) -u_\eps^\kappa(t,y)+\theta(\eta(u_{\Delta t}(t,x);z)-\eta(u_\eps(t,y); z)\ast\tau_\kappa)) \varrho_\delta(x-y)\psi(t,x)\,d\theta\,m(dz)\,dx\,dt\,dy\Big]\notag\\
    + &\,  \mathbb{E}\Big[ \int_{\mathbb{Q}_T} \int_{\mathbb{R}^d}\int_{|z| > 0}\int_0^1(1-\theta) |\eta(u_{\Delta t}(t,x);z)-\eta(u_\eps^\kappa(t,y); z))|^2\notag\\&\times\beta''(u_{\Delta t}(t,x) -u_\eps^\kappa(t,y)+\theta(\eta(u_{\Delta t}(t,x);z)-\eta(u_\eps(t,y); z)\ast\tau_\kappa)) \varrho_\delta(x-y)\psi(t,x)\,d\theta\,m(dz)\,dx\,dt\,dy\Big]\notag \notag \\
    =: & \mathfrak{A}_1 + \mathfrak{A}_2 + \mathfrak{A}_3.\notag
\end{align}
Following the estimation of $\mathcal{I}_9^1$ and $\mathcal{I}_9^3$, we find the upper bound of the terms $\mathfrak{A}_1$ and $\mathfrak{A}_2$ as follows. 
\begin{align}
    \mathfrak{A}_1 \le & C(\xi, \psi)\,\mathbb{E}\Big[ \int_0^T\int_{K_y}\int_{|z| > 0} |\eta(u_\eps(t,y); z)-\eta(u_\eps(t,y); z)\ast\tau_\kappa|^2 \,m(dz)\,dy\,dt\Big],\notag\\
    \mathfrak{A}_2 \le & C(\xi, \psi)\,\mathbb{E}\Big[\int_0^T\int_{K_y} |u_\eps^\kappa(t,y)-u_\eps(t,y)|^2\,dy\,dt\Big].\notag
\end{align}
We now focus on the term $\mathfrak{A}_3$. It can be decomposed as a sum of three terms $ \mathfrak{A}_3^i~(1\le i\le 3)$ where 
 can be re-arranged as follows.
\begin{align}
    \mathfrak{A}_3^1& =  \, \mathbb{E}\Big[ \int_{\mathbb{Q}_T} \int_{\mathbb{R}^d}\int_{|z| > 0}\int_0^1(1-\theta) |\eta(u_{\Delta t}(t,x);z)-\eta(u_\eps^\kappa(t,y); z))|^2\notag\\&\times\Big\{\beta''(u_{\Delta t}(t,x) -u_\eps^\kappa(t,y)+\theta(\eta(u_{\Delta t}(t,x);z)-\eta(u_\eps(t,y); z)\ast\tau_\kappa))\notag\\ & - \beta''(u_{\Delta t}(t,x) -u_\eps^\kappa(t,y)+\theta(\eta(u_{\Delta t}(t,x);z)-\eta(u_\eps(t,y); z)))\Big\}\notag \\&\hspace{4cm}\times \varrho_\delta(x-y)\psi(t,x)\,d\theta\,m(dz)\,dx\,dt\,dy\Big]\,, \notag \\
   \mathfrak{A}_3^2&=\mathbb{E}\Big[ \int_{\mathbb{Q}_T} \int_{\mathbb{R}^d}\int_{|z| > 0}\int_0^1(1-\theta) |\eta(u_{\Delta t}(t,x);z)-\eta(u_\eps^\kappa(t,y); z))|^2\notag\\&\times\Big\{\beta''(u_{\Delta t}(t,x) -u_\eps^\kappa(t,y)+\theta(\eta(u_{\Delta t}(t,x);z)-\eta(u_\eps(t,y); z)))\notag\\ & - \beta''(u_{\Delta t}(t,x) -u_\eps^\kappa(t,y)+\theta(\eta(u_{\Delta t}(t,x);z)-\eta(u_\eps^\kappa(t,y); z)))\Big\}\notag \\&\hspace{4cm}\times \varrho_\delta(x-y) \psi(t,x)\,d\theta\,m(dz)\,dx\,dt\,dy\Big] \,,\notag \\
   \mathfrak{A}_3^3 &= \mathbb{E}\Big[ \int_{\mathbb{Q}_T} \int_{\mathbb{R}^d}\int_{|z| > 0}\int_0^1(1-\theta) |\eta(u_{\Delta t}(t,x);z)-\eta(u_\eps^\kappa(t,y); z))|^2\notag\\&\times\beta''(u_{\Delta t}(t,x) -u_\eps^\kappa(t,y)+\theta(\eta(u_{\Delta t}(t,x);z)-\eta(u_\eps^\kappa(t,y); z))) \varrho_\delta(x-y)\notag\\&\hspace{2cm}\times\psi(t,x)\,d\theta\,m(dz)\,dx\,dt\,dy\Big] .\notag
\end{align}
Following the estimation of $\mathcal{I}_9^3$, one can easily derive the bound of  $\mathfrak{A}_3^1$ and  $\mathfrak{A}_3^2$:
\begin{align}
    \mathfrak{A}_3^1 \le &
     C(\xi)\, \Big(\mathbb{E}\Big[\int_0^T\int_{K_y}\int_{|z| > 0}|\eta(u_\eps(t,y); z)\ast\tau_\kappa -\eta(u_\eps(t,y); z)|^2\,m(dz)\,dy\,dt\Big]\Big)^{1/2},\notag\\
    \mathfrak{A}_3^2 \le  &\,C(\xi)\Big(\mathbb{E}\Big[\int_{\mathbb{Q}_T} |u_\eps(t,y) - u_\eps^\kappa(t,y)|^2\,dt\,dy\Big]\Big)^{1/2}.\notag
\end{align}
Using Lemma \ref{lem:9}, along with above estimations for $\mathcal{J}_8$ and keeping in mind that $\mathcal{I}_8=0$, we have the following Lemma.
\begin{lem}\label{lem89}
\begin{align}
       &\mathcal{I}_{8} + \mathcal{J}_8 + \mathcal{I}_9 + \mathcal{J}_9 \, \leq \,\mathbb{E}\Big[ \int_{\mathbb{Q}_T} \int_{\mathbb{R}^d}\int_{|z| > 0}\Big(\beta \big(u_{\Delta t}(t,x)  + \eta(u_{\Delta t}(t,x); z) - u_\eps^\kappa(t,y) - \eta(u_\eps^\kappa(t,y); z)\big)\notag \\ & \hspace{5cm} - \beta\big(u_{\Delta t}(t,x) -  u_\eps^\kappa(t,y)\big) - \big(\eta(u_{\Delta t}(t,x);z) - \eta(u_\eps^\kappa(t,y);z)\big)\notag\\ & \hspace{5.5cm}\times \beta'\big(u_{\Delta t}(t,x) -  u_\eps^\kappa(t,y)\big) \Big) \varrho_\delta(x-y)\psi(t,x)m(dz)\,dx\,dt\,dy\Big]\,\notag \\
       &  +\, C(\xi)\,\mathbb{E}\Big[\int_0^T\int_{K_y}\int_{|z| > 0}|\eta(u_\eps(t,y); z)\ast\tau_\kappa -\eta(u_\eps(t,y); z)|^2\,m(dz)\,dy\,dt\Big]\notag \\&\,+\,C(\xi)\mathbb{E}\Big[\int_{\mathbb{Q}_T} |u_\eps(t,y) - u_\eps^\kappa(t,y)|^2\,dy\,dt\Big] + C(\xi)\Big(\mathbb{E}\Big[\int_{\mathbb{Q}_T} |u_\eps(t,y) - u_\eps^\kappa(t,y)|^2\,dy\,dt\Big]\Big)^{1/2}\notag \\
       &\,+\, C(\xi)\,\Big(\mathbb{E}\Big[\int_0^T\int_{K_y}\int_{|z| > 0}|\eta(u_\eps(t,y); z)\ast\tau_\kappa -\eta(u_\eps(t,y); z)|^2\,m(dz)\,dy\,dt\Big]\Big)^{1/2}\notag \\& \qquad \quad+ \,C(\delta, \xi, \kappa, \eps)\delta_0^{\frac{1}{p}} + C(\delta, \xi)\delta_0^a  + C(\xi)l\notag \\
       &\le \, C(\xi)\,\mathbb{E}\Big[\int_0^T\int_{K_y}\int_{|z| > 0}|\eta(u_\eps(t,y); z)\ast\tau_\kappa -\eta(u_\eps(t,y); z)|^2\,m(dz)\,dy\,dt\Big]\notag \\&\,+\,C(\xi)\mathbb{E}\Big[\int_{\mathbb{Q}_T} |u_\eps(t,y) - u_\eps^\kappa(t,y)|^2\,dy\,dt\Big] + C(\xi)\Big(\mathbb{E}\Big[\int_{\mathbb{Q}_T} |u_\eps(t,y) - u_\eps^\kappa(t,y)|^2\,dy\,dt\Big]\Big)^{1/2}\notag \\
       &\,+\, C(\xi)\,\Big(\mathbb{E}\Big[\int_0^T\int_{K_y}\int_{|z| > 0}|\eta(u_\eps(t,y); z)\ast\tau_\kappa -\eta(u_\eps(t,y); z)|^2\,m(dz)\,dy\,dt\Big]\Big)^{1/2}\notag \\
       &\qquad\quad + \,C(\delta, \xi, \kappa, \eps)\delta_0^{\frac{1}{p}} + C(\delta, \xi)\delta_0^a  + C(\xi)l + C\xi.\notag
\end{align}
\end{lem}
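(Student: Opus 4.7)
The plan is to treat $\mathcal{I}_8$ and $\mathcal{I}_9$ on the viscous side, and $\mathcal{J}_8$ and $\mathcal{J}_9$ on the approximate side, then merge the three compensator-type contributions into a single Taylor-remainder integrand of the form $\beta(a+h_1-h_2)-\beta(a)-(h_1-h_2)\beta'(a)$ with $a=u_{\Delta t}-u_\eps^\kappa$, $h_1=\eta(u_{\Delta t};z)$, $h_2=\eta(u_\eps^\kappa;z)$. First, for $\mathcal{I}_8$ I would invoke the martingale property of the compensated Poisson integral exactly as was done for $\mathcal{I}_6$: since $J_l(u_{\Delta t}(t,x)-k)$ is $\mathcal{F}_t$-measurable while the integral $\int_t^{t+\delta_0}\int_{|z|>0}(\cdots)\widetilde N(dz,ds)$ is a martingale increment independent of $\mathcal{F}_t$, the inner conditional expectation vanishes and hence $\mathcal{I}_8=0$. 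For $\mathcal{J}_9$ I already have Lemma \ref{lem:9}, so the real work is $\mathcal{J}_8$.

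For $\mathcal{J}_8$, the key is to trade the stochastic factor in $\varphi_{\delta_0,\delta}$ for the anticipating $J_l(u_\eps^\kappa(s,y)-k)$. Following the recipe used for $\mathcal{J}_6$, I would introduce the L\'evy-type functional $\mathcal{K}[\beta,\varphi_{\delta,\delta_0}](s,y,k)$ defined in the excerpt and use the fact (proved in \cite{Majee-2015}) that under the predictability of $u_\eps^\kappa(s-\delta_0,\cdot)$ the replacement of $J_l(u_\eps^\kappa(s,y)-k)$ by $J_l(u_\eps^\kappa(s-\delta_0,y)-k)$ kills the jump integral, so $\mathcal{J}_8$ equals $\mathbb{E}\int\mathcal{K}[\beta',\varphi_{\delta,\delta_0}]\bigl(J_l(u_\eps^\kappa(s,y)-k)-J_l(u_\eps^\kappa(s-\delta_0,y)-k)\bigr)$. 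I then apply It\^o--L\'evy to $J_l(u_\eps^\kappa(\cdot,y)-k)$ on $[s-\delta_0,s]$ using the regularized viscous equation \eqref{eq:regularize}; this produces a drift part (fractional, flux, Laplacian), a Brownian martingale part that dies under expectation against $\mathcal{K}$, a compensated Poisson part, and the two compensator terms (Brownian and L\'evy). Call the resulting pieces $\mathcal{J}_8^1$, $\mathcal{J}_8^2$, $\mathcal{J}_8^3$ as in the excerpt. Estimates on $\mathcal{J}_8^1$ (drift part) come from the $L^\infty$-in-probability bounds \eqref{eq:Jumpnoise} on $\|\mathcal{K}[\beta',\varphi_{\delta,\delta_0}]\|_\infty$ combined with the a priori bounds on $\mathcal{L}_\theta[u_\eps^\kappa]$, $\nabla f(u_\eps)\ast\tau_\kappa$, $\eps\Delta u_\eps^\kappa$; in each case the time integration over $[s-\delta_0,s]$ produces a factor $\delta_0^{1/p}$ once the $L^\infty$ bound $\delta_0^{-(p-1)/p}$ is absorbed. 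The compensator $\mathcal{J}_8^2$ is bounded directly by $C(\xi,\eps)\delta_0^{1/p}$ since $\beta''$ is controlled by $1/\xi$ and $|\eta|^2$ integrates against $m$.

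The crucial term is $\mathcal{J}_8^3$, the one coming from the product of two compensated Poisson increments in It\^o--L\'evy isometry. Here I would first dispose of the $\rho_{\delta_0}(r-s)\psi(r,x)-\rho_{\delta_0}(r-s)\psi(t,x)$ discrepancy and the difference $J_l(u_\eps^\kappa(r,y)-k)-J_l(u_\eps^\kappa(r,y)-k+(\eta\ast\tau_\kappa))$ by a first-order Taylor expansion, paying an $O(\sqrt{\delta_0}+l)$ price, so that up to such error the integrand becomes of the form $\Phi(u_{\Delta t},u_\eps^\kappa;\eta(u_{\Delta t};z),\eta(u_\eps;z)\ast\tau_\kappa)$ where $\Phi(a,a';h,h'):=\beta(a-a'+h-h')-\beta(a-a'-h')-\beta(a-a'+h)+\beta(a-a')$. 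A second Taylor expansion in the differences $h-h'$ rewrites this as $\int_0^1(1-\theta)|h-h'|^2\beta''(a-a'+\theta(h-h'))\,d\theta$. Now the plan is to swap the convolved noise $\eta(u_\eps;z)\ast\tau_\kappa$ for $\eta(u_\eps^\kappa;z)$ in two steps: first $\ast\tau_\kappa$ removed against the remaining $L^2_{m\otimes dy\otimes dt}$ distance $\|\eta(u_\eps)\ast\tau_\kappa-\eta(u_\eps)\|$, then $u_\eps\to u_\eps^\kappa$ inside $\eta$ against $\|u_\eps-u_\eps^\kappa\|_{L^2}$, using the Lipschitz bound \ref{A6} on $\eta$, the global bound $|\beta''|\le M_2/\xi$, and \eqref{eq:etaI93} to control the ``other factor''. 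After these two swaps, the $\theta$-shift inside $\beta''$ can likewise be shifted from $\eta(u_\eps(t,y);z)\ast\tau_\kappa$ to $\eta(u_\eps^\kappa(t,y);z)$ at the cost of the same two $L^2$-errors. What remains is exactly the first contribution in the statement, namely $(\eta(u_{\Delta t};z)-\eta(u_\eps^\kappa;z))^2\beta''(u_{\Delta t}-u_\eps^\kappa+\theta(\eta(u_{\Delta t};z)-\eta(u_\eps^\kappa;z)))$, which by the midpoint identity $\beta(A+B)-\beta(A)-B\beta'(A)=\int_0^1(1-\theta)B^2\beta''(A+\theta B)\,d\theta$ equals the Taylor remainder written in the lemma.

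Finally, adding Lemma \ref{lem:9} and collecting the errors $C(\delta,\xi,\kappa,\eps)\delta_0^{1/p}$, $C(\delta,\xi)\delta_0^a$, $C(\xi)l$, $C\xi$ together with the two $L^2$-convolution discrepancies in $\sigma$ and $\eta$ yields the claim. The main obstacle I anticipate is the careful bookkeeping in $\mathcal{J}_8^3$: unlike the pure Brownian case, each replacement of $\eta(u_\eps)\ast\tau_\kappa$ by $\eta(u_\eps^\kappa)$ inside a nonlinear second derivative forces one to control a fourth-order $\eta$-moment (as in \eqref{eq:etaI93}) uniformly in $\eps,\kappa$, and the correct order of limits ($l,\delta_0,\kappa,\eps$) must be respected so that the Taylor remainder emerges before any of the smoothing parameters are sent to zero.
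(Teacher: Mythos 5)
Your proposal is correct and follows essentially the same route as the paper: $\mathcal{I}_8=0$ by the martingale identity, $\mathcal{J}_8$ is handled via the functional $\mathcal{K}[\beta,\varphi_{\delta,\delta_0}]$ and It\^o--L\'evy applied to $J_l(u_\eps^\kappa(\cdot,y)-k)$ with the drift/compensator pieces absorbing $\delta_0^{1/p}$ through \eqref{eq:Jumpnoise}, the cross term $\mathcal{J}_8^3$ is reduced to the four-point difference $\beta(a+h-h')-\beta(a-h')-\beta(a+h)+\beta(a)$ and the convolution is swapped out in two $L^2$ steps (the paper's $\mathfrak{A}_1,\mathfrak{A}_2,\mathfrak{A}_3$ decomposition), and the result is merged with Lemma \ref{lem:9} to produce the single Taylor remainder. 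The only cosmetic discrepancy is that Lemma \ref{lem:9} bounds $\mathcal{I}_9+\mathcal{J}_9$ jointly rather than $\mathcal{J}_9$ alone, but your final assembly uses it that way, so there is no gap.
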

Adding \eqref{entropinquality0} and \eqref{EntropyIQUE2} and using the Lemmas \ref{lem1}, \ref{lem 2}, \ref{lem3}, \ref{lem45}, \ref{lem67}, \ref{lem89} and \ref{lem:error} along with estimation \eqref{inq:i10}, we get 
\begin{align}\label{first-hand-ineqaulity}
 0  \le & \displaystyle\mathbb{E} \Big[ \int_{\mathbb{R}^d } \int_{\mathbb{R}^d }\beta(u_0(x)-u_\eps^\kappa(0,y)) \varrho_\delta(x-y)\psi(0,x)\,dx\,dy \Big]\notag\\
+\, &\mathbb{E}\Big[\int_{\mathbb{Q}_T}\int_{\mathbb{R}^d}\beta(u_{\Delta t}(t,x) - u_\eps^\kappa(t,y))\partial_t\psi(t,x)\varrho_{\delta}(x-y)\,dx\,dt\,dy  \Big]\notag\\
+\,&\mathbb{E} \Big [\int_{\mathbb{Q}_T} \int_{\mathbb{R}^d}F^\beta(u_{\Delta t}(t,x), u_\eps^\kappa(t,y))\cdot \nabla_x\psi(t,x)\varrho_\delta(x-y)\,dx\,dt\,dy \Big]\notag\\
-\,&\mathbb{E} \Big [\int_{\mathbb{Q}_T}\int_{\mathbb{R}^d}\beta(u_\eps^\kappa(t,y)-u_{\Delta t}(t,x))\mathcal{L}_\theta^{\bar{r}}[\psi(t,\cdot)](x)\varrho_{\delta}(x-y)\,dx\,dt\,dy\Big]\notag\\
-& \,\mathbb{E} \Big [\int_{\mathbb{Q}_T}\int_{\mathbb{R}^d} \beta(u_{\Delta t}(t,x) - u_\eps^\kappa(t,y)) \mathcal{L}_{\theta,\bar{r}}[\varrho_\delta(\cdot-y)\psi(t,\cdot)](x)\,dx\,dt\,dy \Big]\notag\\
- &\,\mathbb{E}\Big[\int_{\mathbb{Q}_T} \int_{\mathbb{R}^d}\beta(u_\eps^\kappa(t,y) - u_{\Delta t}(t,x) )\mathcal{L}_{\theta,\bar{r}}[\varrho_\delta(x-\cdot)](y)\psi(t,x)\,dx\,dt\,dy \Big]\notag\\
 +&C(\xi)\Big\{\mathbb{E}\Big[ \int_0^T \int_{K_y}|u_\eps^\kappa(t,y)-u_\eps(t,y)|^2 \,dy\,dt\Big]
+ \Big(\mathbb{E}\Big[ \int_0^T \int_{K_y}|u_\eps^\kappa(t,y)-u_\eps(t,y)|^2 \,dy\,dt\Big]\Big)^{\frac{1}{2}} \Big\}\notag\\ 
+\,&\, C(\xi)\,\mathbb{E} \Big[\int_0^T\int_{K_y} | \sigma(u_\eps(t,y)\ast\tau_\kappa) - \sigma(u_\eps(t,y))|^2\,dy\,dt \Big]\notag \\
+ &\,C(\xi)\,\mathbb{E}\Big[\int_0^T \int_{K_y} \int_{|z| > 0} |\eta(u_\eps(t,y); z)-\eta(u_\eps(t,y); z)\ast\tau_\kappa|^2 \,m(dz)\,dt\,dy\Big].\notag\\ +\, \,&C(\xi)\,\Big(\mathbb{E}\Big[\int_0^T \int_{K_y} \int_{|z| > 0}|\eta(u_\eps(t,y); z)\ast\tau_\kappa -\eta(u_\eps(t,y); z)|^2 \,m(dz)\,dy\,dt\Big]\Big)^{1/2}\notag\\
+&  C(\delta, \delta_0, \xi, \bar{r})\sqrt{\Delta t} +C(\delta, \xi, \kappa, \bar{r})(\sqrt{\delta_0} + \delta_0^{1/p}) + C(\delta, \xi)\delta_0^a + C(\delta, \xi, \bar{r})l + C(\delta)\sqrt{\eps} + C\xi \notag \\
 =:& \sum_{i=1}^{11} {\tt B}_i +  C(\delta, \delta_0, \xi, \bar{r})\sqrt{\Delta t} +C(\delta, \xi, \kappa, \bar{r})(\sqrt{\delta_0} + \delta_0^{1/p}) + C(\delta, \xi)\delta_0^a + C(\delta, \xi, \bar{r})l + C(\delta)\sqrt{\eps} + C\xi\,.
\end{align}

\subsection{Convergence analysis}\label{Convergence Analysis}
As we mentioned, our main goal is to establish  the convergence of approximate solutions $u_{\Delta t}(t,x)$ to an entropy solution for Cauchy problem \eqref{eq:fractional}. For this purpose, we will use Young measure theory in stochastic setup to send $\Delta t$ to $0$. We refer to see 
Dafermos \cite{dafermos} for deterministic setting, and Balder \cite{Balder} for the 
stochastic version of Young measure theory. To proceed further, we recall  the definition of Young measure. Let $(\Theta, \Sigma, \mu)$ be a $\sigma$-finite measure space and $\mathcal{P}(\R)$ be the space of probability measures on $\R$.
\begin{defi}[Young Measure]
A Young measure from $\Theta$ into $\R$ is a map $\tau \mapsto \mathcal{P}(\R)$ such that
$\tau(\cdot): \gamma \mapsto \tau(\gamma)(B)$ is $\Sigma$-measurable for every Borel subset $B$ of $\R$.
The set of all Young measures from $\Theta$ into $\R$ is denoted by $\mathcal{R}(\Theta, \Sigma, \mu).$
\end{defi}
In this context, we mention that with an appropriate choice of $(\Theta, \Sigma, \mu)$, the family
$\{u_{\Delta t}(t,x)\}_{\Delta t>0}$ can be thought of as a family of Young measures. To this end, we consider the predictable
$\sigma$-field of $\Omega\times(0,T)$ with the respect to $\{\mathcal{F}_t\}$, denoted by $\mathcal{P}_T$, and set 
\begin{align*}
  \Theta = \Omega\times (0,T)\times \R^d,\quad \Sigma = \mathcal{P}_T \times \mathcal{L}(\R^d)\quad \text{and} \quad \mu= P\otimes \lambda_t\otimes \lambda_x,
\end{align*}
where $\mathcal{L}(\R^d)$ is the Lebesgue $\sigma$-algebra on $\R^d$, $\lambda_t$ and $\lambda_x$ are respectively the Lebesgue measures on $(0,T)$ and $\R^d$. Moreover, for $M\in \mathbb{N}$, set
$\Theta_M = \Omega\times (0,T)\times B_M,$ where $B_M$ be the ball of radius $M$ around zero in $\R^d$. With the above setting at hand,
 we sum up the necessary results in the following proposition to carry over the subsequent analysis, see \cite{Majee-2014}.
\begin{prop} \label{prop:young-measure}
  Let  $\{u_{\Delta t}(t,x)\}_{{\Delta t}> 0}$ be a sequence of predictable processes such that
  \eqref{l-infty-bound} holds. Then there exist a sub-sequence $\{{\Delta t}_n\}$ with ${\Delta t}_n\goto 0$ and a Young measure
  $\tau\in \mathcal{R}(\Theta, \Sigma, \mu) $ such that the followings hold:
  \begin{itemize}
   \item [(A)] If  $g(\gamma,\nu)$ is a Carath\'eodory function on $\Theta\times \R$ such that $\mbox{supp}(g)\subset \Theta_M\times \R$
 for some $M \in \mathbb{N}$ and $\{g(\gamma, u_{\Delta t}(\gamma))\}_n$ (where $\gamma \equiv(\omega; t, x))$ is uniformly integrable,  then 
 \begin{align}
    \lim_{\Delta t_n\rightarrow 0} \int_{\Theta}g(\gamma, u_{\Delta t_n}(\gamma))\,\mu(d\gamma)
    = \int_\Theta\Big[\int_{\R} g(\gamma,\nu)\tau(\gamma)(\,d\nu)\Big]\,\mu(d\gamma) \notag 
    \end{align}
\item [(B)]  Denoting a triplet $(\omega,x,t)\in \Theta$ by $\gamma$, we define 
\begin{align}
 {\bf u}(\gamma,\alpha)=\inf\Big\{ c\in \R: \tau(\gamma)\Big((-\infty,c)\Big)>\alpha \Big\}\quad \text{for}\quad \alpha \in (0,1)
 ~\text{and}~\gamma\in \Theta.\notag 
\end{align} 
 Then, the function ${\bf u}(\gamma,\alpha)$ is  non-decreasing, right continuous on $(0,1)$ and
 $ \mathcal{P}_T\times \mathcal{L}(\R^d\times (0,1))$- measurable.
  Moreover, if $g(\gamma,\nu)$ is a  Carath\'eodory function on $\Theta\times \R$, then 
  \begin{align}
 \int_{\Theta}\Big[\int_\R g(\gamma,\nu)\tau(\gamma)(\,d\nu)\Big]\, \mu \,(d\gamma)
 = \int_\Theta\int_{\alpha=0}^1 g(\gamma, {\bf u}(\gamma,\alpha))\,d\alpha\, \mu(d\gamma)\,\cdot \notag
 \end{align}
\end{itemize}
\end{prop}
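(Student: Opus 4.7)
The plan is to deduce this statement from the stochastic Young measure compactness theorem of Balder \cite{Balder}, taking advantage of the uniform $L^\infty$ bound \eqref{l-infty-bound}. I would proceed in three stages.

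\textbf{Stage 1 (Extraction of $\tau$).} First, associate to each $u_{\Delta t}$ its elementary Young measure $\delta_{u_{\Delta t}(\gamma)}$, i.e.~the Dirac mass at $u_{\Delta t}(\gamma)$. Because of \eqref{l-infty-bound}, the supports of these Young measures are uniformly contained in the fixed compact set $K = [-\widetilde{M},\widetilde{M}]$, so the family is tight on $\R$; together with the $\sigma$-finiteness of $(\Theta, \Sigma, \mu)$ restricted to each $\Theta_M$, this furnishes exactly the tightness hypothesis of Balder's compactness theorem. Applying that theorem and a diagonal extraction over $M\in \mathbb{N}$ yields a subsequence $\{\Delta t_n\}$, $\Delta t_n\goto 0$, and a Young measure $\tau\in \mathcal{R}(\Theta,\Sigma,\mu)$ such that the elementary measures converge narrowly to $\tau$.

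\textbf{Stage 2 (Proof of (A)).} For a Carath\'eodory $g$ with $\supp(g)\subset \Theta_M\times \R$ that is bounded, narrow convergence directly gives
\begin{align*}
\int_\Theta g(\gamma, u_{\Delta t_n}(\gamma))\,\mu(d\gamma)= \int_\Theta \int_\R g(\gamma, \nu)\delta_{u_{\Delta t_n}(\gamma)}(d\nu)\,\mu(d\gamma) \longrightarrow \int_\Theta \int_\R g(\gamma,\nu)\,\tau(\gamma)(d\nu)\,\mu(d\gamma).
\end{align*}
To remove the boundedness assumption on $g$ (while keeping the $\gamma$-support in $\Theta_M$ and the uniform integrability hypothesis), I would use a standard truncation argument: approximate $g$ by $g_R := g \cdot \chi_{\{|g|\le R\}}$, apply the previous step to $g_R$, and then let $R\uparrow \infty$, using uniform integrability of $\{g(\gamma,u_{\Delta t_n}(\gamma))\}_n$ on one side and Fatou on the Young measure side to conclude.

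\textbf{Stage 3 (Proof of (B)).} Define ${\bf u}(\gamma,\alpha)$ as the right-continuous generalized inverse (quantile function) of the distribution function $\alpha\mapsto \tau(\gamma)((-\infty,\cdot))$; monotonicity and right-continuity in $\alpha$ are immediate from the definition. For measurability, observe that
\begin{align*}
\{(\gamma,\alpha): {\bf u}(\gamma,\alpha) < c\} = \{(\gamma,\alpha): \tau(\gamma)((-\infty,c)) > \alpha\},
\end{align*}
and since $\gamma\mapsto \tau(\gamma)((-\infty,c))$ is $\mathcal{P}_T\times \mathcal{L}(\R^d)$-measurable by the defining property of a Young measure, the right-hand side lies in $\mathcal{P}_T\times \mathcal{L}(\R^d \times (0,1))$. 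The representation formula is the classical push-forward identity: for each fixed $\gamma$, if $\alpha$ is uniform on $(0,1)$ then ${\bf u}(\gamma,\alpha)$ has law $\tau(\gamma)$, whence $\int_0^1 g(\gamma,{\bf u}(\gamma,\alpha))\,d\alpha = \int_\R g(\gamma,\nu)\tau(\gamma)(d\nu)$, and integrating in $\gamma$ via Fubini (justified by the Carath\'eodory hypothesis and joint measurability of ${\bf u}$) gives the claim.

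The only delicate point I anticipate is verifying, at the level of the predictable $\sigma$-field $\mathcal{P}_T$, that Balder's narrow convergence theorem applies without loss of the predictability structure. This is handled by working on $\Theta_M$ with the restricted $\sigma$-algebra and noting that narrow limits of $\mathcal{P}_T\times \mathcal{L}(\R^d)$-measurable Young measures remain $\mathcal{P}_T\times \mathcal{L}(\R^d)$-measurable; everything else is a routine measure-theoretic assembly.
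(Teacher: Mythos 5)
The paper does not actually prove this proposition — it is imported verbatim from \cite{Majee-2014} (resting on Balder's compactness theory \cite{Balder}) — and your three-stage sketch is precisely the standard argument behind that citation: tightness of the elementary Young measures from the uniform $L^\infty$ bound \eqref{l-infty-bound}, Balder's narrow-compactness theorem with a diagonal extraction over the exhaustion $\Theta_M$, and the quantile-function (inverse-CDF) representation together with the identity $\{(\gamma,\alpha):{\bf u}(\gamma,\alpha)<c\}=\{(\gamma,\alpha):\tau(\gamma)((-\infty,c))>\alpha\}$ for part (B). The only slip is cosmetic: the truncation $g\,\chi_{\{|g|\le R\}}$ need not be Carath\'eodory, so one should instead truncate by $\max(-R,\min(R,g))$, which preserves continuity in $\nu$, before letting $R\uparrow\infty$ with the uniform integrability; with that adjustment your proof is correct and matches the cited source's approach.
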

\noindent{\bf Step I:~(passage to the limit as $\lim_{l\goto 0} \lim_{\delta_0\goto 0} \lim_{\Delta t \goto 0}$).} We will pass to the limit as $\Delta t \goto 0$ in \eqref{first-hand-ineqaulity} (keeping all other involved parameters fixed) in the sense of Young measure (stochastic setting). Let $u(t,x, \alpha)$  be the Young measure valued limit of $u_{\Delta t}(t,x)$. We define
\begin{align}
    &\mathcal{H}_1(t,x,\omega;\nu) := \int_{\mathbb{R}^d}\beta(\nu - u_\eps^\kappa(s,y))\partial_t\psi(t,x)\varrho_{\delta}(x-y)\,dy\,,\notag\\
    &\mathcal{H}_2(t,x,\omega;\nu) := \int_{\mathbb{R}^d}F^\beta(\nu, u_\eps^\kappa(t,y))\cdot \nabla_x \psi(t,x)\varrho_\delta(x-y)\,dy.\notag
\end{align}
Note that $\mathcal{H}_1$,  
$\mathcal{H}_2$ are Carath\'edory function on $\Theta \times \mathbb{R}$ and the families $\mathcal{H}_1(., u_{\Delta t})$, 
and $\mathcal{H}_2(., u_{\Delta t})$ are uniformly integrable; for more details see \cite{Bauzet-2012}. Thus, invoking Proposition \ref{prop:young-measure} we have
\begin{align}
    \underset{\Delta t \rightarrow 0}{lim} {\tt B}_2& = \,\mathbb{E}\Big[\int_{\mathbb{Q}_T}\int_{\mathbb{R}^d}\int_0^1\beta(u(t,x,\alpha) - u_\eps^\kappa(s,y))\partial_t\psi(t,x)\varrho_{\delta}(x-y)\,d\alpha\,dx\,dt\,dy  \Big]\,, \notag\\
     \underset{\Delta t \rightarrow 0}{lim}\,{\tt B}_3&=
 \mathbb{E} \Big [\int_{\mathbb{Q}_T} \int_{\mathbb{R}^d}\int_0^1F^\beta(u(t,x,\alpha), u_\eps^\kappa(t,y))\varrho_\delta(x-y)\partial_x\psi(t,x))\,d\alpha\,dx\,dt\,dy \Big].\notag
\end{align}
To send the limit as $\Delta t \goto 0$ in the terms ${\tt B}_4$ and ${\tt B}_5$, we define  Carath\'eodory functions  $\mathcal{H}_4$ and $\mathcal{H}_5$ on $\Theta \times \mathbb{R}$,
\begin{align}
    &\mathcal{H}_4(t,x,\omega; \nu) := \int_{\mathbb{R}^d}\beta(u_\eps^\kappa(t,y)-\nu)\mathcal{L}_\theta^{\bar{r}}[\psi(t,\cdot)](x)\varrho_{\delta}(x-y)\,dy\,, \notag\\
     &\mathcal{H}_5(t,x,\omega; \nu) := \int_{\mathbb{R}^d} \beta(\nu - u_\eps^\kappa(t,y)) \mathcal{L}_{\theta,\bar{r}}[\varrho_\delta(\cdot-y)\psi(t,\cdot)](x)\,dy.\notag
\end{align}
We claim that  $\mathcal{H}_4(.,u_{\Delta t})$ and $\mathcal{H}_5(.,u_{\Delta t})$ are uniformly bounded sequence in $L^2(\mathbb{Q}_T \times \Omega)$ and hence uniformly integrable. Indeed, one can use triangle inequality, \eqref{inq:bound-frac-z} and \eqref{fractionalbound} to have
\begin{align}
  &\mathbb{E}\Big[\int_{\mathbb{Q}_T}|\mathcal{H}_4(.,u_{\Delta t})|^2\,dx\,dt\Big]
  = \mathbb{E}\Big[\int_{\mathbb{Q}_T}\Big(\int_{\mathbb{R}^d}\beta(u_\eps^\kappa(t,y)-u_{\Delta t}(t,x))\mathcal{L}_\theta^{\bar{r}}[\psi(t,\cdot)](x)\varrho_{\delta}(x-y)\,dy\Big)^2\,dx\,dt\Big]\notag\\
  &\le \, C(\bar{r}, \psi, \beta')\,\mathbb{E}\Big[\int_{\mathbb{Q}_T}\int_{\mathbb{R}^d}|u_\eps^\kappa(t,y)-u_{\Delta t}(t,x)|^2\varrho_{\delta}^2(x-y)\,dy \,dx\,dt\Big] \le C(\bar{r}, \delta),\notag
\end{align}
and
\begin{align}
&\mathbb{E}\Big[\int_{\mathbb{Q}_T}|\mathcal{H}_5(.,u_{\Delta t})|^2\,dx\,dt\Big]  =  \mathbb{E}\Big[\int_{\mathbb{Q}_T}\Big(\int_{\mathbb{R}^d} \beta(u_{\Delta t}(t,x) - u_\eps^\kappa(t,y)) \mathcal{L}_{\theta,\bar{r}}[\varrho_\delta(\cdot-y)\psi(t,\cdot)](x)\,dy\Big)^2\,dx\,dt\Big]\notag\\
   &\le C(\delta, \psi, \beta')\bar{r}^a\mathbb{E}\Big[\int_{\mathbb{Q}_T}\int_{\mathbb{R}^d} |(u_{\Delta t}(t,x) - u_\eps^\kappa(t,y))|^2\,dy\,dx\,dt\Big] \le C(\delta)\bar{r}^a. \notag
\end{align}
 Hence, in the view of Proposition \ref{prop:young-measure} we have
\begin{align}
    \underset{\Delta t \rightarrow 0}{lim}\,{\tt B}_4
    &= \mathbb{E} \Big [\int_{\mathbb{Q}_T}\int_{\mathbb{R}^d}\int_0^1\beta(u_\eps^\kappa(t,y)-u(t,x,\alpha))\mathcal{L}_\theta^{\bar{r}}[\psi(t,\cdot)](x)\varrho_{\delta}(x-y)\,d\alpha \,dx\,dt\,dy\Big]\,, \notag \\
    \underset{\Delta t \rightarrow 0}{lim}\,{\tt B}_5&=
  \mathbb{E} \Big [\int_{\mathbb{Q}_T}\int_{\mathbb{R}^d}\int_0^1 \beta(u(t,x,\alpha)- u_\eps^\kappa(t,y)) \mathcal{L}_{\theta,\bar{r}}[\varrho_\delta(\cdot-y)\psi(t,\cdot)](x)\,d\alpha\,dx\,dt\,dy \Big].\notag
\end{align}
Similarly it can be shown that,
\begin{align}
    \underset{\Delta t \rightarrow 0}{lim}\,{\tt B}_6 = \mathbb{E}\Big[\int_{\mathbb{Q}_T} \int_{\mathbb{R}^d}\int_0^1\beta(u_\eps^\kappa(t,y) - u(t,x,\alpha) )\mathcal{L}_{\theta,\bar{r}}[\varrho_\delta(x-\cdot)](y)\psi(t,x)\,d\alpha\,dx\,dt\,dy \Big].\notag
\end{align}
Hence, passing  the limit as $\Delta t$ tends to zero first, then subsequently sending $\delta_0 \rightarrow 0$ and $l \rightarrow 0$ in the inequality \eqref{first-hand-ineqaulity},  we get
\begin{align}\label{eq:A}
0\le {\tt B}_1 + \sum_{i=1}^5 \bar{\tt B}_i + \sum_{i=7}^{11} {\tt B}_i + C(\delta)\eps^{1/2} + C\xi\,,
\end{align}
where
\begin{align*}
 \bar{\tt B}_1&:= \mathbb{E}\Big[\int_{\mathbb{Q}_T}\int_{\mathbb{R}^d}\int_0^1\beta(u(t,x,\alpha) - u_\eps^\kappa(s,y))\partial_t\psi(t,x)\varrho_{\delta}(x-y)\,d\alpha\,dx\,dt\,dy  \Big]\,, \notag\\
 \bar{\tt B}_2&:= \mathbb{E} \Big [\int_{\mathbb{Q}_T} \int_{\mathbb{R}^d}\int_0^1F^\beta((u(t,x,\alpha), u_\eps^\kappa(t,y))\cdot \nabla_x\psi(t,x))\varrho_\delta(x-y)\,d\alpha\,dx\,dt\,dy \Big]\,,\notag\\
 \bar{\tt B}_3&:=-\mathbb{E} \Big [\int_{\mathbb{Q}_T}\int_{\mathbb{R}^d}\int_0^1\beta(u_\eps^\kappa(t,y)-u(t,x,\alpha))\mathcal{L}_\theta^{\bar{r}}[\psi(t,\cdot)](x)\varrho_{\delta}(x-y)\,d\alpha \,dx\,dt\,dy\Big]\,,\notag \\
 \bar{\tt B}_4&:=- \mathbb{E} \Big [\int_{\mathbb{Q}_T}\int_{\mathbb{R}^d}\int_0^1 \beta(u(t,x,\alpha)- u_\eps^\kappa(t,y)) \mathcal{L}_{\theta,\bar{r}}[\varrho_\delta(\cdot-y)\psi(t,\cdot)](x)\,d\alpha\,dx\,dt\,dy \Big]\,,\notag \\
 \bar{\tt B}_5&:=-\mathbb{E}\Big[\int_{\mathbb{Q}_T} \int_{\mathbb{R}^d}\int_0^1\beta(u_\eps^\kappa(t,y) - u(t,x,\alpha) )\mathcal{L}_{\theta,\bar{r}}[\varrho_\delta(x-\cdot)](y)\psi(t,x)\,d\alpha\,dx\,dt\,dy \Big]\,. 
 \end{align*}
 \noindent{\bf Step II:~(passage to the limit as $\kappa\goto 0$).}
Observe that, thanks to \eqref{inq:bound-frac-z} and convolution property, 
\begin{align}
    &\bigg| \bar{\tt B}_3 - \, \mathbb{E} \Big [\int_{\mathbb{Q}_T}\int_{\mathbb{R}^d}\int_0^1\beta(u_\eps(t,y)-u(t,x,\alpha))\mathcal{L}_\theta^{\bar{r}}[\psi(t,\cdot)](x)\varrho_{\delta}(x-y)\,d\alpha \,dx\,dt\,dy\Big] \bigg|\notag\\
    \le&\, \mathbb{E} \Big [\int_{\mathbb{Q}_T}\int_{\mathbb{R}^d}\int_0^1\Big|\beta(u_\eps^\kappa(t,y)-u(t,x,\alpha))- \beta(u_\eps(t,y)-u(t,x,\alpha))\Big|\,|\mathcal{L}_\theta^{\bar{r}}[\psi(t,\cdot)](x)|\varrho_{\delta}(x-y)\,d\alpha \,dx\,dt\,dy\Big]\notag \\
    \le \,&\, C(\beta')\mathbb{E} \Big [\int_{\mathbb{Q}_T}\int_{\mathbb{R}^d}|u_\eps^\kappa(t,y)-u_\eps(t,y)||\mathcal{L}_\theta^{\bar{r}}[\psi(t,\cdot)](x)|\varrho_{\delta}(x-y)\,dx\,dt\,dy\Big]\notag \\
    \le \, &C(\beta', |\psi|,\bar{r})\mathbb{E} \Big [\int_{\mathbb{Q}_T}\int_{\mathbb{R}^d}|u_\eps^\kappa(t,y)-u_\eps(t,y)|\varrho_{\delta}(x-y)\,dt\,dx\,dy\Big]\notag\\ \le &\, C(\bar{r})\, \Big(\mathbb{E} \Big [\int_{\mathbb{Q}_T}|u_\eps^\kappa(t,y)-u_\eps(t,y)|^2\,dt\,dy\Big]\Big)^{1/2} \underset{\kappa \rightarrow 0}{\longrightarrow} 0.\notag
\end{align}
Thus, we have 
$$\underset{\kappa \rightarrow 0}{lim}\, \bar{\tt B}_3 =\, - \, \mathbb{E} \Big [\int_{\mathbb{Q}_T}\int_{\mathbb{R}^d}\int_0^1\beta(u_\eps(t,y)-u(t,x,\alpha))\mathcal{L}_\theta^{\bar{r}}[\psi(t,\cdot)](x)\varrho_{\delta}(x-y)\,d\alpha \,dx\,dt\,dy\Big].$$
 Similarly, using properties of convolution and \eqref{fractionalbound}, we have 
 \begin{align*}
 \underset{\kappa \rightarrow 0}{lim}\,\bar{\tt B}_4& =- \mathbb{E} \Big [\int_{\mathbb{Q}_T}\int_{\mathbb{R}^d}\int_0^1 \beta(u(t,x,\alpha)- u_\eps^\kappa(t,y)) \mathcal{L}_{\theta,\bar{r}}[\varrho_\delta(\cdot-y)\psi(t,\cdot)](x)\,d\alpha\,dx\,dt\,dy \Big]\,, \\
  \underset{\kappa \rightarrow 0}{lim}\,\bar{\tt B}_5&= -\mathbb{E}\Big[\int_{\mathbb{Q}_T} \int_{\mathbb{R}^d}\int_0^1\beta(u_\eps(t,y) - u(t,x,\alpha) )\mathcal{L}_{\theta,\bar{r}}[\varrho_\delta(x-\cdot)](y)\psi(t,x)\,d\alpha\,dx\,dt\,dy \Big]\,. 
 \end{align*}
 Since $\sigma$ is Lipschitz continuous with $\sigma(0)=0$ and $u_\eps(\cdot)$ satisfies the uniform bound \eqref{inq:uniform-1st-viscous}, one can easily show that
 \begin{align*}
 \lim_{\kappa\goto 0} {\tt B}_7=0= \lim_{\kappa\goto 0} {\tt B}_8= \lim_{\kappa\goto 0} {\tt B}_9\,. 
 \end{align*}
 Note that $\mathbb{P}$-a.s. and for fixed $z\in \R$, 
 $\eta(u_\eps;z)\ast \tau_{\kappa} \goto \eta(u_\eps;z)$ in $L^2(K_x)$. Hence, 
 thanks to the assumptions \ref{A6} and \ref{A8}, estimation \eqref{inq:uniform-1st-viscous},
one can use dominated convergence theorem to conclude that 
$$\underset{\kappa \rightarrow 0}{lim}\,{\tt B}_{10} = \,0\, = \underset{\kappa \rightarrow 0}{lim}\,{\tt B}_{11}.$$
By using Lemma \ref{lem:l-infinity bound-approximate-solution} along with Fatou's Lemma, we have
 \begin{align}\label{eq:boundofyoungslimit}
&\underset{0\leq t \leq T}{sup}\mathbb{E}\Big[||{u}(t,\cdot,\cdot)||_2^2\Big] \leq \underset{0\leq t \leq T}{sup}  \Big(\liminf _{\eps \downarrow 0}\mathbb{E}\Big[|| u_{\Delta t}(t,\cdot)||_2^2 \Big]\Big)  \leq  \underset{0\leq t \leq T}{sup}\mathbb{E} \Big[||u_{\Delta t}(t,\cdot)||_2^2 \Big] < \infty.
 \end{align}
Since $\beta$ is Lipschitz continuous and $F^\beta$ satisfies \eqref{eqlipF} and the inequality \eqref{eq:boundofyoungslimit} in hand, it is easy to observe that, (see also \cite{Bauzet-2015})
\begin{align*}
   &\underset{\kappa \rightarrow 0}{lim} \,{\tt B}_1 =\mathbb{E} \Big[ \int_{\mathbb{R}^d } \int_{\mathbb{R}^d }\beta(u_0(x)-u_\eps(0,y)) \varrho_\delta(x-y)\psi(0,x)\,dx\,dy \Big], \\
   &\underset{\kappa \rightarrow 0}{lim} \,{\tt \bar{B}}_1 = \mathbb{E}\Big[\int_{\mathbb{Q}_T}\int_{\mathbb{R}^d}\int_0^1\beta(u(t,x,\alpha) - u_\eps(t,y))\partial_t\psi(t,x)\varrho_{\delta}(x-y)\,d\alpha\,dx\,dt\,dy  \Big], \\
   &\underset{\kappa \rightarrow 0}{lim} \,{\tt \bar{B}}_2 =  \mathbb{E} \Big [\int_{\mathbb{Q}_T} \int_{\mathbb{R}^d}\int_0^1F^\beta((u(t,x,\alpha), u_\eps(t,y))\varrho_\delta(x-y)\cdot\nabla_x\psi(t,x))\,d\alpha\,dx\,dt\,dy \Big].
\end{align*}
 Thus, making use of above estimations and sending $\kappa \rightarrow 0$ in \eqref{eq:A}, we have
\begin{align}\label{eq:A1}
    0 \le &\,\mathbb{E} \Big[ \int_{\mathbb{R}^d } \int_{\mathbb{R}^d }\beta(u_0(x)-u_\eps(0,y)) \varrho_\delta(x-y)\psi(0,x)\,dx\,dy \Big]\notag \\
    +\, &\mathbb{E}\Big[\int_{\mathbb{Q}_T}\int_{\mathbb{R}^d}\int_0^1\beta(u(t,x,\alpha) - u_\eps(t,y))\partial_t\psi(t,x)\varrho_{\delta}(x-y)\,d\alpha\,dx\,dt\,dy  \Big]\notag\\
+\,&\mathbb{E} \Big [\int_{\mathbb{Q}_T} \int_{\mathbb{R}^d}\int_0^1F^\beta((u(t,x,\alpha), u_\eps(t,y))\varrho_\delta(x-y)\cdot\nabla_x\psi(t,x))\,d\alpha\,dx\,dt\,dy \Big]\notag\\
-\, &\mathbb{E} \Big [\int_{\mathbb{Q}_T}\int_{\mathbb{R}^d}\int_0^1\beta(u_\eps(t,y)-u(t,x,\alpha))\mathcal{L}_\theta^{\bar{r}}[\psi(t,\cdot)](x)\varrho_{\delta}(x-y)\,d\alpha \,dx\,dt\,dy\Big]\notag \\
-\, & \mathbb{E} \Big [\int_{\mathbb{Q}_T}\int_{\mathbb{R}^d}\int_0^1 \beta(u(t,x,\alpha)- u_\eps(t,y)) \mathcal{L}_{\theta,\bar{r}}[\varrho_\delta(\cdot-y)\psi(t,\cdot)](x)\,d\alpha\,dx\,dt\,dy \Big]\notag \\
-\, &\mathbb{E}\Big[\int_{\mathbb{Q}_T} \int_{\mathbb{R}^d}\int_0^1\beta(u_\eps(t,y) - u(t,x,\alpha) )\mathcal{L}_{\theta,\bar{r}}[\varrho_\delta(x-\cdot)](y)\psi(t,x)\,d\alpha\,dx\,dt\,dy \Big] + C(\delta)\eps^{\frac{1}{2}} + C\xi\notag\\
&=: \sum_{i=1}^{6}\mathsf{E}_i + C(\delta)\eps^{\frac{1}{2}} + C\xi.
\end{align}
 \noindent{\bf Step III:~(passage to the limit as $\lim_{\eps\goto 0}\lim_{\xi\goto 0}$).}
 Recall that, $\big|\beta_\xi(r)- |r|\big| \le M_1\xi$.
Therefore, we have
\begin{align}\label{inq:E4}
    &\bigg|\, \mathsf{E}_4 - \mathbb{E} \Big [\int_{\mathbb{Q}_T}\int_{\mathbb{R}^d}\int_0^1|u_\eps(t,y)-u(t,x,\alpha)|\,\mathcal{L}_\theta^{\bar{r}}[\psi(t,\cdot)](x)\varrho_{\delta}(x-y)\,d\alpha \,dx\,dt\,dy\Big]\bigg|\notag\\
    \le \,&\mathbb{E} \Big [\int_{\mathbb{Q}_T}\int_{\mathbb{R}^d}\int_0^1\big|\beta(u_\eps(t,y)-u(t,x,\alpha)) - |u_\eps(t,y)-u(t,x,\alpha)|\big| \mathcal{L}_\theta^{\bar{r}}[\psi(t,\cdot)](x)\varrho_{\delta}(x-y)\,d\alpha \,dx\,dt\,dy\Big]\notag \\
    \le \, & C(\bar{r}){\xi}\,.
\end{align}
Thus, $$\underset{\xi \rightarrow 0}{lim}\,\mathsf{E}_4 = - \, \mathbb{E} \Big [\int_{\mathbb{Q}_T}\int_{\mathbb{R}^d}\int_0^1|u_\eps(t,y)-u(t,x,\alpha)|\,\mathcal{L}_\theta^{\bar{r}}[\psi(t,\cdot)](x)\varrho_{\delta}(x-y)\,d\alpha \,dx\,dt\,dy\Big].$$
Similarly, we get
\begin{align*}
&\underset{\xi \rightarrow 0}{lim}\, \mathsf{E}_1 = \, \mathbb{E} \Big[ \int_{\mathbb{R}^d } \int_{\mathbb{R}^d }|u_0(x)-u_\eps(0,y)| \varrho_\delta(x-y)\psi(0,x)\,dy\,dx \Big],\\ &\underset{\xi \rightarrow 0}{lim}\, \mathsf{E}_2 =  \mathbb{E}\Big[\int_{\mathbb{Q}_T}\int_{\mathbb{R}^d}\int_0^1|u(t,x,\alpha) - u_\eps(t,y)|\partial_t\psi(t,x)\varrho_{\delta}(x-y)\,d\alpha\,dx\,dt\,dy  \Big].
\end{align*}
Following \eqref{inq:E4} and using \eqref{fractionalbound}, we have
\begin{align*}
&\underset{\xi \rightarrow 0}{lim}\, \mathsf{E}_5 = -\,  \mathbb{E} \Big [\int_{\mathbb{Q}_T}\int_{\mathbb{R}^d}\int_0^1 |u(t,x,\alpha)- u_\eps(t,y)| \mathcal{L}_{\theta,\bar{r}}[\varrho_\delta(\cdot-y)\psi(t,\cdot)](x)\,d\alpha\,dx\,dt\,dy \Big], \\
&\underset{\xi \rightarrow 0}{lim}\, \mathsf{E}_6 =\,-\, \mathbb{E}\Big[\int_{\mathbb{Q}_T} \int_{\mathbb{R}^d}\int_0^1|(u_\eps(t,y) - u(t,x,\alpha) |\mathcal{L}_{\theta,\bar{r}}[\varrho_\delta(x-\cdot)](y)\psi(t,x)\,d\alpha\,dx\,dt\,dy \Big].
\end{align*}
Note that $\big|F^\beta(a,b) - F(a,b)\big| \le C\xi$ for any $a, b \in \R$. Thus, one has
\begin{align*}
    \underset{\xi \rightarrow 0}{lim}\, \mathsf{E}_3 = \mathbb{E} \Big [\int_{\mathbb{Q}_T} \int_{\mathbb{R}^d}\int_0^1F(u(t,x,\alpha), u_\eps(t,y))\varrho_\delta(x-y)\cdot\nabla_x\psi(t,x)\,d\alpha\,dx\,dt\,dy \Big].
\end{align*}
 Thanks to \cite[Section 3.4]{frac lin}, we can pass to the limit  $\eps \rightarrow 0$ in \eqref{eq:A1}. Thus in the view of above estimations, passing the limit as $\xi$ goes to 0 and subsequently sending $\eps \rightarrow 0$, we get
\begin{align}\label{eq:A2}
 0 \le \,&\,\mathbb{E} \Big[ \int_{\mathbb{R}^d } \int_{\mathbb{R}^d }|u_0(x)-u_0(y)| \varrho_\delta(x-y)\psi(0,x)\,dx\,dy \Big]\notag \\
+\, &\mathbb{E}\Big[\int_{\mathbb{Q}_T}\int_{\mathbb{R}^d}\int_0^1|u(t,x,\alpha) - \bar{u}(t,y)|\partial_t\psi(t,x)\varrho_{\delta}(x-y)\,d\alpha\,dx\,dt\,dy  \Big]\notag\\
+\,&\mathbb{E} \Big [\int_{\mathbb{Q}_T} \int_{\mathbb{R}^d}\int_0^1F((u(t,x,\alpha), \bar{u}(t,y))\varrho_\delta(x-y)\cdot\nabla_x\psi(t,x)\,d\alpha\,dx\,dt\,dy \Big]\notag\\
-\, &\mathbb{E} \Big [\int_{\mathbb{Q}_T}\int_{\mathbb{R}^d}\int_0^1|\bar{u}(t,y)-u(t,x,\alpha)|\mathcal{L}_\theta^{\bar{r}}[\psi(t,\cdot)](x)\varrho_{\delta}(x-y)\,d\alpha \,dx\,dt\,dy\Big]\notag \\
-\, & \mathbb{E} \Big [\int_{\mathbb{Q}_T}\int_{\mathbb{R}^d}\int_0^1|u(t,x,\alpha)- \bar{u}(t,y)| \mathcal{L}_{\theta,\bar{r}}[\varrho_\delta(\cdot-y)\psi(t,\cdot)](x)\,d\alpha\,dx\,dt\,dy \Big]\notag \\
-\, &\mathbb{E}\Big[\int_{\mathbb{Q}_T} \int_{\mathbb{R}^d}\int_0^1|\bar{u}(t,y) - u(t,x,\alpha) |\mathcal{L}_{\theta,\bar{r}}[\varrho_\delta(x-\cdot)](y)\psi(t,x)\,d\alpha\,dx\,dt\,dy \Big],
\end{align}
where $\bar{u}(t,y)$ is the unique entropy solution of the Cauchy problem \eqref{eq:fractional}. Moreover,  $u_\eps$ converges weakly to $\bar{u}$ in $L^2(\Omega \times \mathbb{Q}_T)$ and $u_{\eps}\goto \bar{u}$ in $L^p(\Omega \times(0,T)\times B(0,M))$ for any $M>0$ and any $1\le p<2$. 
\vspace{0.2cm}

\noindent{\bf Step IV:~(passage to the limit as $\bar{r}\goto 0$).} In view of \eqref{fractionalbound} and the fact that $\mathcal{L}_\theta[\Psi] := \mathcal{L}_{\theta,\bar{r}}[\Psi] + \mathcal{L}_\theta^{\bar{r}}[\Psi]$,  we have~(see also \cite[Step 2, Lemma $6$]{frac lin})
\begin{align}
     & \mathbb{E} \Big [\int_{\mathbb{Q}_T}\int_{\mathbb{R}^d}\int_0^1 |u(t,x,\alpha)- \bar{u}(t,y)| \mathcal{L}_{\theta,\bar{r}}[\varrho_\delta(\cdot-y)\psi(t,\cdot)](x)\,d\alpha\,dx\,dy \Big]\notag \\  &\hspace{1cm}+\,\mathbb{E}\Big[\int_{\mathbb{Q}_T} \int_{\mathbb{R}^d}\int_0^1|\bar{u}(t,y) - u(t,x,\alpha) |\mathcal{L}_{\theta,\bar{r}}[\varrho_\delta(x-\cdot)](y)\psi(t,x)\,d\alpha\,dx\,dt\,dy \Big] \, \underset{\bar{r} \rightarrow 0}{\longrightarrow} 0\,, \notag \\
    &\mathbb{E} \Big [\int_{\mathbb{Q}_T}\int_{\mathbb{R}^d}\int_0^1|\bar{u}(t,y)-u(t,x,\alpha)|\mathcal{L}_\theta^{\bar{r}}[\psi(t,\cdot)](x)\varrho_{\delta}(x-y)\,d\alpha
 \,dx\,dt\,dy\Big]\notag\\
 &\underset{\bar{r} \rightarrow 0}{\longrightarrow}\, \mathbb{E}\Big[\int_{\mathbb{Q}_T}\int_{\mathbb{R}^d}\int_0^1|\bar{u}(t,y)-u(t,x,\alpha)|\mathcal{L}_\theta[\psi(t,\cdot)](x)\varrho_{\delta}(x-y)\,d\alpha
 \,dx\,dt\,dy\Big].\notag
\end{align}
Substituting the above estimations in \eqref{eq:A2}, we get
\begin{align}\label{eq:A3}
 0 \le \, & \,\mathbb{E} \Big[ \int_{\mathbb{R}^d } \int_{\mathbb{R}^d }|u_0(x)-u_0(y)| \varrho_\delta(x-y)\psi(0,x)\,dy\,dx \Big]\notag\\
 +\,&\mathbb{E}\Big[\int_{\mathbb{Q}_T}\int_{\mathbb{R}^d}\int_0^1|u(t,x,\alpha) - \bar{u}(t,y)|\partial_t\psi(t,x)\varrho_{\delta}(x-y)\,d\alpha\,dx\,dt\,dy  \Big]\notag\\
+\,&\,\mathbb{E} \Big [\int_{\mathbb{Q}_T} \int_{\mathbb{R}^d}\int_0^1F(u(t,x,\alpha), \bar{u}(t,y))\varrho_\delta(x-y)\cdot\nabla_x\psi(t,x)\,d\alpha\,dx\,dt\,dy \Big]\notag\\
-\, &\mathbb{E} \Big [\int_{\mathbb{Q}_T}\int_{\mathbb{R}^d}\int_0^1|\bar{u}(t,y)-u(t,x,\alpha)|\mathcal{L}_\theta[\psi(t,\cdot)](x)\varrho_{\delta}(x-y)\,d\alpha \,dx\,dt\,dy\Big].
\end{align}
\noindent{\bf Step V:~(passage to the limit as $\delta \goto 0$).} Following the calculations  as invoked in \cite[Lemmas $1$, $3$ and Step $3$ of Lemma $6$]{frac lin}, one can pass to the limit as $\delta \goto 0$ in \eqref{eq:A3} to have the following Kato inequality:
\begin{align}\label{eq:kato}
    0 \le \, &\mathbb{E}\Big[\int_{\mathbb{Q}_T}\int_0^1|u(t,x,\alpha) - \bar{u}(t,x)|\partial_t\psi(t,x)\,d\alpha\,dt\,dx  \Big]\notag\\
+\,&\,\mathbb{E} \Big [\int_{\mathbb{Q}_T} \int_0^1F(u(t,x,\alpha), \bar{u}(t,x)) \cdot\nabla_x\psi(t,x)\,d\alpha\,dt\,dx \Big]\notag\\
-\, &\mathbb{E} \Big [\int_{\mathbb{Q}_T}\int_0^1|\bar{u}(t,x)-u(t,x,\alpha)|\mathcal{L}_\theta[\psi(t,\cdot)](x)\,d\alpha \,dt\,dx\Big].
\end{align}
At this point, one can closely follow the analysis as mentioned in \cite[Subsection $3.2$]{frac lin} to arrive at 
\begin{align}
    \mathbb{E}\Big[\int_{\mathbb{Q}_T}\int_0^1|u(t,x,\alpha) - \bar{u}(t,x)|\,d\alpha\,dt\,dx  \Big] = 0.\label{eq:final-conv-frac-lin}
\end{align}
\subsection{Proof of Theorem \ref{thm: mainthmf}}
Thanks to the uniqueness result  of Theorem \ref{thm: fractional}, we conclude from \eqref{eq:final-conv-frac-lin} that $u(t,x, \alpha)$, the Young measure-valued limit of  the approximate solutions $u_{\Delta t}(t,x)$, is an independent function of the variable $\alpha$ and equal to $\bar{u}(t,x)$ pointwise, the unique entropy solution of \eqref{eq:fractional}. Thus, $u_{\Delta t}$ converges weakly to $\bar{u}$ in $L^2(\Omega \times (0,T), \mathbb{R}^d)$. Moreover, since $u_{\Delta t}\in$ $L^\infty(\Omega \times \mathbb{Q}_T),$ by using the Carath\'eodory function $g(\omega, t, x; \nu)=\mathds{1}_{B}(x)|u(\omega, t,x) - \nu|^p$ in  Proposition \ref{prop:young-measure} for any compact set $B \subset \mathbb{R}^d$ and $1 \leq p < \infty$, one can easily conclude that $u_{\Delta t}(\cdot,\cdot)$ converges to $\bar{u}(\cdot,\cdot)$ in $L_{loc}^p(\mathbb{R}^d; L^p(\Omega \times (0,T)))$.  In other words, the approximate solutions $u_{\Delta t}(t, x)$ given by \eqref{approxi:solu} and \eqref{eq:sequence} converges to a unique entropy solution of \eqref{eq:fractional} in $L_{loc}^p(\mathbb{R}^d; L^p(\Omega \times (0,T)))$ for $1 \le p < \infty.$ This completes the proof of Theorem \ref{thm: mainthmf}. 


\section{Fractional degenerate Cauchy Problem: Proof of Theorem \ref{thm: mainthmd}}\label{sec:6}

In this section, we will analyze the convergence of  $u_{\Delta t}(t,x)$, given by \eqref{approxi:solu} and \eqref{eq:sequence-nonlinear}, to an entropy solution of the Cauchy problem \eqref{eq:fdegenerate}. Like in linear fractional Cauchy problem, let $u_\eps(\cdot)$ be the viscous solution of associated to \eqref{eq:fdegenerate}
with initial data $u_\eps(0,x) = u_0^\eps \in H^1(\mathbb{R}^d).$ Then thanks to \cite[Theorem 2.1]{frac non},  $u_\eps \in C([0,T]; L^2(\mathbb{R}^d))$ $\mathbb{P}$-a.s, and there exists a Constant $C > 0$ such that
\begin{align}\label{eq:estimations-fd}
    \underset{0 \le t \le T}{sup}\, \mathbb{E} \Big[||u_\eps(t)||_{L^2(\mathbb{R}^d)}^2\Big] +  \eps \int_0^T \mathbb{E} \Big[||\nabla u_\eps(s)||_{L^2(\mathbb{R}^d)}^2\Big]ds  + \int_0^T \mathbb{E} \Big[||\phi(u_\eps(s))||_{H^\theta(\mathbb{R}^d)}^2\Big]ds  \le C.
\end{align}
Observe that $u_\eps \in H^1(\mathbb{R}^d)$, while as in previous section for technical reason we need higher regularity of $u_\eps.$ Thus, we regularize $u_\eps$ by a space convolution with a molliffier-sequence $\{\tau_\kappa\}_\kappa$. Note that $u_\eps^\kappa = u_\eps \ast \tau_\kappa$ satisfies,
\begin{align}\label{eq:regularizefd}
     &\partial_t \Big[u_\eps - \int_0^t\sigma(u_\eps) \ast \tau_\kappa dW(r)\Big] = -\mathcal{L}_{\theta}[\phi(u_\eps) \ast \tau_\kappa] - div_x f(u_\eps) \ast \tau_\kappa + \eps\Delta (u_\eps^\kappa).
\end{align}

We choose $\varphi_{\delta,{\delta_0}}$  and $J_l$ as defined in Sections~\ref{sec:convergence u}. We apply It\^{o} formula to \eqref{eq:regularizefd}, multiply by $J_l(u_{\Delta t}(t,x) - k)$ and then integrate with respect to $t,x$ and $k$ and take expectation in the resulting expression to obtain the following inequality.
 \begin{align}\label{uepsdfrac}
         &0 \le \sum_{i=1}^3 \mathcal{I}_i + \sum_{i=1}^2 \mathcal{X}_i + \sum_{i=6}^7 \mathcal{I}_i +  \sum_{i=10}^{11} \mathcal{I}_i\,
         \end{align}
         where $\mathcal{I}_i$ are given in  \eqref{entropinquality0} and $\mathcal{X}_i ~(i=1,2)$ are given by
         \begin{align*}
         \mathcal{X}_1:=
         &-\mathbb{E} \Big[\int_{\mathbb{Q}_T^2} \int_{\mathbb{R}}\mathcal{L}_\theta^{\bar{r}}[\phi(u_\eps)^\kappa(s,\cdot))](y)\beta'(u_\eps^\kappa(s,y) - k) \varphi_{\delta_0, \delta}J_l(u_{\Delta t}(t,x) - k)\,dk\,ds\,dt\,dy\,dx \Big]\,,\notag \\
      \mathcal{X}_2:=   &- \mathbb{E} \Big[\int_{\mathbb{Q}_T^2} \int_{\mathbb{R}}\beta'(u_\eps^\kappa(s,y) - k) \mathcal{L}_{\theta,\bar{r}}[\phi(u_\eps)^\kappa(s,\cdot))](y)\varphi_{\delta_0, \delta} J_l(u_{\Delta t}(t,x) - k)\,dk\,ds\,dt\,dy\,dx \Big]\,.
    \end{align*}
    Similar to \eqref{EntropyIQUE2}, we obtain entropy inequality for approximate solution $u_{\Delta t}(t,x)$ of \eqref{eq:fdegenerate}  as 
\begin{align}
 &0 \le \sum_{i=1}^3 \mathcal{J}_i + \sum_{i=1}^2 \mathcal{Y}_i + \sum_{i=6}^{7} \mathcal{J}_i + \sum_{i=10}^{13} \mathcal{J}_i 
 +  \sum_{i=3}^6 \mathcal{Y}_i +  \sum_{i=18}^{19} \mathcal{J}_i\,, \label{udeldfrac}
 \end{align}
 where
 \begin{align*}
 \mathcal{Y}_1 &:= 
  -\mathbb{E} \Big [\int_{\mathbb{Q}_T^2}\int_\mathbb{R} \mathcal{L}_\theta^{\bar{r}}[\phi(u_{\Delta t}(t,\cdot))](x)\beta'(u_{\Delta t}(t,x) - k)\varphi_{\delta_0, \delta}J_l(u_\eps^\kappa(s,y) - k)\,dk\,ds\,dt\,dx\,dy \Big]\,, \notag\\
  \mathcal{Y}_2 &:=  - \mathbb{E} \Big [\int_{\mathbb{Q}_T^2}\int_\mathbb{R}\phi_k^\beta(u_{\Delta t}(t,x))\mathcal{L}_{\theta,\bar{r}}[\varphi_{\delta_0, \delta}(t,\cdot,s,\cdot)](x) J_l(u_\eps^\kappa(s,y) - k)\,dk\,ds\,dt\,dx\,dy \Big], \notag \\
   \mathcal{Y}_3 &:= -  \mathbb{E}\Big[\int_{\mathbb{Q}_T}\int_\mathbb{R}\sum_{n=0}^{N-1} \int_{\mathbb{R}^d} \int_{t_n}^{t_{n+1}}  \big[\mathcal{L}_\theta^{\bar{r}}[\phi(\widetilde{u}_{\Delta t}(t,\cdot))](x)\beta'(\widetilde{u}_{\Delta t}(t,x) - k)\notag \\ & \hspace{1cm} -\mathcal{L}_\theta^{\bar{r}}[\phi(u_{\Delta t}(t_{n+1},\cdot))](x)\beta'(u_{\Delta t}(t_{n+1},x) -k) ] \varphi_{\delta_0, \delta}(t,x,s,y) J_l(u_\eps^\kappa(s,y) - k)\,dt\,dx\,dk\,ds\,dy \Big],\notag \\
  \mathcal{Y}_4 &:=  -  \mathbb{E}\Big[\int_{\mathbb{Q}_T}\int_\mathbb{R}\sum_{n=0}^{N-1} \int_{\mathbb{R}^d} \int_{t_n}^{t_{n+1}}  \big[\mathcal{L}_\theta^{\bar{r}}[\phi(u_{\Delta t}(t_{n+1},\cdot))](x)\beta'(u_{\Delta t}(t_{n+1},x) - k)\notag \\ & \hspace{2cm} -\mathcal{L}_\theta^{\bar{r}}[\phi(u_{\Delta t}(t,\cdot))](x)\beta'(u_{\Delta t}(t,x) - k) ] \varphi_{\delta_0, \delta}(t,x,s,y) J_l(u_\eps^\kappa(s,y) - k)\,dt\,dx\,dk\,ds\,dy \Big],
  \notag\\ 
   \mathcal{Y}_5 &:= - \mathbb{E}\Big[\int_{\mathbb{Q}_T}\int_\mathbb{R}\sum_{n=0}^{N-1} \int_{\mathbb{R}^d} \int_{t_n}^{t_{n+1}} \big(\phi_k^\beta(\widetilde{u}_{\Delta t}(t,x)) - \phi_k^\beta(u_{\Delta t}(t_{n+1},x)) \big) \mathcal{L}_{\theta,\bar{r}}[\varphi_{\delta_0, \delta}(t,\cdot,s,y)](x)\notag\notag \\& \hspace{6cm} \times J_l(u_\eps^\kappa(s,y) - k)\,dt\,dx\,dk\,ds\,dy \Big],\notag\\ 
   \mathcal{Y}_6&:= - \mathbb{E}\Big[\int_{\mathbb{Q}_T}\int_\mathbb{R} \sum_{n=0}^{N-1} \int_{\mathbb{R}^d} \int_{t_n}^{t_{n+1}} \big(\phi_k^\beta(u_{\Delta t}(t_{n+1},x)) - \phi_k^\beta(u_{\Delta t}(t,x)) \big) \mathcal{L}_{\theta,\bar{r}}[\varphi_{\delta_0, \delta}(t,\cdot,s,y)](x)\notag \\& \hspace{6cm} \times J_l(u_\eps^\kappa(s,y) - k)\,dt\,dx\,dk\,ds\,dy \Big]\,,
  \end{align*}
  and $\mathcal{J}_i$ are given in  \eqref{EntropyIQUE2}. 
Next, we want to approximate each of the terms in \eqref{uepsdfrac}-\eqref{udeldfrac} in terms of small parameters $\delta, \delta_0, l, \xi, \eps, \kappa$ and $\Delta t$ to achieve our main result. Let us start with the non-local fractional terms. We rearrange  $\mathcal{X}_1$ as follows.
\begin{align}
       \mathcal{X}_1 = &- \mathbb{E} \Big[\int_{\mathbb{Q}_T^2} \int_{\mathbb{R}}\Big(\mathcal{L}_\theta^{\bar{r}}[\phi(u_\eps)^\kappa(s,\cdot))](y)-\mathcal{L}_\theta^{\bar{r}}[\phi(u_\eps)(s,\cdot))](y)\Big)\beta'(u_\eps^\kappa(s,y) - u_{\Delta t}(t,x) + k)\notag \\ &\hspace{8cm}\times\varphi_{\delta_0, \delta}J_l(k)\,dk\,ds\,dt\,dy\,dx \Big]\notag\\
       &- \mathbb{E} \Big[\int_{\mathbb{Q}_T^2} \int_{\mathbb{R}}\Big(\mathcal{L}_\theta^{\bar{r}}[\phi(u_\eps)(s,\cdot))](y)-\mathcal{L}_\theta^{\bar{r}}[\phi(u_\eps^\kappa)(s,\cdot)](y)\Big)\beta'(u_\eps^\kappa(s,y) - u_{\Delta t}(t,x) + k)\notag \\ &\hspace{8cm}\times\varphi_{\delta_0, \delta}J_l(k)\,dk\,ds\,dt\,dy\,dx \Big]\notag\\
       &- \mathbb{E} \Big[\int_{\mathbb{Q}_T^2} \int_{\mathbb{R}}\Big(\mathcal{L}_\theta^{\bar{r}}[\phi(u_\eps^\kappa)(s,\cdot))](y)-\mathcal{L}_\theta^{\bar{r}}[\phi(u_\eps^\kappa)(t,\cdot)](y)\Big)\beta'(u_\eps^\kappa(s,y) - u_{\Delta t}(t,x) + k)\notag \\ &\hspace{8cm}\times\varphi_{\delta_0, \delta}J_l(k)\,dk\,ds\,dt\,dy\,dx \Big]\notag\\
       &-\mathbb{E} \Big[\int_{\mathbb{Q}_T^2} \int_{\mathbb{R}}\mathcal{L}_\theta^{\bar{r}}[\phi(u_\eps^\kappa)(t,\cdot)](y)\Big(\beta'(u_\eps^\kappa(s,y) - u_{\Delta t}(t,x) + k)) - \beta'(u_\eps^\kappa(t,y) - u_{\Delta t}(t,x) + k))\Big)\notag\\&\hspace{7cm}\times \varphi_{\delta_0, \delta}J_l(k)\,dk\,ds\,dt\,dy\,dx \Big]\notag\\
       &-\mathbb{E} \Big[\int_{\mathbb{Q}_T}\int_{\mathbb{R}^d} \int_{\mathbb{R}}\mathcal{L}_\theta^{\bar{r}}[\phi(u_\eps^\kappa)(t,\cdot)](y)\beta'(u_\eps^\kappa(t,y) - u_{\Delta t}(t,x) + k)\notag\\& \hspace{5cm} \times \Big(\int_0^T\rho_{\delta_0}(t-s)ds -1\Big) \varrho_{ \delta}(x-y)\psi(t,x)J_l(k)\,dk\,dx\,dt\,dy \Big]\notag\\
       &-\mathbb{E} \Big[\int_{\mathbb{Q}_T}\int_{\mathbb{R}^d} \int_{\mathbb{R}}\mathcal{L}_\theta^{\bar{r}}[\phi(u_\eps^\kappa(t,\cdot))](y)\Big(\beta'(u_\eps)^\kappa(t,y)- u_{\Delta t}(t,x) + k) - \beta'(u_\eps^\kappa(t,y) - u_{\Delta t}(t,x))\Big)\notag\\& \hspace{7cm}\times\varrho_{ \delta}(x-y)\psi(t,x)J_l(k)\,dk\,dx\,dt\,dy \Big]\notag\\
       &-\mathbb{E} \Big[\int_{\mathbb{Q}_T}\int_{\mathbb{R}^d} \mathcal{L}_\theta^{\bar{r}}[\phi(u_\eps^\kappa)(t,\cdot)](y)\beta'(u_\eps^\kappa(t,y) - u_{\Delta t}(t,x))\varrho_{ \delta}(x-y)\psi(t,x)\,dx\,dt\,dy \Big]\notag\\
       &=: \mathcal{X}_1^1 +  \mathcal{X}_1^2 + \mathcal{X}_1^3 +  \mathcal{X}_1^4 + \mathcal{X}_1^5 + \mathcal{X}_1^6 + \mathcal{X}_1^7.        \notag
       \end{align}
    Using the triangle inequality and \eqref{inq:bound-frac-z}, we bound $\mathcal{X}_1^1$ as follows.
    \begin{align}
        \mathcal{X}_1^1 = &- \,\mathbb{E} \Big[\int_{\mathbb{Q}_T^2} \int_{\mathbb{R}}\big(\phi(u_\eps)^\kappa(s,y)-\phi(u_\eps)(s,y)\big)\mathcal{L}_\theta^{\bar{r}}[\beta'(u_\eps^\kappa(s,.) - u_{\Delta t}(t,x) + k)\varrho_{\delta}(x-\cdot)](y)\notag \\ &\hspace{8cm}\times\rho_{\delta_0}(t-s)\psi(t,x)J_l(k)\,dk\,ds\,dt\,dy\,dx \Big]\notag\\
        \le \,&\, C(\beta', \psi)\mathbb{E} \Big[\int_0^T\int_{K_y}\int_{K_x} \big|\phi(u_\eps)^\kappa(s,y)-\phi(u_\eps)(s,y)\big|\Big(\int_{|z| > \bar{r}}\frac{\varrho_\delta(x-y) + \varrho_{\delta}(x-(y-z))}{|z|^{d+2\theta}}\,dz\Big)\,dx\,dy\,ds \Big]\notag\\
        \le \,&\,C(\beta', \psi, \delta, \bar{r})\mathbb{E} \Big[\int_0^T\int_{K_y} \big|\phi(u_\eps)^\kappa(s,y)-\phi(u_\eps)(s,y)\big|\,dy\,ds\,\Big].\notag
    \end{align}
    Similarly, since $\phi$ is Lipschitz continuous, we get
    \begin{align*}
        \mathcal{X}_1^2 \le C(\delta, \bar{r})\mathbb{E} \Big[\int_0^T\int_{K_y} \big|u_\eps^\kappa(s,y)-u_\eps(s,y)\big|\,ds\,dy\,\Big].
    \end{align*}
    We use the assumption \ref{A2}, triangle inequality and estimation \eqref{inq:bound-frac-z}, and Remark \ref{rem:time-cont-degenerate-vis} to approximate $\mathcal{X}_1^3$.
    \begin{align}
        \mathcal{X}_1^3 = &-\mathbb{E} \Big[\int_{\mathbb{Q}_T^2} \int_{\mathbb{R}}\Big(\phi(u_\eps^\kappa)(s,y)-\phi(u_\eps^\kappa)(t,y)\Big)\mathcal{L}_\theta^{\bar{r}}[\beta'(u_\eps^\kappa(s,\cdot) - u_{\Delta t}(t,x) + k)\varrho_\delta(x-\cdot)](y)\notag\\&\hspace{6cm }\times\rho_{\delta_0}(t-s)\psi(t,x)J_l(k)\,dk\,ds\,dt\,dy\,dx \Big]\notag\\
        \le \,&C(\beta',||\phi'||_{L^\infty} , |K_x|)\,\mathbb{E} \Big[\int_0^T\int_0^T\int_{K_y} \,|u_\eps^\kappa(s,y)-u_\eps^\kappa(t,y)| \rho_{\delta_0}(t-s) \notag \\
        & \hspace{4cm } \times \Big(\int_{|z| \ge \bar{r}}\frac{\varrho_\delta(x-y)+\varrho_\delta(x-(y-z))}{|z|^{d+2\theta}}\,dz\Big)\,dy\,dt\,ds\Big]\notag\\
        \le \,&C(\beta',||\phi'||_{L^\infty} , \delta,\bar{r})\,\mathbb{E}\Big[\int_0^T\int_0^T\int_{K_y}\,|u_\eps^\kappa(s,y)-u_\eps^\kappa(t,y)| \rho_{\delta_0}(t-s)\,dy\,dt\,ds\Big] \notag \\
         \le\,&\, C(\delta,\bar{r}, \kappa, \eps)\sqrt{\delta_0}.\notag
    \end{align}
Using the assumption \ref{A3}, Cauchy-Schwartz inequality, estimations  \eqref{inq:bound-frac-z}, \eqref{eq:estimations-fd} and Remark \ref{rem:time-cont-degenerate-vis}, we get
    \begin{align}
       \mathcal{X}_1^4  
       \le \, &C(\xi)\,\mathbb{E} \Big[\int_0^T\int_0^T\int_{K_y} |\mathcal{L}_\theta^{\bar{r}}[\phi(u_\eps)^\kappa(t,\cdot)](y)|\,|u_\eps^\kappa(s,y)-u_\eps^\kappa(t,y)|\rho_{\delta_0}(t-s)\,dy\,dt\,ds \Big]\notag\\
       \le &C(\xi)\,\mathbb{E} \Big[\int_0^T\int_0^T\int_{K_y} |u_\eps^\kappa(t,y)|\,|u_\eps^\kappa(s,y)-u_\eps^\kappa(t,y)|\rho_{\delta_0}(t-s)\Big(\int_{|z| > \bar{r}} \frac{1}{|z|^{d+2\theta}}\,dz\Big)\,dy\,dt\,ds \Big]\notag\\
       +&\,C(\xi)\mathbb{E} \Big[\int_0^T\int_0^T\int_{K_y} ||u_\eps^\kappa(t,\cdot)||_{L^2(\mathbb{R}^d)}\,|u_\eps^\kappa(s,y)-u_\eps^\kappa(t,y)|\rho_{\delta_0}(t-s)\Big(\int_{|z|> \bar{r}}\Big(\frac{1}{|z|^{1+2\theta}}\Big)^2dz\Big)^{1/2}\,dy\,dt\,ds \Big]\notag\\
       \le &C(\xi,\psi, \bar{r})\Big(\underset{t}{sup}\,\mathbb{E}\Big[||u_\eps(t,\cdot)||_{L^2(\R^d)}^2\Big]\Big)^{\frac{1}{2}}\Big(\mathbb{E}\Big[\int_0^T\int_{K_y\times [0,T]} |u_\eps^\kappa(s,y)-u_\eps^\kappa(t,y)|^2\rho_{\delta_0}(t-s)\,dy\,dt\,ds \Big]\Big)^{1/2}\notag\\
       + &\,C(\xi, \psi, \bar{r}, |K_x|)\Big(\underset{t}{sup}\,\mathbb{E}\Big[||u_\eps(t,\cdot)||_{L^2(\mathbb{R}^d)}^2\Big]\Big)^{\frac{1}{2}}\Big(\mathbb{E}\Big[\int_0^T\int_{K_y\times [0,T]} |u_\eps^\kappa(s,y)-u_\eps^\kappa(t,y)|^2\rho_{\delta_0}(t-s)\,dy\,dt\,ds \Big]\Big)^{\frac{1}{2}}\notag \\
       \le \,&C(\xi,\bar{r}, \eps, \kappa)\sqrt{\delta_0}.\notag
    \end{align}
 A simple calculation reveals that
     \begin{align}
    &\mathcal{X}_1^5 \le  \,C(\beta', \psi)\mathbb{E} \Big[\int_0^T\int_{K_y} |\mathcal{L}_\theta^{\bar{r}}[\phi(u_\eps^\kappa)(t,\cdot)](y)|\Big|\int_0^T\rho_{\delta_0}(t-s)ds -1\Big| \,dy\,dt\Big]\notag \\
    &\le \,C(\beta', \psi, ||\phi'||_{L^\infty})\,\mathbb{E} \Big[\int_{t= T-\delta_0}^T\int_{K_y}\Big(\int_{|z| > \bar{r}} \frac{|u_\eps^\kappa(t,x) - u_\eps^\kappa(t,x+z)|}{|z|^{d+2\theta}}\,dz\Big)\,dy\,dt \Big]\notag \le C(\bar{r}, \kappa, \eps) \sqrt{\delta_0}.
    \end{align}
    Similarly, it is easy to observe that
    $$\mathcal{X}_1^6 \le C(\xi, \bar{r})l.$$
    We rearrange $\mathcal{Y}_1$ as follows. 
\begin{align}
         \mathcal{Y}_1 =&- \mathbb{E} \Big [\int_{\mathbb{Q}_T^2}\int_\mathbb{R} \mathcal{L}_\theta^{\bar{r}}[\phi(u_{\Delta t}(t,\cdot))](x)\Big(\beta'(u_{\Delta t}(t,x) - u_\eps^\kappa(s,y) + k)- \beta'(u_{\Delta t}(t,x) - u_\eps^\kappa(t,y) + k)\Big)\notag\\ &\hspace{7cm} \times \varphi_{\delta_0, \delta}J_l(k)\,dk\,ds\,dt\,dx\,dy \Big]\notag\\
         &- \mathbb{E} \Big [\int_{\mathbb{Q}_T}\int_{\mathbb{R}^d}\int_\mathbb{R} \mathcal{L}_\theta^{\bar{r}}[\phi(u_{\Delta t}(t,\cdot))](x)\beta'(u_{\Delta t}(t,x) - u_\eps^\kappa(t,y) + k)\notag\\ &\hspace{5cm} \times \Big(\int_0^T\rho_{\delta_0}(t-s)ds -1 \Big) \varphi_{\delta}(x-y)\psi(t,x)J_l(k)\,dk\,dx\,dt\,dy \Big]\notag\\
         & -\mathbb{E} \Big [\int_{\mathbb{Q}_T}\int_{\mathbb{R}^d}\int_\mathbb{R} \mathcal{L}_\theta^{\bar{r}}[\phi(u_{\Delta t}(t,\cdot))](x)\Big(\beta'(u_{\Delta t}(t,x) - u_\eps^\kappa(t,y) +k) - \beta'(u_{\Delta t}(t,x) - u_\eps^\kappa(t,y) )\Big)\notag\\ &\hspace{7cm} \times  \varphi_{\delta}(x-y)\psi(t,x)J_l(k)dk\,dx\,dt\,dy \Big]\notag\\
         &-\mathbb{E} \Big [\int_{\mathbb{Q}_T}\int_{\mathbb{R}^d} \mathcal{L}_\theta^{\bar{r}}[\phi(u_{\Delta t}(t,\cdot))](x)  \beta'(u_{\Delta t}(t,x) - u_\eps^\kappa(t,y)) \varphi_{\delta}(x-y)\psi(t,x)\,dx\,dt\,dy \Big]\notag\\
         &=: \mathcal{Y}_1^1 + \mathcal{Y}_1^2 + \mathcal{Y}_1^3+ \mathcal{Y}_1^4\,.
         \notag
\end{align}
We use Lemmas \ref{lem:l-infinity bound-approximate-solution}, Remark \ref{rem:time-cont-degenerate-vis} and estimation  \eqref{inq:bound-frac-z} to bound $\mathcal{Y}_1^1$. 
\begin{align}
    \mathcal{Y}_1^1 \le & \,C(\xi, ||\phi'||_{L^\infty}, \psi)\,\mathbb{E} \Big [\int_0^T\int_0^T\int_{K_y}||u_{\Delta t}(t,\cdot)||_{L^\infty(\mathbb{R}^d)}\,|u_\eps^\kappa(t,y) - u_\eps^\kappa(s,y)| \notag \\
    & \hspace{3cm} \times \,\rho_{\delta_0}(t-s)\Big|\int_{|z| > r}\frac{1}{|z|^{d+2\theta}}dz\Big|\,dy\,dt\,ds \Big]\notag\\
    \le & \,C(\xi,\bar{r})\,\mathbb{E} \Big [\int_0^T\int_0^T\int_{K_y}|u_\eps^\kappa(t,y) - u_\eps^\kappa(s,y)|\,\rho_{\delta_0}(t-s)\,dy\,dt\,ds \Big] \le C(\xi, \bar{r}, \eps, \kappa)\sqrt{\delta_0}.\notag
\end{align}
Similarly using Lemma \ref{lem:l-infinity bound-approximate-solution} and \eqref{inq:bound-frac-z}, we get 
$$\mathcal{Y}_1^2 \le C(\bar{r})\delta_0, \hspace{.5cm}\text{and}\hspace{.5cm} \mathcal{Y}_1^3 \le C(\xi, \bar{r})l.$$
Since $\beta'$ is odd function, we combine $\mathcal{X}_1^7$ and $\mathcal{Y}_1^4$ to get
\begin{align}
    \mathcal{X}_1^7 + \mathcal{Y}_1^4
    =&\,  \mathbb{E} \Big [\int_{\mathbb{Q}_T}\int_{\mathbb{R}^d} \Big[\int_{|z|>\bar{r}}\frac{\phi(u_\eps^\kappa)(t,y+z)-\phi(u_\eps^\kappa)(t,y)}{|z|^{d+2\theta}}dz-\int_{|z|>\bar{r}}\frac{\phi(u_{\Delta t}(t,x+z))-\phi(u_{\Delta t}(t,x))}{|z|^{d+2\theta}}dz\Big]\notag\\&\hspace{4cm}\times\beta'( u_\eps^\kappa(t,y)-u_{\Delta t}(t,x)) \varrho_{\delta}(x-y)\psi(t,x)\,dx\,dt\,dy\Big]\notag\\
     =&\, \mathbb{E} \Big [\int_{\mathbb{Q}_T}\int_{\mathbb{R}^d} \Big[\int_{|z|>\bar{r}}\frac{\phi(u_\eps^\kappa)(t,y+z)-\phi(u_{\Delta t}(t,x+z))}{|z|^{d+2\theta}}dz-\int_{|z|>\bar{r}}\frac{\phi(u_\eps^\kappa)(t,y)-\phi(u_{\Delta t}(t,x))}{|z|^{d+2\theta}}dz\Big]\notag\\&\hspace{4cm}\times\beta'(u_\eps^\kappa(t,y)-u_{\Delta t}(t,x)) \varrho_{\delta}(x-y)\psi(t,x)\,dx\,dt\,dy\Big]\notag\\
    \le &\,\mathbb{E} \Big [\int_{\mathbb{Q}_T}\int_{\mathbb{R}^d} \Big[\int_{|z|>\bar{r}}\frac{|\phi(u_\eps^\kappa)(t,y+z)-\phi(u_{\Delta t}(t,x+z))|}{|z|^{d+2\theta}}dz-\int_{|z|>\bar{r}}\frac{|\phi(u_\eps^\kappa)(t,y)-\phi(u_{\Delta t}(t,x))|}{|z|^{d+2\theta}}dz\Big]\notag\\&\hspace{6cm}\times \varrho_{\delta}(x-y)\psi(t,x)\,dx\,dt\,dy\Big]\notag\\
    +\, &\mathbb{E} \Big[\int_{\mathbb{Q}_T}\int_{\mathbb{R}^d} \big(sign(u_\eps^\kappa(t,y) - u_{\Delta t}(t,x)) -\beta'(u_\eps^\kappa(t,y) - u_{\Delta t}(t,x))\big)\notag \\ &\hspace{3cm} \times \Big[\int_{|z|>\bar{r}}\frac{\phi(u_\eps^\kappa)(t,y)-\phi(u_{\Delta t}(t,x))}{|z|^{d+2\theta}}dz\Big]\varrho_{\delta}(x-y)\psi(t,x)\,dx\,dt\,dy\Big]\notag\\
    =\,& -\,\mathbb{E} \Big [\int_{\mathbb{Q}_T}\int_{\mathbb{R}^d}|\phi(u_\eps^\kappa)(t,y)-\phi(u_{\Delta t}(t,x)|\mathcal{L}_\theta^{\bar{r}}[\psi(t,\cdot)](x)\varrho_{\delta}(x-y)\,dx\,dt\,dy\Big]\notag\\
    + \, &\mathbb{E} \Big[\int_{\mathbb{Q}_T}\int_{\mathbb{R}^d} \big(sign(u_\eps^\kappa(t,y) - u_{\Delta t}(t,x)) -\beta'(u_\eps^\kappa(t,y) - u_{\Delta t}(t,x))\big)\notag \\ &\hspace{3cm} \times \Big[\int_{|z|>\bar{r}}\frac{\phi(u_\eps^\kappa)(t,y)-\phi(u_{\Delta t}(t,x))}{|z|^{d+2\theta}}dz\Big]\varrho_{\delta}(x-y)\psi(t,x)\,dx\,dt\,dy\Big] \notag \\
    & =: -\,\mathbb{E} \Big [\int_{\mathbb{Q}_T}\int_{\mathbb{R}^d}|\phi(u_\eps^\kappa)(t,y)-\phi(u_{\Delta t}(t,x)|\mathcal{L}_\theta^{\bar{r}}[\psi(t,\cdot)](x)\varrho_{\delta}(x-y)\,dx\,dt\,dy\Big] + \mathcal{C}\,. \notag
\end{align}
In the above ~(the penultimate inequality), we have used the fact that $\phi(a) - \phi(b)$ and $sign(a-b)$ have the same sign ~(as $\phi$ is non-decreasing) and  a change of coordinates for the first integral $x \mapsto x + z$, $y \mapsto y + z$, $z \mapsto -z.$ 

Thanks to the properties of $\beta_\xi^\prime$ and \eqref{inq:bound-frac-z}, one has
\begin{align}
  \mathcal{C} \le \,&C(\bar{r}, \psi)\,\mathbb{E} \Big[\int_{\mathbb{Q}_T}\int_{\mathbb{R}^d} \big|sign(u_\eps^\kappa(t,y) - u_{\Delta t}(t,x)) -\beta'(u_\eps^\kappa(t,y) - u_{\Delta t}(t,x))\big|\notag\\ &\hspace{4cm}\times|u_\eps^\kappa(t,y)-u_{\Delta t}(t,x)|\varrho_{\delta}(x-y)\,dx\,dt\,dy\Big]\notag \\
   \le \, &C(\bar{r}, \psi)\,\mathbb{E} \Big[\int_{\mathbb{Q}_T}\int_{\mathbb{R}^d} \big|sign(u_\eps^\kappa(t,y) - u_{\Delta t}(t,x)) -\beta'(u_\eps^\kappa(t,y) - u_{\Delta t}(t,x))\big|\textbf{1}_{\{|u_\eps^\kappa(t,y) - u_{\Delta t}(t,x)| < \xi\}}\notag\\ &\hspace{6cm}\times|u_\eps^\kappa(t,y)-u_{\Delta t}(t,x)|\varrho_{\delta}(x-y)\,dx\,dt\,dy\Big]\notag\\
    \le \,&C(\bar{r},\psi)\,\mathbb{E} \Big[\int_{\mathbb{Q}_T}\int_{K_x} 2\xi\varrho_{\delta}(x-y)\,dx\,dt\,dy\Big]\notag \le C(\bar{r})\xi.\notag
\end{align}
Next we move our focus on the term $\mathcal{X}_2$ which can be re-written as,
 \begin{align}
        \mathcal{X}_2 
        =\,&-\mathbb{E} \Big[\int_{\mathbb{Q}_T^2} \int_{\mathbb{R}}\beta'(u_\eps^\kappa(s,y) - u_{\Delta t}(t,x) +k) \Big(\mathcal{L}_{\theta,\bar{r}}[\phi(u_\eps)^\kappa(s,\cdot)](y)- \mathcal{L}_{\theta,\bar{r}}[\phi(u_\eps)^\kappa(t,\cdot)](y)\Big)\notag\\&\hspace{8cm}\times \varphi_{\delta_0, \delta}J_l(k)\,dk\,ds\,dt\,dy\,dx \Big]\notag
         \\
        &- \mathbb{E} \Big[\int_{\mathbb{Q}_T^2} \int_{\mathbb{R}}\big(\beta'(u_\eps^\kappa(s,y) - u_{\Delta t}(t,x) +k )-\beta'(u_\eps^\kappa(t,y) - u_{\Delta t}(t,x) +k ) \big)\notag\\&\hspace{2cm}\times \mathcal{L}_{\theta,\bar{r}}[\phi(u_\eps)^\kappa(t,\cdot))](y)\varphi_{\delta_0, \delta}(t,x,s,y) J_l(k)\,dk\,ds\,dt\,dy\,dx \Big]\notag \\
        &-\mathbb{E}\Big[\int_{\mathbb{Q}_T^2} \int_{\mathbb{R}}\beta'(u_\eps^\kappa(t,y) - u_{\Delta t}(t,x) +k)\big[\mathcal{L}_{\theta,\bar{r}}[\phi(u_\eps)^\kappa(s,\cdot) - \phi(u_\eps^\kappa)(t,\cdot)](y) \big]\notag\\ & \hspace{5cm}\times \varphi_{\delta_0, \delta}(t,x,s,y)J_l(k)\,dk\,dt\,ds\,dy\,dx \Big]\notag\\
        &-\mathbb{E}\Big[\int_{\mathbb{Q}_T}\int_{\mathbb{R}^d} \int_{\mathbb{R}}\beta'(u_\eps^\kappa(t,y) - u_{\Delta t}(t,x) +k)\mathcal{L}_{\theta,\bar{r}}[\phi(u_\eps)^\kappa(t,\cdot)](y)\Big(\int_0^T\rho_{\delta_0}(t-s)\,ds -1\Big)\notag\\ & \hspace{5cm}\times \varrho_\delta(x-y)\psi(t,x)J_l(k)\,dk\,dx\,dt\,dy \Big]\notag\\
        & -\mathbb{E} \Big[\int_{\mathbb{Q}_T}\int_{\mathbb{R}^d} \int_{\mathbb{R}}\Big(\beta'((u_\eps^\kappa(t,y) - u_{\Delta t}(t,x) + k ) - \beta'(u_\eps^\kappa(t,y) - u_{\Delta t}(t,x))\Big)\mathcal{L}_{\theta,\bar{r}}[\phi(u_\eps)^\kappa(t,\cdot)](y)\notag\\& \hspace{5cm} \times\varrho_\delta(x-y)\psi(t,x)J_l(k)\,dk\,dx\,dt\,dy \Big]\notag\\
        &-\mathbb{E}\Big[\int_{\mathbb{Q}_T} \int_{\mathbb{R}^d}\beta'(u_\eps^\kappa(t,y) - u_{\Delta t}(t,x) )\mathcal{L}_{\theta,\bar{r}}[\phi(u_\eps)^\kappa(t,\cdot)](y)\varrho_\delta(x-y)\psi(t,x)\,dx\,dt\,dy \Big]\notag\\
        &=:\mathcal{X}_2^1 +\mathcal{X}_2^2 +\mathcal{X}_2^3 + \mathcal{X}_2^4 \,.\notag
        \end{align}
Using properties of convolution it is easy to observe that, 
\begin{align}\label{eq:betadelta}
& |D^2(\beta'(u_\eps^\kappa(s,y) - u_{\Delta t}(t,x) +k)\varrho_\delta(x-y)| \notag \\
& = \Big|\beta^{\prime\prime\prime}(u_\eps^\kappa(s,y)-\cdot)|\nabla_xu_\eps^\kappa(s,y)|\varrho_\delta(x-y)+ \beta''(u_\eps^\kappa(s,y)-\cdot)|\Delta_x u_\eps^\kappa(s,y)|\varrho_\delta(x-y) \notag \\
& \quad + \beta''(u_\eps^\kappa(s,y)-\cdot)|\Delta_x u_\eps^\kappa(s,y)|\nabla_x\varrho_\delta(x-y) + \beta'(u_\eps^\kappa(s,y) - .)\Delta_x\varrho_\delta(x-y)\Big| \notag \\
&  \le C(\xi, \delta, \kappa)||u_\eps(s,\cdot)||_{L^2(\R^d)}\notag.
\end{align}
Hence, by \eqref{fractionalbound}
\begin{align}
&|\mathcal{L}_{\theta,\bar{r}}[\beta'(u_\eps^\kappa(s,\cdot) - u_{\Delta t}(t,x) +k)\varrho_\delta(x-\cdot)](y)|\notag \\
& \le C||D^2(\beta'(u_\eps^\kappa(s,\cdot) - u_{\Delta t}(t,x) +k)\varrho_\delta(x-y)||_{L^\infty}\bar{r}^a  \le C(\xi, \delta, \kappa)||u_\eps(s,\cdot)||_{L^2(\R^d)}.
\end{align}
We use \eqref{eq:betadelta}, Cauchy-Schwartz inequality, properties of Convolution and \eqref{eq:estimations-fd} to bound $\mathcal{X}_2^1$ as
\begin{align}
    \mathcal{X}_2^1 = &-\mathbb{E}\Big[\int_{\mathbb{Q}_T^2} \int_{\mathbb{R}}\big(\phi(u_\eps)^\kappa(s,y) - \phi(u_\eps)^\kappa(t,y)\big)\mathcal{L}_{\theta,\bar{r}}[\beta'(u_\eps^\kappa(s,\cdot) - u_{\Delta t}(t,x) +k)\varrho_{\delta}(x-\cdot)](y)\notag\\ & \hspace{3cm}\times\rho_{\delta_0}(t-s)\psi(t,x)J_l(k)\,dk\,dt\,ds\,dy\,dx \Big]\notag \\
    \le \,& C(\xi,\delta,\kappa, \psi)\mathbb{E}\Big[\int_0^T\int_0^T\int_{K_y} ||u_\eps(s)||_{L^2(\R^d)}\,|\phi(u_\eps)^\kappa(s,y) - \phi(u_\eps)^\kappa(t,y)|\rho_{\delta_0}(t-s)\,dy\,ds\,dt \Big]\notag\\
    \le \,& C(\xi,\delta,\kappa, \psi, |K_y|)\Big(\underset{0 \le s \le T}{sup}\mathbb{E}\Big[||u_\eps(s)||_{L^2(\R^d)}^2\Big]\Big)^{1/2}\notag \\&\hspace{2cm}\times\Big(\mathbb{E}\Big[\int_0^T\int_0^T\int_{K_y} \,\phi(u_\eps)^\kappa(s,y) - \phi(u_\eps)^\kappa(t,y)|^2\rho_{\delta_0}(t-s)\,dy\,ds\,dt \Big]\Big)^{\frac{1}{2}}\notag\\
    \le \, & C(\xi,\delta,\kappa)\Big(\mathbb{E}\Big[\int_0^T\int_0^T\int_{K_y} \,|| u_\eps(s,y) - u_\eps(t,y)||_{L^2(\R^d)}^2\rho_{\delta_0}(t-s)\,dy\,ds\,dt \Big]\Big)^{\frac{1}{2}}.\notag
\end{align}
Applying \eqref{fractionalbound}, Cauchy-Schwartz inequality, properties of convolution and \eqref{eq:estimations-fd},
we compute $\mathcal{X}_2^2$.
\begin{align}
    \mathcal{X}_2^2 \le\,& C(\beta'')\,\mathbb{E} \Big[\int_0^T\int_0^T\int_{K_y} |u_\eps^\kappa(s,y) - u_\eps^\kappa(t,y)| |\mathcal{L}_{\theta,\bar{r}}[\phi(u_\eps)^\kappa(t,\cdot))](y)|\rho_{\delta_0}(t-s)\,dy\,ds\,dt \Big]\notag \\
    \le \,& C\,(\beta'', \psi, \bar{r}^a)\,\mathbb{E} \Big[\int_0^T\int_0^T\int_{K_y}|u_\eps^\kappa(s,y) - u_\eps^\kappa(t,y)|\, ||\phi(u_\eps)(t,\cdot)\ast D^2\tau_\kappa||_{L^\infty}\rho_{\delta_0}(t-s)\,dy\,ds\,dt \Big]\notag \\
   \le &\, C(\beta'',\psi, \bar{r}^a)\,\Big(\mathbb{E} \Big[\int_0^T\int_0^T\int_{K_y} |u_\eps^\kappa(s,y) - u_\eps^\kappa(t,y)|^2\rho_{\delta_0}(t-s)\,dy\,ds\,dt \Big]\Big)^{1/2}\notag\\&\hspace{2cm}\times\mathbb{E}\Big( \Big[\int_0^T\int_0^T\int_{K_y} ||\phi(u_\eps)(t)||_{L^2(\R^d)}^2 ||D^2\tau_\kappa||_{L^2(\R^d)}^2\rho_{\delta_0}(t-s)\,dy\,ds\,dt\Big]\Big)^{1/2}\notag\\
   \le &\,C(\xi,\bar{r}^a,\kappa,\eps)\sqrt{\delta_0}\Big(\underset{0\le t\le T}{sup}\,\mathbb{E}\Big[||u_\eps(t)||_{L^2(\R^d)}^2\Big]\Big)^{\frac{1}{2}} \le C(\xi,\bar{r}^a,\kappa,\eps)\sqrt{\delta_0}.\notag
\end{align}
In the view of \eqref{fractionalbound} and \eqref{eq:estimations-fd}, it is easy to see that
$$\mathcal{X}_2^3 \le C(\kappa, \bar{r}^a)\sqrt{\delta_0}, \quad \text{and} \quad \mathcal{X}_2^4 \le  C(\xi,\kappa, \bar{r}^a)l.$$

Re-arranging $\mathcal{Y}_2$, we get
\begin{align}
       \mathcal{Y}_2 = & -  \mathbb{E} \Big [\int_{\mathbb{Q}_T^2}\int_\mathbb{R}\Big(\phi_{u_\eps^\kappa(s,y) - k}^\beta(u_{\Delta t}(t,x)) - \phi_{u_\eps^\kappa(t,y) - k}^\beta(u_{\Delta t}(t,x))\Big )\mathcal{L}_{\theta,\bar{r}}[\varphi_{\delta_0, \delta}(t,\cdot,s,y)](x) J_l(k)\,dk\,ds\,dt\,dx\,dy \Big] \notag \\
       &-\mathbb{E} \Big [\int_{\mathbb{Q}_T}\int_{\mathbb{R}^d}\int_\mathbb{R}  \phi_{u_\eps^\kappa(t,y) - k}^\beta(u_{\Delta t}(t,x)) \mathcal{L}_{\theta,\bar{r}}[\varrho_\delta(\cdot-y)\psi(t,\cdot)](x))\Big(\int_0^T\rho_{\delta_0}(t-s)ds -1 \Big)J_l(k)dk\,dx\,dt\,dy \Big]\notag\\
       &- \mathbb{E} \Big [\int_{\mathbb{Q}_T}\int_{\mathbb{R}^d}\int_\mathbb{R}  \Big(\phi_{u_\eps^\kappa(t,y) - k}^\beta(u_{\Delta t}(t,x))- \phi_{u_\eps^\kappa(t,y)}^\beta(u_{\Delta t}(t,x)\Big) \mathcal{L}_{\theta,\bar{r}}[\varrho_\delta(\cdot-y)\psi(t,\cdot)](x)J_l(k)\,dk\,dx\,dt\,dy \Big]\notag\\
       &- \mathbb{E} \Big [\int_{\mathbb{Q}_T}\int_{\mathbb{R}^d} \phi_{u_\eps^\kappa(t,y)}^\beta(u_{\Delta t}(t,x)) \mathcal{L}_{\theta,\bar{r}}[\varrho_\delta(\cdot-y)\psi(t,\cdot)](x)\,dx\,dt\,dy \Big]\notag\\
       &=: \mathcal{Y}_2^1 +  \mathcal{Y}_2^2 +  \mathcal{Y}_2^3 - \mathbb{E} \Big [\int_{\mathbb{Q}_T}\int_{\mathbb{R}^d} \phi_{u_\eps^\kappa(t,y)}^\beta(u_{\Delta t}(t,x)) \mathcal{L}_{\theta,\bar{r}}[\varrho_\delta(\cdot-y)\psi(t,\cdot)](x)\,dx\,dt\,dy \Big].\notag
\end{align}
Note that for all $a, b, c \in \mathbb{R}$
\begin{align}\label{eq:inqphi}
  |\phi_b^\beta(a) - \phi_c^\beta(a)| \le C(1 + |a-b|)|b-c|\,.
\end{align}
To estimate $\mathcal{Y}_2^1$, we  use \eqref{eq:inqphi}, \eqref{fractionalbound},  \eqref{eq:estimations-fd}, Lemma \ref{lem:l-infinity bound-approximate-solution}, Remark \ref{rem:time-cont-degenerate-vis} and Cauchy- Schwartz inequality. We have
\begin{align}
  \mathcal{Y}_2^1 &\le C \,\mathbb{E} \Big [\int_{\mathbb{Q}_T^2}(1 + |u_{\Delta t}(t,x) - u_\eps^\kappa(s,y) +k|)|u_\eps^\kappa(s,y) - u_\eps^\kappa(t,y)|\,|\mathcal{L}_{\theta,\bar{r}}[\varphi_{\delta_0, \delta}(t,\cdot,s,y)](x)| \,ds\,dt\,dx\,dy \Big] \notag \\
  \le &\, C(\delta,\psi, \bar{r}^a) \mathbb{E}\Big [\int_{K_x \times [0,T]}\int_{K_y \times [0,T]}(1 + |u_{\Delta t}(t,x) - u_\eps^\kappa(s,y) +k|)|u_\eps^\kappa(s,y) - u_\eps^\kappa(t,y)|\notag\\&\hspace{7cm}\times\rho_{\delta_0}(t-s)\,ds\,dy\,dt\,dx \Big]\notag\\
  &\le \,C(\delta,\psi, \bar{r}^a, \widetilde{M}, \kappa,\eps)\sqrt{\delta_0} (1+l)\Big(\mathbb{E}\Big[\int_0^T\int_{K_x \times [0,T]}|u_\eps^\kappa(s,y) - u_\eps^\kappa(t,y)|^2\rho_{\delta_0}(t-s)\,dx\,dt\,ds\Big]\Big)^\frac{1}{2}\notag\\&\hspace{6cm}\times\Big(\mathbb{E}\Big[\int_{K_y \times [0,T]}|u_\eps^\kappa(s,y)|^2\,ds\,dy |\Big]\Big)^\frac{1}{2}\notag\\
  &\le C(\delta,\psi, \bar{r}^a, \widetilde{M}, \kappa,\eps)\sqrt{\delta_0}\Big(\underset{0\le s\le T}{sup}\,\mathbb{E}\Big[||u_\eps(s)||_{L^2(\R^d)}^2\Big]\Big)^\frac{1}{2} \le C(\delta,\bar{r}^a, \kappa,\eps)\sqrt{\delta_0}.\notag
\end{align}
Applying the Lipschitz continuity of $\phi_k^\beta$ and \eqref{fractionalbound},  \eqref{eq:estimations-fd}, \eqref{eq:inqphi}, Lemma \ref{lem:l-infinity bound-approximate-solution}, one can easily estimate the following.
\begin{align}
\mathcal{Y}_2^2 \le C(\delta, \bar{r}^a)(\sqrt{\delta_0} + l) \quad \text{and} \quad \mathcal{Y}_2^3 \le C(\delta)l.\notag
\end{align}
Combining all the above estimations, we arrive at the following Lemma.
\begin{lem}\label{eq:x4y4x5y5}
We have,
\begin{align}
    &\mathcal{X}_1 + \mathcal{Y}_1 + \mathcal{X}_2 + \mathcal{Y}_2 
    \le -\,\mathbb{E} \Big [\int_{\mathbb{Q}_T}\int_{\mathbb{R}^d}|\phi(u_\eps^\kappa)(t,y)-\phi(u_{\Delta t}(t,x))|\mathcal{L}_\theta^{\bar{r}}[\psi(t,\cdot)](x)\varrho_{\delta}(x-y)\,dx\,dt\,dy\Big]\notag \\
    -\,&\,\mathbb{E}\Big[\int_{\mathbb{Q}_T} \int_{\mathbb{R}^d}\beta'(u_\eps^\kappa(t,y) - u_{\Delta t}(t,x) )\mathcal{L}_{\theta,\bar{r}}[\phi(u_\eps)^\kappa(t,\cdot)](y)\varrho_\delta(x-y)\psi(t,x)\,dx\,dt\,dy \Big]\notag\\
-\, & \,\mathbb{E} \Big [\int_{\mathbb{Q}_T}\int_{\mathbb{R}^d} \phi_{u_\eps^\kappa(t,y)}^\beta(u_{\Delta t}(t,x)) \mathcal{L}_{\theta,\bar{r}}[\varrho_\delta(\cdot-y)\psi(t,\cdot)](x)\,dx\,dt\,dy \Big]\notag\\
+\,&\,C(\delta, \bar{r})\mathbb{E} \Big[\int_0^T\int_{K_y} \big|\phi(u_\eps)^\kappa(s,y)-\phi(u_\eps)(s,y)\big|\,dy\,ds\,\Big]
+C(\delta, \bar{r})\mathbb{E} \Big[\int_0^T\int_{K_y} \big|u_\eps^\kappa(s,y)-u_\eps(s,y)\big|\,dy\,ds\,\Big]\notag\\
+\,&\,C(\xi,\delta,\kappa)\Big(\mathbb{E}\Big[\int_0^T\int_0^T\int_{K_y} \,||u_\eps(s,y) - u_\eps(t,y)||_{L^2(\R^d)}^2\rho_{\delta_0}(t-s)\,dx\,dt\,ds \Big]\Big)^{\frac{1}{2}}\notag\\
& +C(\delta,\xi, \kappa, \eps, \bar{r}^a)\sqrt{\delta_0} + C(\xi,\bar{r}, \delta)l + C(\bar{r})\xi.
\end{align}
\end{lem}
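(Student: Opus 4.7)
The plan is to estimate each of $\mathcal{X}_1$, $\mathcal{Y}_1$, $\mathcal{X}_2$, $\mathcal{Y}_2$ separately by mirroring the decompositions already carried out for the linear fractional case in Lemma \ref{lem45}, and then combine the surviving principal pieces by means of the monotonicity of $\phi$ in order to recover a Kato-type non-local right-hand side.

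First I would decompose $\mathcal{X}_1$ as a telescoping sum of seven pieces, in analogy with the decomposition of $\mathcal{I}_4$. The first two pieces, involving $\mathcal{L}_\theta^{\bar{r}}[\phi(u_\eps)^\kappa - \phi(u_\eps)]$ and $\mathcal{L}_\theta^{\bar{r}}[\phi(u_\eps) - \phi(u_\eps^\kappa)]$, are controlled after transferring the non-local operator onto $\beta'\,\varrho_\delta$ via the duality $\int \mathcal{L}_\theta^{\bar{r}}[F]\,G = \int F\,\mathcal{L}_\theta^{\bar{r}}[G]$; combining \eqref{inq:bound-frac-z} with the Lipschitz continuity of $\phi$ (assumption \ref{A2}) yields bounds of the form $C(\delta,\bar{r})$ times the $L^1$-distance between $\phi(u_\eps)^\kappa$ and $\phi(u_\eps)$, respectively $u_\eps^\kappa$ and $u_\eps$. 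The third piece measures the temporal oscillation of $\phi(u_\eps^\kappa)$, and by Remark \ref{rem:time-cont-degenerate-vis} together with a Cauchy--Schwartz argument against $\rho_{\delta_0}$ gives a contribution of order $C(\bar{r},\delta,\kappa,\eps)\sqrt{\delta_0}$. The fourth piece (oscillation of $\beta'$ in $s$) is handled by Cauchy--Schwartz coupled with \eqref{eq:estimations-fd} and the average time continuity of $u_\eps^\kappa$, and the fifth and sixth pieces, which use the tail $\bigl|\int_0^T\rho_{\delta_0}(t-s)\,ds-1\bigr|$ and the $k$-regularisation respectively, give orders $\sqrt{\delta_0}$ and $l$.

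Next I would decompose $\mathcal{Y}_1$ into four pieces, exactly as for $\mathcal{J}_4$. The $L^\infty$-bound of Lemma \ref{lem:l-infinity bound-approximate-solution} on $u_{\Delta t}$, the average time continuity of $u_\eps^\kappa$ and \eqref{inq:bound-frac-z} produce the three error pieces of orders $\sqrt{\delta_0}$, $\delta_0$, $l$, leaving only the principal term $\mathcal{Y}_1^4$. The crucial step, and the main obstacle, is the simultaneous treatment of the two surviving principal pieces $\mathcal{X}_1^7 + \mathcal{Y}_1^4$. Here I would apply the change of coordinates $x\mapsto x+z$, $y\mapsto y+z$, $z\mapsto -z$ in $\mathcal{X}_1^7$, exploit the fact that $\phi$ is non-decreasing so that $\phi(a)-\phi(b)$ and $\mathrm{sign}(a-b)$ share sign (giving $\mathrm{sign}(u_\eps^\kappa-u_{\Delta t})(\phi(u_\eps^\kappa)-\phi(u_{\Delta t}))=|\phi(u_\eps^\kappa)-\phi(u_{\Delta t})|$), and then replace $\beta'$ by $\mathrm{sign}$ at the cost of an $O(\xi)$ remainder whose support lies in $\{|u_\eps^\kappa-u_{\Delta t}|<\xi\}$, the tail of the remainder being controlled by \eqref{inq:bound-frac-z}. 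This produces precisely the first term on the right-hand side of the claimed inequality. Unlike the linear case of Lemma \ref{lem45}, the monotone composition $\phi\circ u$ must be tracked carefully through the non-local operator before the sign structure can be extracted.

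Finally, for $\mathcal{X}_2$ and $\mathcal{Y}_2$ I would imitate the treatment of $\mathcal{I}_5$ and $\mathcal{J}_5$. The new ingredient is that the dual operator $\mathcal{L}_{\theta,\bar{r}}$ now falls on $\beta'(\cdot)\varrho_\delta(x-\cdot)$ or $\phi_k^\beta(\cdot)$ rather than $\varrho_\delta$ alone, so one must absorb two spatial derivatives. Using the bound displayed in \eqref{eq:betadelta}, the $L^2$-estimate \eqref{eq:estimations-fd}, the Lipschitz-type inequality \eqref{eq:inqphi} for $\phi_k^\beta$, and the small-ball estimate \eqref{fractionalbound}, every telescoping error collapses to a constant times $\sqrt{\delta_0}$, $\delta_0$ or $l$, with explicit dependence on $\delta$, $\kappa$, and $\bar{r}^a$; the residual principal pieces are exactly the second and third terms on the right-hand side of the Lemma. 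Summing the four decompositions and collecting the surviving principal contributions produces the stated bound, with remainder $C(\delta,\xi,\kappa,\eps,\bar{r}^a)\sqrt{\delta_0} + C(\xi,\bar{r},\delta)l + C(\bar{r})\xi$ together with the two $L^1$-convergence-in-$\kappa$ terms that are retained on the compact set $K_y$ for later passage to the limit.
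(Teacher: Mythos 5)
Your proposal follows essentially the same route as the paper: the seven-piece telescoping of $\mathcal{X}_1$, the four-piece splitting of $\mathcal{Y}_1$, the combination of the two principal pieces via the change of coordinates and the monotonicity of $\phi$ (so that $\mathrm{sign}(a-b)$ and $\phi(a)-\phi(b)$ share sign, with the $\beta'\to\mathrm{sign}$ replacement costing $C(\bar r)\xi$), and the treatment of $\mathcal{X}_2,\mathcal{Y}_2$ through \eqref{eq:betadelta}, \eqref{eq:inqphi} and \eqref{fractionalbound} all match the paper's argument. The only imprecision is your claim that every telescoping error from $\mathcal{X}_2$ collapses to $\sqrt{\delta_0}$, $\delta_0$ or $l$: the temporal-oscillation piece $\mathcal{X}_2^1$ is only bounded by $C(\xi,\delta,\kappa)\bigl(\mathbb{E}\bigl[\iiint\|u_\eps(s,\cdot)-u_\eps(t,\cdot)\|_{L^2(\R^d)}^2\rho_{\delta_0}(t-s)\bigr]\bigr)^{1/2}$, which cannot be reduced to $\sqrt{\delta_0}$ at this stage and is therefore retained as the fifth term on the right-hand side of the lemma.
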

Now we will estimates the error terms appearing  due to the fractional terms in the inequalities.
We re-write $\mathcal{Y}_{3}$ and $\mathcal{Y}_{4}$ as follows.
\begin{align}
    \mathcal{Y}_{3} =  &-  \mathbb{E}\Big[\int_{\mathbb{Q}_T}\int_\mathbb{R}\sum_{n=0}^{N-1} \int_{\mathbb{R}^d} \int_{t_n}^{t_{n+1}}  \big[\mathcal{L}_\theta^{\bar{r}}[\phi(\widetilde{u}_{\Delta t}(t,\cdot))](x) -\mathcal{L}_\theta^{\bar{r}}[\phi(u_{\Delta t}(t_{n+1},\cdot))](y)\big]\notag \\ & \hspace{3cm}\times\beta'(\widetilde{u}_{\Delta t}(t,x) - k) \varphi_{\delta_0, \delta}(t,x,s,y) J_l(u_\eps^\kappa(s,y) - k)\,dt\,dx\,dk\,ds\,dy \Big] \notag \\
    \, - &\mathbb{E}\Big[\int_{\mathbb{Q}_T}\int_\mathbb{R}\sum_{n=0}^{N-1} \int_{\mathbb{R}^d} \int_{t_n}^{t_{n+1}}  \big[\mathcal{L}_\theta^{\bar{r}}[\phi(u_{\Delta t}(t_{n+1},\cdot))](x)\big(\beta'(\widetilde{u}_{\Delta t}(t,x) - k)-\beta'(u_{\Delta t}(t_{n+1},x) -k)\big)\notag \\ & \hspace{3cm} \times\varphi_{\delta_0, \delta}(t,x,s,y) J_l(u_\eps^\kappa(s,y) - k)\,dt\,dx\,dk\,ds\,dy \Big] \notag:= \mathcal{Y}_{3}^1 + \mathcal{Y}_{3}^2\notag \\
    \mathcal{Y}_{4} = &-  \mathbb{E}\Big[\int_{\mathbb{Q}_T}\int_\mathbb{R}\sum_{n=0}^{N-1} \int_{\mathbb{R}^d} \int_{t_n}^{t_{n+1}}  \big[\mathcal{L}_\theta^{\bar{r}}[\phi(u_{\Delta t}(t_{n+1},\cdot))](x) -\mathcal{L}_\theta^{\bar{r}}[\phi(u_{\Delta t}(t,\cdot))](y)\Big]\notag \\ & \hspace{3cm} \times\beta'(u_{\Delta t}(t_{n+1},x) - k) \varphi_{\delta_0, \delta}(t,x,s,y) J_l(u_\eps^\kappa(s,y) - k)\,dt\,dx\,dk\,ds\,dy \Big]\notag \\
    \,  &- \, \mathbb{E}\Big[\int_{\mathbb{Q}_T}\int_\mathbb{R}\sum_{n=0}^{N-1} \int_{\mathbb{R}^d} \int_{t_n}^{t_{n+1}}  \mathcal{L}_\theta^{\bar{r}}[\phi(u_{\Delta t}(t,\cdot))](x)\big(\beta'(u_{\Delta t}(t_{n+1},x) - k) -\beta'(u_{\Delta t}(t,x) - k)\big)\notag \\ & \hspace{3cm}\times \varphi_{\delta_0, \delta}(t,x,s,y) J_l(u_\eps^\kappa(s,y) - k)\,dt\,dx\,dk\,ds\,dy \Big] := \mathcal{Y}_{4}^1 + \mathcal{Y}_{4}^2.\notag
\end{align}
Since $\phi$ is Lipschitz continuous function, a similar lines of argument ( as  done in \eqref{eq:j141}, \eqref{eq:j142}, \eqref{eq:j151}, and \eqref{eq:152}, reveals that
\begin{align}\label{eq:y11y12}
    \mathcal{Y}_{3}^1 ,\, \mathcal{Y}_{4}^1 \le C(\delta, \bar{r})\sqrt{\Delta t}, \quad  \text{and}  \quad  \mathcal{Y}_{3}^2,\, \mathcal{Y}_{4}^2 \le C(\bar{r}, \xi)\sqrt{\Delta t}.  
\end{align}
Observe that, 
\begin{align}\label{eq:phikbeta}
    |\phi_k^\beta(a) - \phi_k^\beta(b)| \le C|a-b|.
\end{align}
In the view \eqref{eq:phikbeta}, \eqref{fractionalbound} and following the estimations  \eqref{eq:j16} and \eqref{eq:j17}, we have
\begin{align}\label{eq:y13y14}
    \mathcal{Y}_{5} ,\, \mathcal{Y}_{6} \le C(\delta,\bar{r}^a)\Delta t\,.
\end{align}

In view of the analysis of Subsection \ref{Convergence Analysis}, one can estimate the terms $\mathcal{I}_i$ and $\mathcal{J}_i$
appeared in  \eqref{uepsdfrac}-\eqref{udeldfrac} and pass to the limit  as $\Delta t \goto 0$ in the surviving terms in the sense of Young measure.  It remains to 
 send the limit as $\Delta t$ tends to zero in the first three terms in Lemma \ref{eq:x4y4x5y5}. For this purpose, we define,
\begin{align}
    &\mathcal{G}_1(t,x, \omega; \nu) := \, \int_{\mathbb{R}^d}|\phi(u_\eps^\kappa)(t,y)-\phi(\nu)|\mathcal{L}_\theta^{\bar{r}}[\psi(t,\cdot)](x)\varrho_{\delta}(x-y)\,dy\,, \notag \\
    &\mathcal{G}_2(t,x, \omega; \nu) := \,  \int_{\mathbb{R}^d}\beta'(u_\eps^\kappa(t,y) - \nu)\mathcal{L}_{\theta,\bar{r}}[\phi(u_\eps)^\kappa(t,\cdot)](y)\varrho_\delta(x-y)\psi(t,x)\,dy\,,\notag\\
    &\mathcal{G}_3(t,x,\omega; \nu) := \, \int_{\mathbb{R}^d} \phi_{u_\eps^\kappa(t,y)}^\beta(\nu) \mathcal{L}_{\theta,\bar{r}}[\varrho_\delta(\cdot-y)\psi(t,\cdot)](x)\,dy\,.\notag
\end{align}
Observe that $\mathcal{G}_1$, $\mathcal{G}_2$ , and $\mathcal{G}_3$ are  Carath\'eodory functions on $\Theta \times \mathbb{R}$.  Moreover, $\mathcal{G}_1(.,u_{\Delta t})$ ,$\mathcal{G}_2(.,u_{\Delta t})$ and $\mathcal{G}_1(.,u_{\Delta t})$ are uniformly bounded sequence in $L^2(\mathbb{Q}_T \times \Omega)$. 
We show it only for $\mathcal{G}_2$. 
Using Lipschitz continuity of $\phi$ and \eqref{fractionalbound}, we have
\begin{align}
    &\mathbb{E}\Big[\int_{\mathbb{Q}_T}|\mathcal{G}_2(.,u_{\Delta t})|^2\,dx\,dt \Big]\notag \\&=\,
    \mathbb{E}\Big[\int_{\mathbb{Q}_T}\Big|\int_{\mathbb{R}^d}\beta'(u_\eps^\kappa(t,y) - u_{\Delta t}(t,x) )\mathcal{L}_{\theta,\bar{r}}[\phi(u_\eps)^\kappa(t,\cdot)](y)\varrho_\delta(x-y)\psi(t,x)\,dy\Big|^2\,dx\,dt \Big]\notag\\
    \le\,& C(\beta',\psi, \bar{r}^a)\,  \mathbb{E}\Big[\int_{\mathbb{Q}_T}\int_{K_y}|D^2\phi(u_\eps)^\kappa(t,y)|_{L^\infty}^2\varrho_\delta(x-y)^2\,dy\,dx\,dt \Big]\notag\\
    \le\,& C(\beta',\psi, ||\phi'||_{L^\infty}, \bar{r}^a)\,  \mathbb{E}\Big[\int_{\mathbb{Q}_T}\int_{K_y}||u_\eps(t,\cdot)||_{L^2}^2||\tau_\kappa||_{L^2}^2\varrho_\delta(x-y)^2\,dy\,dx\,dt \Big]\notag \le C(\delta, \kappa,\bar{r}^a).
\end{align}
Thus invoking Proposition \eqref{prop:young-measure}, we have
\begin{align}\label{conv:degenerate-frac-1}
\begin{cases}
   \displaystyle \underset{\Delta t \rightarrow 0}{lim}\,\mathbb{E} \Big [\int_{\mathbb{Q}_T}\int_{\mathbb{R}^d}|\phi(u_\eps)^\kappa(t,y)-\phi(u_{\Delta t}(t,x)|\mathcal{L}_\theta^{\bar{r}}[\psi(t,\cdot)](x)\varrho_{\delta}(x-y)\,dx\,dt\,dy\Big] \\ 
     \displaystyle \,= \,\mathbb{E} \Big [\int_{\mathbb{Q}_T}\int_{\mathbb{R}^d}\int_0^1|\phi(u_\eps^\kappa)(t,y)-\phi(u(t,x, \alpha)\mathcal{L}_\theta^{\bar{r}}[\psi(t,\cdot)](x)\varrho_{\delta}(x-y)\,d\alpha\,dx\,dt\,dy\Big]\,,\\
    \displaystyle  \underset{\Delta t \rightarrow 0}{lim}\,\mathbb{E}\Big[\int_{\mathbb{Q}_T} \int_{\mathbb{R}^d}\beta'(u_\eps^\kappa(t,y) - u_{\Delta t}(t,x) )\mathcal{L}_{\theta,\bar{r}}[\phi(u_\eps)^\kappa(t,\cdot)](y)\varrho_\delta(x-y)\psi(t,x)\,dx\,dt\,dy \Big] \\ 
      \displaystyle \,= \,\mathbb{E}\Big[\int_{\mathbb{Q}_T} \int_{\mathbb{R}^d}\int_0^1\beta'(u_\eps^\kappa(t,y) - u(t,x,\alpha)\mathcal{L}_{\theta,\bar{r}}[\phi(u_\eps)^\kappa(t,\cdot)](y)\varrho_\delta(x-y)\psi(t,x)\, d\alpha\,dx\,dt\,dy \Big]\,,\\
         \displaystyle \underset{\Delta t \rightarrow 0}{lim}\mathbb{E} \Big [\int_{\mathbb{Q}_T}\int_{\mathbb{R}^d} \phi_{u_\eps^\kappa(t,y)}^\beta(u_{\Delta t}(t,x)) \mathcal{L}_{\theta,\bar{r}}[\varrho_\delta(\cdot-y)\psi(t,\cdot)](x)\,dx\,dt\,dy \Big]\\ 
           \displaystyle\,=\,  \mathbb{E} \Big [\int_{\mathbb{Q}_T}\int_{\mathbb{R}^d}\int_0^1 \phi_{u_\eps^\kappa(t,y)}^\beta(u(t,x, \alpha)) \mathcal{L}_{\theta,\bar{r}}[\varrho_\delta(\cdot-y)\psi(t,\cdot)](x)\,d\alpha\,dx\,dt\,dy \Big] \,,
        \end{cases}
\end{align}
where $u(t,x,\alpha)$ is the Young-measure valued limit of the approximate solutions $u_{\Delta t}(t,x)$. Thanks to Lemmas \ref{lem1}, \ref{lem 2}, \ref{lem3}, \ref{eq:x4y4x5y5}, \ref{lem67}, and the  convergence  results in \eqref{conv:degenerate-frac-1} together with \eqref{eq:y11y12} and \eqref{eq:y13y14}, one can 
send $\Delta t$ to $0$, and subsequently  pass to the limit $\delta_0 \rightarrow 0$ and $l \rightarrow 0$ in in  \eqref{uepsdfrac}-\eqref{udeldfrac} to have
\begin{align}
    0 \le  &\,\mathbb{E} \Big[ \int_{\mathbb{R}^d } \int_{\mathbb{R}^d }\beta(u_0(x)-u_\eps^\kappa(0,y)) \varrho_\delta(x-y)\psi(0,x)\,dy\,dx \Big]\notag \\
    +\, &\mathbb{E}\Big[\int_{\mathbb{Q}_T}\int_{\mathbb{R}^d}\int_0^1\beta(u(t,x,\alpha) - u_\eps^\kappa(t,y))\partial_t\psi(t,x)\varrho_{\delta}(x-y)\,d\alpha\,dx\,dt\,dy  \Big].\notag\\
+\,&\mathbb{E} \Big [\int_{\mathbb{Q}_T} \int_{\mathbb{R}^d}\int_0^1F^\beta((u(t,x,\alpha), u_\eps^\kappa(t,y))\varrho_\delta(x-y)\cdot\nabla_x\psi(t,x)\,d\alpha\,dx\,dt\,dy \Big]\notag\\
-\,&\mathbb{E} \Big [\int_{\mathbb{Q}_T}\int_{\mathbb{R}^d}\int_0^1|\phi(u_\eps^\kappa)(t,y)-\phi((u(t,x,\alpha))|\mathcal{L}_\theta^{\bar{r}}[\psi(t,\cdot)](x)\varrho_{\delta}(x-y)\,d\alpha\,dx\,dt\,dy\Big]\notag \\
    -\,&\,\mathbb{E}\Big[\int_{\mathbb{Q}_T}\int_{\mathbb{R}^d}\int_0^1 \beta'(u_\eps^\kappa(t,y) - (u(t,x,\alpha) )\mathcal{L}_{\theta,\bar{r}}[\phi(u_\eps)^\kappa(t,\cdot)](y)\varrho_\delta(x-y)\psi(t,x)\,d\alpha\,dx\,dt\,dy \Big]\notag\\
-\, & \,\mathbb{E} \Big [\int_{\mathbb{Q}_T}\int_{\mathbb{R}^d} \phi_{u_\eps^\kappa(t,y)}^\beta(u(t,x,\alpha)) \mathcal{L}_{\theta,\bar{r}}[\varrho_\delta(\cdot-y)\psi(t,\cdot)](x)\,d\alpha\,dx\,dt\,dy \Big]\notag\\
+\,&\,C(\delta, \bar{r})\mathbb{E} \Big[\int_0^T\int_{K_y} \big|\phi(u_\eps)^\kappa(s,y)-\phi(u_\eps)(s,y)\big|\,dy\,ds\,\Big]\notag\\
+\,&\,C(\delta, \bar{r})\mathbb{E} \Big[\int_0^T\int_{K_y} \big|u_\eps^\kappa(s,y)-u_\eps(s,y)\big|\,dy\,ds\,\Big]\notag\\
+\, &C(\xi)\,\mathbb{E} \Big[\int_0^T\int_{K_y}  \Big( (\sigma(u_\eps(t,y))\ast\tau_\kappa) - \sigma(u_\eps(t,y))\Big)^2 \varrho_\delta(x-y)\,dy\,dt \Big]\notag\\
+ \, & C(\xi)\,\mathbb{E} \Big[\int_0^T\int_{K_y} \Big( \sigma(u_\eps^\kappa(t,y)) - \sigma(u_\eps(t,y))\Big)^2 \,dy\,dt\Big] + C(\delta)\eps^{1/2} + C(\bar{r})\xi\notag \\
=:\, &\mathcal{B}_1 + \mathcal{B}_2 + \mathcal{B}_3 + \mathcal{B}_4 + \mathcal{B}_5 + \mathcal{B}_6 + \mathcal{B}_7 + \mathcal{B}_8 + \mathcal{B}_9 + \mathcal{B}_{10} + C(\delta)\eps^{1/2} + C\xi. \label{inq:degenerate-frac-2}
\end{align}
Observe that
\begin{align}
    &\Bigg|\mathcal{B}_4- \mathbb{E} \Big [\int_{\mathbb{Q}_T}\int_{\mathbb{R}^d}\int_0^1|\phi(u_\eps)(t,y)-\phi(u(t,x, \alpha)|\mathcal{L}_\theta^{\bar{r}}[\psi(t,\cdot)](x)\varrho_{\delta}(x-y)\,d\alpha\,dx\,dt\,dy\Big]\Bigg|\notag \\
    \le \, & \,\mathbb{E} \Big [\int_{\mathbb{Q}_T}\int_{\mathbb{R}^d}\int_0^1|\phi(u_\eps^\kappa)(t,y)-\phi(u_\eps(t,y))|\,|\mathcal{L}_\theta^{\bar{r}}[\psi(t,\cdot)](x)|\varrho_{\delta}(x-y)\,d\alpha\,dx\,dt\,dy\Big]\notag \\ \le &\, C(\bar{r}, \psi)\, \Big(\mathbb{E} \Big [\int_0^T\int_{K_y}|u_\eps^\kappa(t,y)-u_\eps(t,y)|^2\,dt\,dy\Big]\Big)^{1/2} \underset{\kappa \rightarrow 0}{\longrightarrow} 0. \label{conv-kappa-b4-mathcal}
    \end{align}
Following the calculations \cite[Step $2$, Lemma $3.4$]{frac non}, one has
\begin{align*}
    &\underset{\kappa \rightarrow 0}{lim}\,\mathcal{B}_5 \notag \\
    & = \, -\mathbb{E}\Big[\int_{\mathbb{Q}_T}\ \int_{\mathbb{R}^d}\int_0^1\beta'(u_\eps(t,y) - u(t,x,\alpha))\mathcal{L}_{\theta,\bar{r}}[\phi(u_\eps)(t,\cdot)](y)\varrho_\delta(x-y)\psi(t,x)\, d\alpha\,dx\,dt\,dy \Big]\,. 
\end{align*}
 In view of  \cite[Appendix A]{frac non} and the fact that
$$(\phi(a)-\phi(b))\beta'(a-k) \ge \phi_k^\beta(a) - \phi_k^\beta(b) \ge (\phi(a) - \phi(b))\beta'(b-k)\quad \forall~a,b \in \R\,,$$
we have
\begin{align}
    &\underset{\kappa \rightarrow 0}{lim}\,\mathcal{B}_5 = - \mathbb{E}\Big[\int_{\mathbb{Q}_T}\ \int_{\mathbb{R}^d}\int_0^1\beta'(u_\eps(t,y) - u(t,x,\alpha))\mathcal{L}_{\theta,\bar{r}}[\phi(u_\eps)(t,\cdot)](y)\varrho_\delta(x-y)\psi(t,x)\, d\alpha\,dx\,dt\,dy \Big]\notag \\ &\le\, -\,
    \mathbb{E}\Big[\int_{\mathbb{Q}_T}\ \int_{\mathbb{R}^d}\int_0^1\phi_{u(t,x,\alpha)}^\beta(u_\eps(t,y))\mathcal{L}_{\theta,\bar{r}}[\varrho_\delta(x-\cdot)](y)\psi(t,x)\, d\alpha\,dx\,dt\,dy \Big]\,. \label{conv-kappa-b5-mathcal}
\end{align}
We now focus on the term $\mathcal{B}_6$. Using \eqref{eq:inqphi}, we have
\begin{align}
    &\bigg|\mathcal{B}_6 - \mathbb{E} \Big [\int_{\mathbb{Q}_T}\int_{\mathbb{R}^d}\int_0^1 \phi_{u_\eps(t,y)}^\beta(u(t,x, \alpha)) \mathcal{L}_{\theta,\bar{r}}[\varrho_\delta(\cdot-y)\psi(t,\cdot)](x)\,d\alpha\,dx\,dt\,dy \Big]\bigg|\notag\\
    \le & \, \mathbb{E} \Big [\int_{\mathbb{Q}_T}\int_{\mathbb{R}^d}\int_0^1 |\phi_{u_\eps^\kappa(t,y)}^\beta(u(t,x, \alpha))- \phi_{u_\eps(t,y)}^\beta(u(t,x, \alpha))| \mathcal{L}_{\theta,\bar{r}}[\varrho_\delta(\cdot-y)\psi(t,\cdot)](x)\,d\alpha\,dx\,dt\,dy \Big]\notag \\
    \le \, &\, C\,\mathbb{E}\Big[\int_{\mathbb{Q}_T}\int_{\mathbb{R}^d}\int_0^1 \big(1 + |u(t,x, \alpha)- u_\eps^\kappa(t,y)|\big)\,|u_\eps^\kappa(t,y) - u_\eps(t,y)|\,|\mathcal{L}_{\theta,\bar{r}}[\varrho_\delta(\cdot-y)\psi(t,\cdot)](x)|\,d\alpha\,dx\,dt\,dy \Big]\notag\\
    \le\, &C(\delta, \psi)\bar{r}^a\Big(1 + \underset{t}{sup}\,\mathbb{E}\Big[||u(t,\cdot,\cdot)||_{L^2}^2\Big] + \underset{t}{sup}\,\mathbb{E}\Big[||u_\eps(t,\cdot)||_{L^2}^2\Big]\Big)^{\frac{1}{2}}\Big(\mathbb{E}\Big[\int_0^T\int_{K_y}|u_\eps^\kappa(t,y) - u_\eps(t,y)|^2\,dy\,dt \Big]\Big)^{1/2}\notag\\
    &\hspace{13cm}\underset{\kappa \rightarrow 0}{\longrightarrow} 0.\notag
\end{align}
Thus, we have 
\begin{align}
    &\underset{\kappa \rightarrow 0}{lim} \mathcal{B}_6 =\,- \,\mathbb{E} \Big [\int_{\mathbb{Q}_T}\int_{\mathbb{R}^d}\int_0^1 \phi_{u_\eps(t,y)}^\beta(u(t,x, \alpha)) \mathcal{L}_{\theta,\bar{r}}[\varrho_\delta(\cdot-y)\psi(t,\cdot)](x)\,d\alpha\,dx\,dt\,dy \Big]. \label{conv-kappa-b6-mathcal}
\end{align}
Since $\big|\phi_k^\beta(a) -|\phi(a)- \phi(k)|\big| \le \xi||\phi'||_{L^\infty}$, we have
\begin{align}
    &\bigg|\,\mathbb{E}\Big[\int_{\mathbb{Q}_T}\ \int_{\mathbb{R}^d}\int_0^1\phi_{u(t,x,\alpha)}^\beta(u_\eps(t,y)\mathcal{L}_{\theta,\bar{r}}[\varrho_\delta(x-\cdot)](y)\psi(t,x)\, d\alpha\,dx\,dt\,dy \Big]\notag \\&\hspace{1cm}- \mathbb{E}\Big[\int_{\mathbb{Q}_T}\ \int_{\mathbb{R}^d}\int_0^1|\phi(u_\eps) - \phi(u(t,x, \alpha)) |\,\mathcal{L}_{\theta,\bar{r}}[\varrho_\delta(x-\cdot)](y)\psi(t,x)\, d\alpha\,dx\,dt\,dy \Big]\bigg|\notag\\
    & \, \le \mathbb{E}\Big[\int_{\mathbb{Q}_T}\ \int_{\mathbb{R}^d}\int_0^1\Big|\phi_{u(t,x,\alpha)}^\beta(u_\eps(t,y)) - |\phi(u_\eps) - \phi(u(t,x, \alpha))|\Big|\notag\\ &\hspace{8cm}\times\mathcal{L}_{\theta,\bar{r}}[\varrho_\delta(x-\cdot)](y)|\psi(t,x)\, d\alpha\,dx\,dt\,dx \Big]\notag\\
     \,& \le\,\xi||\phi'||_{L^\infty}C(|\psi|)\bar{r}^a \underset{\xi \rightarrow 0}{\longrightarrow} 0.\label{conv-xi-corresponding-b5-mathcal}
\end{align}
Similarly, we have
\begin{align}
&\mathbb{E} \Big [\int_{\mathbb{Q}_T}\int_{\mathbb{R}^d}\int_0^1 \phi_{u_\eps(t,y)}^\beta(u(t,x, \alpha)) \mathcal{L}_{\theta,\bar{r}}[\varrho_\delta(\cdot-y)\psi(t,\cdot)](x)\,d\alpha\,dt\,dx\,dy \Big]\notag \\ &\hspace{1cm} \underset{\xi \rightarrow 0}{\longrightarrow} \mathbb{E} \Big [\int_{\mathbb{Q}_T}\int_{\mathbb{R}^d}\int_0^1 |\phi(u_\eps) - \phi(u(t,x, \alpha))| \mathcal{L}_{\theta,\bar{r}}[\varrho_\delta(\cdot-y)\psi(t,\cdot)](x)\,d\alpha\,dt\,dx\,dy \Big].
\label{conv-xi-corresponding-b6-mathcal}
\end{align}
Repeating the arguments as done in {\bf Steps II $\&$ III} of Subsection \ref{Convergence Analysis} and using \eqref{conv-kappa-b4-mathcal}-\eqref{conv-xi-corresponding-b6-mathcal}, we first pass to the limit in $\kappa \rightarrow 0$ and then $\xi \rightarrow 0$ and $\eps \rightarrow 0$ in \eqref{inq:degenerate-frac-2}. The resulting inequality reads as
\begin{align}
    0 \le  &\,\mathbb{E} \Big[ \int_{\mathbb{R}^d } \int_{\mathbb{R}^d }|u_0(x)-u_0(y)| \varrho_\delta(x-y)\psi(0,x)\,dy\,dx \Big]\notag \\
    +\, &\mathbb{E}\Big[\int_{\mathbb{Q}_T}\int_{\mathbb{R}^d}\int_0^1|u(t,x,\alpha) - \bar{u}(t,y))|\partial_t\psi(t,x)\varrho_{\delta}(x-y)\,d\alpha\,dx\,dt\,dy  \Big].\notag\\
   +\,&\mathbb{E} \Big [\int_{\mathbb{Q}_T} \int_{\mathbb{R}^d}\int_0^1F((u(t,x,\alpha), \bar{u}(t,y))\varrho_\delta(x-y)\cdot\nabla_x\psi(t,x)\,d\alpha\,dx\,dt\,dy \Big]\notag\\
-&\,\mathbb{E} \Big [\int_{\mathbb{Q}_T}\int_{\mathbb{R}^d}\int_0^1|\phi(\bar{u}(t,y))-\phi(u(t,x, \alpha)|\mathcal{L}_\theta^{\bar{r}}[\psi(t,\cdot)](x)\varrho_{\delta}(x-y)\,d\alpha\,dx\,dt\,dy\Big]\notag \\
-\,& \mathbb{E}\Big[\int_{\mathbb{Q}_T}\ \int_{\mathbb{R}^d}\int_0^1|\phi(\bar{u}(t,y)) - \phi(u(t,x, \alpha))|\mathcal{L}_{\theta,\bar{r}}[\varrho_\delta(x-\cdot)](y)\psi(t,x)\, d\alpha\,dx\,dt\,dy \Big]\notag\\
-\, & \,\mathbb{E} \Big [\int_{\mathbb{Q}_T}\int_{\mathbb{R}^d}\int_0^1|\phi(\bar{u}(t,y)) - \phi(u(t,x, \alpha))|\mathcal{L}_{\theta,\bar{r}}[\varrho_\delta(\cdot-y)\psi(t,\cdot)](x)\,\,d\alpha\,dx\,dt\,dy \Big]\,,\label{inq:degenerate-frac-3}
\end{align}
where $\bar{u}(t,y)$ is the unique entropy solution of \eqref{eq:fdegenerate} and $u_\eps(\cdot,\cdot)$ converges to $\bar{u}(\cdot,\cdot)$ weakly in $L^2(\Omega \times \mathbb{Q}_T)$ and strongly in $L^p(\Omega\times (0,T)\times B(0,M))$ for any $M>0$ and $1\le p<2$. 
\vspace{0.2cm}

Since $\phi(\cdot)$ is Lipschitz, by following the same arguments as invoked in {\bf Steps IV $\&$ V} of Subsection \ref{Convergence Analysis}, we can pass to the limit as $\bar{r} \goto 0$ and $\delta \goto 0$ in \eqref{inq:degenerate-frac-3} and obtain the following Kato type inequality:

\begin{align}\label{katofd}
    0 \le     \, &\mathbb{E}\Big[\int_{\mathbb{Q}_T}\int_0^1|u(t,x,\alpha) - \bar{u}(t,x)|\partial_t\psi(t,x)\,d\alpha\,dt\,dx  \Big]\notag\\
   +\,&\mathbb{E} \Big [\int_{\mathbb{Q}_T} \int_0^1F((u(t,x,\alpha), \bar{u}(t,x))\cdot\nabla_x\psi(t,x)\,d\alpha\,dt\,dx \Big]\notag\\
-&\,\mathbb{E} \Big [\int_{\mathbb{Q}_T}\int_0^1|\phi(\bar{u}(t,x))-\phi(u(t,x, \alpha)|\mathcal{L}_\theta[\psi(t,\cdot)](x)\,d\alpha\,dt\,dx\Big]\,. 
\end{align}
In view of  \cite[Lemma $2.6$, Corollary $5.8$, Lemma $5.9$, Lemma $5.2$]{Endal-2014} and following the arguments of \cite[Subsection $3.2.1$]{frac non}, we arrive from \eqref{katofd}
\begin{align}
\mathbb{E}\Big[\int_{\mathbb{Q}_T}\int_0^1 |u(t,x,\alpha) - \bar{u}(t,x)|\,d\alpha \,dx \,dt\Big] = 0. \label{eq:final-con-degenerate-frac}
\end{align}
Since $u(t,x,\alpha)$ is the Young measure valued limit of the approximate solutions $u_{\Delta t}(t,x)$, generated by \eqref{approxi:solu} and \eqref{eq:sequence-nonlinear} and $\bar{u}(t,x)$ is the unique entropy solution of  \eqref{eq:fdegenerate}, we conclude from \eqref{eq:final-con-degenerate-frac} that 
 $u_{\Delta t}(\cdot,\cdot)$ converges to the unique entropy solution of \eqref{eq:fdegenerate} in $L_{loc}^p(\mathbb{R}^d; L^p(\Omega \times (0,T)))$ for $1 \le p < \infty.$
 This essentially completes the proof.
 
\section{Numerical Experiments}\label{sec:numerical}
In this section, the operator splitting scheme given by \eqref{approxi:solu}-\eqref{eq:sequence}
has been tested on suitable test cases in order to demonstrate its effectiveness in one space dimension.
However, in order to implement our scheme, one needs to discretize both space and time variables. We begin by introducing some notation needed to define the
fully discrete finite difference schemes. To that context, we
reserve $\Dx, \Dt$ to denote small positive numbers that represent the
spatial and temporal discretization parameter of the numerical scheme respectively. Given $\Dx>0$, we set $x_j=j\Dx$ for $j\in \Z$, to denote the spatial mesh points. Similarly,  we set $t^n = n \Dt$ for $n= 0,1,\cdots,N$, where $N\Dt=T$ for some fixed time horizon $T>0$. Moreover, for any function $u=u(x,t)$ admitting point values, we write $u^n_j = u(x_j, t^n)$. With the help of the above notations, we consider the following explicit numerical scheme for the fractional conservation laws \eqref{eq:operator-S}, (see \cite[Section 7]{cifani fds}).
\begin{align}\label{eq:discrtization-op-s}
    u_j^{n+1} = u_j^n - \Delta t\,D_{-}F(u_j^n, u_{j+1}^n) + \Delta t\sum_{i \neq 0}G_i(u_{i+j}^n - u_j^n)\,.
\end{align}
Similarly for \eqref{eq:operator Sbar} we have,
\begin{align}\label{eq:discritization-op-sbar}
    u_j^{n+1} = u_j^n - \Delta t\,D_{-}F(u_j^n, u_{j+1}^n) + \Delta t\sum_{i \neq 0}G_i(\phi(u_{i+j}^n) - \phi(u_j^n)).
\end{align}
In \eqref{eq:discrtization-op-s}
and \eqref{eq:discritization-op-sbar}, $u_j^n$ is the approximate solution in the cell $[t^n, t^{n+1}) \times [x_{j-\frac{1}{2}}, x_{j+\frac{1}{2}})$, $F:\R^2 \goto \R$ is the monotone numerical flux corresponding to the given flux function $f$, and $D_{-}$ is the spatial difference operator given by 
$$D_{-}(\cdot)_j := \frac{((\cdot)_{j}-(\cdot)_{j-1})}{\Delta x},$$
and $G_i$ is the approximate diffusion operator given in \eqref{eq:compute-G-i}.
For simplicity, we have chosen to demonstrate the Engquist-Osher method for the flux function $f : \R \rightarrow \R$, which has the following form.
\begin{align}
    &\Delta t\,D_{-}F(u_j^n, u_{j+1}^n) =\frac{\Dt}{\Dx} \Big\{ F_{\rm EO}(u^n_j, u^n_{j+1}) - F_{\rm EO}(u^n_{j-1}, u^n_j) \Big\}\,,\notag 
\end{align}
where 
\begin{align}
     F_{\rm EO}(u,v):= \int_{0}^{u} \max{ \Big( f'(s), 0 \Big)} \,ds + \int_{0}^{v} \min{ \Big(f'(s), 0 \Big)} \,ds + f(0).
 \label{EO_flux}
\end{align}
The approximate diffusion operator in \eqref{eq:discrtization-op-s} and \eqref{eq:discritization-op-sbar} is given by (see, \cite{cifani fds, Koley-2021-Monte Carlo})
\begin{align}\label{eq:compute-G-i}
    G_j = a_\theta\int_{x_{j - \frac{1}{2}}}^{x_{j + \frac{1}{2}}} \frac{dz}{|z|^{1 + 2\theta}}, \quad j \in \Z-\{0\},
\end{align}
where $a_\theta = \frac{2^{2\theta-1}\Gamma(\frac{1+2\theta}{2})}{\pi^{\frac{1}{2}}\Gamma(1-\theta)}$ with $\Gamma(\xi) = \int_0^\infty e^{-x}x^{\xi-1}\,dx, \quad \xi > 0.$
The above explicit numerical scheme converges under the following CFL condition \cite[Equation 4.4]{cifani}, \cite[Section 7]{cifani fds}. 
\begin{align}\label{eq:CFL-condition}
    2L_f\frac{\Delta t}{\Delta x} + \Big(a_\theta2^{2\theta}L_{\phi}\int_{|z| >0}\frac{dz}{|z|^{1+2\theta}}\Big)\frac{\Delta t}{(\Delta x)^{2\theta}} < 1,
\end{align}
where $L_f$ and $L_{\phi}$ are the Lipschitz constants of $f$ and $\phi$ respectively.\\
\vspace{0.1cm}

 Meanwhile, for the numerical scheme of \eqref{eq:noise}, we consider the following  Euler-Maruyama approximations.
\begin{align}
 u_j^{n+1}= u_j^n + \sigma(u_j^n)\big[W(t_{n+1})-W(t_{n})\big] + \int_{t_n}^{t_{n+1}} \int_{|z|>0} \eta(u_j^n,z) \,\widetilde{N}(dz,ds). \label{EM-scheme-1}
\end{align}
Here we assume that $\alpha:= m(\{|z|>0\}) < + \infty$. Moreover, 
observe that, $N(t):= N(\{|z|>0\} \times [0,t])$ is a stochastic process which  counts the number of jumps until  time $t$.
 Also, $N(dz,dt)$ generates a pair of sequence of random variables
 $\Big\{ (\tau_i, \xi_i): ~~ i \in \{ 1,2,\cdots, N(T)\}\Big\}$,
 where $\tau_i:\Omega \mapsto \R_{+}, ~~ i \in \{ 1,2,\cdots, N(T)\}$, are increasing sequence of 
 nonnegative random variable which 
 represents the jump times of a Poisson process with intensity $\alpha$, and 
 $\xi_i:\Omega \mapsto \R, ~~ i \in \{ 1,2,\cdots, N(T)\}$, are i.i.d sequence of random variables with distribution
 as $\frac{m(dz)}{\alpha}$. Then, Euler-Maruyama scheme \eqref{EM-scheme-1} reduces to
\begin{align}
u_j^{n+1}= u_j^n + \sigma(u_j^n)\big[W(t_{n+1})-W(t_n)\big] -  \Delta t\int_{|z|>0} \eta(u_j^n, z)\,m(dz) +{ \displaystyle  \sum \limits_{i=N(t_n)+1}^{N(t_{n+1})} \eta(u_j^n, \xi_i)}.  \label{EM-scheme-2}
\end{align}
The initial data can be approximated as 
\begin{align*}
    u_j^0 = \frac{1}{\Delta x}\int_{x_{j -\frac{1}{2}}}^{x_{j +\frac{1}{2}}}u_0(x)\,dx, \quad j\in \mathbb{Z}\,.
\end{align*}
\subsection{Computation of non-local diffusion term}
To simulate the numerical experiments, it is not useful to consider the whole domain $\R$. Following \cite[Subsection 5.1]{Koley-2021-Monte Carlo}, we consider a symmetric truncated domain $[-K, K]$ for a given $K>0$. For computational purposes, one must appropriately extend the domain outside the prescribed interval. Keeping this in mind, the solution can be prolonged beyond the initial interval in a constant way, such that $u_j = u_{-K}$ for all $j \le -K$ and $u_j = u_K$ for all $j \ge K$;~see also \cite{koley-2022}. Thus, we reformulate the non-local term in \eqref{eq:operator Sbar} as follows.
\begin{align*}
    &\sum_{i \neq 0}G_i(\phi(u_{i+j}^n) - \phi(u_j^n))\\ &= \sum_{i < -K-j}G_i(\phi(u_{i+j}^n) - \phi(u_j^n)) + \sum_{\underset{i \neq 0}{i = -K-j}}^{K-j}G_i(\phi(u_{i+j}^n)-\phi(u_j^n)) + \sum_{i > K-j}G_i(\phi(u_{i+j}^n)-\phi(u_j^n))\\
    &= (\phi(u_{-K}^n) -\phi(u_j^n))\sum_{i < -K-j}G_i +  \sum_{\underset{i \neq 0}{i = -K-j}}^{K-j}G_i(\phi(u_{i+j}^n)-\phi(u_j^n)) + (\phi(u_K^n) -\phi(u_j^n))\sum_{i > K-j}G_i\\
    &= \frac{a_\theta}{2\theta(\Delta x)^{2\theta}}\frac{\phi(u_{-K}^n)-\phi(u_j^n)}{(K + j +\frac{1}{2})^{{2\theta}}}\, + \, \frac{a_\theta}{2\theta(\Delta x)^{2\theta}}\frac{\phi(u_{K}^n)-\phi(u_j^n)}{(K - j +\frac{1}{2})^{{2\theta}}}\, +  \sum_{\underset{i \neq 0}{i = -K-j}}^{K-j}G_i(\phi(u_{i+j}^n)-\phi(u_j^n))\,.
\end{align*}
The above estimation requires the value of $G_i$ for $|i| < K$ which can be computed by \eqref{eq:compute-G-i}. A similar formulation can be done for the non-local term of \eqref{eq:discritization-op-sbar} with $\phi(x) = x$.
\subsection{Numerical Examples} As it has been mentioned earlier, in the first step, we solve the stochastic equation \eqref{eq:operator-S} using the Euler-Maruyama method 
\eqref{EM-scheme-2}, and in the second step, we solve the scalar fractional conservation 
laws  using explicit numerical scheme \eqref{eq:discritization-op-sbar} and \eqref{eq:discrtization-op-s}, 
with initial condition as an obtained solution from the first step.
We consider the truncated spatial domain $[-1,1]$ and the time simulation $T =1$. All the computations are performed on an AMD Ryzen $5$-$3500$U, $2.10$ GHz processor with
$8$ GB RAM. 

\begin{ex}\label{exm:1} Here, we consider \eqref{eq:fdegenerate} with $f(u) = \frac{u^2}{2}$ and the diffusion term $\phi$, noise co-coefficient $\sigma$ and initial function $u_0^1$ defined as follows.
\begin{equation*}
\phi(x)= 
\begin{cases}
 (x-\frac{1}{2})^+ \quad  if \quad 0 \le x \le 1, \\
 0 \quad \text{else},
\end{cases}\, 
 \sigma(x) =
\begin{cases}
 x(1-x) \quad \text{if} -1 \le x \le 1\\
 0 \quad \text{else},
\end{cases}\,
u_0^1(x)=
 \begin{cases}
  -\frac{1}{2} \quad   -1 \le x < 0, \\
  \frac{1}{2}   \quad  0 \le x \le 1\\
  0,  \quad \text{else}\,.
 \end{cases}
\end{equation*}
We have simulated the numerical solution for different values of $\theta\in (0,1)$. Figure \ref{fig:1}, ${\rm (A)}$ displays the numerical solution of \eqref{eq:fdegenerate} for the above mentioned datum with $\theta=0.1$~(red doted curve) and $\theta= 0.3$~(blue doted curve) for the uniform temporal discretization parameter $\Delta t=0.002$ and corresponding spatial discretization parameter $\Delta x$ obtained from the CFL condition \eqref{eq:CFL-condition}, whereas Figure \ref{fig:1}, ${\rm (B)}$ depicts the numerical solution of \eqref{eq:fdegenerate} with $\theta=0.6$~(red doted curve) and $\theta= 0.8$~(blue doted curve) for $\Delta t=0.001$ and corresponding $\Delta x$ given via the CFL condition \eqref{eq:CFL-condition}.
\begin{figure}[htbp]
\begin{subfigure} [b]{0.49\textwidth}
        \includegraphics[scale=0.5]{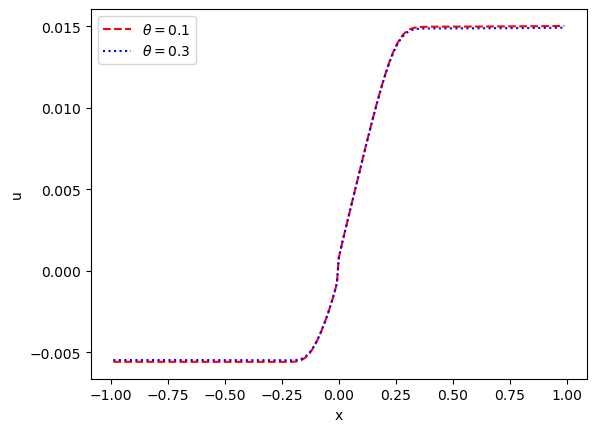} 
         \caption{}
  \end{subfigure}
  \begin{subfigure}[b]{0.49\textwidth}
        \includegraphics[scale=0.5]{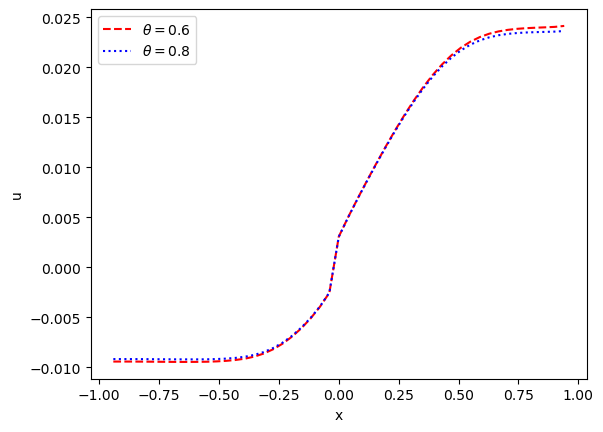}
       \caption{}
  \end{subfigure}
   \caption{Numerical solution of \eqref{eq:fdegenerate} for Burger's flux at final time $T=1$:
    (A) for $\theta=0.1,~0.3$, (B) for $\theta=0.6,~0.8$.}
  \label{fig:1}
\end{figure}
\end{ex}

\begin{ex}\label{exm:2} 
Here, we again consider \eqref{eq:fdegenerate} with $f, \phi, \sigma$ as defined in Example \ref{exm:1} and initial condition $u_0^2$ given by
\begin{equation*}
u_0^2(x)=
 \begin{cases}
  2\exp{(\frac{1}{x^2 -1})} \quad \text{if} \quad -1 < x < 1,\\
  0  \qquad \text{else}.
 \end{cases}
\end{equation*}
\begin{figure}[htbp]
\begin{subfigure} [b]{0.49\textwidth}
        \includegraphics[scale=0.5]{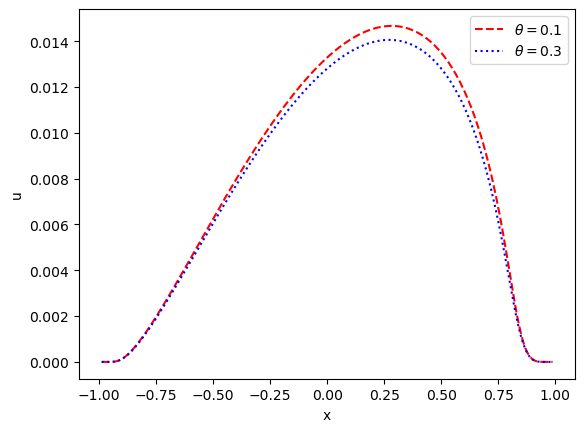} 
         \caption{For $\Delta t=0.002$ and $\Delta x$ using \eqref{eq:CFL-condition} }
  \end{subfigure}
  \begin{subfigure}[b]{0.49\textwidth}
        \includegraphics[scale=0.5]{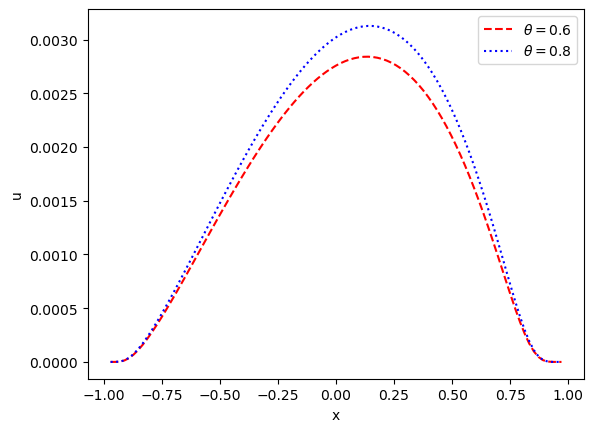}
        \caption{For $\Delta t=0.001$ and $\Delta x$ using \eqref{eq:CFL-condition} }
  \end{subfigure}
   \caption{ Numerical solution of \eqref{eq:fdegenerate} for Burger's flux at final time $T=1$:
    (A) for $\theta=0.1,~0.3$, (B) for $\theta=0.6, 0.8$.}
  \label{fig:2}
\end{figure}

\end{ex}
\vspace{0.5cm}

\noindent{\bf Acknowledgement:} The authors wish to thank Prof. Ujjwal Koley for his valuable discussions and suggestions. The first author would like to acknowledge the financial support by CSIR, India.
The second author is supported by Department of Science and Technology, Govt. of India-the INSPIRE fellowship~(IFA18-MA119).

\vspace{.2cm}

\noindent{\bf Data availability:}  Data sharing is not applicable to this article as no data-sets were generated or analyzed during the current study.

\vspace{0.2cm}

\noindent{\bf Conflict of interest:}  The authors have not disclosed any competing interests.

 \vspace{0.5cm}

\end{document}